\newtheorem{theorem}{Theorem}[section]
\newtheorem{lemma}[theorem]{Lemma}
\newtheorem{observation}[theorem]{Observation}
\newtheorem{corollary}[theorem]{Corollary}
\def\df#1{{\em #1\/}}
\def\A{\mathcal{A}}
\def\B{\mathcal{B}}
\def\C{\mathcal{C}}
\def\E{\mathcal{E}}
\def\G{\mathcal{G}}
\def\H{\mathcal{H}}
\def\M{\mathcal{M}}
\def\P{\mathcal{P}}
\def\S{\mathcal{S}}
\def\NN{\mathbb{N}}
\def\RR{\mathbb{R}}
\def\SS{\mathbb{S}}
\newcounter{cases}
\newcounter{subcases}[cases]
\newenvironment{casesblock}{%
\setcounter{cases}{0}\setcounter{subcases}{0}\par}{}
\long\def\case#1{\stepcounter{cases}\medskip\noindent{\bf Case \arabic{cases}: }#1. \par}
\def\ifempty#1#2#3{%
\def\ifemptytest{#1}%
\def\ifemptyempty{}%
\ifx\ifemptyempty\ifemptytest #2\else #3\fi%
}
\def\iso{\cong}
\def\ss{\subseteq}
\def\sm{\setminus}
\def\into{\to}
\def\eg{\widehat{g}}
\def\ng{\widetilde{g}}
\def\+{{}^+\!}
\def\eg{\widehat{g}}
\def\egp{\widehat{g}\+}
\def\ng{\widetilde{g}}
\def\ngp{\widetilde{g}\+}
\def\Forb{\text{Forb}}
\def\dc#1#2{\Delta_{#1}(#2)}
\def\Gxy{\G_{xy}}
\def\Gcxy{\G^\circ_{xy}}
\def\Cc{\C^\circ}
\def\hat{\widehat}
\def\os{\sigma}
\def\osp{\sigma\+}
\def\eh{\widehat{h}}
\def\nh{\widetilde{h}}
\def\nt{\widetilde{\theta}}
\def\Hw{\H^{\rm w}}
\def\et{\theta}
\def\eeta{\eta}
\def\dd{d^*}
\def\homeo{\hookrightarrow}
\def\gs#1{(\ref{fg-separation}#1)}
\def\gsel#1{(\ref{fg-selected}#1)}
\def\gkur#1{(\ref{fg-cc-0-egp}#1)}
\def\gcas#1{(\ref{fg-cascades-2}#1)}
\def\gbase#1{(\ref{fg-bases}#1)}
\def\By{B_y}
\def\Bx{B_x}
\def\hem{\eta}
\begin{document}
\title{Cascades and Obstructions of Low Connectivity for Embedding Graphs into the Klein Bottle}
\author{%
  Bojan Mohar
  \thanks{Supported in part by an NSERC Discovery Grant (Canada),
    by the Canada Research Chair program, and by the
    Research Grant P1--0297 of ARRS (Slovenia).}~\thanks{On leave from:
    IMFM \& FMF, Department of Mathematics, University of Ljubljana, Ljubljana,
    Slovenia.}
  \qquad
  Petr \v{S}koda\\\\
  \normalsize
  Department of Mathematics, \\
  Simon Fraser University,\\
  8888 University Drive,\\
  Burnaby, BC, Canada.}
\date{}
\maketitle

\begin{abstract}
  The structure of graphs with a 2-vertex-cut that are critical with respect to the Euler genus is studied.
  A general theorem describing the building blocks is presented. These constituents, called hoppers and cascades, are classified for the case when Euler genus is small.
  As a consequence, the complete list of obstructions of connectivity 2 for embedding graphs into the Klein bottle is obtained.
\end{abstract}

\section{Introduction}

Robertson and Seymour~\cite{robertson-1990} proved that for each surface $\SS$
the class of graphs that embed into $\SS$ can be characterized by a finite list $\Forb(\SS)$ of minimal forbidden minors (or \df{obstructions}).
For the 2-sphere $\SS_0$, $\Forb(\SS_0)$ consists of the \df{Kuratowski graphs}, $K_5$ and $K_{3,3}$ \cite{kuratowski-1930}. The list of obstructions $\Forb(\NN_1)$
for the projective plane $\NN_1$ already contains 35 graphs and $\NN_1$ is the only other surface for which the complete list of forbidden minors is known \cite{archdeacon-1981,glover-1979}.
For the torus $\SS_1$, the complete list of obstructions is still not known, but thousands of obstructions
were generated by the use of computers (see~\cite{Chambers02,NeufeldMyrwold,Wood07}).

The obstructions for the Klein bottle are even less understood than those for the torus. Even though no list of obstructions have been constructed so far,
it is expected that the total number of obstructions for the Klein bottle will be in tens of thousand. Henry Glover (private communication to B.M.)
conjectured that there will be many more.
In fact, Glover made a speculation that more than $10^6$ obstructions will be obtained by pasting together two obstructions for the projective plane by identifying two vertices in all possible ways. One of the side results of this paper is a refutation of this conjecture.

In this paper, we study critical graphs for Euler genus of low connectivity.
For a graph $G$, we denote by $\eg(G)$ its Euler genus; see Section \ref{sc-preliminaries} for definitions.
A graph $G$ is \df{critical} for Euler genus $k$ if $\eg(G) > k$ and for each edge $e \in E(G)$, $\eg(G-e) \le k $ and $\eg(G / e) \le k$, where $G/e$ denotes the graph obtained from $G$ by contracting the edge $e$.
Let $\E_k$ be the class of critical graphs for Euler genus $k$ and $\E = \bigcup_{k\ge 0} \E_k$.
It is easy to show that graphs in $\E$ that are not 2-connected can be obtained as disjoint unions and 1-sums of graphs in $\E$ (see~\cite{stahl-1977}).
Here we study graphs in $\E$ of \df{connectivity} 2, that is, graphs that are 2-connected but not 3-connected.
We shall show that each critical graph for Euler genus of connectivity 2 can be obtained as a 2-sum of two graphs that are close to graphs in $\E$
or belong to an exceptional class of graphs, called \df{cascades} (see Sect.~\ref{sc-s1} and~\ref{sc-ext}).
In Sect.~\ref{sc-klein-euler}, we construct the list of critical graphs for Euler genus 2 of connectivity 2.
In Sect.~\ref{sc-klein-klein}, we show that a graph of connectivity 2 is critical for Euler genus 2 if and only if it is an obstruction for the Klein bottle.
This yields a complete list of obstructions for the Klein bottle of connectivity 2. The list of obstructions for embeddability in the Klein bottle (and for Euler genus 2) contains precisely 668 graphs of connectivity two.
This is in strong contrast with predictions of Henry Glover, who estimated that the number of Klein bottle obstructions of connectivity two will be more than a million (private communication).
An analogous result for the torus is given in~\cite{mohar-torus}. However, the methods used in that paper are quite different from those in this one. The main difference is the appearance of cascades, whose treatment occupies about half of this paper.

The above-mentioned result that obstructions of connectivity two for Euler genus 2 and the nonorientable genus 2 are the same is just a coincidence. It is easy to see that it no longer holds for larger genus. Also, there are 3-connected obstructions for Euler genus 2 that are not Klein bottle obstructions. One example is the following graph. Let $Q$ be the graph obtained from $K_7$ by first subdividing two of its edges that have no vertex in common and then adding an edge joining both vertices of degree two used in the subdivision. Since $K_7$ does not embed in the Klein bottle, $Q$ is not an obstruction for this surface. However, $Q$ cannot be embedded in the torus and, as the reader may verify, deleting or contracting any edge gives a graph of genus one. So, $Q$ is an obstruction for the torus and an obstruction for Euler genus 2.

In classifying obstructions of connectivity two, we encounter two special families of graphs that are the building blocks of such obstructions. The first class are mysterious graphs called {\em hoppers}. While we prove that hoppers do not exist when the genus is small, and we are not able to construct any for larger genus, we believe that they may show up when the genus is large enough. Their existence is closely related to an old open problem dating back to the 1980's asking if there exists a graph which is simultaneously an obstruction for two different nonorientable surfaces.\footnote{This problem was proposed in various incarnations by Dan Archdeacon, Bruce Richter, and Jozef \v{S}iran, and appears as Problem \#1 in the list of open problems in topological graph theory compiled by Dan Archdeacon in 1995 (\url{http://www.emba.uvm.edu/~darchdea/problems/decgenus.htm}).}
For such an obstruction, deleting or contracting any edge would reduce the nonorientable genus by at least two.

The graphs in the second family that we encounter are called {\em cascades}. We determine all cascades when the genus is small. The proofs use methods from structural graph theory and involve development of results about extensions of embeddings of subgraphs. The classification of cascades for Euler genus 2 occupies almost half of the paper and is the most complicated part of the paper.

In the first part of the paper, obstructions of connectivity two for arbitrary Euler genus are examined. It is shown that we encounter the same behavior as for the small genus, except that we are unable to say much about hoppers and cascades.

\section{Preliminaries}
\label{sc-preliminaries}

Let $G$ be  a connected multigraph. An \df{embedding} of $G$ is a pair $\Pi = (\pi, \lambda)$
where $\pi = (\pi_v \mid v \in V(G))$ is a \df{rotation system}, which assigns each vertex
$v$ a cyclic permutation $\pi_v$ of the edges incident with $v$, and $\lambda$ is a \df{signature}
mapping which assigns each edge $e \in E(G)$ a \df{sign} $\lambda(e) \in \{-1, 1\}$.
For an edge $e$ incident to $v$, the cyclic sequence $e, \pi_v(e), \pi_v^2(e), \ldots, e$ is called
the \df{local rotation} at $v$.
Given an embedding $\Pi$ of $G$, we say that $G$ is \df{$\Pi$-embedded}.

A \df{$\Pi$-face} of a $\Pi$-embedded graph $G$ is a cyclic sequence of triples $(v_i, e_i, s_i)$,
where $v_i \in V(G)$, $e_i$ is an edge incident with $v_i$, and $s_i\in \{1,-1\}$,
satisfying the following (with indices being cyclic):
\begin{enumerate}[label=(\roman*)]
\item
  $e_i = v_iv_{i+1}$,
\item
  $s_{i+1} = s_i\lambda(e_i)$, and
\item
  $e_{i+1} = \pi_{v_{i+1}}^{s_{i+1}}(e_i)$.
\end{enumerate}
Two consecutive tuples $(v, e, s), (v', e', s')$ of a $\Pi$-face $W$ give a \df{$\Pi$-angle} $e, v', e'$ of $W$.
Let $F(\Pi)$ be the set of $\Pi$-faces.
The \df{Euler genus} of $\Pi$ is given by Euler's formula.
$$|V(G)| - |E(G)| + |F(G)| = 2 - \eg(\Pi).$$
The \df{Euler genus} $\eg(G)$ of a graph $G$ is the minimum Euler genus of an embedding of $G$.


If $G$ contains a cycle that contains odd number of edges of negative signature,
we say that $\Pi$ is \df{nonorientable}. Otherwise, $\Pi$ is \df{orientable}.
The \df{orientable} genus $g(G)$ is half of the minimum genus of an orientable combinatorial embedding of $G$.
If $G$ contains at least one cycle, then the \df{nonorientable genus} $\ng(G)$ is the minimum
Euler genus of a nonorientable embedding of $G$, else $\ng(G) = 0$.
The following relation is an easy observation (see~\cite{mohar-book}).

\begin{lemma}
\label{lm-ng-upper}
  For every connected graph $G$ which is not a tree,
  $$\ng(G) \le 2g(G) + 1.$$
\end{lemma}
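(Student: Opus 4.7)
The plan is to start from an orientable embedding $\Pi_0$ of $G$ achieving $\eg(\Pi_0) = 2g(G)$ and transform it into a nonorientable embedding $\Pi_1$ whose Euler genus exceeds that of $\Pi_0$ by at most one, by flipping the signature of a single well-chosen edge. Because $\Pi_0$ is orientable, after applying vertex-signature switches (which do not affect the face set of the embedding) we may assume every edge of $\Pi_0$ has signature $+1$. Since $G$ is connected and not a tree, it contains a cycle, and hence at least one non-bridge edge $e$; let $C$ be a cycle through $e$.

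Now define $\Pi_1$ to have the same rotation system as $\Pi_0$, but with $\lambda(e) = -1$ and all remaining signatures $+1$. Then $C$ contains exactly one edge of negative signature, so $\Pi_1$ is nonorientable. To finish, I would control how $|F(\Pi)|$ reacts to this single-edge switch. Tracing the face-walk rule at the two $\Pi$-angles incident to $e$, one verifies that the local face structure either merges two distinct $\Pi_0$-faces into a single $\Pi_1$-face, or splits one $\Pi_0$-face into two, and that no other face is affected. Hence $|F(\Pi_1)| = |F(\Pi_0)| \pm 1$, and by Euler's formula,
$$\eg(\Pi_1) \;=\; 2 - |V(G)| + |E(G)| - |F(\Pi_1)| \;=\; \eg(\Pi_0) \mp 1 \;\le\; 2g(G) + 1.$$
Since $\Pi_1$ is a nonorientable embedding, this gives $\ng(G) \le \eg(\Pi_1) \le 2g(G) + 1$, as required.

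The main step to verify carefully is the claim that flipping a single edge's signature changes $|F(\Pi)|$ by exactly one: this is a small but slightly technical calculation using the definition of a $\Pi$-face, since one must inspect the two face-walks passing through the angles at $e$ and check that they interact in exactly one of the two ways described. The role of the hypothesis that $G$ is not a tree is precisely to supply a non-bridge $e$, which is what forces $\Pi_1$ to be nonorientable after the switch; without such an edge the construction cannot create an odd signature cycle.
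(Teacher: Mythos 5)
Your overall strategy is the standard one and is surely the argument behind the reference the paper cites for this lemma (the paper itself gives no proof): take a minimum-genus orientable embedding with all signatures positive, flip the signature of one edge lying on a cycle, observe that the result is nonorientable, and control the change in the number of faces. The hypotheses are used correctly: $G$ not a tree supplies a cycle edge, which forces an odd-signature cycle after the flip, and connectedness plus the existence of a cycle make $\ng(G)$ the minimum Euler genus over nonorientable embeddings, so the bound $\ng(G)\le \eg(\Pi_1)$ is legitimate.

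However, the key technical claim as you state it is not what the verification actually yields: after flipping the signature of $e$, the face count does not always change by exactly one. For an embedding with all signatures $+1$, the correct dichotomy is governed by whether the two facial traversals of $e$ lie on distinct faces or on the same face. If they lie on distinct faces, those two faces merge and $|F|$ drops by one. If they lie on the same face, then, because the faces of an orientable all-positive embedding can be coherently oriented so that $e$ is traversed once in each direction, the boundary walk reconnects into a single face again and $|F|$ is \emph{unchanged}; a split into two faces never occurs in this situation. (A concrete instance of the unchanged case: in the one-face orientable embedding of the bouquet of two loops in the torus, twisting one loop leaves a single face and yields the Klein bottle embedding.) So the truth is $|F(\Pi_1)|\in\{|F(\Pi_0)|,\,|F(\Pi_0)|-1\}$ rather than $|F(\Pi_0)|\pm 1$. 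Fortunately only the inequality $|F(\Pi_1)|\ge |F(\Pi_0)|-1$ is needed, and it holds in both cases, giving $\eg(\Pi_1)\le \eg(\Pi_0)+1=2g(G)+1$ exactly as you want; but the sentence ``one verifies that the local face structure either merges two faces or splits one into two'' should be replaced by the case analysis above, since the deferred verification would fail for the split case. One further small point: switching signs at a vertex preserves the embedding (and its faces) only if the local rotation at that vertex is simultaneously reversed; with that convention your reduction to an all-positive signature is fine.
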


If $\ng(G) = 2g(G) + 1$, then $G$ is said to be \df{orientably simple}.
Note that in this case $\eg(G) = 2g(G) = \ng(G)-1$, i.e., the Euler genus of $G$ is even.

In this paper, we will deal mainly with the class $\G$ of simple graphs.
Let $G \in \G$ be a simple graph and $e$ an edge of $G$.
Then $G-e$ denotes the graph obtained from $G$ by \df{deleting} $e$
and $G/e$ denotes the graph\footnote{When contracting an edge, one may obtain multiple edges.
We shall replace any multiple edges by single edges as such a simplification has no effect on the genus.}
obtained from $G$ by \df{contracting} $e$. It is convenient for us to formalize these graph operations.
The set $\M(G) = E(G) \times \{-,/\}$ is the \df{set of minor-operations} available for $G$.
An element $\mu \in \M(G)$ is called a \df{minor-operation} and $\mu G $ denotes the graph obtained
from $G$ by applying $\mu$. For example, if $\mu = (e, -)$ then $\mu G  = G - e$.
A graph $H$ is a \df{minor} of $G$ if $H$ can be obtained from a subgraph of $G$ by contracting some edges.
If $G$ is connected, then $H$ can be obtained from $G$ by a sequence of minor-operations.

We shall use the following well-known result.

\begin{theorem}[Stahl and Beineke~\cite{stahl-1977}]
\label{th-stahl-euler}
The Euler genus of a graph is the sum of the Euler genera of its blocks.
\end{theorem}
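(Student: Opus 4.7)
The plan is to proceed by induction on the number of blocks of $G$. If $G$ has a single block, there is nothing to prove. Otherwise, pick a leaf block $B$ in the block-cut tree of $G$, incident with a cut vertex $v$, and write $G = B \cup_v G'$ with $G'$ the union of the remaining blocks and $V(B) \cap V(G') = \{v\}$. By the inductive hypothesis applied to $G'$, the theorem reduces to the \emph{1-sum identity}: $\eg(G_1 \cup G_2) = \eg(G_1) + \eg(G_2)$ whenever $V(G_1) \cap V(G_2) = \{v\}$ and $E(G_1) \cap E(G_2) = \emptyset$.

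For the upper bound $\eg(G_1 \cup G_2) \le \eg(G_1) + \eg(G_2)$, I would take optimal embeddings $\Pi_1, \Pi_2$ of $G_1, G_2$ and construct an embedding $\Pi$ of $G$ by inheriting all rotations and signatures from $\Pi_1, \Pi_2$, except at $v$, where $\pi_v$ is set to the concatenation of $\pi^{(1)}_v$ followed by $\pi^{(2)}_v$, so that the $G_1$-edges and $G_2$-edges each form a contiguous arc of $\pi_v$. A direct application of the face rules (i)--(iii) shows that every $\Pi_i$-face persists in $\Pi$, except that the unique $\Pi_1$-face and the unique $\Pi_2$-face meeting the two seam corners at $v$ merge into a single $\Pi$-face. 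Thus $|F(\Pi)| = |F(\Pi_1)| + |F(\Pi_2)| - 1$, and combining with $|V(G)| = |V(G_1)| + |V(G_2)| - 1$ and $|E(G)| = |E(G_1)| + |E(G_2)|$ in Euler's formula gives $\eg(\Pi) = \eg(\Pi_1) + \eg(\Pi_2)$, as required.

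For the lower bound $\eg(G_1 \cup G_2) \ge \eg(G_1) + \eg(G_2)$, take an optimal embedding $\Pi$ of $G = G_1 \cup G_2$ in a surface $S$ of Euler genus $\eg(G)$. The plan is to argue that $\Pi$ may be chosen so that in $\pi_v$ the $G_1$-edges and the $G_2$-edges appear in two contiguous arcs; equivalently, no interleaving of $G_1$- and $G_2$-edges at $v$ is needed in a minimum-genus embedding. Granting this, draw a simple closed curve $C \subset S$ passing through $v$ that locally separates the two arcs; since $v$ is a cut vertex and $G_1 \setminus v, G_2 \setminus v$ lie in disjoint components of $G \setminus v$, this local curve extends to $C \subset S$ with $C \cap G = \{v\}$ separating $G_1 \setminus v$ from $G_2 \setminus v$. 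Cutting $S$ along $C$ and capping the two resulting boundary circles with disks yields closed surfaces $\hat S_1 \supseteq G_1$ and $\hat S_2 \supseteq G_2$ with $\eg(\hat S_1) + \eg(\hat S_2) = \eg(S)$ by an Euler-characteristic computation, from which $\eg(G_1) + \eg(G_2) \le \eg(G)$ follows.

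The main obstacle is justifying the assumption that $\pi_v$ in an optimal embedding can be taken to separate $G_1$- and $G_2$-edges into contiguous arcs. I would address this by a local swap argument: whenever consecutive edges at $v$ in $\pi_v$ belong to different blocks, swap them and track the effect on the three affected corners at $v$ and on the facial walks using them (in both orientable and non-orientable cases, respecting signatures). A careful case analysis should show that a suitable sequence of such swap-based moves is always available that weakly reduces the number of interleavings while weakly increasing $|F(\Pi)|$, eventually producing a separated rotation; an alternative topological route is to argue that any interleaving at $v$ in a minimum-genus embedding witnesses a removable handle or crosscap, contradicting minimality.
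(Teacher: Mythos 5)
The paper itself does not prove this statement (it is quoted from Stahl and Beineke), so your proposal has to stand on its own. The inductive reduction to a one-vertex amalgamation and the upper bound $\eg(G_1\cup G_2)\le \eg(G_1)+\eg(G_2)$ are fine: merging optimal embeddings by concatenating the two rotations at $v$ is the standard connected-sum construction, and the count $|F(\Pi)|=|F(\Pi_1)|+|F(\Pi_2)|-1$ together with Euler's formula does give the sum.

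The lower bound, however, is where the entire content of the theorem lies, and there your argument has a genuine gap. You defer the key claim -- that a minimum-genus embedding can be modified so that the $G_1$-edges and $G_2$-edges occupy contiguous arcs of $\pi_v$ -- to an unspecified ``careful case analysis'' of local swaps. Swapping two consecutive edges at $v$ replaces three angles by three new ones and can easily \emph{decrease} the number of faces (i.e.\ increase the Euler genus of the embedding) and can change orientability; nothing in your sketch shows that a genus-non-increasing, interleaving-reducing sequence of swaps always exists, and that is precisely the hard step any proof must supply. A warning sign that no routine local argument can do this is Theorem~\ref{th-stahl-nonori} in this paper: for the nonorientable genus the analogous additivity is false without the orientably-simple correction term, so a correct proof must use something specific to Euler genus, which your sketch never does. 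Separately, the cut-and-cap step tacitly assumes the curve $C$ through $v$ is separating in the surface $S$. It need not be: if $C$ is two-sided non-separating (or one-sided), cutting and capping yields a single connected surface of smaller genus containing disjoint embeddings of $G_1$ and $G_2$, and to conclude you would then need additivity of Euler genus over disjoint unions, which is essentially the statement being proved. So both pillars of your lower-bound plan (the rotation normalization and the surgery accounting) are missing their decisive arguments.
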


Generally, we are interested in minor-minimal graphs (with some property). The closely related classes
of deletion-minimal graphs appear naturally.
Let $\Forb^*(\SS)$ be the class of graphs of minimum degree at least 3 that do not embed into $\SS$ but are minimal such with respect to taking subgraphs.
Similarly, let $\E^*_k$ be the class of graphs of minimum degree at least 3 such that $\eg(G) > k$ but $\eg(G - e) \le k$ for each edge $e \in E(G)$.
Again, we let $\E^* = \bigcup_{k\ge 0} \E^*_k$.

Let us note that the neighbors of a vertex of degree 3 cannot be adjacent in a graph that is minimally non-embeddable on a surface.

\begin{observation}
\label{obs-triangle}
  Let $uvw$ be a triangle in a graph $G$. If $u$ has degree 3, then
  every embedding of $G - vw$ into a surface can be extended to an embedding of $G$ into the same surface.
\end{observation}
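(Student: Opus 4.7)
My plan is to locate the face of the given embedding of $G-vw$ that passes through $u$ between the edges $uv$ and $uw$, and to insert $vw$ as a chord across that face.

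Let $x$ be the third neighbor of $u$ in $G$. Then in $G-vw$ the vertex $u$ is incident with exactly the three edges $uv,uw,ux$, so the local rotation $\pi_u$ is a cyclic permutation of this $3$-element set. Because every two elements of a cyclic order on three elements are cyclically adjacent, there is a $\Pi$-angle at $u$ whose two sides are $uv$ and $uw$. Let $F$ be the $\Pi$-face containing this angle; by the definition of a face walk, the boundary of $F$ contains two consecutive triples whose edges are $uv$ and $uw$, so $F$ visits $v$, crosses $u$ via this angle, and then visits $w$.

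I then extend $\Pi$ to an embedding $\Pi'$ of $G$ by inserting $vw$ at that corner. Concretely, insert $vw$ into $\pi_v$ at the $F$-angle at $v$ incident with $uv$, insert $vw$ into $\pi_w$ at the $F$-angle at $w$ incident with $uw$, and set $\lambda(vw)$ to be whichever value in $\{-1,+1\}$ makes the sign condition $s_{i+1}=s_i\lambda(e_i)$ close up consistently along the triangular walk $v,uv,u,uw,w,vw,v$. With these choices, $F$ splits into two $\Pi'$-faces, namely the triangle bounded by $uv,uw,vw$ together with the face obtained from $F$ by excising the $u$-corner, while every other $\Pi$-face is an unchanged $\Pi'$-face. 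Euler's formula then yields $\eg(\Pi')=\eg(\Pi)$ since $|V|$ is unchanged while both $|E|$ and $|F|$ grow by one, so $\Pi'$ is an embedding of $G$ into the same surface as $\Pi$.

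The only delicate point is whether a signature for $vw$ exists that makes the new triangular walk an actual face; this reduces to a single consistency check. Writing out the three iterations of rule (ii) around the triangle yields the constraint $\lambda(uv)\lambda(uw)\lambda(vw)=1$, which uniquely determines $\lambda(vw)\in\{-1,+1\}$, and the accompanying rotation condition (iii) is satisfied by the chosen insertion positions at $v$ and $w$. This confirms that the required extension always exists.
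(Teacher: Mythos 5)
The paper states this observation without proof, and your argument is precisely the standard one it implicitly relies on: since $u$ has degree $3$, the edges $uv$ and $uw$ are consecutive in the rotation at $u$, so some face of the embedding of $G-vw$ contains the corner $v,u,w$; inserting $vw$ across that corner splits this face into the triangle and one further face, and Euler's formula gives the same Euler genus. Your sign computation $\lambda(uv)\lambda(uw)\lambda(vw)=1$ and the insertion positions at $v$ and $w$ are correct, so the construction is sound. One small point you should make explicit: equal Euler genus alone does not determine the surface (torus versus Klein bottle), so to get ``the same surface'' you should also observe that orientability is unchanged --- immediate here, either topologically because the new edge is drawn inside a single face, or combinatorially because your choice of $\lambda(vw)$ makes the triangle $uvw$ have positive total signature while every cycle of $G-vw$ keeps its signature.
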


\section{Graphs with terminals}
\label{sc-terminals}

We study the class $\Gxy$ of graphs with two special vertices $x$ and $y$, called \df{terminals}.
Most notions that are used for graphs can be used in the same way for graphs with terminals.
Some notions differ though and, to distinguish between graphs with and without terminals, let $\hat{G}$ be the underlying graph of $G$ without terminals (for $G \in \Gxy$).
Two graphs, $G_1$ and $G_2$, in $\Gxy$ are \df{isomorphic}, also denoted $G_1 \iso G_2$, if there is an isomorphism of the graphs $\hat{G_1}$ and $\hat{G_2}$
that maps terminals of $G_1$ onto terminals of $G_2$ (and non-terminals onto non-terminals), possibly exchanging $x$ and $y$.
We define minor-operations on graphs in $\Gxy$ in the way that $\Gxy$ is a minor-closed class.
When performing edge contractions on $G \in \Gxy$, we do not allow contraction of the edge $xy$ (if $xy \in E(G)$)
and when contracting an edge incident with a terminal, the resulting vertex becomes a terminal.

We use $\M(G)$ to denote the set of available minor-operations for $G$.
Since $(xy, /) \not\in \M(G)$ for $G \in \Gxy$,
we shall use $G /xy$ to denote the underlying simple graph in $\G$ obtained from $G$
by identification of $x$ and $y$. In particular, we do not require the edge $xy$ to be present in~$G$.

A \df{graph parameter} is a function $\G \into \RR$ that is constant on each isomorphism class of $\G$.
Similarly, we call a function $\Gxy \into \RR$ a \df{graph parameter} if it is constant on each isomorphism class of $\Gxy$.
A graph parameter $\P$ is \df{minor-monotone} if $\P(H) \le \P(G)$ for each graph $G \in \Gxy$ and each minor $H$ of $G$.
The Euler genus is an example of a minor-monotone graph parameter.

For $G \in \Gxy$, the graph $G\+$ is the graph obtained from $G$ by adding the edge $xy$ if it is not already present.
We can view the Euler genus of $G\+$ as a graph parameter $\egp$ of $G$, $\egp(G) = \eg(G\+)$.
Note that $\egp$ is minor-monotone.
The difference of $\egp$ and $\eg$ is a parameter $\et$, that is, $\et(G) = \eg(G\+) - \eg(G)$.
Note that $\et(G) \in \{0,1,2\}$.

\begin{table}
  \centering
  \begin{tabular}{| c | l | l |}
    \hline
    Parameter & Definition & Range \\
    \hline
    $\eg(G)$ & Euler genus & $\ge 0$ \\
    $\egp(G)$ & $\eg(G+xy)$ & $\ge 0$ \\
    $\et(G)$ & $\egp(G)-\eg(G)$ & $0,1,2$ \\
    $\eeta(G_1, G_2)$ & $\et(G_1) + \et(G_2)$ & $0,1,2,3,4$ \\
    \hline
  \end{tabular}
  \caption{Genus parameters for graphs in $\Gxy$}
  \label{tb-ranges}
\end{table}

Let $\P$ be a graph parameter.
A graph $G$ is \df{$\P$-critical} if $\P(\mu G) < \P(G)$ for each $\mu \in \M(G)$.
Let $H$ be a subgraph of a graph $G$ (possibly with terminals) and $\P$ a graph parameter. We say that $H$ is \df{$\P$-tight} if
$\P(\mu G) < \P(G)$ for every minor-operation $\mu \in \M(H)$.
We observe that $\P$-critical graphs have $\P$-tight subgraphs:

\begin{lemma}
\label{lm-tight}
Let $H_1, \ldots, H_s$ be subgraphs of a graph $G$ (possibly with terminals).
If $E(H_1) \cup \cdots \cup E(H_s) = E(G)$, then
$G$ is $\P$-critical if and only if $H_1, \ldots, H_s$ are $\P$-tight.
\end{lemma}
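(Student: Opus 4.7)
The plan is a direct verification from the definitions, pivoting on the observation that the hypothesis $E(H_1) \cup \cdots \cup E(H_s) = E(G)$ lifts, essentially verbatim, to a covering of minor-operations $\M(H_1) \cup \cdots \cup \M(H_s) = \M(G)$. Indeed, for any subgraph $H$ of $G$, the definition $\M(H) = E(H) \times \{-,/\}$ (with the convention that $(xy,/)$ is excluded in the terminal case, a convention inherited identically by $G$) gives $\M(H) \ss \M(G)$, and taking the union over $H_1,\ldots,H_s$ together with the edge-covering hypothesis yields the stated covering of $\M(G)$.

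For the forward direction, assume $G$ is $\P$-critical, so $\P(\mu G) < \P(G)$ for every $\mu \in \M(G)$. Fix $i \in \{1,\ldots,s\}$. Then for every $\mu \in \M(H_i) \ss \M(G)$ the same inequality $\P(\mu G) < \P(G)$ holds, which is precisely the definition of $H_i$ being $\P$-tight in $G$.

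For the reverse direction, assume each $H_i$ is $\P$-tight. Let $\mu \in \M(G)$ be arbitrary. By the covering $\M(G) = \M(H_1) \cup \cdots \cup \M(H_s)$, there exists an index $i$ with $\mu \in \M(H_i)$, and $\P$-tightness of $H_i$ gives $\P(\mu G) < \P(G)$. Since $\mu$ was arbitrary, $G$ is $\P$-critical.

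There is really no obstacle in the argument; the lemma is a tautological repackaging of criticality as a conjunction of tightness conditions on a family of subgraphs whose edges exhaust $E(G)$. The only point requiring a sentence of care is the terminal convention, ensuring that when $G \in \Gxy$ and the edge $xy$ appears in some $H_i$, the forbidden operation $(xy,/)$ is excluded in both $\M(H_i)$ and $\M(G)$ so the inclusion $\M(H_i) \ss \M(G)$ is unaffected. The utility of the lemma is not in depth but in its role as a workhorse for later sections, where critical graphs will be decomposed along 2-separations into pieces (hoppers, cascades, and near-critical graphs) and tightness will be verified piece by piece.
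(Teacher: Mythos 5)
Your argument is correct and is exactly the intended justification: the paper states Lemma~\ref{lm-tight} without proof, treating it as immediate from the definitions, and your verification via the covering $\M(G)=\M(H_1)\cup\cdots\cup\M(H_s)$ is that immediate argument spelled out. The remark about the terminal convention (excluding $(xy,/)$ consistently in $\M(H_i)$ and $\M(G)$) is the right point of care and is handled correctly.
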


Let $\Gcxy$ be the subclass of $\Gxy$ that consists of graphs that do not contain the edge $xy$.
For graphs $G_1, G_2 \in \Gxy$ such that $V(G_1) \cap V(G_2) = \{x,y\}$, the graph $G = (V(G_1) \cup V(G_2), E(G_1) \cup E(G_2))$ is the \df{$xy$-sum} of $G_1$ and $G_2$.
The graphs $G_1$ and $G_2$ are called \df{parts} of $G$.
Let $G$ be the $xy$-sum of $G_1, G_2 \in \Gxy$.
We define the following two parameters:
\begin{equation}
  \label{eq-eg-0}
  \eh_0(G) = \eg(G_1) + \eg(G_2) + 2;
\end{equation}
\begin{equation}
  \label{eq-eg-1}
  \eh_1(G) = \egp(G_1) + \egp(G_2).
\end{equation}

Eq.~(\ref{eq-eg-1}) can be rewritten in a form similar to Eq.~(\ref{eq-eg-0}).
\begin{equation}
  \label{eq-thetas}
  \eh_1(G) = \eg(G_1) + \eg(G_2) + \et(G_1) + \et(G_2) = \eg(G_1) + \eg(G_2) + \eeta(G_1, G_2)
\end{equation}
where $\eeta(G_1, G_2) = \et(G_1) + \et(G_2)$.
Note that $\eeta(G_1, G_2) \in \{0,1,2,3,4\}$.

Richter~\cite{richter-1987-euler} gave a precise formula for the Euler genus of a 2-sum that
can be expressed using our notation as follows.

\begin{theorem}[Richter~\cite{richter-1987-euler}]
  \label{th-richter-euler}
  Let $G$ be the $xy$-sum of connected graphs $G_1, G_2 \in \Gxy$. Then
  \begin{enumerate}[label=\rm(\roman*)]
  \item
    $\eg(G) = \min \{\eh_0(G), \eh_1(G)\}$,
  \item
    $\egp(G) = \eh_1(G)$, and
  \item
    $\et(G) = \max\{\eh_1(G) - \eh_0(G), 0\}.$
  \end{enumerate}
\end{theorem}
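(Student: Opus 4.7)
The plan is to prove (ii) first, derive the upper bounds in (i) from it, establish the lower bound in (i) by a surgery argument on an optimal embedding of $G$, and finally deduce (iii) as an arithmetic consequence. Part (ii) is the classical additivity of Euler genus along a $2$-sum over a single shared edge, applied to $G^+=G_1^+\cup_{xy}G_2^+$, and I would prove both directions by surgery on embeddings. For $\eg(G^+)\le\eg(G_1^+)+\eg(G_2^+)$, start with optimal embeddings $\Pi_i$ of $G_i^+$, excise a thin strip adjacent to $xy$ on one chosen side in each, and glue the two bordered surfaces along their boundary circles so that the two copies of $xy$ coincide; the Euler characteristics combine with a $-2$ correction, so the glued embedding of $G^+$ has Euler genus $\eg(\Pi_1)+\eg(\Pi_2)$. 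For the reverse inequality, cut an optimal embedding of $G^+$ along $xy$ and cap off the two resulting boundary circles with disks to recover embeddings of $G_1^+$ and $G_2^+$ whose Euler genera sum to $\eg(G^+)$.

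For (i), the bound $\eg(G)\le\eh_1(G)$ follows from $G\subseteq G^+$ and (ii). The bound $\eg(G)\le\eh_0(G)$ is witnessed by the following construction: take optimal embeddings of $G_1$ and $G_2$, form the $1$-sum at $x$ (Euler genus $\eg(G_1)+\eg(G_2)$ by Theorem~\ref{th-stahl-euler}), add an auxiliary edge between the two copies of $y$ at a cost of at most $2$, and contract it. The heart of the theorem is the lower bound $\eg(G)\ge\min\{\eh_0(G),\eh_1(G)\}$. Fix an optimal embedding $\Pi$ of $G$ in a surface $S$, take a closed regular neighborhood $N_i$ of $G_i$ in $S$, and cap each boundary circle of $N_i$ with a disk to form a closed surface $\bar N_i$ into which $G_i$ cellularly embeds. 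Since $N_1\cap N_2$ consists of two disks (around $x$ and $y$), an Euler-characteristic count gives $\eg(S)\ge\eg(\bar N_1)+\eg(\bar N_2)$. I would then split cases on whether, in each $\bar N_i$, some cap-face carries both $x$ and $y$ on its boundary walk: if so, the edge $xy$ can be drawn inside that face without changing the surface, so $\egp(G_i)\le\eg(\bar N_i)$ and the bounds sum to $\eg(G)\ge\eh_1(G)$; if the caps fail to co-locate $x$ and $y$ on some side, a refined bookkeeping that charges for bridging the two terminals across the separator upgrades the bound to $\eg(G)\ge\eg(G_1)+\eg(G_2)+2=\eh_0(G)$.

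Finally (iii) follows arithmetically from (i) and (ii):
$$\et(G)=\egp(G)-\eg(G)=\eh_1(G)-\min\{\eh_0(G),\eh_1(G)\}=\max\{\eh_1(G)-\eh_0(G),0\},$$
which lies in $\{0,1,2\}$ since $\eh_1(G)-\eh_0(G)=\eeta(G_1,G_2)-2\in\{-2,\dots,2\}$. The main obstacle is the dichotomy in the lower bound of (i): the interleaving of $G_1$-edges and $G_2$-edges in the local rotations $\pi_x$ and $\pi_y$ controls the number and structure of cap-faces in each $\bar N_i$, and the technically delicate step is to verify that whenever no cap on some side carries both $x$ and $y$, the Euler-characteristic deficit is at least $2$ — exactly the cost of the handle needed to route $xy$ across the two sides.
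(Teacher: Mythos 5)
This statement is not proved in the paper at all: it is quoted from Richter's 1987 paper on the Euler genus of 2-amalgamations, so there is no in-paper argument to compare yours with. Judged on its own, your proposal gets the easy half right — the strip-excision gluing for $\eg(G^+)\le\eg(G_1^+)+\eg(G_2^+)$, the ``1-sum plus an extra edge of cost at most 2'' construction for $\eg(G)\le\eh_0(G)$, and the purely arithmetic deduction of (iii) from (i) and (ii) are all fine — but both lower bounds, which are the entire content of the theorem, rest on steps that do not work as stated.

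First, for the reverse inequality in (ii) you propose to ``cut an optimal embedding of $G^+$ along $xy$ and cap off the two resulting boundary circles to recover embeddings of $G_1^+$ and $G_2^+$.'' Cutting along the edge $xy$ (an arc, not a closed curve) produces a single bordered surface that still contains all of $G^+$; it does not split the surface into a piece carrying $G_1^+$ and a piece carrying $G_2^+$. In an arbitrary minimum-genus embedding of $G^+$ there need not exist any closed curve meeting the graph only in the edge $xy$ (or only in $\{x,y\}$) that separates the two sides — the two parts can be topologically entangled through the rotations at $x$ and $y$ — and showing that one can nevertheless extract embeddings of the parts with the right genus sum is exactly the hard surgery/re-embedding argument that makes this a theorem of Richter (with the orientable analogue, where naive additivity actually fails, due to Decker, Glover and Huneke). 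Second, in the lower bound for (i) the claim that ``an Euler-characteristic count gives $\eg(S)\ge\eg(\bar N_1)+\eg(\bar N_2)$'' is not a valid deduction: writing $b_i$ for the number of boundary circles of $N_i$ and $f$ for the number of $\Pi$-faces, the count gives precisely $\eg(S)-\eg(\bar N_1)-\eg(\bar N_2)=b_1+b_2-f$, a quantity with no a priori sign; the inequality you assert is essentially the superadditivity statement one is trying to prove, and nothing in the proposal rules out an optimal embedding in which both induced capped neighborhoods use the genus of $S$ simultaneously through the two gateways at $x$ and $y$. The subsequent dichotomy (either some cap-face of each $\bar N_i$ sees both $x$ and $y$, giving $\eh_1$, or a ``refined bookkeeping'' yields the extra $+2$ for $\eh_0$) is only gestured at, and it is exactly where the interleaving of $G_1$- and $G_2$-edges in $\pi_x$ and $\pi_y$ must be analyzed. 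So the proposal reduces Richter's theorem to precisely the steps it leaves unproved; as it stands it is not a proof.
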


We can rewrite (i) as
\begin{equation}
  \label{eq-eta-eg}
  \eg(G) = \eg(G_1) + \eg(G_2) + \min\{\eeta(G_1, G_2), 2\}
\end{equation}
and as
\begin{equation}
  \label{eq-eta-egp}
  \eg(G) = \egp(G_1) + \egp(G_2) + 2 - \max\{\eeta(G_1, G_2), 2\}.
\end{equation}

For a graph parameter $\P$, we say that a minor-operation $\mu \in \M(G)$ \df{decreases} $\P$ by at least $k$
if $\P(\mu G) \le \P(G) - k$. The subset of $\M(G)$ that decreases $\P$ by at least $k$ is denoted by $\dc k {\P,G}$.
We write just $\dc k \P$  when the graph is clear from the context.
Note that a graph $G$ is $\P$-critical precisely when $\M(G) = \dc1 \P$.
The following observation is stated for later reference.

\begin{lemma}
\label{lm-two-separated}
  Let $G \in \Gxy$. Then $\eg(G) \le \egp(G) \le \eg(G) + 2$. Furthermore, for $\et = \et(G)$ and $k \ge 0$, we have
  \begin{enumerate}
  \setlength{\itemindent}{3mm}
  \item[\rm(S1)]
    $\dc {k+2-\et} \eg \ss \dc k \egp$ and
  \item[\rm(S2)]
    $\dc {k+\et} \egp \ss \dc k \eg$.
  \end{enumerate}
\end{lemma}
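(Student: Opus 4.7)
The bounds $\eg(G) \le \egp(G) \le \eg(G)+2$ come essentially for free. For the lower bound, $G$ is a subgraph of $G\+$, so $\eg(G) \le \eg(G\+) = \egp(G)$ by monotonicity of Euler genus under subgraphs. For the upper bound, $G\+$ is obtained from $G$ either by doing nothing (if $xy \in E(G)$) or by adding the single edge $xy$; the standard fact that adding one edge raises the Euler genus by at most $2$ (equivalently, the range assertion $\et(G) \in \{0,1,2\}$ stated just before the lemma) yields $\egp(G) \le \eg(G)+2$.

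The crucial observation is that the very same two-sided inequality applies to every minor $\mu G$ of $G$: we have $\eg(\mu G) \le \egp(\mu G) \le \eg(\mu G)+2$, because $\mu G \in \Gxy$ whenever $G \in \Gxy$ (our minor-operations preserve the terminal structure). With this in hand, both (S1) and (S2) reduce to chasing a one-line inequality using $\egp(G) = \eg(G) + \et$.

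For (S1), suppose $\mu \in \dc{k+2-\et} \eg$, i.e.\ $\eg(\mu G) \le \eg(G) - (k+2-\et)$. Then
\[
\egp(\mu G) \le \eg(\mu G) + 2 \le \eg(G) + \et - k = \egp(G) - k,
\]
so $\mu \in \dc k \egp$. For (S2), suppose $\mu \in \dc{k+\et} \egp$, i.e.\ $\egp(\mu G) \le \egp(G) - (k+\et)$. Then
\[
\eg(\mu G) \le \egp(\mu G) \le \egp(G) - (k+\et) = \eg(G) - k,
\]
so $\mu \in \dc k \eg$.

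There is no real obstacle here: the lemma is a packaging of the universal gap $0 \le \egp - \eg \le 2$ together with the observation that this gap is respected by every minor-operation. The content of the statement is bookkeeping that will later allow us to translate between criticality with respect to $\eg$ and criticality with respect to $\egp$ depending on the value of $\et(G)$.
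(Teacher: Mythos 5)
Your proof is correct and follows essentially the same route as the paper: both arguments reduce (S1) and (S2) to the inequality chain combining $\egp(G)=\eg(G)+\et$ with the bound $\eg(\mu G)\le\egp(\mu G)\le\eg(\mu G)+2$ applied to the minor. Your explicit justification of the preliminary two-sided bound and of why it persists under minor-operations is a slightly fuller write-up of what the paper leaves implicit, but the substance is identical.
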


\begin{proof}
Suppose that $\mu \in \dc {k+2-\et} \eg$, i.e., $\eg(G)\ge \eg(\mu G)+k+2-\et$.
Then $\egp(G) = \eg(G)+\et \ge \eg(\mu G)+k+2 \ge \egp(\mu G) + k$. This shows that $\mu\in \dc {k} \egp$ and proves (S1). Property (S2) is verified in the same way.
\end{proof}

As an example, take a graph $G$ with $\eg(G) = 1$ and $\egp(G) = 2$. Then (S2) for $k = 1$ says that
$\dc2 \egp \ss \dc1 \eg$, or that each minor-operation that decreases the Euler genus of $G\+$ by at least 2 also decreases
the Euler genus of $G$ by at least $1$.

The next lemma describes when a minor-operation in a part of a 2-sum decreases $\eg$ of the 2-sum.

\begin{lemma}
  \label{lm-part-eg}
  Let $G$ be the $xy$-sum of connected graphs $G_1, G_2 \in \Gcxy$ and let $\mu \in \M(G_1)$ be a minor-operation such that $\mu G_1$ is connected.
  Then $\eg(\mu G) < \eg(G)$ if and only if the following is true (where $\dc k\cdot$ always refer to the decrease of the parameter in $G_1$):
  \begin{enumerate}[label=\rm(\roman*)]
  \item
    If $\eeta(G_1, G_2) = 0$, then $\mu \in \dc1 \egp$.
  \item
    If $\eeta(G_1, G_2) = 1$, then $\mu \in \dc1 \egp \cup \dc2 \eg$.
  \item
    If $\eeta(G_1, G_2) = 2$, then $\mu \in \dc1 \egp \cup \dc1 \eg$.
  \item
    If $\eeta(G_1, G_2) = 3$, then $\mu \in \dc2 \egp \cup \dc1 \eg$.
  \item
    If $\eeta(G_1, G_2) = 4$, then $\mu \in \dc1 \eg$.
  \end{enumerate}
\end{lemma}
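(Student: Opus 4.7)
The plan is to apply Richter's theorem, in the form of equation (\ref{eq-eta-eg}), to both $G$ and $\mu G$ and compare. Since $\mu \in \M(G_1)$ only affects $G_1$, the graph $\mu G$ is the $xy$-sum of the connected graphs $\mu G_1 \in \Gcxy$ and $G_2$. Writing $\eeta = \eeta(G_1,G_2)$ and $\eeta' = \eeta(\mu G_1,G_2)$, this yields
$$\eg(G) - \eg(\mu G) = a + \min\{\eeta,2\} - \min\{\eeta',2\},$$
where $a = \eg(G_1) - \eg(\mu G_1) \ge 0$ by minor-monotonicity. Introducing also $b = \egp(G_1) - \egp(\mu G_1) \ge 0$, the key identity
$$b - a \;=\; \et(G_1) - \et(\mu G_1) \;=\; \eeta - \eeta'$$
lets one translate freely between statements about $a$ (i.e., about $\dc k\eg$) and about $b$ (i.e., about $\dc k\egp$). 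The condition $\eg(\mu G) < \eg(G)$ thus becomes
$$a + \min\{\eeta,2\} \;\ge\; 1 + \min\{\eeta',2\}. \qquad(\star)$$

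I would then split on $\eeta \in \{0,1,2,3,4\}$. Each value of $\eeta$ constrains $\et(G_1)$ and $\et(G_2)$, and combined with the at most three possible values of $\et(\mu G_1) \in \{0,1,2\}$ produces a short finite list of subcases. In each subcase $\eeta'$ is determined, so $(\star)$ becomes an explicit lower bound on $a$, and the identity above rewrites this bound as membership in a specific $\dc k\eg$ or $\dc k\egp$ set. Collecting the conditions across all subcases of a fixed $\eeta$, I would verify that their disjunction is precisely the union stated in the corresponding clause of the lemma, and that every element of that union indeed satisfies $(\star)$.

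The extreme cases $\eeta = 0$ and $\eeta = 4$ are nearly immediate: when $\eeta = 0$ we have $\et(G_1) = 0$, so $\eeta' = \et(\mu G_1)$, and $(\star)$ reduces to $a \ge 1 + \et(\mu G_1)$, which by the identity is exactly $b \ge 1$, i.e., (i); when $\eeta = 4$ we have $\et(G_1) = \et(G_2) = 2$, forcing $\eeta' \ge 2$, so both $\min$-terms equal $2$ and $(\star)$ collapses to $a \ge 1$, i.e., (v). The mixed cases $\eeta \in \{1,2,3\}$ each split into a handful of subcases depending on $\et(\mu G_1)$, and in each subcase one reads off which of the inequalities $a \ge 1$, $a \ge 2$, $b \ge 1$, or $b \ge 2$ is the binding one.

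The main obstacle is purely this bookkeeping: in the mixed cases one must check that a single union of two $\dc k\cdot$ sets captures all subcase conditions in both directions. Specifically, one must verify that a subcase in which only $b \ge k_\egp$ appears in the stated union (but $a$ is small) still forces $(\star)$ through the cancellation $\min\{\eeta,2\} - \min\{\eeta',2\}$ induced by a favorable sign of $b-a$; symmetric care is required when only $a \ge k_\eg$ appears. There is no conceptual difficulty beyond the algebra of Richter's formula, and the hypothesis that $\mu G_1$ is connected is used precisely to legitimize applying (\ref{eq-eta-eg}) to the $xy$-sum $\mu G$.
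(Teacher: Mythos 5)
Your proposal is correct: the reduction to the single inequality $(\star)$ via Eq.~(\ref{eq-eta-eg}) applied to both $G$ and $\mu G$ is valid, the identity $b-a=\et(G_1)-\et(\mu G_1)=\eeta(G_1,G_2)-\eeta(\mu G_1,G_2)$ is exactly right, and the subcase arithmetic you defer does go through (e.g.\ for $\eeta=1$, $b\ge1$ forces $\eeta'\le a$ so $(\star)$ holds, while $b=0$, $a\le 1$ gives $\eeta'=1+a$ and $(\star)$ fails; the cases $\eeta=2,3$ check out the same way). This is the same key tool as the paper --- Richter's Theorem~\ref{th-richter-euler}, in the rewritten form (\ref{eq-eta-eg}) --- but the organization differs. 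The paper proves the two implications separately: for the ``only if'' direction it assumes the relevant membership fails, deduces from the $\min\{\eh_0,\eh_1\}$ structure which of $\eh_0(\mu G)$, $\eh_1(\mu G)$ realizes $\eg(\mu G)$, and disposes of the extreme cases $\eeta\in\{0,4\}$ via the inclusions $\dc3\eg\ss\dc1\egp$ and $\dc3\egp\ss\dc1\eg$ from Lemma~\ref{lm-two-separated}; the ``if'' direction is then a case-by-case upper bound on $\eg(\mu G)$ through $\eh_0(\mu G)$ or $\eh_1(\mu G)$. Your version buys a unified both-directions-at-once equivalence and replaces the appeal to (S1)/(S2) by elementary arithmetic in $a$, $b$, $\eeta$, $\eeta'$, at the cost of the explicit subcase bookkeeping in the mixed cases; the paper's route avoids tracking $\eeta'$ altogether. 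One small point to make explicit in a write-up: after a contraction incident with a terminal, $\mu G_1$ may acquire the edge $xy$ and so need not lie in $\Gcxy$, but Theorem~\ref{th-richter-euler} is stated for parts in $\Gxy$, so applying (\ref{eq-eta-eg}) to $\mu G$ (as the $xy$-sum of $\mu G_1$ and $G_2$) is still legitimate --- the same implicit step occurs in the paper's proof.
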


\begin{proof}
  Assume first that $\eg(\mu G) < \eg(G)$.
  Suppose that $\eeta(G_1, G_2) \le 2$ and that $\mu \not\in \dc1 \egp$.
  Since $\mu G_1$ is connected and $\eh_1(\mu G) = \eh_1(G)$, Theorem~\ref{th-richter-euler} gives that $\eg(\mu G) = \eh_0(G)$.
  Thus using Eq.~(\ref{eq-eta-eg}), we obtain that
  $$\eg(\mu G_1) + \eg(G_2) + 2 = \eh_0(\mu G) = \eg(\mu G) < \eg(G) \le \eg(G_1) + \eg(G_2) + \eeta(G_1, G_2).$$
  If $\eeta(G_1, G_2) = 0$, then $\eg(\mu G_1) < \eg(G_1) - 2$. Thus $\mu \in \dc3 \eg$ and, since  $\dc3 \eg \ss \dc1 \egp$ by (S1), $\mu \in \dc1 \egp$, a contradiction.
  We conclude that (i) holds.
  If $\eeta(G_1, G_2) = 1$, then $\mu \in \dc2 \eg$ and (ii) holds.
  If $\eeta(G_1, G_2) = 2$, then $\mu \in \dc1 \eg$ and (iii) holds.

  Assume now that $\eeta(G_1, G_2) \ge 3$ and assume that $\mu \not\in \dc1 \eg$. Then $\eh_0(\mu G) = \eh_0(G)$.
  Consequently, by Theorem~\ref{th-richter-euler}, $\eg(\mu G) = \eh_1(G)$. Thus using Eq.~(\ref{eq-eta-egp}), we have
  $$\egp(\mu G_1) + \egp(G_2) = \eh_1(\mu G) = \eg(\mu G) < \eg(G) \le \egp(G_1) + \egp(G_2) + 2 - \eeta(G_1, G_2).$$
  If $\eeta(G_1, G_2) = 3$, then $\egp(\mu G_1) < \egp(G_1) - 1$. Hence $\mu \in \dc2 \egp$ and (iv) holds.
  If $\eeta(G_1, G_2) = 4$, then $\egp(\mu G_1) < \egp(G_1) - 2$. Thus $\mu \in \dc3 \egp$ and, since $\dc3 \egp \ss \dc1 \eg$ by (S2), $\mu \in \dc1 \eg$, a contradiction.
  We conclude that (v) holds.

  To prove the ``if'' part of the lemma, assume that (i)--(v) hold.
  We need to show that $\eg(\mu G) < \eg(G)$.
  Assume first that $\eeta(G_1, G_2) \le 2$ and $\mu \in \dc1 \egp$. By Theorem~\ref{th-richter-euler} and~(\ref{eq-eta-egp}),
  $$\eg(\mu G) \le \eh_0(\mu G) = \egp(\mu G_1) + \egp(G_2) < \egp(G_1) + \egp(G_2) = \eh_0(G) = \eg(G).$$

  Assume now that $\eeta(G_1, G_2) \ge 2$ and $\mu \in \dc1 \eg$. By Theorem~\ref{th-richter-euler} and~(\ref{eq-eta-eg}),
  $$\eg(\mu G) \le \eh_1(\mu G) = \eg(\mu G_1) + \eg(G_2) + 2 < \eg(G_1) + \eg(G_2) + 2 = \eh_1(G) = \eg(G).$$

  If $\eeta(G_1, G_2) = 1$ and $\mu \in \dc2 \eg$, then using~(\ref{eq-eta-eg}),
  $$\eg(\mu G) \le \eh_1(\mu G) = \eg(\mu G_1) + \eg(G_2) + 2 < \eg(G_1) + \eg(G_2) + 1 = \eg(G).$$

  If $\eeta(G_1, G_2) = 3$ and $\mu \in \dc2 \egp$, then using~(\ref{eq-eta-egp}),
  $$\eg(\mu G) \le \eh_0(\mu G) = \egp(\mu G_1) + \egp(G_2) < \egp(G_1) + \egp(G_2) -1 = \eg(G).$$
  Since the cases (i)--(v) cover all possible values of $\eeta$, at least one of the cases above occurs and we are done.
\end{proof}

Let us prove a similar lemma for $\egp$.

\begin{lemma}
  \label{lm-part-egp}
  Let $G$ be the $xy$-sum of connected graphs $G_1, G_2 \in \Gcxy$ and let $\mu \in \M(G_1)$ be a minor-operation such that $\mu G_1$ is connected.
  Then $\egp(\mu G) < \egp(G)$ if and only if $\mu \in \dc1 \egp$.
\end{lemma}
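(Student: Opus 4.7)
The plan is to leverage the clean additive formula for $\egp$ on an $xy$-sum given by Theorem~\ref{th-richter-euler}(ii), which (unlike the formula for $\eg$) does not involve a minimum and so does not depend on the value of $\eeta(G_1,G_2)$. This is what makes the present lemma much shorter than Lemma~\ref{lm-part-eg}.

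First I would observe that, since $G$ is the $xy$-sum of the connected graphs $G_1,G_2\in\Gcxy$, Theorem~\ref{th-richter-euler}(ii) gives
\[
  \egp(G) \;=\; \eh_1(G) \;=\; \egp(G_1) + \egp(G_2).
\]
Next, because $\mu\in\M(G_1)$ acts only on the $G_1$-side and $\mu G_1$ is connected by hypothesis, $\mu G$ is itself an $xy$-sum, namely of the connected graphs $\mu G_1$ and $G_2$ (both viewed as elements of $\Gxy$; whether or not $\mu G_1$ happens to acquire the edge $xy$ via a contraction is immaterial, since the statement of Theorem~\ref{th-richter-euler} only requires parts in $\Gxy$). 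Applying Theorem~\ref{th-richter-euler}(ii) again yields
\[
  \egp(\mu G) \;=\; \egp(\mu G_1) + \egp(G_2).
\]

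Subtracting the two displayed equations, $\egp(\mu G) - \egp(G) = \egp(\mu G_1) - \egp(G_1)$, so $\egp(\mu G) < \egp(G)$ holds if and only if $\egp(\mu G_1) < \egp(G_1)$, i.e., $\mu\in\dc1\egp$ (with the convention, as in Lemma~\ref{lm-part-eg}, that $\dc1\egp$ refers to decrease of $\egp$ on $G_1$). This proves both directions of the equivalence. There is no genuine obstacle here: the only thing to double-check is that the hypothesis ``$\mu G_1$ is connected'' is exactly what we need to legitimately reapply Theorem~\ref{th-richter-euler}(ii) to $\mu G$, and that is satisfied by assumption.
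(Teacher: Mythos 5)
Your proposal is correct and follows essentially the same route as the paper: both apply Theorem~\ref{th-richter-euler}(ii) to $G$ and to $\mu G$ (the latter legitimate because $\mu G_1$ is connected), obtaining $\egp(G)=\egp(G_1)+\egp(G_2)$ and $\egp(\mu G)=\egp(\mu G_1)+\egp(G_2)$, from which the equivalence is immediate. Your remark that $\mu G_1$ may contain the edge $xy$ but still lies in $\Gxy$, which suffices for the theorem, is a correct and welcome point of care.
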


\begin{proof}
  Assume first that $\egp(\mu G) < \egp(G)$.
  Since $\mu G_1$ is connected, Theorem~\ref{th-richter-euler} gives that $\egp(\mu G) = \eh_1(G)$. Using Eq.~(\ref{eq-eg-1}), we have that
  $$\egp(\mu G_1) + \egp(G_2) = \eh_1(\mu G) = \egp(\mu G) < \egp(G) = \eh_1(G) = \egp(G_1) + \eg(G_2).$$
  Thus $\egp(\mu G_1) < \egp(G_1)$. Hence $\mu \in \dc1 \egp$.

  On the other hand, assume that $\mu \in \dc1 \egp$.
  Thus $\egp(\mu G_1) < \egp(G_1)$.
  By Theorem~\ref{th-richter-euler} and Eq.~(\ref{eq-eg-1}),
  $$\egp(\mu G) = \egp(\mu G_1) + \egp(G_2) < \egp(G_1) + \egp(G_2) = \egp(G),$$
  as claimed.
\end{proof}

In the statements of Lemmas~\ref{lm-part-eg} and~\ref{lm-part-egp}, we required that $\mu G_1$ is connected.
The next lemma shows that this is indeed the case for all minor-operations if $G_1$ is $\eg$-tight or $\egp$-tight in $G$.
It is not hard to see that if $G_1$ is a connected graph, then $\mu G_1$ is disconnected if and only if $\mu$ is the deletion of a cutedge of $G_1$.

\begin{lemma}
  \label{lm-no-cutedges}
  Let $G \in \Gcxy$ be a connected graph with a cutedge $e$.
  Then $\eg(G/e) = \eg(G)$ and $\egp(G/e) = \egp(G)$.
\end{lemma}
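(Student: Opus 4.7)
The plan is to appeal to Stahl--Beineke's Theorem~\ref{th-stahl-euler} whenever a single cutvertex or bridge splits the graph cleanly, and to fall back on Richter's Theorem~\ref{th-richter-euler} in the one remaining genuine case. Throughout, write $e=uv$ and let $G_1,G_2$ be the two components of $G-e$ containing $u$ and $v$ respectively.

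For the $\eg$ equality, the blocks of $G$ are those of $G_1$ and $G_2$ together with the bridge-block $\{e\}$ of Euler genus $0$, while $G/e$ is the $1$-sum of $G_1$ and $G_2$ at the merged vertex, whose blocks are simply those of $G_1$ and $G_2$. Stahl--Beineke then yields $\eg(G)=\eg(G_1)+\eg(G_2)=\eg(G/e)$.

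For the $\egp$ equality I would first note the identity $(G/e)+xy=(G+xy)/e$ and split by the location of the terminals. If $x$ and $y$ lie in the same component of $G-e$, then $e$ is still a bridge of $G+xy$, and the $\eg$ argument applied to $G+xy$ is enough. The degenerate cases $x=u$ and $y=v$ are likewise handled by Stahl--Beineke: the merged vertex becomes a cutvertex of both $G+xy$ and $(G/e)+xy$, separating $G_1$ from a $G_2$-side that, after suppressing a degree-$2$ vertex, is isomorphic to $G_2+vy$ (or, symmetrically, $G_1+ux$).

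The only genuinely new case is $x\in V(G_1)\setminus\{u\}$ and $y\in V(G_2)\setminus\{v\}$. Here I would apply Richter's formula to two different $2$-sum decompositions and verify that they produce the same value. Subdividing both $e$ and $xy$ in $G+xy$ at new midpoints $m,m'$ (which preserves Euler genus) exhibits $G+xy$ as the $\{m,m'\}$-sum of $A^{*}=G_1\cup\{um,xm'\}$ and $B^{*}=G_2\cup\{mv,m'y\}$; since $m,m'$ are pendants, $\eg(A^{*})=\eg(G_1)$ and $\eg(B^{*})=\eg(G_2)$, while adjoining $mm'$ produces a subdivision of the edge $ux$ in $A^{*}$ and of the edge $vy$ in $B^{*}$. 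Richter's formula then gives
$$\eg(G+xy)=\min\bigl\{\eg(G_1)+\eg(G_2)+2,\;\eg(G_1+ux)+\eg(G_2+vy)\bigr\}.$$
For $(G/e)+xy$ I would instead take the $\{w,x\}$-sum of $A=G_1$ (with $u$ renamed $w$) and $B=G_2\cup\{xy\}$ (with $v$ renamed $w$, and $x$ appended as a pendant at $y$); adding the terminal edge $wx=vx$ inside $B$ again subdivides $vy$ at $x$, so the parallel Richter computation yields the identical minimum, and the equality $\egp(G/e)=\egp(G)$ follows. The only real bookkeeping is lining up the two $2$-sum decompositions so that the same edge-additions $ux$ and $vy$ emerge after suppressing the relevant degree-$2$ vertices; once that is set up, the result drops out by comparing the two Richter expressions term by term.
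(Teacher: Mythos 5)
Your proposal is correct, and the easy cases ($\eg$ via blocks, both terminals in one component, a terminal being an endpoint of $e$) are handled exactly as in the paper, via Theorem~\ref{th-stahl-euler}. The genuine difference is in the remaining case $x\in V(G_1)\setminus\{u\}$, $y\in V(G_2)\setminus\{v\}$. You subdivide both $e$ and $xy$ and apply Richter's Theorem~\ref{th-richter-euler} twice, to two different $2$-sum decompositions (one of the subdivided $G+xy$ along $\{m,m'\}$, one of $(G/e)+xy$ along $\{w,x\}$), evaluating both sides explicitly as $\min\{\eg(G_1)+\eg(G_2)+2,\ \eg(G_1+ux)+\eg(G_2+vy)\}$ and matching them; this is a valid computation (your formula agrees with what the paper's decomposition yields), with only the minor caveat that when $ux\in E(G_1)$ or $vy\in E(G_2)$ the ``subdivided edge'' is really a parallel path, which is harmless under the paper's convention that multiple edges do not affect genus. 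The paper instead uses a single decomposition that serves both graphs at once: it writes $G+xy$ as the $2$-sum along $\{y,z\}$ (with $z=u$ the endpoint of $e$ on the $x$-side) of $H_1'=G_1+xy$ and $H_2'=G_2+e$, observes that contracting $e$ takes place entirely inside $H_2'$ and changes neither $\eg(H_2')$ nor the corresponding $\egp(H_2')$ (since $e$ is a pendant edge there, and $H_2'{}^+/e$ is homeomorphic to $H_2'{}^+$), and concludes from Richter's formula that neither term of the minimum changes. Both routes rest on the same two theorems; yours buys symmetry and an explicit closed form for $\egp(G)$ at the cost of setting up and aligning two decompositions, while the paper's localizes the contraction in one part and so avoids the term-by-term bookkeeping entirely.
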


\begin{proof}
  Let $H_1$ and $H_2$ be the components of $G - e$.
  By Theorem~\ref{th-stahl-euler}, $\eg(G/e) = \eg(H_1) + \eg(H_2) = \eg(G)$.
  If both $x$ and $y$ lie in $H_1$ (or $H_2$ by symmetry), then
  by Theorem~\ref{th-stahl-euler}, $\egp(G/e) = \eg(H_1 + xy) + \eg(H_2) = \egp(G)$.
  Suppose then that $x \in V(H_1)$ and $y \in V(H_2)$.
  If $x$ (or $y$ by symmetry) and $w$ are the endpoints of $e$, then $G\+$ is the $1$-sum of $H_1$ and $H_2 + xy + xw$.
  Since $\eg(H_2 + yw) = \eg(H_2 + xy + xw)$, we have that $\egp(G/e) = \eg(H_1) + \eg(H_2 + yw) = \eg(H_1) + \eg(H_2 + xy + e) = \egp(G)$.

  Therefore we may assume that $e$ has endpoints $z \in V(H_1) \sm \{x\}$ and $w \in V(H_2) \sm \{y\}$.
  Let us view the graph $G\+$ as a $yz$-sum of graphs $H_1' = H_1+xy$ and $H_2' = H_2 + e$.
  We have that $(e,/) \in \M(H_2')$ and $\eg(H_2'/e) = \eg(H_2')$ by Theorem~\ref{th-stahl-euler} since $e$ is a block of $H_2'$.
  Similarly, $\egp(H_2'/e) = \egp(H_2')$ since $H_2'/e$ is homeomorphic to $H_2'$ and thus admits the same embeddings.
  By applying Theorem~\ref{th-stahl-euler} to $G\+$ as a $yz$-sum of $H_1'$ and $H_2'$, we obtain that $\eg(G\+/e) = \eg(G\+)$.
  We conclude that $\egp(G/e) = \egp(G)$.
\end{proof}

\begin{table}
  \centering
  \begin{tabular}{|c | c |}
    \hline
    $\eeta(G_1, G_2)$ & $\M(G_1)$ \\
    \hline
    0 & $\dc1 \egp$ \\
    1 & $\dc1 \egp \cup \dc2 \eg$ \\
    2 & $\dc1 \egp \cup \dc1 \eg$ \\
    3 & $\dc2 \egp \cup \dc1 \eg$ \\
    4 & $\dc1 \eg$ \\
    \hline
  \end{tabular}
  \caption{Possible outcomes for a minor-operation in a $\eg$-tight part of a 2-sum.}
  \label{tb-parts-eg}
\end{table}

Lemma~\ref{lm-no-cutedges} easily implies that a $\eg$-tight or $\egp$-tight part $G_1$ of an $xy$-sum $G$
has no cutedges. If $e$ is a cutedge of $G_1$, then $G_1 / e$ is connected, $\eg(G_1 /e) = \eg(G_1)$, and $\egp(G_1/e) = \egp(G_1)$.
By Lemmas~\ref{lm-part-eg} and~\ref{lm-part-egp}, $G_1$ is neither $\eg$-tight nor $\egp$-tight in $G$.
In particular, we may present the outcome of Lemma~\ref{lm-part-eg} in terms of $\M(G_1)$ as in Table~\ref{tb-parts-eg}.

\section{Critical classes, cascades, and hoppers}
\label{sc-klein-critical}

For a graph parameter $\P$, let $\C(\P)$ denote the class of $\P$-critical graphs in $\Gxy$.
Note that $G \in \C(\P)$ if and only if $\M(G) = \dc1 \P$.
We call $\C(\P)$ the \df{critical class} for $\P$.
Let $\Cc(\P)$ be the class $\C(\P) \cap \Gcxy$.
We refine the class $\C(\P)$ according to the value of $\P$:
Let $\C_k(\P)$ denote the subclass of $\C(\P)$ that contains precisely the graphs $G$ for which $\P(G) = k+1$. Let $\Cc_k(\P)$ be the class $\C_k(\P) \cap \Gcxy$ of those $\P$-critical graphs that do not contain the edge $xy$.

Let us start this section by describing the relation between the classes $\Cc(\eg)$, $\Cc(\egp)$, and $\E$ (unlabeled graphs that are critical for the Euler genus).
The next result follows from the definitions of $\E$ and $\Cc(\eg)$.

\begin{lemma}
\label{lm-cc-eg}
  For $G \in \Gcxy$, $\hat{G} \in \E$ if and only if $G \in \Cc(\eg)$.
\end{lemma}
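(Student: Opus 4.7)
The plan is to verify both directions by unwinding definitions: the underlying graph $\hat G$ and the graph-with-terminals $G$ have identical edge sets, so they admit the same minor-operations, and the Euler genus is insensitive to the terminal labels, so criticality for $\eg$ transfers between the two contexts.

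First, I would observe that, because $G\in\Gcxy$, the edge $xy$ is absent from $G$, so the restriction that $(xy,/)\notin\M(G)$ imposes no loss; hence $\M(G)=E(G)\times\{-,/\}=E(\hat G)\times\{-,/\}=\M(\hat G)$ as sets of minor-operations. Although applying $\mu\in\M(G)$ may relabel a vertex as a terminal (when $\mu$ contracts an edge incident with $x$ or $y$), the operation on the level of underlying graphs is the same, that is, $\widehat{\mu G}=\mu\hat G$.

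Second, I would note that the Euler genus depends only on the underlying multigraph, since embeddings are defined via rotation systems and signatures, quantities that do not involve the distinguished terminals. Consequently $\eg(G)=\eg(\hat G)$ and, for every $\mu\in\M(G)$, $\eg(\mu G)=\eg(\widehat{\mu G})=\eg(\mu\hat G)$.

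With these two remarks in place, the equivalence is immediate. By definition, $G\in\Cc(\eg)$ means $G\in\Gcxy$ and $\M(G)=\dc1\eg$, i.e.\ $\eg(\mu G)<\eg(G)$ for every $\mu\in\M(G)$. By the two observations above, this is equivalent to $\eg(\mu\hat G)<\eg(\hat G)$ for every $\mu\in\M(\hat G)$, which in turn is precisely the statement $\hat G\in\E_{\eg(\hat G)-1}\subseteq\E$. Conversely, if $\hat G\in\E$, then $\hat G\in\E_k$ for $k=\eg(\hat G)-1$, so every edge deletion and edge contraction strictly drops $\eg$, yielding $G\in\Cc(\eg)$ by the same translation. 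There is no real obstacle here; the only point worth stating explicitly is the compatibility of $\M$ and $\eg$ with the passage between $G$ and $\hat G$, which is exactly what the hypothesis $G\in\Gcxy$ guarantees.
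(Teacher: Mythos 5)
Your proof is correct and matches the paper's treatment: the paper states this lemma without proof, noting it follows directly from the definitions of $\E$ and $\Cc(\eg)$, and your argument is exactly that definition-unwinding (identifying $\M(G)$ with $\M(\hat G)$ since $xy\notin E(G)$, noting $\eg$ ignores terminal labels, and taking $k=\eg(\hat G)-1$ for the converse).
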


The next two lemmas describe the relation between the class $\Cc(\egp)$ and $\E$.

\begin{lemma}
\label{lm-cc-egp-iff}
  For $G \in \Gcxy$, $\hat{G\+} \in \E$ if and only if $G \in \Cc(\egp)$, $\et(G) > 0$, and $\eg(G / xy) < \egp(G)$.
\end{lemma}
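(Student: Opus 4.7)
The plan is to translate the statement $\hat{G\+}\in\E$, which asserts that every minor-operation on $G\+$ strictly drops the Euler genus, into equivalent conditions on minor-operations performed on $G$. The key observation is that $\M(G\+) = \M(G) \cup \{(xy,-),(xy,/)\}$, so the criticality of $\hat{G\+}$ splits naturally into what happens on the extra edge $xy$ versus what happens on the edges inherited from $G$.

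First I would handle the two operations on the edge $xy$. Deleting $xy$ from $G\+$ returns $G$, so $\eg(G\+ - xy) < \eg(G\+)$ translates to $\eg(G) < \egp(G)$, which is exactly $\et(G) > 0$. Contracting $xy$ in $G\+$ yields (by definition) the simple graph $G/xy$, so the corresponding criticality condition becomes $\eg(G/xy) < \egp(G)$, which is the third condition in the statement.

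Next I would deal with the edges $e \in E(G)$. For any such edge, one checks directly that $G\+ - e = (G-e)\+$ as simple graphs (both are obtained by deleting $e$ and then adding the edge $xy$ if absent). For contraction, I would verify that $G\+/e = (G/e)\+$: if $e$ is not incident with a terminal the equality is immediate, while if $e = xw$ then contracting $e$ in $G\+$ identifies $x$ and $w$ and produces an edge to $y$, which after simplification equals the graph $(G/e)\+$, where the new merged vertex takes the role of $x$. Using these equalities, the conditions $\eg(G\+ - e) < \eg(G\+)$ and $\eg(G\+/e) < \eg(G\+)$ for every $e \in E(G)$ are equivalent to $(e,-), (e,/) \in \dc{1}{\egp}$ for every $e$, which is precisely $\M(G) = \dc{1}{\egp}$, i.e., $G \in \C(\egp)$. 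Since $G \in \Gcxy$ by hypothesis, this is equivalent to $G \in \Cc(\egp)$.

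Combining the three translations gives the claimed equivalence in both directions simultaneously. The only point that needs care is the verification that contraction commutes with the $\+$-operation when the contracted edge is incident with a terminal and causes a parallel edge to $y$; once this bookkeeping is in place the rest is a direct enumeration of minor-operations, so I do not anticipate a substantial obstacle.
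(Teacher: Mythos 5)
Your proposal is correct and follows essentially the same route as the paper: split $\M(\hat{G\+})$ into $\M(G)$ plus the two operations on $xy$, note that $\eg(\mu\hat{G\+})=\egp(\mu G)$ for $\mu\in\M(G)$ (your commutation of deletion/contraction with the $\+$-operation), and translate deletion and contraction of $xy$ into $\et(G)>0$ and $\eg(G/xy)<\egp(G)$ respectively.
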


\begin{proof}
  Let $H = \hat{G\+}$.
  Note that $\eg(H) = \egp(G)$ and $\M(H) = \M(G) \cup \{(xy, -), (xy, /)\}$.
  Since $\eg(\mu H) = \egp(\mu G)$ for each $\mu \in \M(G)$,
  we get that $\eg(\mu H) < \eg(H)$ for each $\mu \in \M(G)$ if and only if $G \in \Cc(\egp)$.
  Since $H - xy \iso \hat{G}$, we obtain that $\eg(H - xy) < \eg(H)$ if and only if $\et(G) > 0$.
  Since $H/xy \iso G/xy$, we have that $\eg(H / xy) < \eg(H)$ if and only if $\eg(G / xy) < \egp(G)$.
  As $H \in \E$ if and only if $\eg(\mu H) < \eg(H)$ for each $\mu \in \M(H)$, the result follows.
\end{proof}

\begin{lemma}
\label{lm-cc-egp}
Let $G \in \Cc(\egp)$. If $\et(G) = 0$, then $\hat{G} \in \E$.
If $\et(G) > 0$, then either $\hat{G\+} \in \E$, or $\hat{G\+} \in \E^*$ and $\hat{G / xy} \in \E$.
\end{lemma}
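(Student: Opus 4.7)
The plan is to split on the value of $\et(G)$. If $\et(G) = 0$, then $\eg(G) = \egp(G)$, and $\egp$-criticality of $G$ immediately gives $\eg(\mu G) \le \egp(\mu G) < \egp(G) = \eg(G)$ for every $\mu \in \M(G)$; so $G \in \Cc(\eg)$, and Lemma~\ref{lm-cc-eg} yields $\hat G \in \E$.

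For $\et(G) > 0$, write $H = \hat{G\+}$ and observe that $G/xy = H/xy$ is a minor of $H$, so $\eg(G/xy) \le \egp(G)$. First I would dispose of the subcase $\eg(G/xy) < \egp(G)$: all three hypotheses of Lemma~\ref{lm-cc-egp-iff}---namely $G \in \Cc(\egp)$, $\et(G) > 0$, and $\eg(G/xy) < \egp(G)$---hold, so $\hat{G\+} \in \E$.

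In the remaining subcase $\eg(G/xy) = \egp(G)$, I plan to establish both $\hat{G/xy} \in \E$ and $\hat{G\+} \in \E^*$. For $\hat{G/xy} \in \E$, I would identify $\M(G/xy)$ with $\M(G)$ via $\mu(G/xy) = (\mu G)/xy$, then use that $(\mu G)/xy$ is a minor of $(\mu G)\+$ to deduce
\[
  \eg(\mu(G/xy)) \le \egp(\mu G) < \egp(G) = \eg(G/xy)
\]
from $\egp$-criticality of $G$. Deletion-criticality of $H$ is similarly short: for $e \in E(G)$ use $\eg(H - e) = \egp(G - e) < \egp(G) = \eg(H)$, and for the added edge $xy$ use $\eg(H - xy) = \eg(\hat G) = \eg(G) < \egp(G) = \eg(H)$, which needs $\et(G) > 0$.

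The main obstacle is the minimum-degree-at-least-three requirement for $H \in \E^*$. I would argue that $G \in \Cc(\egp)$ forces every non-terminal vertex to have degree at least $3$ in $G$, and each terminal to have degree at least $2$: a non-terminal $v$ of degree at most $2$ admits a suppression (the unique incident edge if $\deg v = 1$, or the contraction of an incident edge if $\deg v = 2$, split on whether its two neighbors are adjacent in $G$, invoking the ``add $v$ in a face incident with $uw$'' extension argument in the adjacent subcase) that preserves $\egp(G)$, contradicting criticality; a terminal $x$ of degree $0$ in $G$ forces $x$ isolated and $\egp(G) = \eg(G)$, violating $\et(G) > 0$; and a terminal $x$ of degree $1$ with neighbor $u$ makes $x$ a degree-$2$ vertex of $G\+$, to which the same suppression argument, now carried out inside $G\+$ with a sub-split on whether $uy \in E(G)$, applies and again preserves $\egp(G)$. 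This low-degree case analysis, performed in $G\+$ rather than in $G$, is the most delicate part of the plan.
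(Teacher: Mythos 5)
Your proof follows essentially the same route as the paper's: the $\et(G)=0$ case via the (S2)-type monotonicity $\eg(\mu G)\le\egp(\mu G)<\egp(G)=\eg(G)$, the subcase $\eg(G/xy)<\egp(G)$ via Lemma~\ref{lm-cc-egp-iff}, and, when $\eg(G/xy)=\egp(G)$, the same chain $\eg(\mu(G/xy))\le\egp(\mu G)<\egp(G)=\eg(G/xy)$ for $\hat{G/xy}\in\E$ together with the deletion argument (using $\et(G)>0$ for the edge $xy$) for $\hat{G\+}\in\E^*$. The only difference is that you explicitly verify the minimum-degree-3 requirement in the definition of $\E^*$ via the suppression arguments for low-degree vertices and terminals, a point the paper's proof leaves implicit; your verification is correct and only strengthens the write-up.
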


\begin{proof}
  If $\et(G) = 0$, then $\M(G) = \dc1 \eg$ by (S2) and thus $G \in \Cc(\eg)$.
  Therefore $\hat{G} \in \E$ by Lemma~\ref{lm-cc-eg}.
  Suppose now that $\et(G) > 0$. Let $H = \hat{G\+}$.
  Since $G \in \Cc(\egp)$, we have that $\eg(\mu H) < \eg(H)$ for each $\mu \in \M(G)$.
  As $\eg(H - xy) = \eg(G) < \eg(G) + \et(G) = \eg(H)$,
  we have that $H \in \E^*$.
  If $\eg(G / xy) < \egp(G)$, then $H \in \E$ (since both deletion and contraction of $xy$ decrease the Euler genus of $H$).
  Hence we may assume that $\eg(G /xy) = \egp(G)$.
  Let $\mu \in \M(G /xy)$ be a minor-operation in $G / xy$.
  Since $\mu$ is also a minor-operation in $G$, we obtain that
  $$\eg(\mu(G /xy)) \le \eg(\mu G\+) = \egp(\mu G) < \egp(G) = \eg(G / xy)$$
  as $\mu(G /xy)$ is a minor of $\hat{\mu G\+}$.
  Since $\mu$ was chosen arbitrarily, $G/xy \in \E$.
\end{proof}

A graph $G \in \Gcxy$ is called a \df{cascade} if $G$ satisfies the following properties:
\begin{enumerate}[label=(C\arabic*)]
\item
  $\M(G) = \dc1 \eg \cup \dc1 \egp$ (i.e., each minor operation decreases $\eg$ or $\egp$).
\item
  $G \not\in \Cc(\eg)$ (i.e., some minor operation does not decrease $\eg$).
\item
  $G \not\in \Cc(\egp)$ (i.e., some minor operation does not decrease $\egp$).
\end{enumerate}

Let $\S$ be the class of all cascades.
We refine the class $\S$ according to the Euler genus.
Let $\S_k$ be the subclass of $\S$ containing those graphs $G$ for which $\egp(G) = k+1$.
It is not hard to see that for $G \in \S_k$ we have that $\eg(G) = k$.

\begin{lemma}
  \label{lm-cascades}
  If $G \in \S$, then $\et(G) = 1$.
\end{lemma}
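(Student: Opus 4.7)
The plan is to use the separation inequalities from Lemma~\ref{lm-two-separated} to rule out the extreme values $\et(G) = 0$ and $\et(G) = 2$, leaving only $\et(G) = 1$. Recall that $\et(G) \in \{0,1,2\}$ in general, so after excluding the two endpoints we are done.

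First I would handle the case $\et(G) = 0$. Applying (S2) with $k = 1$ gives
\[
  \dc{1}{\egp} \ss \dc{1}{\eg}.
\]
Hence every minor-operation that decreases $\egp$ already decreases $\eg$, so $\dc{1}{\eg} \cup \dc{1}{\egp} = \dc{1}{\eg}$. Combined with property (C1), this means $\M(G) = \dc{1}{\eg}$, so $G \in \Cc(\eg)$, contradicting (C2).

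Next I would handle the case $\et(G) = 2$ symmetrically. Applying (S1) with $k = 1$ gives
\[
  \dc{1}{\eg} = \dc{1 + 2 - \et}{\eg} \ss \dc{1}{\egp},
\]
so $\dc{1}{\eg} \cup \dc{1}{\egp} = \dc{1}{\egp}$. Combined with (C1), this means $\M(G) = \dc{1}{\egp}$, so $G \in \Cc(\egp)$, contradicting (C3). Therefore the only remaining possibility is $\et(G) = 1$, as claimed.

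There is no real obstacle here: the whole argument is a direct bookkeeping application of Lemma~\ref{lm-two-separated} against the definition of a cascade. The cascade conditions (C2) and (C3) are precisely the assertions that $G$ is neither $\eg$-critical nor $\egp$-critical, and the separation inequalities show that whenever $\et(G)$ takes an extreme value, condition (C1) collapses into $\eg$-criticality or $\egp$-criticality.
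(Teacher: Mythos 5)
Your proof is correct and follows essentially the same route as the paper: use (S2) to show $\et(G)=0$ forces $\M(G)=\dc1\eg$ (contradicting (C2)) and (S1) to show $\et(G)=2$ forces $\M(G)=\dc1\egp$ (contradicting (C3)). You merely spell out the role of (C1) more explicitly than the paper does, which is fine.
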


\begin{proof}
  If $\et(G) = 0$, then $\dc1 \egp \ss \dc1 \eg$ by (S2), violating (C2).
  If $\et(G) = 2$, then $\dc1 \eg \ss \dc1 \egp$ by (S1), violating (C3).
  Thus $\et(G) = 1$.
\end{proof}

In this paper we shall show that the class of cascades is nonempty.
In particular, we will determine the class $\S_1$ which appears as a class of building blocks for obstructions of connectivity 2 for the Klein bottle.
The following lemma is an immediate consequence of (C1)--(C3).

\begin{lemma}
\label{lm-eg-and-egp}
  Let $G \in \Gcxy$. If $\M(G) = \dc1 \eg \cup \dc1 \egp$, then
  $G \in \Cc(\eg) \cup \Cc(\egp) \cup \S$.
\end{lemma}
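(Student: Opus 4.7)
The plan is essentially a direct definition chase, as the hypothesis is exactly the defining property (C1) of a cascade. First I would observe that the assumption $\M(G) = \dc1\eg \cup \dc1\egp$ is precisely condition (C1) in the definition of a cascade. So either $G$ already satisfies the remaining two conditions (C2) and (C3), placing $G \in \S$, or at least one of them fails.

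Next I would handle the two ways conditions (C2)/(C3) can fail. If (C2) fails, then $G \in \Cc(\eg)$ by definition of being $\eg$-critical (every minor-operation decreases $\eg$), and we are done. Similarly, if (C3) fails, then every minor-operation decreases $\egp$, hence $G \in \Cc(\egp)$. In all remaining cases, (C1), (C2), and (C3) all hold, so $G \in \S$. This partitions the possibilities into $\Cc(\eg) \cup \Cc(\egp) \cup \S$, as required.

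There is no real obstacle here: the lemma is a tautological consequence of the definition of $\S$, spelled out to make explicit that the three classes on the right-hand side exhaust the possibilities once (C1) is assumed. Nothing from Section~\ref{sc-terminals} (e.g., Lemma~\ref{lm-two-separated} or Theorem~\ref{th-richter-euler}) needs to be invoked, since the proof does not depend on how $\eg$ and $\egp$ interact, only on which minor-operations decrease which parameter.
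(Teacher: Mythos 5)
Your proof is correct and matches the paper, which states the lemma as an immediate consequence of (C1)--(C3): under the hypothesis (which is exactly (C1)), either (C2) or (C3) fails, giving $G \in \Cc(\eg)$ or $G \in \Cc(\egp)$ respectively, or both hold and $G \in \S$. Nothing further is needed.
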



We shall encounter another class of building blocks for obstructions of connectivity two. This class is more mysterious and we call them hoppers. Although it turns out that they do not exist when the genus is small (see Lemma~\ref{lm-hoppers}), we suspect that they might appear when the genus becomes large. Their existence or nonexistence is intimately related to an old open question if there exist graphs that are obstructions for two different nonorientable surfaces.

Let $G \in \Gcxy$.
For a graph parameter $\P$, a graph $G$ is a \df{$\P$-hopper} if every minor operation reduces the parameter by at least 2, i.e., $\M(G) = \dc2 \P$.
Let $\H(\P)$ be the class of $\P$-hoppers.
The subclass of $\H(\P)$ of graphs with $\P$ equal to $k+1$ is denoted by $\H_k(\P)$.
In this paper, we restrict our attention to $\eg$-hoppers and $\egp$-hoppers.

Let us define two weaker forms of hoppers.
We say that $G$ is a \df{weak $\eg$-hopper} if $G \not\in \Cc(\egp)$ and $\M(G) = \dc1 \egp \cup \dc2 \eg$.
Note that necessarily $\et(G) = 0$ by (S1); and $G \in \Cc(\eg)$ by (S2).
We say that $G$ is a \df{weak $\egp$-hopper} if $G \not\in \Cc(\eg)$ and $\M(G) = \dc1 \eg \cup \dc2 \egp$.
Note that $\et(G) = 2$ by (S2) and $G \in \Cc(\egp)$ by (S1).
Let $\Hw(\eg)$ and $\Hw(\egp)$ be the class of weak $\eg$-hoppers and weak $\egp$-hoppers, respectively.
Let $\Hw_k(\P)$ be the subclass of $\Hw(\P)$ such that $G \in \Hw_k(\P)$ if $\P(G) = k+1$.
The next result follows directly from the definition of weak hoppers.

\begin{lemma}
\label{lm-weak-hoppers}
  Let $G \in \Gcxy$.
  If $\M(G) = \dc2 \eg \cup \dc1 \egp$, then $G \in \Cc(\egp) \cup \Hw(\eg)$.
  If $\M(G) = \dc1 \eg \cup \dc2 \egp$, then $G \in \Cc(\eg) \cup \Hw(\egp)$.
\end{lemma}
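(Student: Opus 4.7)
The plan is to prove each of the two statements by a completely trivial two-case dichotomy, based on whether $G$ is critical for the ``other'' parameter. No embedding argument or appeal to Lemma~\ref{lm-two-separated} is needed; the entire content reduces to unfolding the definition of a weak hopper.

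For the first statement, assume $\M(G) = \dc2 \eg \cup \dc1 \egp$. Since set union is commutative, this is literally the same condition as $\M(G) = \dc1 \egp \cup \dc2 \eg$, which is one of the two clauses in the definition of a weak $\eg$-hopper. I would then split on whether $G \in \Cc(\egp)$. If it is, the conclusion $G \in \Cc(\egp) \cup \Hw(\eg)$ holds because $G$ lands in the first term. If not, then $G \notin \Cc(\egp)$ is the remaining clause of the definition of weak $\eg$-hopper, so together with the hypothesized equality of $\M(G)$ we conclude $G \in \Hw(\eg)$.

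The second statement is handled by the same dichotomy, now on whether $G \in \Cc(\eg)$. Assume $\M(G) = \dc1 \eg \cup \dc2 \egp$. If $G \in \Cc(\eg)$, we are done. Otherwise $G \notin \Cc(\eg)$ together with the given equation matches exactly the definition of a weak $\egp$-hopper, so $G \in \Hw(\egp)$.

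There is no genuine obstacle here: the parenthetical remarks immediately following the definition of weak hoppers (that a weak $\eg$-hopper necessarily has $\et(G)=0$ and lies in $\Cc(\eg)$, and symmetrically for weak $\egp$-hoppers) are stated as \emph{consequences} of Lemma~\ref{lm-two-separated}, not as additional hypotheses that must be verified. Thus the lemma is essentially a tautology once one performs the case split and notices that the displayed equation in the hypothesis is the same as the equation in the definition of the relevant weak hopper.
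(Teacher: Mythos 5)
Your proposal is correct and matches the paper's treatment exactly: the paper states that Lemma~\ref{lm-weak-hoppers} ``follows directly from the definition of weak hoppers,'' and your two-case split on membership in $\Cc(\egp)$ (resp.\ $\Cc(\eg)$) is precisely that observation spelled out. You are also right that the remarks $\et(G)=0$ and $G\in\Cc(\eg)$ (and their analogues) are consequences of Lemma~\ref{lm-two-separated}, not extra conditions to check.
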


For the record we also state the following observation.

\begin{observation}
  The class $\H_k(\eg)$ is empty if and only if each graph $G \in \E_{k-1}$ has $\eg(G) = k$.
\end{observation}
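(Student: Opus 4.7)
The observation is essentially a reformulation of definitions, the key ingredient being a narrow range for the Euler genus of graphs in $\E_{k-1}$. My plan is to first establish the \emph{range fact}: every $G \in \E_{k-1}$ satisfies $\eg(G) \in \{k, k+1\}$. The lower bound $\eg(G) \ge k$ is the defining condition. For the upper bound, pick any edge $e \in E(G)$; criticality gives $\eg(G-e) \le k-1$, and the standard inequality $\eg(G) \le \eg(G-e)+2$ (embed $G-e$ in a minimum-genus surface, add a handle joining the two faces containing the endpoints of $e$, and route $e$ through the new handle) yields $\eg(G) \le k+1$. Consequently, the hypothesis ``each $G \in \E_{k-1}$ has $\eg(G) = k$'' is equivalent to ``no $G \in \E_{k-1}$ has $\eg(G) = k+1$''.

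For the $(\Leftarrow)$ direction, suppose every $G \in \E_{k-1}$ has $\eg(G) = k$ and assume, toward contradiction, that $H \in \H_k(\eg)$. By definition $H \in \Gcxy$ with $\eg(H) = k+1$, and each $\mu \in \M(H)$ satisfies $\eg(\mu H) \le k-1$. Since $xy \notin E(H)$, the set $\M(H)$ of minor-operations for $H$ as a graph with terminals coincides with the complete collection of edge-deletions and edge-contractions on the underlying graph $\hat{H}$. Hence $\hat{H} \in \E_{k-1}$ with $\eg(\hat{H}) = k+1$, contradicting the hypothesis.

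For the $(\Rightarrow)$ direction, assume that some $G \in \E_{k-1}$ has $\eg(G) \ne k$; by the range fact, $\eg(G) = k+1$. Designate any two distinct non-adjacent vertices of $G$ as terminals $x, y$, so $G \in \Gcxy$ with $\eg(G) = k+1$. Every $\mu \in \M(G)$ drops $\eg$ from $k+1$ to at most $k-1$, a decrease of at least $2$, and hence $G \in \H_k(\eg)$, contradicting emptiness.

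The only subtlety — and the main potential obstacle — is the existence of a non-adjacent pair of vertices in the reverse direction, since we need a legitimate member of $\Gcxy$. Equivalently, we must rule out $G = K_n$ with $\eg(K_n) = k+1$ belonging to $\E_{k-1}$. This is a small case-analysis using the known formula $\eg(K_n) = \lceil (n-3)(n-4)/6 \rceil$, and in any borderline situation can be bypassed by noting that complete graphs in $\E_{k-1}$ tend to realize only $\eg = k$ rather than the extremal value $k+1$; all other graphs in $\E_{k-1}$ contain a non-adjacent pair (since they have at least five vertices), so the construction of the hopper goes through.
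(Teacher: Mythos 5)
The paper records this statement as a bare observation, with no proof supplied, so there is nothing in-paper to compare against; judged on its own, your argument is the natural definitional unwinding and both directions are structurally sound. A hopper $H \in \H_k(\eg)$ lies in $\Gcxy$, so its terminals are non-adjacent and $\M(H)$ is the full set of edge deletions and contractions of $\hat{H}$; hence $\hat{H} \in \E_{k-1}$ with $\eg(\hat{H}) = k+1$. Conversely, a graph $G \in \E_{k-1}$ with $\eg(G) = k+1$ becomes a member of $\H_k(\eg)$ once two non-adjacent vertices are designated as terminals, and your range fact $\eg(G) \in \{k,k+1\}$ (via $\eg(G) \le \eg(G-e)+2$) is correct.

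The one genuine soft spot is the step you flag and then wave off: in the forward direction you must exclude the possibility that the only witnesses $G \in \E_{k-1}$ with $\eg(G) = k+1$ are complete graphs, and ``complete graphs in $\E_{k-1}$ tend to realize only $\eg = k$'' is an assertion, not an argument. Moreover, the formula $\eg(K_n) = \lceil (n-3)(n-4)/6 \rceil$ alone does not decide this, because the relevant quantity is $\eg(K_n - e)$; the easy bound $\eg(K_n - e) \ge \eg(K_{n-1})$ suffices only for $n \le 7$. The gap closes with one extra line: Euler's formula with every face of length at least $3$ gives
$$\eg(K_n - e) \ \ge\ 2 - n + \frac{1}{3}\Bigl(\binom{n}{2} - 1\Bigr) \ =\ \frac{(n-2)(n-5)}{6},$$
and since $(n-3)(n-4) - (n-2)(n-5) = 2 < 6$, this yields $\eg(K_n - e) \ge \eg(K_n) - 1$. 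Thus deleting an edge of a complete graph lowers the Euler genus by at most $1$, so no complete graph can lie in $\E_{k-1}$ with $\eg = k+1$; every witness therefore has a non-adjacent pair and your hopper construction goes through. With this line added, the proof is complete.
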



Let us now combine the properties of introduced classes with Lemma~\ref{lm-part-eg} to
characterize $\eg$-tight and $\egp$-tight parts of a 2-sum of two graphs.

\begin{table}
  \centering
  \begin{tabular}{|c | c |}
    \hline
    $\eeta(G_1, G_2)$ & $G_1$ \\
    \hline
    0 & $\Cc(\egp)$ \\
    1 & $\Cc(\egp) \cup \Hw(\eg)$ \\
    2 & $\Cc(\egp) \cup \Cc(\eg) \cup \S$ \\
    3 & $\Cc(\eg) \cup \Hw(\egp)$ \\
    4 & $\Cc(\eg)$ \\
    \hline
  \end{tabular}
  \caption{Classification of $\eg$-tight parts of a 2-sum.}
  \label{tb-general-eg}
\end{table}

\begin{theorem}
  \label{th-general-eg}
  Let $G$ be the $xy$-sum of connected graphs $G_1, G_2 \in \Gcxy$.
  The subgraph $G_1$ is $\eg$-tight in $G$ if and only if the following is true:
  \begin{enumerate}[label=\rm(\roman*)]
  \item
    If $\eeta(G_1, G_2) = 0$, then $G_1 \in \Cc(\egp)$.
  \item
    If $\eeta(G_1, G_2) = 1$, then $G_1 \in \Cc(\egp) \cup \Hw(\eg)$.
  \item
    If $\eeta(G_1, G_2) = 2$, then $G_1 \in \Cc(\egp) \cup \Cc(\eg) \cup \S$.
  \item
    If $\eeta(G_1, G_2) = 3$, then $G_1 \in \Cc(\eg) \cup \Hw(\egp)$.
  \item
    If $\eeta(G_1, G_2) = 4$, then $G_1 \in \Cc(\eg)$.
  \end{enumerate}
\end{theorem}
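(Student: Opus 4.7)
The plan is to deduce the theorem directly from Lemma~\ref{lm-part-eg} (which characterizes, case by case on $\eeta(G_1,G_2)$, exactly which minor-operations $\mu\in\M(G_1)$ force $\eg(\mu G)<\eg(G)$) together with the classification lemmas~\ref{lm-eg-and-egp} and~\ref{lm-weak-hoppers}, which translate equalities of the form $\M(G_1)=\dc k \eg \cup \dc \ell \egp$ into memberships in $\Cc(\eg)$, $\Cc(\egp)$, $\S$, $\Hw(\eg)$, or $\Hw(\egp)$.

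First I would check that Lemma~\ref{lm-part-eg} is applicable to every $\mu\in\M(G_1)$ in both directions of the equivalence, which requires that $\mu G_1$ be connected, i.e.\ that $G_1$ has no cutedges. For the forward direction (assume $G_1$ is $\eg$-tight) this is precisely the observation recorded after Lemma~\ref{lm-no-cutedges}. For the reverse direction, each class on the right-hand side of (i)--(v) also forbids cutedges: if $e$ were a cutedge of $G_1$, then by Lemma~\ref{lm-no-cutedges} the minor-operation $(e,/)$ decreases neither $\eg$ nor $\egp$, hence cannot belong to any $\dc k \eg$ or $\dc k \egp$, contradicting the defining condition of $\Cc(\eg)$, $\Cc(\egp)$, $\S$, $\Hw(\eg)$, or $\Hw(\egp)$.

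With connectivity secured, Lemma~\ref{lm-part-eg} yields that $G_1$ is $\eg$-tight in $G$ if and only if $\M(G_1)$ is contained in the set listed for $\eeta(G_1,G_2)$ in Table~\ref{tb-parts-eg}; since those sets are by definition subsets of $\M(G_1)$, the containment is actually equality. Thus it remains only to match each of the five equalities with the corresponding class condition. Cases (i) and (v) reduce $\M(G_1)=\dc1\egp$ and $\M(G_1)=\dc1\eg$ to the definitions of $\Cc(\egp)$ and $\Cc(\eg)$. Case (iii), $\M(G_1)=\dc1\eg\cup\dc1\egp$, is exactly Lemma~\ref{lm-eg-and-egp} in one direction; in the other direction, any $G_1$ lying in $\Cc(\eg)\cup\Cc(\egp)\cup\S$ satisfies $\M(G_1)\subseteq\dc1\eg\cup\dc1\egp$ (for the critical classes by definition, for $\S$ by property~(C1)). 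Cases (ii) and (iv), $\M(G_1)=\dc1\egp\cup\dc2\eg$ and $\M(G_1)=\dc2\egp\cup\dc1\eg$, are handled in the same way by invoking Lemma~\ref{lm-weak-hoppers} together with the definitions of $\Hw(\eg)$ and $\Hw(\egp)$.

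The proof is thus essentially bookkeeping, with no real obstacle beyond carefully aligning the five rows of Table~\ref{tb-parts-eg} with the five class statements and verifying, in both directions, the no-cutedge hypothesis needed to invoke Lemma~\ref{lm-part-eg}.
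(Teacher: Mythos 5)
Your proposal is correct and follows essentially the same route as the paper: both directions reduce to Lemma~\ref{lm-part-eg} after ruling out cutedges via Lemma~\ref{lm-no-cutedges}, and the resulting equalities for $\M(G_1)$ are translated into class membership via Lemma~\ref{lm-weak-hoppers}, Lemma~\ref{lm-eg-and-egp}, and the definitions (the paper cites Lemma~\ref{lm-cascades} at the $\eeta=2$ step, but the content used is that of Lemma~\ref{lm-eg-and-egp}, exactly as you do).
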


\begin{proof}
  Assume first that $G_1$ is $\eg$-tight.
  By Lemma~\ref{lm-no-cutedges}, $\mu G_1$ is connected for each $\mu \in \M(G_1)$.
  If $\eeta(G_1, G_2) = 0$, then $\M(G_1) = \dc1 \egp$ by Lemma~\ref{lm-part-eg}.
  Thus $G_1 \in \Cc(\egp)$. Similarly, if $\eeta(G_1, G_2) = 4$, then $G_1 \in \Cc(\eg)$.
  If $\eeta(G_1, G_2) = 1$, then $\M(G_1) = \dc1 \egp \cup \dc2 \eg$. By Lemma~\ref{lm-weak-hoppers}, $G_1 \in \Cc(\egp) \cup \Hw(\eg)$.
  If $\eeta(G_1, G_2) = 3$, then $\M(G_1) = \dc1 \eg \cup \dc2 \egp$. By Lemma~\ref{lm-weak-hoppers}, $G_1 \in \Cc(\eg) \cup \Hw(\egp)$.
  Finally, if $\eeta(G_1, G_2) = 2$, then $\M(G_1) = \dc1 \eg \cup \dc1 \egp$. By Lemma~\ref{lm-cascades}, $G_1 \in \Cc(\eg) \cup \Cc(\egp) \cup \S$.

  Assume now that (i)--(v) hold.
  Since $\M(G) = \dc1 \eg \cup \dc1 \egp$ for $G \in \Cc(\eg) \cup \Cc(\egp) \cup \S \cup \Hw(\eg) \cup \Hw(\egp)$,
  Lemma~\ref{lm-no-cutedges} asserts that $\mu G_1$ is connected for each $\mu \in \M(G_1)$.
  Suppose first that $G_1 \in \Cc(\eg)$.
  Since $\M(G) = \dc1 \eg$, we obtain for each $\eeta(G_1, G_2) \in \{2,3,4\}$ that $G_1$ is $\eg$-tight by Lemma~\ref{lm-part-eg}.
  A similar argument works if $G_1 \in \Cc(\egp)$ and $\eeta(G_1, G_2) \in \{0, 1, 2\}$.
  If $\eeta(G_1, G_2) = 1$ and $G_1 \in \Hw(\eg)$, then  $\M(G_1) = \dc1 \egp \cup \dc2 \eg$ and $G_1$ is $\eg$-tight by Lemma~\ref{lm-part-eg}.
  If $\eeta(G_1, G_2) = 3$ and $G_1 \in \Hw(\egp)$, then $\M(G_1) = \dc2 \egp \cup \dc1 \eg$ and $G_1$ is $\eg$-tight by Lemma~\ref{lm-part-eg}.
  If $\eeta(G_1, G_2) = 2$ and $G_1 \in \S$, then $\M(G_1) = \dc1 \eg \cup \dc1 \egp$ by (C1) and $G_1$ is $\eg$-tight by Lemma~\ref{lm-part-eg}.
  This completes the proof since $\eeta(G_1, G_2) \in \{0,\ldots,4\}$ and we have proven that $G_1$ is $\eg$-tight in each case given by (i)--(v).
\end{proof}

The outcome of Theorem~\ref{th-general-eg} is summarized in Table~\ref{tb-general-eg}.
There is an analogous theorem for $\egp$-tight parts of 2-sums.

\begin{theorem}
  \label{th-general-egp}
  Let $G$ be the $xy$-sum of connected graphs $G_1, G_2 \in \Gcxy$.
  The subgraph $G_1$ is $\egp$-tight in $G$ if and only if $G_1 \in \Cc(\egp)$.
\end{theorem}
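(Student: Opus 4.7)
The proof is a direct two-way application of Lemma~\ref{lm-part-egp}, with Lemma~\ref{lm-no-cutedges} used to eliminate cutedges so that the hypothesis ``$\mu G_1$ is connected'' of Lemma~\ref{lm-part-egp} is met for every $\mu \in \M(G_1)$. Note first that the only way $\mu G_1$ can fail to be connected (for connected $G_1$) is when $\mu$ is the deletion of a cutedge; contraction never disconnects.

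For the forward direction, suppose $G_1$ is $\egp$-tight in $G$. I would first argue that $G_1$ has no cutedges. Indeed, if $e$ were a cutedge, then $G_1/e$ is connected and Lemma~\ref{lm-no-cutedges} gives $\egp(G_1/e) = \egp(G_1)$, so $(e,/) \notin \dc1\egp$. Since $\mu G_1 = G_1/e$ is connected, Lemma~\ref{lm-part-egp} then yields $\egp((e,/)G) = \egp(G)$, contradicting $\egp$-tightness of $G_1$. With cutedges ruled out, $\mu G_1$ is connected for every $\mu \in \M(G_1)$, and Lemma~\ref{lm-part-egp} applies uniformly: $\egp$-tightness of $G_1$ forces $\mu \in \dc1\egp$ for every $\mu \in \M(G_1)$, i.e.\ $\M(G_1) = \dc1\egp$, which is exactly the condition $G_1 \in \Cc(\egp)$.

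The backward direction is the mirror image. Assuming $G_1 \in \Cc(\egp)$, we again need to check there are no cutedges: if $e$ were one, then $(e,/) \in \M(G_1) = \dc1\egp$ would give $\egp(G_1/e) < \egp(G_1)$, contradicting Lemma~\ref{lm-no-cutedges}. Hence $\mu G_1$ is connected for every $\mu$, and since every $\mu \in \M(G_1)$ belongs to $\dc1\egp$, Lemma~\ref{lm-part-egp} immediately delivers $\egp(\mu G) < \egp(G)$ for every $\mu \in \M(G_1)$, which is the definition of $\egp$-tightness.

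There is no serious obstacle here; this theorem is substantially simpler than its $\eg$-counterpart (Theorem~\ref{th-general-eg}) because the condition $\egp(\mu G) < \egp(G)$ does not split into cases according to $\eeta(G_1,G_2)$ — Lemma~\ref{lm-part-egp} already folds the behavior on both sides of the 2-sum into the single condition $\mu \in \dc1\egp$. The only bookkeeping step is the cutedge check, handled uniformly by Lemma~\ref{lm-no-cutedges} in both directions.
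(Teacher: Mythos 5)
Your proof is correct and follows exactly the paper's route: the paper's own (very terse) proof likewise combines Lemma~\ref{lm-part-egp} with Lemma~\ref{lm-no-cutedges} to get that $G_1$ is $\egp$-tight if and only if $\M(G_1) = \dc1\egp$, which is the definition of $G_1 \in \Cc(\egp)$. Your explicit cutedge check in both directions is precisely the observation the paper records in the remark following Lemma~\ref{lm-no-cutedges}, so the argument matches in substance; the only cosmetic quibble is that Lemma~\ref{lm-part-egp} gives $\egp((e,/)G) \ge \egp(G)$ rather than equality directly, with equality then following from minor-monotonicity.
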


\begin{proof}
  By Lemmas~\ref{lm-part-egp} and~\ref{lm-no-cutedges}, $G_1$ is $\egp$-tight if and only if $\M(G_1) = \dc1 \egp$.
  By definition, $G_1 \in \Cc(\egp)$ if and only if $\M(G_1) = \dc1 \egp$.
\end{proof}

\section{Excluded minors for Euler genus 2}
\label{sc-klein-euler}

\begin{figure}
  \centering
  \includegraphics{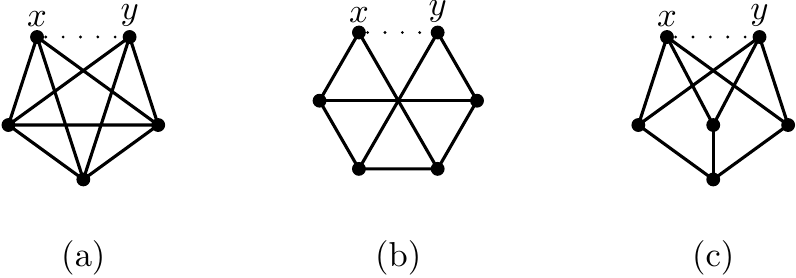}
  \caption{The class $\Cc_0(\egp)$, the third graph is the sole member of the class $\Cc_0(\eg)$.}
  \label{fg-cc-0-egp}
\end{figure}

In this section, we determine the classes $\Cc_2(\eg)$, $\Cc_2(\egp)$, and $\E_2$.
We begin by showing that the classes $\Cc_0(\eg)$ and $\Cc_0(\egp)$ are related to Kuratowski graphs $K_5$ and $K_{3,3}$.

\begin{lemma}
\label{lm-cc-0}
  The class $\Cc_0(\eg)$ consists of a single graph that is isomorphic to $K_{3,3}$ with non-adjacent terminals (Fig.~\ref{fg-cc-0-egp}c).
  The class $\Cc_0(\egp)$ consists of the three graphs shown in Fig.~\ref{fg-cc-0-egp}.
\end{lemma}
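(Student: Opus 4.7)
My plan is to read off membership in $\Cc_0(\eg)$ and $\Cc_0(\egp)$ from Lemmas~\ref{lm-cc-eg} and~\ref{lm-cc-egp}, which translate these into statements about $\hat G$, $\hat{G\+}$, and $\hat{G/xy}$ lying in $\E$ or $\E^*$, and then to combine this with Kuratowski's theorem in the form $\E_0 = \{K_5, K_{3,3}\}$.

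For $\Cc_0(\eg)$, Lemma~\ref{lm-cc-eg} gives $G \in \Cc_0(\eg)$ if and only if $\hat G \in \E_0$, while $G \in \Gcxy$ forces the terminals to be non-adjacent. Since $K_5$ is complete it contributes nothing, and in $K_{3,3}$ the non-adjacent pairs are exactly the same-part pairs, all equivalent under $\Aut(K_{3,3})$. This produces the unique graph in Fig.~\ref{fg-cc-0-egp}(c).

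For $\Cc_0(\egp)$, I split on the value of $\et(G)$ and apply Lemma~\ref{lm-cc-egp}. When $\et(G) = 0$, we have $\eg(G) = \egp(G) = 1$ and $\hat G \in \E_0$, reproducing the graph in Fig.~\ref{fg-cc-0-egp}(c). When $\et(G) > 0$, necessarily $\et(G) = 1$ and $\eg(G) = 0$, and Lemma~\ref{lm-cc-egp} leaves two sub-cases: either $\hat{G\+} \in \E_0$, or $\hat{G\+} \in \E^*_0 \sm \E_0$ with $\hat{G/xy} \in \E_0$. In the first sub-case $xy$ is an edge of a Kuratowski graph, and the automorphism groups leave only two isomorphism types: $G = K_5 - xy$ (Fig.~\ref{fg-cc-0-egp}(a)) and $G = K_{3,3} - xy$ with $x,y$ in opposite parts (Fig.~\ref{fg-cc-0-egp}(b)).

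The main obstacle is ruling out the second sub-case, for which I show that $\E^*_0 = \E_0$. Any $H \in \E^*_0$ is non-planar of minimum degree at least three, so by Kuratowski it contains a subdivision $S$ of $K_5$ or $K_{3,3}$; edge-deletion minimality forces $H = S$, and the minimum-degree condition eliminates all subdivision vertices, yielding $H \in \{K_5, K_{3,3}\} = \E_0$. Hence the second sub-case contributes nothing. Finally, I verify that the three candidate graphs do belong to $\Cc_0(\egp)$: for $K_5 - xy$ and $K_{3,3} - xy$ every minor-operation, after re-adding $xy$, becomes an edge deletion or contraction in a Kuratowski graph, which is planar; for the third graph one checks directly that every minor of $K_{3,3}$ admits a planar embedding in which the two terminals share a face, so $xy$ can be added without crossings.
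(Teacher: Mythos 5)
Your proof is correct and follows essentially the same route as the paper: reduce via Lemmas~\ref{lm-cc-eg}, \ref{lm-cc-egp} (and, for membership, the equivalent of Lemma~\ref{lm-cc-egp-iff} or a direct planarity check) to the fact that $\E_0=\Forb(\SS_0)=\{K_5,K_{3,3}\}$, then enumerate the labeled candidates. The only cosmetic difference is that you spell out $\E^*_0=\E_0$ where the paper leaves the corresponding step to the reader.
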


\begin{proof}
  A graph has Euler genus greater than 0 if and only if it is non-planar.
  Since both $K_5$ and $K_{3,3}$ embed into projective plane, $\E_0 = \Forb(\SS_0) = \{K_5, K_{3,3}\}$.
  By Lemma~\ref{lm-cc-eg},  a graph $G$ belongs to $\Cc_0(\eg)$ if only if $\hat{G} \in \E$.
  Since $xy \not\in E(G)$, $\hat{G}$ is not isomorphic to $K_5$ and
  thus $\Cc_0(\eg)$ consists of the unique graph isomorphic to $K_{3,3}$ with two non-adjacent terminals.

  Let us show first that each graph in Fig.~\ref{fg-cc-0-egp} belongs to $\Cc_0(\egp)$.
  If $\hat{G\+}$ is isomorphic to a Kuratowski graph, then $G \in \Cc_0(\egp)$ by Lemma~\ref{lm-cc-egp-iff}.
  Otherwise $\hat{G}$ is isomorphic to $K_{3,3}$ with $x$ and $y$ non-adjacent.
  It suffices to show that $\mu G\+$ is planar for each minor-operation $\mu \in \M(G)$ as $G\+$ clearly embeds into the projective plane.
  Pick an arbitrary edge $e \in E(G)$.
  The graph $G\+ - e$ has 9 edges and is not isomorphic to $K_{3,3}$ as it contains a triangle.
  The graph $G\+ / e$ has only 5 vertices and (at most) 9 edges.
  Since $e$ was arbitrary, it follows that $\mu G\+$ is planar for every $\mu \in \M(G)$.
  We conclude that $G \in \Cc_0(\egp)$.

  We shall show now that there are no other graphs in $\Cc_0(\egp)$.
  Let $G \in \Cc_0(\egp)$. By Lemma~\ref{lm-cc-egp}, there is a graph $H \in \Forb^*(\SS_0)$ such that either $\hat{G}$ is isomorphic to $H$
  or $G$ is isomorphic to the graph obtained from $H$ by deleting an edge and making the ends terminals.
  It is not hard to see that this yields precisely the graphs in Fig.~\ref{fg-cc-0-egp}.
\end{proof}

Note that the first two graphs in Fig.~\ref{fg-cc-0-egp} have $\et$ equal to 1 and the last one has $\et$ equal to~0.
We summarize the properties of graphs in $\Cc_0(\egp)$ in the following lemma.

\begin{lemma}
  \label{lm-cc-0-prop}
  For every $G \in \Cc_0(\egp)$, $G /xy$ is planar, $\et(G) \le 1$, and
  $\et(G) = 1$ if and only if $G \not\in \Cc_0(\eg)$.
\end{lemma}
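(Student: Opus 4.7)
The plan is to handle the three claims in order, with the inequality $\et(G) \le 1$ being essentially immediate, the ``$\et(G)=1 \iff G\notin\Cc_0(\eg)$'' equivalence following from property (S2), and the planarity of $G/xy$ being verified against the explicit list from Lemma~\ref{lm-cc-0}.

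First I would observe that since $G \in \Cc_0(\egp)$ we have $\egp(G) = 1$, so $\et(G) = \egp(G) - \eg(G) \le 1 - 0 = 1$. This also shows $\et(G) \in \{0, 1\}$, which reduces the third assertion to showing $\et(G) = 0 \iff G \in \Cc_0(\eg)$. For the forward direction, if $\et(G) = 0$ then property (S2) of Lemma~\ref{lm-two-separated} (applied with $k = 1$) gives $\dc{1}\egp \subseteq \dc{1}\eg$; since $G \in \Cc(\egp)$ means $\M(G) = \dc{1}\egp$, we conclude $\M(G) \subseteq \dc{1}\eg$, so $G \in \Cc(\eg)$, and because $\eg(G) = \egp(G) = 1$, in fact $G \in \Cc_0(\eg)$. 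Conversely, if $G \in \Cc_0(\eg)$ then $\eg(G) = 1 = \egp(G)$, forcing $\et(G) = 0$.

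For planarity of $G/xy$, I would use the explicit description of $\Cc_0(\egp)$ provided by Lemma~\ref{lm-cc-0}, which exhibits the three graphs in Fig.~\ref{fg-cc-0-egp}. For the two graphs with $\hat{G\+} \in \{K_5, K_{3,3}\} \subseteq \E$, Lemma~\ref{lm-cc-egp-iff} applies and directly yields $\eg(G/xy) < \egp(G) = 1$, so $G/xy$ is planar. For the third graph, which is $K_{3,3}$ with two non-adjacent vertices selected as terminals, $G/xy$ is obtained by identifying those two vertices of $K_{3,3}$; simplifying the resulting pair of multi-edges produces $K_{2,3}$, which is planar. This handles all three cases.

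The only genuine obstacle is that Lemma~\ref{lm-cc-egp-iff} alone does not cover the third graph (where $\et(G) = 0$), so one cannot avoid a brief appeal to the explicit classification. Once that is in hand, the verification reduces to graphs on at most five vertices, making the planarity check immediate. I would keep the presentation compact by grouping the two ``Kuratowski'' cases through the lemma and treating the remaining case by the one-line observation above.
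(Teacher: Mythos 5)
Your proof is correct. The paper does not give a formal proof of Lemma~\ref{lm-cc-0-prop} at all: it simply notes, by inspection of the three graphs of Fig.~\ref{fg-cc-0-egp} produced by Lemma~\ref{lm-cc-0}, that the first two have $\et=1$ and the third (the sole member of $\Cc_0(\eg)$) has $\et=0$, and that contracting $xy$ gives a planar graph in each case. Your route is a little more structured: you get $\et(G)\le 1$ from $\egp(G)=1$, and you derive the equivalence ``$\et(G)=0$ iff $G\in\Cc_0(\eg)$'' abstractly from (S2) of Lemma~\ref{lm-two-separated} (the forward direction) plus the trivial converse, so no case-check is needed there; inspection of the explicit list is only invoked for the planarity of $G/xy$, and even then two of the three cases are dispatched by the ``only if'' direction of Lemma~\ref{lm-cc-egp-iff}, leaving just the observation that identifying two nonadjacent vertices of $K_{3,3}$ and simplifying parallel edges yields $K_{2,3}$. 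This buys a small amount of generality (the $\et$ statements do not depend on knowing the list), at the cost of being slightly longer than the paper's one-line inspection; the only cosmetic slip is that the identification creates three parallel pairs of edges rather than one, which does not affect the conclusion.
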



Let us now consider the classes $\Cc_1(\eg)$ and $\Cc_1(\egp)$.
Since a graph embeds into the projective plane if and only if it has Euler genus at most 1,
we have that $\E_1 = \Forb(\NN_1)$.
Lemma~\ref{lm-cc-eg} says that $\Cc_1(\eg)$ can be constructed from the graphs $G$ in $\E_1$ with $\eg(G) = 2$ by choosing two nonadjacent vertices as terminals.
Actually, each graph $G \in \E_1$ has $\eg(G) = 2$.
This construction yields 195 (labeled) graphs in $\Cc_1(\eg)$ and confirms that the list is complete.
Note that while there are 35 graphs in $\E_1$, the class $\Cc_1(\eg)$ is larger because graphs in $\Gxy$
have two labeled terminals.

Lemma~\ref{lm-cc-egp} provides a mean for constructing the class $\Cc_1(\egp)$.
We construct a slightly larger class and then test which of the obtained graphs are in $\Cc_1(\egp)$.
Let $G \in \Cc_1(\egp)$. If $\et(G) = 0$, then $G \in \Cc_1(\eg)$ and thus $\hat{G} \in \E_1 \ss \E^*_1$.
If $\et(G) > 0$, then $\hat{G\+} \in \E^*_1$. The class $\E^*_1$ contains 103 graphs (see~\cite{archdeacon-1981}).
Let $\A$ be the class of graphs with terminals obtained from $\E^*_1$ by either making two nonadjacent
vertices terminals or deleting an edge $e$ and making the ends of $e$ terminals.
By Lemma \ref{lm-cc-egp}, we have that $\Cc_1(\egp)\subseteq \A$.
In order to construct $\Cc_1(\egp)$, it is sufficient to check which graphs $G$ in $\A$ are minor-minimal graphs
such that $G\+$ does not embed into the projective plane.
This construction gives 250 such graphs, out of which only 227 graphs have $G\+$ 2-connected.
The intersection $\Cc_1(\eg) \cap \Cc_1(\egp)$ contains 95 graphs, so we have 132 graphs in
$\Cc_1(\egp) \setminus \Cc_1(\eg)$.

The class $\S_1$ is determined in Sect.~\ref{sc-s1} and~\ref{sc-ext} and shown to contain $21$ graphs (and all have $G\+$ 2-connected). 

By considering all 348 graphs in $\Cc_1(\eg) \cup \Cc_1(\egp) \cup \S_1$,
we obtained the following result by using computer.

\begin{lemma}
  \label{lm-all-in-klein}
  For every $G \in \Cc_1(\eg) \cup \Cc_1(\egp) \cup \S_1$, the graph $G\+$ embeds into the Klein bottle.
\end{lemma}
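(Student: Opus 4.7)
The statement is a finite assertion about an explicit list of $348$ graphs, so the natural proof is a finite computer verification. The plan is to assemble the list, test each $G\+$ for embeddability in the Klein bottle, and record a certificate embedding in every case.

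First I would generate the list from the constructions already sketched in the preceding paragraphs. The $195$ graphs of $\Cc_1(\eg)$ are obtained by iterating over $\E_1 = \Forb(\NN_1)$ and declaring every pair of non-adjacent vertices as terminals (Lemma~\ref{lm-cc-eg}). The $227$ graphs of $\Cc_1(\egp)$ with $G\+$ $2$-connected are constructed from $\E^*_1$ via Lemma~\ref{lm-cc-egp} and then filtered by checking that every minor-operation strictly decreases $\egp$. The $21$ graphs of $\S_1$ are produced in Sections~\ref{sc-s1} and~\ref{sc-ext}. After accounting for the $95$ duplicates in $\Cc_1(\eg) \cap \Cc_1(\egp)$ and noting that $\S_1$ is disjoint from the other two classes by (C2) and (C3), the union has $348$ graphs.

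Second, for each $G$ in the list I form the graph $G\+$ by adding the edge $xy$ if absent and search for an embedding $\Pi = (\pi,\lambda)$ of $G\+$ for which Euler's formula gives $|V(G\+)| - |E(G\+)| + |F(\Pi)| = 0$ and at least one cycle carries an odd number of $\lambda$-negative edges (so $\Pi$ is nonorientable, i.e., into the Klein bottle). I would use a standard branch-and-bound enumeration over partial rotations and signatures, with the partial face count used for pruning via an Euler-type bound. Because each graph inherits its order and size from a projective-plane obstruction or a small cascade (at most a few tens of vertices and edges), the search is entirely tractable. For every embedding the routine returns, I would independently re-trace the $\Pi$-faces using rules (i)--(iii) of Section~\ref{sc-preliminaries} and verify that the face count and nonorientability are as claimed.

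The main obstacle is computational rather than conceptual: the raw space of rotation systems and signatures is factorial in the vertex degrees, so success depends on effective pruning—for instance, fixing a spanning tree first and then inserting non-tree edges in an order that keeps the partial-face Euler bound tight—rather than on any new structural idea. Once all $348$ searches terminate with a certificate, the lemma is proved.
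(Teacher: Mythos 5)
Your proposal matches the paper's proof: the authors also treat this as a finite check, generating the 348 graphs of $\Cc_1(\eg) \cup \Cc_1(\egp) \cup \S_1$ from $\E_1$, $\E^*_1$ and the cascade classification, and certifying the lemma by exhibiting (computer-found) embeddings of each $\hat{G\+}$ in the Klein bottle, published as supplementary data. Your additional detail on how to search for and re-verify a nonorientable Euler-genus-2 embedding is just an implementation of the same certificate-based verification.
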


To prove Lemma~\ref{lm-all-in-klein}, it is sufficient to provide an embedding of $\hat{G\+}$ in the Klein bottle for each $G \in \Cc_1(\eg) \cup \Cc_1(\egp) \cup \S_1$.
The graphs and their embeddings in the Klein bottle are available online\footnote{Embeddings of $G\+$ in the Klein bottle for every $G \in \Cc_1(\eg) \cup \Cc_1(\egp) \cup \S_1$ are listed at \url{***arxiv.com}}.
Based on this evidence, we obtain the following properties of graphs in $\Cc_1(\eg)$.

\begin{lemma}
\label{lm-cc-1-eg}
  For every $G \in \Cc_1(\eg)$, we have that $\et(G) = 0$ and $\dc2 \eg \ss \dc1 \egp$.
\end{lemma}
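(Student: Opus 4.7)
The plan is to handle the two claims separately, using Lemma~\ref{lm-all-in-klein} as the principal input.

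For $\et(G) = 0$: since $G \in \Cc_1(\eg)$, Lemma~\ref{lm-cc-eg} yields $\hat{G} \in \E_1$ and hence $\eg(G) = 2$. By Lemma~\ref{lm-all-in-klein}, $G\+$ embeds into the Klein bottle, so $\egp(G) \le 2$. Combined with the trivial inequality $\egp(G) \ge \eg(G) = 2$ (since $G$ is a subgraph of $G\+$, and the Euler genus is monotone under subgraphs), this forces $\egp(G) = 2$, and therefore $\et(G) = \egp(G) - \eg(G) = 0$.

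For $\dc{2}{\eg} \subseteq \dc{1}{\egp}$: given $\mu \in \dc{2}{\eg}$, we have $\eg(\mu G) \le 0$, so $\mu G$ is planar. I must show $\egp(\mu G) \le 1$, i.e., that $(\mu G)\+$ embeds in the projective plane. A purely abstract route does not work here. The separation principle (Lemma~\ref{lm-two-separated}(S1)) applied with $\et(G) = 0$ only gives $\dc{3}{\eg} \subseteq \dc{1}{\egp}$, which is vacuous since $\eg(G) = 2$; and the observation that $(\mu G)\+$ is a minor of $G\+$, together with Lemma~\ref{lm-all-in-klein}, only yields $\egp(\mu G) \le 2$, one genus too weak.

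I therefore plan to complete the second claim by computer verification. Using the known list of 195 graphs in $\Cc_1(\eg)$ together with the explicit Klein bottle embeddings provided by Lemma~\ref{lm-all-in-klein}, for each such $G$ and each minor-operation $\mu \in \M(G)$ with $\eg(\mu G) = 0$, one checks case-by-case that $(\mu G)\+$ admits an embedding in the projective plane. The main obstacle is that a planar graph plus a single edge can in principle raise the Euler genus by $2$ (this is possible whenever the two designated vertices are not cofacial in any planar embedding of the underlying graph), so the bound $\egp(\mu G) \le 1$ does not follow from any abstract surface-theoretic manipulation. Direct inspection of the 195 critical graphs is essential to rule this pathology out for members of $\Cc_1(\eg)$.
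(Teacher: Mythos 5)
Your proposal is correct and follows essentially the same route as the paper: the first claim is deduced from Lemma~\ref{lm-all-in-klein} exactly as in the paper's proof, and the second claim is reduced, just as the paper does, to verifying by computer that for each $G \in \Cc_1(\eg)$ and each $\mu \in \M(G)$ with $\mu G$ planar, the graph $\mu G\+$ is projective planar. Your additional remarks explaining why no abstract argument can replace the computer check are accurate but not needed beyond that.
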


\begin{proof}
  By Lemma~\ref{lm-all-in-klein}, $\egp(G) = \eg(G\+) \le 2$.
  Since $\eg(G) = 2$, we have that $\et(G) = \egp(G) - \eg(G) = 0$.

  The claim that $\dc2 \eg \ss \dc1 \egp$ was checked by computer.
  It is enough to show that for each $\mu \in \M(G)$ such that $\mu G$ is planar,
  the graph $\mu G\+$ is projective planar.
\end{proof}

The class of hoppers is mysterious. Although we were not able to construct any, we believe that they appear when the genus is large. However, there are none when genus is small.

\begin{lemma}
  \label{lm-hoppers}
  The classes $\Hw_1(\eg), \Hw_1(\egp)$, $\H_1(\eg)$, and $\H_1(\egp)$ are empty.
\end{lemma}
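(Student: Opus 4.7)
The plan is to dispatch each of the four classes, with the main effort concentrated on ruling out $\H_1(\egp)$; the other three reduce quickly via the lemmas already established. For $\Hw_1(\eg)$, any $G\in\Hw_1(\eg)$ lies in $\Cc(\eg)$ by the remark following the definition of a weak $\eg$-hopper, hence in $\Cc_1(\eg)$. Lemma~\ref{lm-cc-1-eg} then gives $\dc2\eg\ss\dc1\egp$, which collapses $\M(G)=\dc1\egp\cup\dc2\eg$ to $\M(G)=\dc1\egp$; this forces $G\in\Cc(\egp)$, contradicting the remaining requirement in the definition of a weak $\eg$-hopper. For $\H_1(\eg)$ I would apply the observation recorded just before the lemma: $G\in\H_1(\eg)$ would make $\hat{G}$ a graph of Euler genus $2$ whose every single-edge minor is planar, placing $\hat{G}$ in $\E_0=\{K_5,K_{3,3}\}$, both of which have Euler genus $1$, a contradiction.

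Ruling out $\H_1(\egp)$ requires a case split on $\eg(G)\in\{0,1,2\}$ for a hypothetical $G\in\H_1(\egp)$. \emph{Case $\eg(G)=0$.} Then $\hat{G\+}$ has Euler genus $2$, and $\hat{G\+}-e$ is planar for every edge $e$ of $\hat{G\+}$: for $e\neq xy$ by the defining condition $\M(G)=\dc2\egp$, and for $e=xy$ because $\hat{G\+}-xy=\hat{G}$ is planar. Hence $\hat{G\+}$ is edge-critical nonplanar and is therefore a subdivision of a Kuratowski graph, with Euler genus $1$, contradicting $\egp(G)=2$. \emph{Case $\eg(G)=1$.} Then $\et(G)=1$; applying (S2) with $k=1$ yields $\M(G)=\dc2\egp\ss\dc1\eg$, so $G\in\Cc_0(\eg)$. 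By Lemma~\ref{lm-cc-0}, $G$ is the graph $K_{3,3}$ with two nonadjacent terminals, and Lemma~\ref{lm-cc-0-prop} then forces $\et(G)=0$, a contradiction. \emph{Case $\eg(G)=2$.} Then $\et(G)=0$ and (S2) again yields $G\in\Cc_1(\eg)$, so $\hat{G}\in\E_1$ by Lemma~\ref{lm-cc-eg}. Since every graph in $\E_1$ has Euler genus $2$ (recorded in Section~\ref{sc-klein-euler}) while every graph in $\E_0$ has Euler genus $1$, we have $\hat{G}\notin\E_0$; hence some $\mu\in\M(G)$ satisfies $\eg(\mu G)\ge 1$. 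For that $\mu$, $\mu G\+$ contains the nonplanar $\mu G$, so $\egp(\mu G)\ge 1>\egp(G)-2$, giving $\mu\notin\dc2\egp$ and contradicting $\M(G)=\dc2\egp$.

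Finally, $\Hw_1(\egp)$ collapses to $\H_1(\egp)$: a weak $\egp$-hopper has $\et(G)=2$, so with $\egp(G)=2$ we have $\eg(G)=0$, which makes $\dc1\eg$ empty; the defining equality $\M(G)=\dc1\eg\cup\dc2\egp$ then reduces to $\M(G)=\dc2\egp$, placing $G$ in the (now empty) class $\H_1(\egp)$. The main obstacle is the third subcase of $\H_1(\egp)$, where one must produce a minor-operation leaving a nonplanar graph; this rests on the disjointness of $\E_0$ and $\E_1$, which itself relies on the fact (recorded at the start of Section~\ref{sc-klein-euler}) that every graph in $\E_1$ has Euler genus exactly $2$.
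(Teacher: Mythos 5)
Your proof is correct, and for $\Hw_1(\eg)$, $\Hw_1(\egp)$ and $\H_1(\eg)$ it essentially coincides with the paper's (for $\H_1(\eg)$ you phrase the argument as ``$\hat{G}$ would lie in $\E_0=\{K_5,K_{3,3}\}$, which have Euler genus $1$,'' while the paper says that a Kuratowski minor of $G$ survives some minor-operation --- the same point). Where you genuinely diverge is $\H_1(\egp)$: the paper disposes of it in two lines by the same uniform device, choosing a Kuratowski minor $K$ of $G\+$ and a minor-operation $\mu$ preserving it, with the wrinkle that $\mu$ can be taken to avoid the edge $xy$ because $\eg(K+uv)=1$; then $\egp(\mu G)\ge 1$ contradicts $\M(G)=\dc2\egp$. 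Your replacement is a case split on $\eg(G)\in\{0,1,2\}$: deletion-minimality plus the subgraph form of Kuratowski's theorem when $\eg(G)=0$, (S2) plus the classification of $\Cc_0(\eg)$ when $\eg(G)=1$, and (S2) again when $\eg(G)=2$. This is longer and leans on more machinery, but each step is sound; note that in the last case the appeal to the genus of graphs in $\E_1$ is dispensable, since $\hat{G}\notin\E_0$ already follows from $\eg(\hat{G})=2$. One small point to tighten in the case $\eg(G)=1$: Lemma~\ref{lm-cc-0-prop} is stated for graphs in $\Cc_0(\egp)$, so to invoke it you should add that the unique member of $\Cc_0(\eg)$, namely $K_{3,3}$ with nonadjacent terminals, also lies in $\Cc_0(\egp)$ (Lemma~\ref{lm-cc-0}); alternatively, observe directly that this graph has $\egp=1$, contradicting $\egp(G)=2$. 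The order of your four arguments avoids circularity (Lemma~\ref{lm-cc-1-eg} is available for $\Hw_1(\eg)$, and $\Hw_1(\egp)$ is reduced to the already-settled $\H_1(\egp)$ exactly as in the paper), so the proposal stands as a valid, if less economical, proof.
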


\begin{proof}
  Let $G \in \H_1(\eg)$.
  Since $G$ is non-planar, it has a Kuratowski graph $K$ as a minor. Since $\eg(G) = 2$, $K$ is a proper minor of $G$.
  Hence there is a minor-operation $\mu \in \M(G)$ such that $\mu G$ still has $K$ as a minor.
  Thus $\eg(\mu G) \ge \eg(K) = 1$. We conclude that $\mu \not\in \dc2 \eg$, a contradiction.

  Similarly, let $G \in \H_1(\egp)$. Then $\eg(G\+)=2$, and thus there is a Kuratowski graph $K$ that is a proper minor of $G\+$.
  Thus there is a minor-operation $\mu \in \M(\hat{G\+})$ such that $\hat{\mu G\+}$ has $K$ as a minor.
  Furthermore, since $\eg(K + uv) = 1$ for all $u,v \in V(G)$ by Lemma~\ref{lm-cc-0}, we may
  pick $\mu$ that does not delete nor contract $xy$. Thus $\mu \in \M(G)$.
  We have that $\egp(\mu G) \ge \eg(K) = 1$, a contradiction.

  Let $G \in \Hw_1(\egp)$. Thus $\egp(G) = 2$ and $\et(G) = 2$. Since $\eg(G) = 0$, we have that $\dc1 \eg = \emptyset$.
  We conclude that $\M(G) = \dc2 \egp$.
  Hence $G \in \H_1(\egp)$ which was already shown to be empty.

  Let $G \in \Hw_1(\eg)$. Thus $\egp(G) = 2$, $\et(G) = 0$, and $G \in \Cc_1(\eg)$.
  By Lemma~\ref{lm-cc-1-eg}, $\M(G) = \dc1 \egp$. Thus $G \in \Cc(\egp)$, a contradiction.
\end{proof}

\begin{table}
  \centering
  \begin{tabular}{|c | c |}
    \hline
    $\eeta(G_1, G_2)$ & $G_1$ \\
    \hline
    0 & $\Cc(\egp)$ \\
    1 & $\Cc(\egp)$ \\
    2 & $\Cc(\egp) \cup \Cc(\eg) \cup \S$ \\
    \hline
  \end{tabular}
  \caption{Classification of $\eg$-tight parts of a 2-sum in $\Cc_2(\eg)$.}
  \label{tb-klein}
\end{table}

Let us now state some properties of the parts of $xy$-sums in $\Cc_2(\eg)$ and $\Cc_2(\egp)$.

\begin{lemma}
  \label{lm-bounds-eg}
  Let $G$ be the $xy$-sum of connected graphs $G_1, G_2 \in \Gcxy$ such that~$\egp(G_1) \le \egp(G_2)$.
  If $G \in \Cc_2(\eg)$, then
  \begin{enumerate}[label=\rm(\roman*)]
  \item
    $\egp(G_1) = 1$,
  \item
    $\egp(G_2) = 2$,
  \item
    $\eeta(G_1, G_2) \le 2$.
  \end{enumerate}
\end{lemma}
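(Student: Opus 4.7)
The plan is to combine Richter's formula for the Euler genus of a 2-sum (Theorem~\ref{th-richter-euler}) with the classification of $\eg$-tight parts (Theorem~\ref{th-general-eg}) and to run a case analysis on $\eeta(G_1,G_2) \in \{0,1,2,3,4\}$. Writing $a_i = \eg(G_i)$ and $b_i = \egp(G_i)$, the equation $3 = \eg(G) = \min\{a_1+a_2+2,\;b_1+b_2\}$ together with the identity $\eh_1 - \eh_0 = \eeta - 2$ reduces almost everything to numerical bookkeeping once we know which critical class each $G_i$ belongs to. Since $E(G_1) \cup E(G_2) = E(G)$ and $G$ is $\eg$-critical, Lemma~\ref{lm-tight} ensures that both $G_1$ and $G_2$ are $\eg$-tight in $G$, and the classification in Theorem~\ref{th-general-eg} applies symmetrically to each part.

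First I would establish (iii) by ruling out $\eeta \in \{3,4\}$. For $\eeta = 4$, Theorem~\ref{th-general-eg}(v) forces $G_1, G_2 \in \Cc(\eg)$, so Lemma~\ref{lm-cc-eg} gives $\hat{G_i} \in \E$ and therefore $a_i \ge 1$; thus $\eh_0 \ge 4$, but $\eeta = 4$ makes $\eg(G) = \eh_0$, contradicting $\eg(G) = 3$. For $\eeta = 3$, we have $\eh_0 < \eh_1$, so $\eg(G) = \eh_0 = 3$ and $a_1 + a_2 = 1$. Since $\et_1 + \et_2 = 3$ with $\et_k \in \{0,1,2\}$, one part (call it $G_i$) has $\et_i = 1$ and the other ($G_j$) has $\et_j = 2$. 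By Theorem~\ref{th-general-eg}(iv) each part lies in $\Cc(\eg) \cup \Hw(\egp)$; but $\Hw(\egp)$ requires $\et = 2$, so $G_i \in \Cc(\eg)$, whence $a_i \ge 1$ and therefore $a_j = 0$. Then $\hat{G_j}$ is planar, so $G_j \notin \Cc(\eg)$, forcing $G_j \in \Hw(\egp)$ with $\egp(G_j) = a_j + \et_j = 2$, i.e. $G_j \in \Hw_1(\egp)$. Lemma~\ref{lm-hoppers} delivers the contradiction.

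Once (iii) is in hand, $\eeta \le 2$ gives $\eh_1 \le \eh_0$, so $\eg(G) = \eh_1 = 3$, that is, $b_1 + b_2 = 3$. To finish I would rule out $b_1 = 0$ by contradiction: then $a_1 = 0$ as well, and consequently the decrement sets $\dc{k}{\eg}$ and $\dc{k}{\egp}$ in $G_1$ are empty for every $k \ge 1$ (a non-negative parameter already equal to $0$ cannot be decreased). Since $G_1$ is connected and contains the two distinct, non-adjacent terminals $x$ and $y$, it has at least one edge, so $\M(G_1) \neq \emptyset$. But for each of $\eeta \in \{0,1,2\}$ the $\eg$-tightness of $G_1$ together with Theorem~\ref{th-general-eg} forces $\M(G_1)$ to be contained in a union of these empty decrement sets, a contradiction. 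Hence $b_1 \ge 1$, and combined with $b_1 \le b_2$ and $b_1 + b_2 = 3$ this yields $b_1 = 1$ and $b_2 = 2$, establishing (i) and (ii).

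The main obstacle is the $\eeta = 3$ branch: it is the only case where arithmetic alone is insufficient and we must invoke the emptiness of the small-genus weak hopper class $\Hw_1(\egp)$ via Lemma~\ref{lm-hoppers}. This is also where the assumption of small Euler genus is genuinely used; at higher genus the corresponding hopper class is not known to vanish, which is exactly the philosophical point made earlier in the paper about hoppers being the obstruction to extending these arguments.
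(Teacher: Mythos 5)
Your proof is correct, and it reorganizes the argument around different tools than the paper uses. The paper never invokes Theorem~\ref{th-general-eg} here: it first bounds $\egp(G_2)\le 2$ directly from criticality (since $G_2^+$ is a proper minor of $G$), gets $\egp(G_1)\ge 1$ and (ii) from $\eh_1\ge 3$, and then excludes $\egp(G_1)=2$ by hand --- using the emptiness of the full hopper class $\H_1(\egp)$ to produce a minor-operation $\mu$ with $\egp(\mu G_1)\ge 1$ and $\eg(\mu G_1)=0$, and computing $\eg(\mu G)=3$ via Richter's formula to contradict criticality; claim (iii) is then pure arithmetic. You instead prove (iii) first: both parts are $\eg$-tight by Lemma~\ref{lm-tight}, so Theorem~\ref{th-general-eg} classifies them, and for $\eeta\in\{3,4\}$ the arithmetic from $\eh_1-\eh_0=\eeta-2$ plus the fact that $\Cc(\eg)$-membership forces positive genus pins the offending part into $\Hw_1(\egp)$, which Lemma~\ref{lm-hoppers} kills; then $b_1+b_2=3$ follows, and you rule out $b_1=0$ by observing that all relevant decrement sets of a part with $\eg=\egp=0$ are empty while $\M(G_1)\ne\emptyset$, which even spares you the paper's separate ``proper minor'' argument for $\egp(G_2)\le 2$. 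Both proofs ultimately hinge on the same two inputs (Richter's Theorem~\ref{th-richter-euler} and Lemma~\ref{lm-hoppers}), but you use the weak hopper class $\Hw_1(\egp)$ through the general classification machinery, whereas the paper uses $\H_1(\egp)$ with an explicit genus computation for a chosen minor-operation; your route is slightly more abstract and shows that the tight-part classification already contains the lemma, while the paper's is more self-contained at this point in the text. The only places to be careful in your write-up are the degenerate checks you implicitly make --- that each part contains an edge (so $\Cc(\eg)$-membership really forces $\eg\ge 1$ and $\M(G_1)\ne\emptyset$) and that connectivity of $\mu G_1$ is available because tight parts have no cutedges --- both of which are legitimate and covered by the remarks following Lemma~\ref{lm-no-cutedges}.
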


\begin{proof}
  If $\egp(G_2) > 2$, then since $G_2^+$ is a proper minor of $G$, there is a minor-operation $\mu \in \M(G)$ such that $\eg(\mu G) \ge \eg(G_2^+) > 2$,
  a contradiction. Thus $\egp(G_2) \le 2$.
  If $\egp(G_1) = 0$, then $\eg(G) \le \eh_1(G) = \egp(G_1) + \egp(G_2) \le 2$ by Theorem~\ref{th-richter-euler}, a contradiction.
  Hence $\egp(G_1) \ge 1$.

  By Theorem~\ref{th-richter-euler}, we have
  $$\eh_1(G) = \egp(G_1) + \egp(G_2) \ge \eg(G) = 3.$$
  This implies that $\egp(G_2) = 2$ and (ii) holds.
  We also have
  $$\eh_0(G) = \eg(G_1) + \eg(G_2) + 2 \ge \eg(G) = 3.$$
  Therefore, $\eg(G_1) + \eg(G_2) \ge 1$.

  Suppose that $\egp(G_1) = 2$. If $\eg(G_1) + \eg(G_2) \ge 2$, then by Theorem~\ref{th-richter-euler},
  $$\eg(G) = \min\{\eh_0(G), \eh_1(G)\} = 4,$$
  a contradiction with $\eg(G) = 3$.
  Hence $\eg(G_1) + \eg(G_2) = 1$.
  Since $\egp(G_1) = \egp(G_2)$, we may exchange the roles of $G_1$ and $G_2$ if necessary and thus assume that $\eg(G_1) = 0$.
  By Lemma~\ref{lm-hoppers}, $\H_1(\egp) = \emptyset$ and thus there exists a minor-operation $\mu \in \M(G_1)$ such that $\egp(\mu G_1) \ge 1$.
  Note that $\eg(\mu G_1) = 0$.
  By Theorem~\ref{th-richter-euler},
  $$\eg(\mu G) = \min\{\eh_0(\mu G),\eh_1(\mu G)\} = \min\{\eg(\mu G_1) + \eg(G_2) + 2, \egp(\mu G_1) + \egp(G_2)\} = 3,$$
  a contradiction with $G \in \Cc_2(\eg)$.
  We conclude that $\egp(G_1) = 1$ and (i) holds.
  Since $\eg(G_1) + \eg(G_2) \ge 1$ and $\egp(G_1) + \egp(G_2) = 3$, we have that $\eeta(G_1, G_2) \le 2$ and (iii) holds.
\end{proof}

\begin{lemma}
  \label{lm-bounds-egp}
  Let $G$ be the $xy$-sum of connected graphs $G_1, G_2 \in \Gcxy$ such that~$\egp(G_1) \le \egp(G_2)$.
  If $G \in \Cc_2(\egp)$, then
  \begin{enumerate}[label=\rm(\roman*)]
  \item
    $\egp(G_1) = 1$,
  \item
    $\egp(G_2) = 2$.
  \end{enumerate}
\end{lemma}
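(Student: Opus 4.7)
The main ingredient is Theorem~\ref{th-richter-euler}(ii), which yields $\egp(G) = \eh_1(G) = \egp(G_1) + \egp(G_2)$. Since $G \in \Cc_2(\egp)$ forces $\egp(G) = 3$, this immediately gives $\egp(G_1) + \egp(G_2) = 3$. Together with the hypothesis $\egp(G_1) \le \egp(G_2)$, the only candidate pairs are $(\egp(G_1), \egp(G_2)) = (0,3)$ or $(1,2)$, so the entire task reduces to ruling out the first case.

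To exclude $\egp(G_1) = 0$, I plan to combine Lemma~\ref{lm-tight} with Theorem~\ref{th-general-egp}. Because $E(G_1) \cup E(G_2) = E(G)$ and $G$ is $\egp$-critical, Lemma~\ref{lm-tight} makes $G_1$ an $\egp$-tight subgraph of $G$. Theorem~\ref{th-general-egp} then yields $G_1 \in \Cc(\egp)$, so every minor-operation in $\M(G_1)$ strictly decreases $\egp(G_1)$.

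It remains only to observe that $G_1$ must carry some edge: being connected in $\Gcxy$ with two distinct terminals $x$ and $y$ requires a path from $x$ to $y$, hence at least one edge $e$. For the operation $(e,-) \in \M(G_1)$ to satisfy $\egp((G_1)-e) < \egp(G_1)$ we need $\egp(G_1) \ge 1$, which rules out the pair $(0,3)$. I therefore conclude $\egp(G_1) = 1$ and $\egp(G_2) = 2$. There is no real obstacle in this proof; the lemma falls out essentially as a corollary of the framework already assembled, once one notices that $\Cc(\egp)$ cannot contain any graph of zero $\egp$ that has at least one edge.
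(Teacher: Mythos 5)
Your proof is correct, and it diverges from the paper's own argument in the one step that requires real work: excluding the pair $(\egp(G_1),\egp(G_2))=(0,3)$. The paper rules this out by bounding $\egp(G_2)$ from above: since $G_1$ is connected with distinct terminals, $\M(G_1)\neq\emptyset$, and any $\mu\in\M(G_1)$ leaves $G_2$ intact as a minor of $\mu G$, so $\egp(\mu G)\ge\egp(G_2)$; if $\egp(G_2)>2$ this contradicts the $\egp$-criticality of $G$ (with $\egp(G)=3$), whence $\egp(G_2)\le 2$ and Theorem~\ref{th-richter-euler}(ii) finishes the job. You instead bound $\egp(G_1)$ from below: Lemma~\ref{lm-tight} makes $G_1$ an $\egp$-tight part, Theorem~\ref{th-general-egp} upgrades this to $G_1\in\Cc(\egp)$, and since $G_1$ has an edge whose deletion must strictly decrease $\egp(G_1)$, you get $\egp(G_1)\ge 1$. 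Both arguments ultimately hinge on $\M(G_1)$ being nonempty and on the criticality of $G$; yours is a legitimate alternative with no circularity, since Theorem~\ref{th-general-egp} (via Lemmas~\ref{lm-part-egp} and~\ref{lm-no-cutedges}) is established before this lemma and does not use it. The trade-off is that the paper's route is more self-contained, using only Richter's formula and minor-monotonicity, while yours leans on the heavier tightness classification already assembled in Section~4 and, in exchange, reads as an almost immediate corollary of that machinery.
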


\begin{proof}
  If $\egp(G_2) > 2$, then, since $G_2$ is a proper minor of $G$, there is a minor-operation $\mu \in \M(G_1)$,such that
  $\mu G$ still has $G_2$ as a minor. Hence $\egp(\mu G) \ge \egp(G_2) > 2$. We conclude that $\egp(G) = \egp(\mu G) = 3$, a contradiction. This shows that $\egp(G_2)\le2$.

  By Theorem~\ref{th-richter-euler}, we have
  $$3 = \egp(G) = \eh_1(G) = \egp(G_1) + \egp(G_2).$$
  Since $\egp(G_2) \le 2$, we conclude that $\egp(G_1) = 1$ and $\egp(G_2) = 2$.
  Thus (i) and (ii) hold.
\end{proof}

Finally, we are ready to state a theorem which classifies the $xy$-sums in $\Cc_2(\eg)$.

\begin{theorem}
\label{th-cc-2-eg}
  Let $G$ be the $xy$-sum of connected graphs $G_1, G_2 \in \Gcxy$.
  If the following statements {\rm(i)--\rm(iv)} hold, then $G \in \Cc_2(\eg)$.
  \begin{enumerate}[label=\rm(\roman*)]
  \item
    $G_1 \in \Cc_0(\egp)$.
  \item
    $G_2 \in \Cc_1(\egp) \cup \S_1$.
  \item
    If $G_1 \in \Cc_0(\eg)$, then $G_2 \in \Cc_1(\egp)$.
  \item
    If $G_1 \not\in \Cc_0(\eg)$, then $\et(G_2) \le 1$.
  \end{enumerate}
  Conversely, every 2-connected graph $G \in \Cc_2(\eg)$ such that $\{x,y\}$ is a 2-vertex-cut can be obtained in this way.
\end{theorem}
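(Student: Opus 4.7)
The plan is to prove the two directions separately, treating the converse first since it is more constrained. Given a 2-connected $G\in \Cc_2(\eg)$ with 2-vertex-cut $\{x,y\}$, write $G$ as the $xy$-sum of connected graphs $G_1,G_2\in\Gcxy$, chosen so that $\egp(G_1)\le \egp(G_2)$. Since $G$ is $\eg$-critical and $E(G)=E(G_1)\cup E(G_2)$, Lemma~\ref{lm-tight} forces both $G_1$ and $G_2$ to be $\eg$-tight in $G$. Now invoke Lemma~\ref{lm-bounds-eg} to pin down $\egp(G_1)=1$, $\egp(G_2)=2$, and $\eeta(G_1,G_2)\le 2$, and then apply Theorem~\ref{th-general-eg} row by row (for $\eeta\in\{0,1,2\}$) to each of $G_1$ and $G_2$.

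To turn the Theorem~\ref{th-general-eg} output into the classes in~(i)--(iv), several ``degenerate'' candidates have to be ruled out. First, $\Hw(\eg)$ is absent at both genus levels: for $G_1$ (with $\egp=1$) a weak $\eg$-hopper would need $\dc2\eg\ne\emptyset$ with $\eg=1$, which is impossible, and for $G_2$ the class $\Hw_1(\eg)$ vanishes by Lemma~\ref{lm-hoppers}. Second, $\S_0$ is empty (a cascade with $\eg=0$ cannot exist by (C1)--(C3)), so $G_1\in\Cc_0(\egp)$, giving~(i). Third, in the $\eeta=2$ case, $G_2\in\Cc_1(\eg)$ would force $\et(G_2)=0$ by Lemma~\ref{lm-cc-1-eg}, contradicting $\eeta=2$ since $\et(G_1)\le 1$ by Lemma~\ref{lm-cc-0-prop}; this leaves $G_2\in\Cc_1(\egp)\cup\S_1$ and gives~(ii). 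Conditions~(iii) and~(iv) then drop out by bookkeeping on $\eeta=\et(G_1)+\et(G_2)$: if $G_1\in\Cc_0(\eg)$ then $\et(G_1)=0$ so $\eeta=\et(G_2)\le 1$, and a cascade $G_2\in\S_1$ would force $\eeta=1$, but Theorem~\ref{th-general-eg}(ii) with $\eeta=1$ gives $G_2\in\Cc(\egp)\cup\Hw(\eg)=\Cc_1(\egp)$, ruling $\S_1$ out; whereas if $G_1\not\in\Cc_0(\eg)$ then $\et(G_1)=1$ by Lemma~\ref{lm-cc-0-prop} and $\eeta\le 2$ yields $\et(G_2)\le 1$.

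For the forward direction, assume (i)--(iv). First compute $\eg(G)=3$ via Theorem~\ref{th-richter-euler}: one has $\eh_1(G)=\egp(G_1)+\egp(G_2)=3$, and a short split into Cases~A ($G_1\in\Cc_0(\eg)$) and~B ($G_1\not\in\Cc_0(\eg)$, so $\et(G_1)=1$ by Lemma~\ref{lm-cc-0-prop}) shows $\eh_0(G)\ge 3$ using the corresponding values of $\et(G_2)$. To verify criticality, it suffices by Lemma~\ref{lm-tight} to show that $G_1$ and $G_2$ are both $\eg$-tight, and for this I will simply apply the ``if'' part of Theorem~\ref{th-general-eg}, checking for each pair $(\et(G_1),\et(G_2))$ permitted by~(iii)--(iv) that each $G_i$ lies in the admissible union for the corresponding value of $\eeta(G_1,G_2)$. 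Since $\Cc_0(\egp)\subseteq\Cc(\egp)$ and $\Cc_1(\egp)\cup\S_1\subseteq\Cc(\egp)\cup\S$, all of these verifications are immediate from (i)--(ii).

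The main obstacle is the converse direction, specifically disentangling $\Cc_1(\eg)$ from $\Cc_1(\egp)\cup\S_1$ when $\eeta=2$; this is where Lemma~\ref{lm-cc-1-eg} is doing real work, because without the computer-verified fact that $\et(G)=0$ on $\Cc_1(\eg)$ one could not cleanly match the classification in~(ii). Everything else is a finite case check against Theorem~\ref{th-general-eg} combined with the vanishing of small-genus hoppers (Lemma~\ref{lm-hoppers}) and the cascade rigidity $\et=1$ (Lemma~\ref{lm-cascades}).
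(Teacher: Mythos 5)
Your converse direction follows the paper's route (Lemma~\ref{lm-tight}, Lemma~\ref{lm-bounds-eg}, Theorem~\ref{th-general-eg}, emptiness of $\Hw_0(\eg)$, $\S_0$ and $\Hw_1(\eg)$), with one genuine variation: to exclude $G_2\in\Cc_1(\eg)\setminus\Cc_1(\egp)$ you use Lemma~\ref{lm-cc-1-eg} (namely $\et=0$ on $\Cc_1(\eg)$) together with $\et(G_1)\le 1$, whereas the paper picks a minor-operation $\mu\notin\dc2\eg\cup\dc1\egp$ and contradicts $\eg$-tightness via Theorem~\ref{th-richter-euler}. Both are sound; just add the one-line observation $\Cc_0(\eg)\subseteq\Cc_0(\egp)$ (Lemma~\ref{lm-cc-0}) when you pass from $G_1\in\Cc_0(\eg)\cup\Cc_0(\egp)$ to conclusion (i).

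The forward direction, however, has a real gap at the one place where hypothesis (iii) matters. You justify the $\eg$-tightness of $G_2$ by the containment $\Cc_1(\egp)\cup\S_1\subseteq\Cc(\egp)\cup\S$, claiming the verification is ``immediate from (i)--(ii)''. But $\Cc(\egp)\cup\S$ is the admissible class in Theorem~\ref{th-general-eg} only for $\eeta(G_1,G_2)=2$; for $\eeta=0$ the admissible class is $\Cc(\egp)$ and for $\eeta=1$ it is $\Cc(\egp)\cup\Hw(\eg)$, and neither contains $\S$. So for the pairs with $\eeta\le 1$ you must still rule out $G_2\in\S_1$, and this cannot follow from (i)--(ii) alone: take $G_1$ to be the $K_{3,3}$-graph that is the sole member of $\Cc_0(\eg)\subseteq\Cc_0(\egp)$ and any $G_2\in\S_1$; then (i)--(ii) hold, $\eeta=1$, $\dc{2}{\eg,G_2}=\emptyset$ because $\eg(G_2)=1$, and property (C3) supplies $\mu\in\M(G_2)$ with $\mu\notin\dc1\egp$, so by Lemma~\ref{lm-part-eg}(ii) this $\mu$ does not decrease $\eg(G)$ and $G$ is not $\eg$-critical. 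The missing step is exactly the paper's: if $\eeta\le 1$ and $G_2\in\S_1$, then $\et(G_2)=1$ (Lemma~\ref{lm-cascades}) forces $\et(G_1)=0$, hence $G_1\in\Cc_0(\eg)$ by Lemma~\ref{lm-cc-0-prop}, contradicting (iii); it is this use of (iii) together with Lemmas~\ref{lm-cascades} and~\ref{lm-cc-0-prop}, not the displayed containment, that makes the admissibility check close for $\eeta\le 1$. With that argument inserted your case check goes through; as written, the tightness of $G_2$ is unjustified precisely in those cases.
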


\begin{proof}
  Suppose that statements (i)--(iv) hold. Our goal is to show that $G \in \Cc_2(\eg)$.
  By Lemma~\ref{lm-tight}, it is enough to prove that $G_1$ and $G_2$ are $\eg$-tight in $G$ and that $\eg(G) = 3$.
  If $G_1 \in \Cc_0(\eg)$, then $\et(G_1) = 0$ by Lemma~\ref{lm-cc-0-prop}. Otherwise, $\et(G_1) = 1$ and $\et(G_2) \le 1$ by (iv).
  We conclude that in both cases we have $\eeta(G_1, G_2) \le 2$.
  Theorem~\ref{th-general-eg} and (i) give that $G_1$ is $\eg$-tight in $G$.
  If $\eeta(G_1, G_2) = 2$, then $G_2$ is $\eg$-tight in $G$ by Theorem~\ref{th-general-eg} and (ii).
  Suppose now that $\eeta(G_1, G_2) \le 1$ and $G_2 \in \S_1$.
  Since $\et(G_2) = 1$ by Lemma~\ref{lm-cascades}, we have that $\et(G_1) = 0$ and hence $G_1 \in \Cc_0(\eg)$ by Lemma~\ref{lm-cc-0-prop}.
  This is a contradiction with (iii).
  Thus, we may assume that $G_2 \in \Cc_1(\egp)$. Theorem~\ref{th-general-eg} asserts that $G_2$ is $\eg$-tight in $G$.
  Since $\eeta(G_1, G_2) \le 2$, $\egp(G_1) = 1$, and $\egp(G_2) = 2$, Theorem~\ref{th-richter-euler} and~(\ref{eq-eta-egp}) give that
  $$\eg(G) = \eh_1(G) = \egp(G_1) + \egp(G_2) = 3.$$
  Therefore, $G \in \Cc_2(\eg)$.

  We shall now show the converse, that is, for $G \in \Cc_2(\eg)$ where $\{x,y\}$ is a 2-vertex-cut, we find connected graphs $G_1, G_2 \in \Gcxy$ such that
  $G$ is an $xy$-sum of $G_1$ and $G_2$ and (i)--(iv) hold.
  Let us distribute the $\{x, y\}$-bridges arbitrarily into $G_1$ and $G_2$ so that $\egp(G_1) \le \egp(G_2)$
  and $G_1, G_2$ contain at least one of the bridges.
  By Lemma~\ref{lm-bounds-eg}, we have that $\egp(G_1) = 1$, $\egp(G_2) = 2$, and $\eeta(G_1, G_2) \le 2$.
  Since $\Hw_0(\eg)$ and $\S_0$ are empty (see Lemma~\ref{lm-cascades}),
  Theorem~\ref{th-general-eg} gives that $G_1 \in \Cc_0(\eg) \cup \Cc_0(\egp) = \Cc_0(\egp)$.
  Thus (i) holds.

  Since $\eeta(G_1, G_2) \le 2$, $G_2 \in \Cc(\eg) \cup \Cc(\egp) \cup \S \cup \Hw(\eg)$ by Theorem~\ref{th-general-eg}.
  By Lemma~\ref{lm-hoppers}, $\Hw_1(\eg)$ is empty.
  Since $\egp(G_2) = 2$, we have that $G_2 \not\in \Cc_0(\eg) \cup \Cc_0(\egp)$.
  We conclude that $G_2 \in \Cc_1(\eg) \cup \Cc_1(\egp) \cup \S_1$.
  Assume for a contradiction that $G_2 \in \Cc_1(\eg) \sm \Cc_1(\egp)$.
  Thus there exists a minor-operation $\mu \in \M(G_2)$ such that $\mu \not\in \dc2 \eg \cup \dc1 \egp$ since $\Hw_1(\eg)$ is empty. 
  By~(\ref{eq-eg-1}), $\eh_1(\mu G) = \eh_1(G)$. Since $G_2$ is $\eg$-tight in $G$, Theorem~\ref{th-richter-euler} gives:
  $$3 = \eg(G) > \eg(\mu G) = \eh_0(\mu G) = \eg(G_1) + \eg(\mu G_2) + 2 \ge 3.$$
  This contradicts our assumption that $G_2 \not\in \Cc_1(\egp) \cup \S_1$.
  We conclude that (ii) holds.

  Suppose that $G_1 \in \Cc_0(\eg)$ and $G_2 \in \S_1$.
  Since $\et(G_1) = 0$ and $\et(G_2) = 1$ by Lemmas~\ref{lm-cascades} and~\ref{lm-cc-0-prop}, we have that $\eeta(G_1, G_2) = 1$.
  This contradicts Theorem~\ref{th-general-eg}.
  Thus (iii) holds.

  In order to show (iv), suppose that $G_1 \not\in \Cc_0(\eg)$ and $\et(G_2) = 2$.
  Then $\et(G_1) = 1$ by Lemma~\ref{lm-cc-0-prop} and thus $\eeta(G_1, G_2) = 3$.
  This contradicts Lemma~\ref{lm-bounds-eg}(iii).
  We conclude that (iv) holds.
\end{proof}

We also have a corresponding theorem that classifies the $xy$-sums in $\Cc_2(\egp)$.

\begin{theorem}
\label{th-cc-2-egp}
  Let $G$ be the $xy$-sum of connected graphs $G_1, G_2 \in \Gcxy$.
  If the following statements (i) and (ii) hold, then $G \in \Cc_2(\egp)$.
  \begin{enumerate}[label=\rm(\roman*)]
  \item
    $G_1 \in \Cc_0(\egp)$.
  \item
    $G_2 \in \Cc_1(\egp)$.
  \end{enumerate}
  Conversely, every 2-connected graph $G \in \Cc_2(\egp)$ such that $\{x,y\}$ is a 2-vertex-cut can be obtained this way.
\end{theorem}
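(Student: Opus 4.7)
The plan is to mirror the two-direction structure of the proof of Theorem~\ref{th-cc-2-eg}, but the argument is considerably shorter: Theorem~\ref{th-general-egp} gives an $\eeta$-independent characterization of $\egp$-tight parts of a 2-sum (namely, $G_i$ is $\egp$-tight if and only if $G_i \in \Cc(\egp)$), so no case analysis on $\eeta(G_1,G_2)$ and no appeal to cascades or (weak) hoppers is needed.

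For the forward direction, I would assume (i) and (ii) and first compute, via Theorem~\ref{th-richter-euler}(ii),
\[
  \egp(G) \;=\; \eh_1(G) \;=\; \egp(G_1) + \egp(G_2) \;=\; 1 + 2 \;=\; 3.
\]
Next, since $G_1 \in \Cc_0(\egp) \ss \Cc(\egp)$ and $G_2 \in \Cc_1(\egp) \ss \Cc(\egp)$, Theorem~\ref{th-general-egp} asserts that both $G_1$ and $G_2$ are $\egp$-tight in $G$. Because $E(G_1) \cup E(G_2) = E(G)$, Lemma~\ref{lm-tight} then yields $G \in \Cc(\egp)$; combined with $\egp(G) = 3$, this shows $G \in \Cc_2(\egp)$.

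For the converse, I would let $G \in \Cc_2(\egp)$ with $\{x,y\}$ a 2-vertex-cut and partition the $\{x,y\}$-bridges of $G$ between two graphs $G_1, G_2$ having $V(G_1) \cap V(G_2) = \{x,y\}$, each receiving at least one bridge and chosen so that $\egp(G_1) \le \egp(G_2)$. Both parts lie in $\Gcxy$ because $xy \notin E(G)$ (as $G \in \Cc(\egp) \ss \Gcxy$), and both are connected since every $\{x,y\}$-bridge is connected through $x$ and $y$. Lemma~\ref{lm-bounds-egp} then forces $\egp(G_1) = 1$ and $\egp(G_2) = 2$. Since $G$ is $\egp$-critical, Lemma~\ref{lm-tight} (applied in the other direction) gives that $G_1$ and $G_2$ are $\egp$-tight in $G$, and Theorem~\ref{th-general-egp} places both in $\Cc(\egp)$. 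The computed values of $\egp$ then pin $G_1$ and $G_2$ down to $\Cc_0(\egp)$ and $\Cc_1(\egp)$, establishing (i) and (ii).

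Because every step is a direct application of already-proved results, there is no real main obstacle; all the genuine work has been absorbed into Theorem~\ref{th-general-egp} and Lemma~\ref{lm-bounds-egp}. The only technical point worth stating carefully is that the ad hoc partition of $\{x,y\}$-bridges in the converse yields connected parts belonging to $\Gcxy$, and this follows immediately from the 2-connectivity of $G$ and the inclusion $\Cc(\egp) \ss \Gcxy$.
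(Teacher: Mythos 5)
Your proof is correct and follows essentially the same route as the paper: the forward direction via Theorem~\ref{th-general-egp}, Lemma~\ref{lm-tight}, and Theorem~\ref{th-richter-euler}(ii), and the converse via a bridge partition with $\egp(G_1)\le\egp(G_2)$, Lemma~\ref{lm-bounds-egp}, and Theorem~\ref{th-general-egp}. The only differences are a harmless reordering of steps in the converse and your (welcome) explicitness about the parts being connected members of $\Gcxy$, which the paper leaves implicit.
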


\begin{proof}
  Suppose that (i) and (ii) hold.
  By Theorem~\ref{th-general-egp}, $G_1$ and $G_2$ are $\egp$-tight in $G$.
  Thus $G \in \Cc(\egp)$ by Lemma~\ref{lm-tight}. By Theorem~\ref{th-richter-euler},
  $$\egp(G) = \eh_1(G) = \egp(G_1) + \egp(G_2) = 3.$$
  Therefore, $G \in \Cc_2(\egp)$.

  For the converse, let $G_1$ and $G_2$ be collections of $\{x, y\}$-bridges in $G$ such that $G$ is the $xy$-sum of $G_1$ and $G_2$, $\egp(G_1) \le \egp(G_2)$, and $G_1, G_2$ contain at least one of the bridges.
  We shall show that (i) and (ii) hold.
  By Theorem~\ref{th-general-egp}, $G_1, G_2 \in \Cc(\egp)$.
  By Lemma~\ref{lm-bounds-egp}, $\egp(G_1) = 1$ and $\egp(G_2) = 2$.
  We conclude that $G_1 \in \Cc_0(\egp)$ and $G_2 \in \Cc_1(\egp)$ and thus (i) and (ii) hold.
\end{proof}

The following lemma gives necessary and sufficient conditions for the edge $xy$ to be $\eg$-tight
in a graph with a 2-vertex-cut $\{x, y\}$ and the edge $xy$.

\begin{lemma}
\label{lm-edge-xy-eg}
  Let $G$ be an $xy$-sum of connected graphs $G_1, G_2 \in \Gcxy$ and let $H = \hat{G\+}$.
  Then the subgraph of $H$ consisting of the edge $xy$ is $\eg$-tight in $H$
  if and only if $\eeta(G_1, G_2) > 2$ and either $\eg(G_1 /xy) < \egp(G_1)$ or $\eg(G_2 / xy) < \egp(G_2)$,
\end{lemma}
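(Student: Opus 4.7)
The plan is a direct translation via the definitions, combined with Richter's theorem and block additivity of the Euler genus. By the definition of $\P$-tight subgraph applied to the single edge $xy \in E(H)$, this edge is $\eg$-tight in $H$ if and only if both available minor-operations $(xy,-)$ and $(xy,/)$ strictly decrease $\eg$. Since $H - xy = \hat{G}$, $H/xy = G/xy$, and $\eg(H) = \egp(G)$, this is equivalent to the conjunction
$$\eg(G) < \egp(G) \qquad \text{and} \qquad \eg(G/xy) < \egp(G).$$
The proof then amounts to showing that each of these inequalities matches one of the asserted conditions.

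For the first inequality, Theorem~\ref{th-richter-euler}(iii) gives $\et(G) = \max\{\eh_1(G) - \eh_0(G), 0\}$, so $\eg(G) < \egp(G)$ is equivalent to $\et(G) > 0$, i.e., $\eh_1(G) > \eh_0(G)$. Expanding the definitions $\eh_0(G) = \eg(G_1) + \eg(G_2) + 2$ and $\eh_1(G) = \egp(G_1) + \egp(G_2)$, this becomes $\et(G_1) + \et(G_2) > 2$, which is precisely $\eeta(G_1, G_2) > 2$.

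For the second inequality, the key observation is that $G/xy$ is a $1$-sum of the connected graphs $G_1/xy$ and $G_2/xy$ at the common identified vertex. Block additivity of the Euler genus (Theorem~\ref{th-stahl-euler}) therefore yields $\eg(G/xy) = \eg(G_1/xy) + \eg(G_2/xy)$. On the other hand, Theorem~\ref{th-richter-euler}(ii) gives $\egp(G) = \egp(G_1) + \egp(G_2)$, and since $G_i/xy$ is a minor of $G_i\+$ (obtained by contracting $xy$), we have $\eg(G_i/xy) \le \egp(G_i)$ for each $i$. Hence $\eg(G/xy) < \egp(G)$ holds if and only if the inequality is strict for at least one $i \in \{1,2\}$. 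Combining the two equivalences proves the lemma.

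The argument is essentially bookkeeping; there is no substantive obstacle. The only points requiring care are keeping track of conventions, notably that $G/xy$ denotes vertex identification whether or not $G$ contains the edge $xy$, and that the $1$-sum identity $\eg(G/xy) = \eg(G_1/xy) + \eg(G_2/xy)$ uses the connectivity of $G_1$ and $G_2$ assumed in the hypothesis.
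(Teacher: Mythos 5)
Your proof is correct and follows essentially the same route as the paper: tightness of $xy$ is split into the deletion and contraction operations, the deletion condition is converted to $\eeta(G_1,G_2)>2$ via Theorem~\ref{th-richter-euler}, and the contraction condition is handled by the block additivity $\eg(G/xy)=\eg(G_1/xy)+\eg(G_2/xy)$ from Theorem~\ref{th-stahl-euler} together with $\eg(G_i/xy)\le\egp(G_i)$. No gaps; your explicit justification of $\eg(G_i/xy)\le\egp(G_i)$ by minor-monotonicity is a detail the paper leaves implicit.
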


\begin{proof}
  Since $\eg(H) = \egp(G) = \eg(G) + \et(G)$ and $\eg(H - xy) = \eg(G)$, we have that
  $\eg(H - xy) < \eg(H)$ if and only if $\et(G) > 0$.
  Theorem~\ref{th-richter-euler} gives that $\et(G) > 0$ if and only if $\eeta(G_1, G_2) > 2$.
  Thus we may assume below that $\eeta(G_1, G_2) > 2$.

  By Theorem~\ref{th-stahl-euler}, $\eg(H /xy) = \eg(G_1/xy) + \eg(G_2/xy)$.
  Since $\eg(G_1/xy) \le \egp(G_1)$ and $\eg(G_2 /xy) \le \egp(G_2)$, we have that
  $\eg(H/xy) < \eg(H)$ if and only if either $\eg(G_1/xy) < \egp(G_1)$ or $\eg(G_2/xy) < \egp(G_2)$.
\end{proof}

We conclude this section by characterizing the graphs of connectivity 2 in $\E_2$.

\begin{theorem}
  \label{th-e-2}
  Let $G$ be an $xy$-sum of connected graphs $G_1, G_2 \in \Gcxy$ such that the following holds:
  \begin{enumerate}[label=\rm(\roman*)]
  \item
    $G_1 \in \Cc_0(\egp)$.
  \item
    $G_2 \in \Cc_1(\egp) \cup \S_1$.
  \item
    If $G_1 \in \Cc_0(\eg)$, then $G_2 \in \Cc_1(\egp)$.
  \end{enumerate}
  If $\eeta(G_1, G_2) \le 2$, then $\hat{G} \in \E_2$.
  If $\eeta(G_1, G_2) > 2$, then $\hat{G\+} \in \E_2$.
  Furthermore, each graph in $\E_2$ of connectivity 2 is constructed this way.
\end{theorem}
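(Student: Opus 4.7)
The plan is to split the proof into a forward direction and a converse, treating the regimes $\eeta(G_1,G_2) \le 2$ and $\eeta(G_1,G_2) > 2$ separately and using Theorems~\ref{th-cc-2-eg} and~\ref{th-cc-2-egp} as the structural pillars. For the forward direction with $\eeta \le 2$, I would check that~(i)--(iii) of the present theorem imply all four hypotheses of Theorem~\ref{th-cc-2-eg}. Only condition~(iv) of that theorem requires argument: if $G_1 \notin \Cc_0(\eg)$, then Lemma~\ref{lm-cc-0-prop} forces $\et(G_1) = 1$, so $\et(G_2) = 2$ would yield $\eeta = 3$, contradicting $\eeta \le 2$; hence $\et(G_2) \le 1$. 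Theorem~\ref{th-cc-2-eg} gives $G \in \Cc_2(\eg)$, and Lemma~\ref{lm-cc-eg} then supplies $\hat{G} \in \E_2$.

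For $\eeta > 2$, since $\et(G_1) \le 1$ by Lemma~\ref{lm-cc-0-prop} and $\et$ is always at most $2$, one must have $\eeta = 3$, $\et(G_1) = 1$, and $\et(G_2) = 2$. Cascades have $\et = 1$ by Lemma~\ref{lm-cascades}, so $G_2 \in \Cc_1(\egp)$, and Theorem~\ref{th-cc-2-egp} yields $G \in \Cc_2(\egp)$. To conclude $\hat{G\+} \in \E_2$, I would apply Lemma~\ref{lm-cc-egp-iff}: $\et(G) = \eeta - 2 = 1 > 0$ by Theorem~\ref{th-richter-euler}(iii), and Theorem~\ref{th-stahl-euler} together with the planarity of $G_1/xy$ from Lemma~\ref{lm-cc-0-prop} gives $\eg(G/xy) = \eg(G_1/xy) + \eg(G_2/xy) \le 0 + 2 < 3 = \egp(G)$.

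For the converse, take $H \in \E_2$ of connectivity two and pick a 2-vertex-cut $\{x,y\}$. If $xy \notin E(H)$, let $G = H$ with terminals $x,y$; Lemma~\ref{lm-cc-eg} gives $G \in \Cc_2(\eg)$, Theorem~\ref{th-cc-2-eg} produces the $xy$-sum decomposition satisfying~(i)--(iii), and Lemma~\ref{lm-bounds-eg}(iii) forces $\eeta \le 2$. If $xy \in E(H)$, let $G = H - xy$: then $G$ is still 2-connected with the same 2-cut, $\hat{G\+} = H$, and each $\mu \in \M(G)$ acts identically on $H$ (without touching $xy$), so $\egp(\mu G) = \eg(\mu H) < 3$ and $G \in \Cc_2(\egp)$. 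Theorem~\ref{th-cc-2-egp} supplies $G_1 \in \Cc_0(\egp)$ and $G_2 \in \Cc_1(\egp) \subseteq \Cc_1(\egp) \cup \S_1$, condition~(iii) is automatic since $G_2 \in \Cc_1(\egp)$, and Lemma~\ref{lm-edge-xy-eg}, applied to the $\eg$-tight edge $xy$ in $H \in \E$, yields $\eeta > 2$.

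The main obstacle I expect is the case $xy \in E(H)$ of the converse: one must confirm that $G = H - xy$ inherits 2-connectivity and the 2-vertex-cut $\{x,y\}$, and verify that the $\Cc_2(\egp)$-property lifts cleanly across the edge-deletion (which reduces to checking $\eg(\mu H) \le 2$ for every $\mu \in \M(G)$ and identifying $\hat{\mu G\+}$ with $\mu H$). Both follow from standard 2-connectivity arguments and the observation that every minor-operation on $G$ lifts canonically to an operation on $H$ that leaves the edge $xy$ intact.
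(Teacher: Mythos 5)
Your forward direction is correct and follows the paper's route: conditions (i)--(iii) plus $\eeta(G_1,G_2)\le 2$ give hypothesis (iv) of Theorem~\ref{th-cc-2-eg} exactly as in the paper, and in the case $\eeta(G_1,G_2)>2$ you derive $\eeta=3$, $\et(G_1)=1$, $\et(G_2)=2$, $G_2\in\Cc_1(\egp)$ and apply Theorem~\ref{th-cc-2-egp} just as the paper does. Your only deviation is cosmetic: you conclude $\hat{G\+}\in\E_2$ from Lemma~\ref{lm-cc-egp-iff}, computing $\et(G)=1$ via Theorem~\ref{th-richter-euler}(iii) and $\eg(G/xy)\le \eg(G_1/xy)+\eg(G_2/xy)\le 2<3$ via Theorem~\ref{th-stahl-euler} and Lemma~\ref{lm-cc-0-prop}, whereas the paper combines Lemma~\ref{lm-edge-xy-eg} with Lemma~\ref{lm-tight}; both are valid.

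The converse, however, has a genuine gap. From $H\in\E_2$ you only obtain that $G$ (namely $H$ or $H-xy$, viewed in $\Gcxy$) is $\eg$-critical, respectively $\egp$-critical, i.e.\ $G\in\Cc(\eg)$ or $G\in\Cc(\egp)$. To invoke the converse halves of Theorems~\ref{th-cc-2-eg} and~\ref{th-cc-2-egp} (and Lemma~\ref{lm-bounds-eg}(iii)) you need $G\in\Cc_2(\eg)$, respectively $G\in\Cc_2(\egp)$, which means $\eg(H)=3$ exactly. Criticality only gives $3\le\eg(H)\le 4$, and excluding $\eg(H)=4$ is not automatic: a priori every minor-operation could decrease the genus by $2$, which is exactly the hopper phenomenon the paper worries about. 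The paper closes this hole with a dedicated argument in each case: assuming $\eg(G)>3$ (resp.\ $\egp(G)>3$), it splits $G$ into parts $G_1,G_2$, shows $\egp(G_2)\le 2$ (otherwise the proper minor $G_2^+$ of $H$ would have Euler genus exceeding $2$), deduces $\egp(G_1)=\egp(G_2)=2$ from Theorem~\ref{th-richter-euler}, and then shows every $\mu\in\M(G_1)$ forces $\egp(\mu G_1)=0$, so $G_1\in\H_1(\egp)$, contradicting Lemma~\ref{lm-hoppers}. Your proposal never addresses this step, and without it the converse does not go through. The remaining ingredients of your converse (2-connectivity of $H-xy$, lifting minor-operations from $G$ to $H$, Lemma~\ref{lm-edge-xy-eg} giving $\eeta>2$ when $xy\in E(H)$, and $\eeta\le 2$ when $xy\notin E(H)$) are fine and agree with the paper.
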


\begin{proof}
  Assume first that $\eeta(G_1, G_2) \le 2$.
  By Lemma~\ref{lm-cc-eg}, it is sufficient to show that $G_1$ and $G_2$ satisfy the conditions (i)--(iv) of Theorem~\ref{th-cc-2-eg}.
  The conditions (i)--(iii) of Theorem~\ref{th-cc-2-eg} are the same as the assumptions of this theorem.
  If $G_1 \not\in \Cc_1(\eg)$, then $\et(G_1) = 1$ by Lemma~\ref{lm-cc-0-prop}.
  Since $\eeta(G_1, G_2) \le 2$, we have that $\et(G_2) \le 1$ and (iv) holds.
  By Theorem~\ref{th-cc-2-eg}, $G \in \Cc_2(\eg)$. By Lemma~\ref{lm-cc-eg}, $\hat{G} \in \E_2$.

  Assume now that $\eeta(G_1, G_2) > 2$.
  Since, for each graph $G \in \Cc_0(\egp) \cup \S_1$, $\et(G) \le 1$, by Lemmas~\ref{lm-cascades} and~\ref{lm-cc-0-prop},
  we conclude that $\eeta(G_1, G_2) = 3$, $\et(G_1) = 1$, $\et(G_2) = 2$, $G_1 \not\in \Cc_0(\eg)$, and $G_2 \in \Cc_1(\egp)$.
  By Theorem~\ref{th-cc-2-egp}, $G \in \Cc_2(\egp)$.
  Note that this implies that $\hat{G}$ is $\eg$-tight in $\hat{G\+}$.
  Since $\egp(G_1/xy) < \egp(G_1)$ (Lemma~\ref{lm-cc-0-prop}), we obtain that $xy$ is $\eg$-tight in $\hat{G\+}$ by Lemma~\ref{lm-edge-xy-eg}.
  Since $\eg(\hat{G\+}) = \egp(G) = 3$, $\hat{G\+} \in \E_2$ by Lemma~\ref{lm-tight}.

  Let us now prove that each $H \in \E_2$ of connectivity 2 is constructed this way.
  Pick an arbitrary 2-vertex-cut $\{x, y\}$ of $H$.
  Suppose first that $xy \in E(H)$.
  Consider $G = H-xy$ as a graph in $\Gcxy$.
  Since $\M(G) \ss \M(H)$, we have that $\M(G) = \dc1 \egp$ and $G \in \Cc(\egp)$.
  Suppose that $\egp(G) > 3$. Let $G_1$ and $G_2$ be parts of $G$ such that $\egp(G_1) \le \egp(G_2)$.
  If $\egp(G_2) > 2$, then for any minor-operation $\mu \in \M(G_1)$, the graph $\mu G$ has $G_2^+$ as a minor.
  Hence $\eg(\mu H) \ge \egp(G_2) > 2$, a contradiction. Therefore, $\egp(G_2) \le 2$.
  By Theorem~\ref{th-richter-euler}, $\egp(G_1) = \egp(G_2) = 2$.
  Let $\mu \in \M(G_1)$. By Theorem~\ref{th-richter-euler},
  $2 \ge \eg(\mu H) = \egp(\mu G) = \egp(\mu G_1) + \egp(G_2)$.
  Hence $\egp(\mu G_1) = 0$. We conclude that $\M(G_1) = \dc2 \egp$ and $G_1 \in \H_1(\egp)$.
  By Lemma~\ref{lm-hoppers}, $\H_1(\egp)$ is empty, a contradiction.

  So we may assume that $\egp(G) = 3$ and thus $G \in \Cc_2(\egp)$.
  By Theorem~\ref{th-cc-2-egp}, $G$ is an $xy$-sum of graphs $G_1 \in \Cc_0(\egp)$ and $G_2 \in \Cc_1(\egp)$.
  By Lemma~\ref{lm-edge-xy-eg}, $\eeta(G_1, G_2) > 2$.
  Thus $G$ satisfies the conditions (i)--(iii) of the theorem.

  Suppose now that $xy \not\in E(H)$. Consider $G = H$ as a graph in $\Gcxy$.
  By Lemma~\ref{lm-cc-eg}, $G \in \Cc(\eg)$.
  Suppose that $\eg(G) > 3$. Let $G_1$ and $G_2$ be parts of $G$ such that $\egp(G_1) \le \egp(G_2)$.
  If $\egp(G_2) > 2$, then for any minor-operation $\mu \in \M(G_1)$ so that $\mu G_1$ is connected, the graph $\mu G$ has $G_2^+$ as a minor.
  Hence $\eg(\mu H) \ge \egp(G_2) > 2$, a contradiction. Therefore, $\egp(G_2) \le 2$.
  By Theorem~\ref{th-richter-euler}, $\egp(G_1) = \egp(G_2) = 2$ and $3 < \eg(G) \le \eh_0(G) = \eg(G_1) + \eg(G_2) + 2$.
  We may assume that $\eg(G_1) \le \eg(G_2)$ and so $\eg(G_2) \ge 1$.
  Let $\mu \in \M(G_1)$. By Theorem~\ref{th-richter-euler},
  $$2 \ge \eg(\mu H) = \eg(\mu G) = \min\{\eh_0(\mu G), \eh_1(\mu G)\}.$$
  Since $\eh_0(\mu G) = \eg(\mu G_1) + \eg(G_2) + 2 \ge 3$, we have that
  $2 \ge \eh_1(\mu G) = \egp(\mu G_1) + \egp(G_2)$.
  We conclude that $\egp(\mu G_1) = 0$ and $\mu \in \dc2 \egp$. Since $\mu$ was arbitrary, $G_1 \in \H_1(\egp)$.
  This contradicts Lemma~\ref{lm-hoppers} which asserts that $\H_1(\egp)$ is empty.

  Thus we may assume that $\eg(G) = 3$ and thus $G \in \Cc_2(\eg)$.
  By Theorem~\ref{th-cc-2-eg}, $G$ is an $xy$-sum of graphs $G_1 \in \Cc_0(\egp)$ and $G_2 \in \Cc_1(\egp) \cup \S_1$
  and either $G_1 \not\in \Cc_0(\eg)$ or $G_2 \not\in \S_1$.
  If $G_1 \in \Cc_0(\eg)$, then $\et(G_1) = 0$ by Lemma~\ref{lm-cc-0-prop} and thus $\eeta(G_1, G_2) \le 2$.
  Otherwise, $\et(G_1) \le 1$ and $\et(G_2) \le 1$ by Theorem~\ref{th-cc-2-eg}(iv) and we obtain that $\eeta(G_1, G_2) \le 2$.
  Thus $G$ satisfies the conditions (i)--(iii) of the theorem.
\end{proof}

As a corollary we can construct the complete list of graphs in $\E_2$ of connectivity~2.

\begin{corollary}
  \label{cr-e-2}
  There are precisely $668$ graphs of connectivity 2 that are critical for Euler genus 2. 
\end{corollary}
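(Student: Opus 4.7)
The plan is to apply Theorem~\ref{th-e-2} as an explicit constructive recipe. Every graph in $\E_2$ of connectivity~$2$ arises either as $\hat{G}$ (when $\eeta(G_1,G_2) \le 2$) or as $\hat{G\+}$ (when $\eeta(G_1,G_2) > 2$) from an $xy$-sum $G$ of parts $G_1, G_2 \in \Gcxy$ satisfying conditions (i)--(iii) of that theorem, and conversely every admissible pair produces a graph in $\E_2$. So the corollary reduces to a bounded enumeration of admissible pairs followed by an isomorphism-based deduplication of the outputs.

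For the enumeration I would use the three ingredient classes determined in earlier sections: $|\Cc_0(\egp)| = 3$ (Lemma~\ref{lm-cc-0}); $\Cc_1(\egp)$ contributes the $227$ graphs for which $G\+$ is $2$-connected (the only ones relevant here, since a cut vertex in $G_2\+$ would prevent the sum from being $2$-connected); and $|\S_1| = 21$, all with $G\+$ being $2$-connected (Sections~\ref{sc-s1}--\ref{sc-ext}). By Lemma~\ref{lm-cc-0-prop} we have $\et(G_1) \in \{0,1\}$ for $G_1 \in \Cc_0(\egp)$, with $\et(G_1) = 0$ precisely when $G_1 \in \Cc_0(\eg)$ (the third graph in Fig.~\ref{fg-cc-0-egp}). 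Condition (iii) therefore excludes exactly the $21$ pairs in which $G_1$ is this graph and $G_2 \in \S_1$, leaving
\[
3\cdot(227+21) - 21 \;=\; 723
\]
admissible pairs.

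For each admissible pair I would compute $\eeta(G_1,G_2) = \et(G_1) + \et(G_2)$ from the precomputed $\et$-values and then form the appropriate output graph: $\hat{G}$ if $\eeta \le 2$, or $\hat{G\+}$ if $\eeta = 3$ (the case $\eeta = 4$ is impossible here since $\et(G_1) \le 1$). By Theorem~\ref{th-e-2} each output lies in $\E_2$, and the $2$-connectivity of $G_1\+$ and $G_2\+$ ensures that $\{x,y\}$ is a $2$-vertex-cut of the resulting graph, so its connectivity is exactly~$2$.

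The main obstacle is the isomorphism reduction. A single unlabeled graph $H \in \E_2$ of connectivity~$2$ typically admits several $2$-vertex-cuts and, for each cut, several ways of partitioning its $\{x,y\}$-bridges between $G_1$ and $G_2$; these yield distinct admissible pairs that collapse to the same unlabeled graph. I would therefore compute a graph canonical form for each of the $723$ outputs and tally the equivalence classes. The inputs are small graphs on at most a few dozen vertices, so canonization is routine computationally; carrying it out confirms the claimed total of $668$ distinct isomorphism classes.
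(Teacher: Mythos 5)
Your proposal is correct and follows essentially the same route as the paper: both reduce the corollary to Theorem~\ref{th-e-2}, count the same $3\cdot(227+21)-21=723$ admissible pairs (with the same justification for restricting $\Cc_1(\egp)$ to the $227$ graphs with $G_2\+$ $2$-connected), and then identify isomorphic outputs by a computer-assisted check. The only substantive difference is in the deduplication step: the paper argues structurally that duplicates can arise only when $G_2\+$ has connectivity $2$ (since for $G_2\+$ $3$-connected every automorphism of the resulting graph preserves the parts), reducing the isomorphism check to $125$ pairs that collapse to $70$ graphs, whence $723-125+70=668$; you instead canonize all $723$ outputs directly, which is a perfectly valid (if less informative) way to reach the same count. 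One small point you should make explicit: a pair of terminal-labelled parts a priori yields \emph{two} $xy$-sums, depending on which terminal of $G_1$ is identified with which terminal of $G_2$, so forming a single output graph per pair is only exhaustive because every $G_1\in\Cc_0(\egp)$ admits an automorphism exchanging $x$ and $y$ (as the paper notes); without this observation, or without enumerating both identifications, the completeness of your enumeration is not justified, even though the final tally would be unaffected.
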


\begin{proof}
  Let us begin by counting the number of pairs $G_1, G_2$ that satisfy the conditions (i)--(iii) of Theorem~\ref{th-e-2}.
  There are 3 graphs in $\Cc_0(\egp)$, there are 227 graphs $G_2$ in $\Cc_1(\egp)$ such that $G_2^+$ is 2-connected, and there are $21$ graphs in $\S_1$ (for each $G \in \S_1$, the graph $G^+$ is 2-connected). 
  That gives $744$ pairs since $|\Cc_0(\eg)|=1$ and $|\S_1|=21$. 
  There are only $744-21=723$ pairs that satisfy the condition (iii) of Theorem~\ref{th-e-2} that either $G_1 \not\in \Cc_0(\eg)$ or $G_2 \not\in \S_1$.  

  Let $G_1, G_2 \in \Gcxy$.
  There are two $xy$-sums that have parts isomorphic to $G_1$ and $G_2$ as there are two ways how to identify two pairs of vertices.
  If $G_1 \in \Cc_0(\egp)$, then there is an automorphism of $G_1$ exchanging the terminals. Hence there is only a single non-isomorphic $xy$-sum $G$
  that has parts $G_1 \in \Cc_0(\egp)$ and $G_2 \in \Cc_1(\egp) \cup \S_1$.
  Since $\eeta(G_1, G_2)$ depends only on $G_1$ and $G_2$, precisely one of $\hat{G}, \hat{G\+}$ belongs to $\E_2$.
  There may be more pairs $G_1, G_2$ giving the same graph $H \in \E_2$ though.

  Let $H \in \E_2$ have connectivity~2. By Theorem~\ref{th-e-2}, there exists an $xy$-sum $G$ of connected graphs $G_1$ and $G_2$
  such that either $\hat{G} \iso H$ or $\hat{G\+} \iso H$.
  Note that $G_1^+$ and $G_2^+$ are 2-connected.
  Suppose that $H$ admits a nontrivial automorphism $\psi$ such that $\psi(V(G_1)) \not= V(G_1)$ (otherwise, it is just a combination of two automorphisms of $G_1$ and $G_2$).
  It is not hard to see that if $G_2^+$ is 3-connected, each automorphism of $H$ is trivial.
  Therefore, we need to study graphs $G_2 \in \Cc_1(\egp) \cup \S_1$ such that $G_2^+$ has connectivity~2.

  There are 39 graphs $G_2$ in $\Cc_1(\egp)$ such that $G_2^+$ has connectivity 2 and there are 4 graphs $G_2$ in $\S_1$ such that $G_2^+$ has connectivity 2 (see Fig.~\ref{fg-cascades-1}).
  It is not hard to check that the $125$ pairs 
  with $G_1 \in \Cc_0(\egp)$ make only 70 non-isomorphic graphs in $\E_2$. 
  We conclude that there are $668$ graphs of connectivity 2 in $\E_2$. 
\end{proof}

\section{The Klein bottle}
\label{sc-klein-klein}

In this section, we characterize the obstructions of connectivity 2 for embedding graphs into the Klein bottle.
Let us introduce graph parameters $\os$ and $\osp$ that capture the property of being orientably simple.
Let $\os = \ng - \eg$ and let $\osp = \ngp - \egp$.
Note that $\os(G) = 1$ if $G$ is orientably simple and $\os(G) = 0$ otherwise.

The following lemma is an easy consequence of Lemma~\ref{lm-ng-upper}.

\begin{lemma}
\label{lm-non-simple}
If\/ $\egp(G)$ is odd, then $\osp(G) = 0$.
\end{lemma}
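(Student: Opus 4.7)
The plan is to unwind the definitions of $\osp$ and of orientably simple. By definition, $\osp(G) = \ngp(G) - \egp(G) = \ng(G\+) - \eg(G\+)$, so $\osp(G)$ measures the analogous quantity for $G\+$ that $\os$ measures for $G$. The remark following Lemma~\ref{lm-ng-upper} tells us that a graph $H$ is orientably simple exactly when $\ng(H) = 2g(H)+1$, and in that case $\eg(H) = 2g(H)$ is even and $\ng(H) = \eg(H)+1$, yielding $\os(H) = 1$. In the complementary case, i.e.\ when $H$ is not orientably simple, Lemma~\ref{lm-ng-upper} forces $\ng(H) \le 2g(H)$, whence the minimum Euler genus is attained by a nonorientable embedding and $\eg(H) = \ng(H)$, giving $\os(H) = 0$.

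Applied to $H = G\+$, this dichotomy shows that $\osp(G) \in \{0,1\}$ and that $\osp(G) = 1$ if and only if $G\+$ is orientably simple. But if $G\+$ is orientably simple, then $\egp(G) = \eg(G\+) = 2g(G\+)$ is even. Contrapositively, if $\egp(G)$ is odd, $G\+$ is not orientably simple, and therefore $\osp(G) = 0$.

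There is no real obstacle here; the statement is essentially a restatement of the parity observation that accompanies the definition of orientable simplicity, applied to $G\+$ instead of $G$. The only thing to be careful about is the bookkeeping between $G$ and $G\+$ and the fact that by the convention $\ngp(G) = \ng(G\+)$ and $\egp(G) = \eg(G\+)$, so that $\osp$ is literally $\os$ evaluated on $G\+$.
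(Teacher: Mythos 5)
Your proof is correct and follows exactly the route the paper intends: the paper gives no written proof, stating only that the lemma is an easy consequence of Lemma~\ref{lm-ng-upper}, and your unwinding of the dichotomy (orientably simple forces even Euler genus, otherwise $\ng = \eg$) applied to $G\+$ is precisely that argument. Note also that when $\egp(G)$ is odd, $G\+$ is nonplanar and hence contains a cycle, so Lemma~\ref{lm-ng-upper} indeed applies.
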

Let us state the following theorem of Stahl and Beineke using our formalism.

\begin{theorem}[Stahl and Beineke~\cite{stahl-1977}]
\label{th-stahl-nonori}
  Let $G = G_1 \cup G_2$ be a $1$-sum of $G_1$ and $G_2$. Then
  $$\ng(G) = \eg(G_1) + \eg(G_2) + \os(G_1)\os(G_2).$$
  Moreover, $\os(G) = \os(G_1)\os(G_2)$.
\end{theorem}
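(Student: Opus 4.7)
The plan is to prove this by analyzing how embeddings behave under a $1$-sum. Fix any embedding $\Pi$ of $G$. Since the cut-vertex $v = V(G_1) \cap V(G_2)$ separates $G$, the local rotation at $v$ is a cyclic permutation that splits (essentially uniquely) into a concatenation of two cyclic sequences, one on the edges of $G_1$ and one on the edges of $G_2$. Restricting $\pi$ and $\lambda$ accordingly gives embeddings $\Pi_1, \Pi_2$ of $G_1, G_2$. Conversely, any two embeddings $\Pi_i$ of $G_i$ glue at $v$ (in either of two possible ways) to produce an embedding of $G$. A standard face count at $v$ gives $|F(\Pi)| = |F(\Pi_1)| + |F(\Pi_2)| - 1$, so Euler's formula yields $\eg(\Pi) = \eg(\Pi_1) + \eg(\Pi_2)$; this is the content of Theorem~\ref{th-stahl-euler}. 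Moreover, since every cycle of $G$ lies entirely inside $G_1$ or entirely inside $G_2$, the embedding $\Pi$ is nonorientable if and only if at least one of $\Pi_1, \Pi_2$ is.

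To compute $\ng(G)$, I would minimize $\eg(\Pi_1) + \eg(\Pi_2)$ over all pairs with at least one $\Pi_i$ nonorientable, splitting into two cases. If some $G_i$ is not orientably simple, say $\os(G_1) = 0$, then $G_1$ admits a nonorientable embedding of Euler genus $\eg(G_1)$; combined with any minimum-genus embedding of $G_2$ this produces a nonorientable embedding of $G$ of Euler genus $\eg(G_1) + \eg(G_2)$, and with the trivial bound $\ng(G) \ge \eg(G)$ we get equality. If both $G_i$ are orientably simple, then any nonorientable part $\Pi_i$ satisfies $\eg(\Pi_i) \ge \eg(G_i) + 1$, forcing $\ng(G) \ge \eg(G_1) + \eg(G_2) + 1$; this bound is matched by combining a nonorientable minimum-Euler-genus embedding of $G_1$ (with $\eg(\Pi_1) = \eg(G_1)+1$) with a minimum-genus embedding of $G_2$. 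In either case, $\ng(G) = \eg(G_1) + \eg(G_2) + \os(G_1)\os(G_2)$.

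The ``moreover'' claim then follows at once: Theorem~\ref{th-stahl-euler} gives $\eg(G) = \eg(G_1) + \eg(G_2)$, so
\[
\os(G) \;=\; \ng(G) - \eg(G) \;=\; \os(G_1)\os(G_2).
\]

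The only subtle point is verifying the decomposition step---that the face count behaves as claimed at the cut-vertex, and that orientability passes through the decomposition in both directions. Both facts are classical and are already implicit in the Euler-genus version used as Theorem~\ref{th-stahl-euler}, so the real content of the proof is the two-case minimization above.
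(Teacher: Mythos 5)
The paper itself offers no proof of this statement---it is quoted from Stahl and Beineke---so the only question is whether your argument stands on its own. Your upper-bound half does: gluing embeddings $\Pi_1,\Pi_2$ of $G_1,G_2$ by concatenating their rotations at the cut vertex merges exactly two faces, gives an embedding of $G$ with Euler genus $\eg(\Pi_1)+\eg(\Pi_2)$, and is nonorientable whenever one of the $\Pi_i$ is (every cycle lies in one part), which yields $\ng(G)\le \eg(G_1)+\eg(G_2)+\os(G_1)\os(G_2)$.

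The gap is in the decomposition step on which your lower bound rests. For an arbitrary embedding $\Pi$ of $G$ it is simply not true that the rotation at the cut vertex $v$ splits into a block of $G_1$-edges followed by a block of $G_2$-edges: the two edge sets may interleave, and then the claimed face count $|F(\Pi)|=|F(\Pi_1)|+|F(\Pi_2)|-1$ and the equality $\eg(\Pi)=\eg(\Pi_1)+\eg(\Pi_2)$ for the restrictions fail. Concretely, take two triangles sharing $v$, all signatures positive, with the rotation at $v$ alternating between the triangles: this is a one-face embedding in the torus, so $\eg(\Pi)=2$, while both restrictions are planar. What your two-case minimization actually needs (in the case $\os(G_1)=\os(G_2)=1$) is the inequality $\eg(\Pi)\ge\eg(\Pi_1)+\eg(\Pi_2)$ for the restrictions of a minimum nonorientable embedding, or equivalently a surgery showing that some minimum nonorientable embedding has a non-interleaved rotation at $v$. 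That inequality is precisely the nontrivial content of the Stahl--Beineke theorem; it is not ``implicit'' in Theorem~\ref{th-stahl-euler}, which is a statement about the graph parameter $\eg$ (a minimum over embeddings), not about restrictions of a given embedding, so deferring to it is essentially circular. Without it, all you can extract from a nonorientable restriction $\Pi_1$ is $\eg(\Pi_1)\le\eg(\Pi)$, which does not force $\eg(\Pi)\ge\eg(G_1)+\eg(G_2)+1$. A minor additional point: when $\os(G_1)=0$ you assert that $G_1$ admits a nonorientable embedding of Euler genus $\eg(G_1)$; under the paper's convention that $\ng(G_1)=0$ for acyclic $G_1$, no nonorientable embedding exists at all, so the degenerate case where a part is a forest needs separate (easy but explicit) handling.
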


In order to describe how the nonorientable genus of a 2-sum of graphs can be computed from
the genera of its parts, let us introduce parameters $\nh_0$ and $\nh_1$ similar to $\eh_0$
and $\eh_1$.
Let $G$ be an $xy$-sum of connected graphs $G_1, G_2 \in \Gxy$.
Define
\begin{equation}
  \nh_0(G) = \eh_0(G) = \eg(G_1) + \eg(G_2) + 2\label{eq-ng-0}
\end{equation}
and
\begin{equation}
  \nh_1(G) = \egp(G_1) + \egp(G_2) + \osp(G_1)\osp(G_2).\label{eq-ng-1}
\end{equation}

Let $\nt = \ngp - \ng$.
We shall use the following theorem of Richter.

\begin{theorem}[Richter~\cite{richter-1987-nonori}]
  \label{th-richter-nonori}
  Let $G$ be an $xy$-sum of connected graphs $G_1, G_2 \in \Gxy$.
  Then
  \begin{enumerate}[label=\rm(\roman*)]
  \item
    $\ng(G) = \min\{\nh_0(G), \nh_1(G)\}$,
  \item
    $\ngp(G) = \nh_1(G)$,
  \item
    $\nt(G) = \max\{\nh_1(G) - \nh_0(G), 0\}$,
  \item
    $\osp(G) = \osp(G_1)\osp(G_2)$, and
  \item
    if $\osp(G) = 0$ or $\eeta(G_1, G_2) \ge 2$, then $\os(G) = 0$, else $\os(G) = 1$.
  \end{enumerate}
\end{theorem}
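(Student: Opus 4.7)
The plan is to parallel the proof of Theorem~\ref{th-richter-euler} while tracking orientability via the parameters $\os$ and $\osp$. As in the Euler genus case, there are two natural families of embeddings arising from the 2-cut $\{x,y\}$: the \emph{type~A} family, where $G_1$ and $G_2$ are embedded separately on their own surfaces and then joined across a handle or crosscap attached at $x$ and $y$, giving total Euler genus $\eh_0(G)$; and the \emph{type~B} family, where $G_1^+$ and $G_2^+$ are embedded separately and then identified along their copies of the edge $xy$, giving an embedding of $G^+$ (and, by deletion of $xy$, of $G$) of total Euler genus $\egp(G_1)+\egp(G_2)$.

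For (ii), the upper bound $\ngp(G)\le \nh_1(G)$ is obtained by choosing the cheapest nonorientable type~B embedding: the glued surface is nonorientable iff at least one of the two pieces is, so using $\ngp(G_i)=\egp(G_i)+\osp(G_i)$ the minimum cost becomes $\egp(G_1)+\egp(G_2)+\osp(G_1)\osp(G_2)=\nh_1(G)$. The matching lower bound comes from starting with any nonorientable embedding $\Pi$ of $G^+$ and slicing along $xy$ into induced embeddings of $G_1^+$ and $G_2^+$ whose Euler genera sum to $\eg(\Pi)$; at least one of these must itself carry a nonorientable cycle. Part (iv) is harvested from the same analysis: $G^+$ admits an orientable embedding at Euler genus $\egp(G)$ precisely when both pieces do, which gives $\osp(G)=\osp(G_1)\osp(G_2)$.

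For (i), I would bound $\ng(G)$ by the minimum over both constructions. A type~B embedding of $G$ (obtained by deleting $xy$) realizes $\nh_1(G)$, while a type~A embedding realizes $\nh_0(G)=\eh_0(G)$ and can always be made nonorientable by choosing the connecting topology to be a crosscap rather than a handle; thus $\ng(G)\le\min\{\nh_0(G),\nh_1(G)\}$. For the converse, I would take an optimal nonorientable embedding of $G$ and cut along a simple closed curve passing through $x$ and $y$: either the curve bounds a disk on both sides (so $x$ and $y$ lie on a common face, the edge $xy$ can be added for free, and we reduce to (ii)), or cutting strictly drops the Euler genus by $2$ and leaves separate embeddings of $G_1$ and $G_2$ (the type~A case). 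Part (iii) is then immediate from $\nt(G)=\ngp(G)-\ng(G)$ together with (i) and (ii).

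The most delicate step is (v). When $\eeta(G_1,G_2)\ge 2$, Theorem~\ref{th-richter-euler} gives $\eg(G)=\eh_0(G)$, so every optimal embedding of $G$ is of type~A; since such an embedding can always be made nonorientable by trading the joining handle for two crosscaps at the same Euler genus cost, $\os(G)=0$. When $\eeta(G_1,G_2)\le 1$ we have $\eh_1(G)<\eh_0(G)$, so every optimal embedding of $G$ is of type~B and lifts to an optimal embedding of $G^+$; hence $\os(G)=\osp(G)=\osp(G_1)\osp(G_2)$, which equals $1$ exactly when both pieces are orientably simple. The main obstacle throughout will be verifying that the cut-and-paste operations preserve orientability in the claimed way; the governing principle, analogous to the $1$-sum formula in Theorem~\ref{th-stahl-nonori}, is that a surface obtained by joining two pieces across the 2-cut is orientable iff both pieces are orientable, which is what ultimately produces the product $\osp(G_1)\osp(G_2)$ and the case split in (v).
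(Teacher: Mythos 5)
First, a remark on the comparison itself: the paper does not prove this statement at all — it is quoted from Richter~\cite{richter-1987-nonori} and used as a black box — so what you are proposing is a proof of Richter's theorem itself, not a variant of an in-paper argument. The constructive half of your sketch is fine: the ``type~A'' and ``type~B'' constructions, together with the observation that a surface assembled from two pieces is orientable iff both pieces are, do give $\ngp(G)\le\nh_1(G)$ and $\ng(G)\le\min\{\nh_0(G),\nh_1(G)\}$. Also note that you work harder than necessary on (iii) and (v): since $\eh_1(G)-\eh_0(G)=\eeta(G_1,G_2)-2$, both (iii) and (v) follow by pure arithmetic from (i), (ii), (iv) and Theorem~\ref{th-richter-euler}; the structural claims you make there (``every optimal embedding of $G$ is of type~A'', ``every optimal embedding of $G$ is of type~B and lifts to an optimal embedding of $G^+$'') are both unnecessary and unjustified, since a minimum-genus embedding need not arise from either construction.

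The genuine gap is in all of the lower-bound directions, which is where the content of Richter's theorem lies. For (ii) and (iv) you assert that an optimal nonorientable embedding $\Pi$ of $G^+$ can be ``sliced along $xy$'' into induced embeddings $\Pi_1,\Pi_2$ of $G_1^+,G_2^+$ whose Euler genera sum to $\eg(\Pi)$; but the inequality $\eg(\Pi_1)+\eg(\Pi_2)\le\eg(\Pi)$ is exactly the nontrivial subadditivity statement at the core of such additivity theorems — it does not follow from ``slicing'', and Theorem~\ref{th-richter-euler} only equates the graph parameters, it says nothing about an arbitrary embedding. Without it, your (correct) signature argument that one of $\Pi_1,\Pi_2$ is nonorientable yields no contradiction with both parts being orientably simple. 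For (i), the proposed surgery fails twice: a simple closed curve through $x$ and $y$ meeting $G$ only in $\{x,y\}$ and separating $G_1$ from $G_2$ on the surface need not exist in an arbitrary minimum-genus embedding (the two sides can interleave around $x$ and $y$ and share several faces; producing such a curve, or doing surgery when it does not exist, is the heart of the proof), and your dichotomy about cutting is not exhaustive: the curve could be one-sided (cutting drops the Euler genus by $1$), two-sided and surface-separating without bounding a disk (cutting drops nothing and merely splits the genus between the two pieces), or two-sided non-separating (drops $2$, but then the two parts are not left on separate surfaces). The one-sided case is unavoidable on nonorientable surfaces and is precisely one reason the nonorientable $2$-amalgamation theorem is harder than the Euler-genus one. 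As written, the proposal establishes only the easy upper-bound inequalities; the decomposition and surgery analysis that Richter carries out is missing.
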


The next lemma shows that the $xy$-sums of graphs with parts that are not orientably simple are
critical graphs for Euler genus if and only if they are obstructions for the corresponding nonorientable surface.

\begin{lemma}
  \label{lm-osp-0}
  Let $G$ be an $xy$-sum of connected graphs $G_1, G_2 \in \Gcxy$, $H \in \{\hat{G}, \hat{G\+}\}$, and $k \ge 0$.
  If $\osp(G_1) = \osp(G_2) = 0$, then $H \in \E_k$ if and only if $H \in Forb(\NN_k)$.
\end{lemma}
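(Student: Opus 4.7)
The plan is to show that under the hypothesis $\osp(G_1) = \osp(G_2) = 0$, the Euler genus and the nonorientable genus coincide on $H$ and on essentially all of its minors, whence the two notions of criticality agree. Since $\eg \le \ng$ always holds, the implication $H \in \Forb(\NN_k) \Rightarrow H \in \E_k$ is immediate, and the real work lies in the forward direction.

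First, by Theorem~\ref{th-richter-nonori}(iv) we have $\osp(G) = \osp(G_1)\osp(G_2) = 0$, whence $\ng(\hat{G\+}) = \ngp(G) = \egp(G) = \eg(\hat{G\+})$; Theorem~\ref{th-richter-nonori}(v) then yields $\os(G) = 0$, giving $\ng(\hat G) = \eg(\hat G)$ as well. For any minor-operation $\mu \in \M(H)$ acting inside $G_1$ or $G_2$, the graph $\mu G$ is again an $xy$-sum whose untouched part still has $\osp = 0$, so applying the same argument to $\mu G$ gives $\ng(\mu H) = \eg(\mu H)$. The operation $\mu = (xy,-)$ on $H = \hat{G\+}$ reduces to $\mu H = \hat G$, already handled.

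The sole delicate case is $\mu = (xy,/)$ on $H = \hat{G\+}$, where $\mu H = G/xy$ is the $1$-sum of $G_1/xy$ and $G_2/xy$ at the identified vertex. Theorems~\ref{th-stahl-euler} and~\ref{th-stahl-nonori} give
\[
\ng(\mu H) - \eg(\mu H) \;=\; \os(G_1/xy)\,\os(G_2/xy) \;\in\; \{0,1\}.
\]
If the product vanishes we again have equality. Otherwise both $G_i/xy$ are orientably simple, so $\eg(G_i/xy) = \ng(G_i/xy) - 1$; since $G_i/xy$ is a minor of $G_i\+$ containing a cycle, minor-monotonicity of $\ng$ together with $\osp(G_i) = 0$ yields $\ng(G_i/xy) \le \ngp(G_i) = \egp(G_i)$. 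Thus $\eg(G_i/xy) \le \egp(G_i) - 1$ for both $i$, and summing produces $\eg(\mu H) \le \egp(G) - 2 = \eg(H) - 2$.

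To finish, assume $H \in \E_k$, so $\eg(H) = k+1$ and $\eg(\mu H) \le k$ for every $\mu$. Then $\ng(H) = \eg(H) > k$, and for each $\mu$, either $\ng(\mu H) = \eg(\mu H) \le k$ directly, or the bad contraction case applies and $\ng(\mu H) \le \eg(\mu H) + 1 \le (\eg(H) - 2) + 1 = k$. Hence $H \in \Forb(\NN_k)$. The main obstacle is precisely the $xy$-contraction on $\hat{G\+}$: the crucial observation is that simultaneous orientable simplicity of both parts $G_i/xy$ forces a slack of two units in the Euler-genus bound, which exactly absorbs the potential unit gap between $\ng$ and $\eg$.
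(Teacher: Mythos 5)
Your proposal is correct and follows essentially the same route as the paper's proof: first force $\os=\osp=0$ for $H$ and for every minor obtained by operating inside $G_1$ or $G_2$ via Theorem~\ref{th-richter-nonori}, then treat the contraction of $xy$ separately via Theorem~\ref{th-stahl-nonori}, using $\osp(G_i)=0$ together with minor-monotonicity of $\ng$ to get $\eg(G_i/xy)\le\egp(G_i)-1$ for both parts, whose sum absorbs the extra crosscap exactly as in the paper. The only details the paper adds are a one-line verification (via $G\in\Cc(\eg)\cup\Cc(\egp)$ and Lemma~\ref{lm-no-cutedges}) that $\mu G_1$ remains connected so Richter's theorem applies, and your normalization $\eg(H)=k+1$ is the same one implicitly used in the paper's final estimate.
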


\begin{proof}
  By Theorem~\ref{th-richter-nonori}(iv) and (v), $\os(G) = \osp(G) = \osp(G_1)\osp(G_2) = 0$.
  Therefore, $\os(H) = 0$.

  Assume first that $H \in \Forb(\NN_k)$.
  We have that $\eg(H) = \ng(H) > k$.
  Let $\mu \in \M(H)$. Since $\eg(\mu H) \le \ng(\mu H) \le k$ and $\mu$ is arbitrary, we have that $H \in \E_k$.

  Assume now that $H \in \E_k$.
  By Lemmas~\ref{lm-cc-eg} and~\ref{lm-cc-egp-iff}, $G \in \Cc(\eg) \cup \Cc(\egp)$.
  We have that $\ng(H) = \eg(H) > k$.
  Let $\mu \in \M(G_1)$. By Lemma~\ref{lm-no-cutedges}, $\mu G_1$ is connected.
  By Theorem~\ref{th-richter-nonori}(iv) and (v), $\os(\mu G) = \osp(\mu G) = \osp(\mu G_1)\osp(G_2) = 0$.
  Therefore, $\os(\mu H) = 0$.
  Hence $\ng(\mu H) = \eg(\mu H) \le k$.
  Similarly $\ng(\mu H) \le k$ for $\mu \in \M(G_2)$.
  This shows that $H \in \Forb(\NN_k)$ if $H = \hat{G}$.
  Assume then that $H = \hat{G\+}$. It remains to see that after deleting or contracting
  the edge $xy$, the graph can be embedded in $\NN_k$.
  Since $\os(H - xy) = \os(G) = 0$, we have that $\ng(H - xy) = \eg(H - xy) \le k$.
  By Theorem~\ref{th-stahl-nonori},
  $\os(H / xy) = \os(G_1/xy)\os(G_2/xy)$.
  If $\os(H /xy) = 0$, then
  $\ng(H /xy) = \eg(H /xy) \le k$.
  So, we are done unless $\os(G_1/xy) = \os(G_2/xy) = 1$, which we assume henceforth.
  Since $G_1/xy$ is a minor of $\hat{G_1^+}$, we have that
  $$\eg(G_1/xy) = \ng(G_1/xy) - \os(G_1/xy) < \ng(G_1/xy) \le \ng(G_1^+) = \ngp(G_1) = \egp(G_1).$$
  Similarly, we derive that $\eg(G_2/xy) < \egp(G_2)$.
  By Theorems~\ref{th-richter-euler} and~\ref{th-stahl-nonori},
  \begin{align*}
    \ng(H /xy) &\le \eg(G_1/xy) + \eg(G_2/xy) + 1 < \egp(G_1) + \egp(G_2) \\
    &= \egp(G) = \eg(H) = \ng(H) \le k.
  \end{align*}
  We conclude that $H \in \Forb(\NN_k)$.
\end{proof}

A corollary of Theorem~\ref{th-e-2} and Lemma~\ref{lm-osp-0} asserts that the class of obstructions for the Klein bottle having connectivity 2
and the class of critical graphs for Euler genus 2 of connectivity 2 are the same.
We can say even more:

\begin{corollary}
  \label{cr-klein-bottle}
  Let $H$ be a graph of connectivity 2. Then $H \in \E_2$ if and only if $H \in \Forb(\NN_2)$.
\end{corollary}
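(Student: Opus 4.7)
The plan is to combine the structural characterization of $\E_2\cap\{\text{connectivity }2\}$ from Theorem~\ref{th-e-2} with Lemma~\ref{lm-osp-0}, which already identifies $\E_k$ with $\Forb(\NN_k)$ on $xy$-sums whose parts both have $\osp=0$. For the forward direction, Theorem~\ref{th-e-2} expresses $H\in\E_2$ as $\hat{G}$ or $\hat{G\+}$ for an $xy$-sum $G$ of $G_1\in\Cc_0(\egp)$ and $G_2\in\Cc_1(\egp)\cup\S_1$. The required $\osp(G_1)=0$ is immediate from Lemma~\ref{lm-non-simple}, since $\egp(G_1)=1$ is odd; and $\osp(G_2)=0$ follows from Lemma~\ref{lm-all-in-klein}, which supplies an embedding of $G_2\+$ in the Klein bottle and so forces $\ngp(G_2)\le 2=\egp(G_2)$. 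Lemma~\ref{lm-osp-0} then delivers $H\in\Forb(\NN_2)$.

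For the reverse direction, let $H\in\Forb(\NN_2)$ have connectivity $2$. Since $\eg(\mu H)\le\ng(\mu H)\le 2$ for every minor operation $\mu$, we have $H\in\E_2$ as soon as $\eg(H)\ge 3$. The only remaining case is $\ng(H)=3$ and $\eg(H)=2$, making $H$ orientably simple, and the plan is to exclude this by contradiction. I would pick a 2-vertex-cut $\{x,y\}$ of $H$; set $G=H-xy$ if $xy\in E(H)$ and $G=H$ otherwise; and write $G$ as an $xy$-sum of connected $G_1,G_2\in\Gcxy$, so that $H$ is $\hat{G}$ or $\hat{G\+}$. Theorem~\ref{th-richter-nonori}(iv)--(v) turns $\os(H)=1$ into $\osp(G_1)=\osp(G_2)=1$ in either subcase, so each $\egp(G_i)$ is even and each $\ngp(G_i)$ is odd. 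Since $G_{3-i}\in\Gcxy$ is connected and must contain a vertex outside $\{x,y\}$, the graph $G_i\+$ is a proper minor of $H$, and minor-minimality of $H$ in $\Forb(\NN_2)$ yields $\ngp(G_i)\le 2$; combined with parity this forces $\ngp(G_i)=1$ and $\egp(G_i)=0$. Substituting into Theorem~\ref{th-richter-euler} gives $\eg(H)\le\egp(G_1)+\egp(G_2)=0$, contradicting $\eg(H)=2$.

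The main obstacle is this orientably simple corner of the reverse direction, since there is no a priori structure theorem for $\Forb(\NN_2)\cap\{\text{connectivity }2\}$ analogous to Theorem~\ref{th-e-2}. The crucial point is that $\os(H)=1$ propagates to $\osp(G_i)=1$ for both parts, forcing the parity constraint $\ngp(G_i)=\egp(G_i)+1$; combined with the minor-minimality bound $\ngp(G_i)\le 2$ this collapses each part all the way to $\egp(G_i)=0$, after which Richter's formula leaves no room for $\eg(H)=2$.
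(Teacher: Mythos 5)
Your proposal is correct, and its first half coincides with the paper: the forward implication is proved exactly as in the paper, via Theorem~\ref{th-e-2}, Lemma~\ref{lm-non-simple} (for $G_1$), Lemma~\ref{lm-all-in-klein} (for $G_2$), and then Lemma~\ref{lm-osp-0}. For the converse your route is genuinely different in organization. The paper keeps working at the level of the parts: assuming $\osp(G_1)=1$ it rules out $\egp(G_1)\ge 2$ by the proper-minor bound $\ngp(G_1)\le 2$, rules out $\egp(G_1)=1$ by the parity Lemma~\ref{lm-non-simple}, asserts $\egp(G_1)>0$, and then applies the already-proved equivalence of Lemma~\ref{lm-osp-0} with $\osp(G_1)=\osp(G_2)=0$. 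You instead split on $\eg(H)$: when $\eg(H)\ge 3$ criticality is immediate from $\eg(\mu H)\le\ng(\mu H)\le 2$, with no appeal to Lemma~\ref{lm-osp-0}; in the only remaining case $H$ is orientably simple, and you propagate $\os(H)=1$ to $\osp(G_1)=\osp(G_2)=1$ via Theorem~\ref{th-richter-nonori}(iv)--(v) (a step the paper's proof of this corollary does not use), then combine the same parity observation with $\ngp(G_i)\le 2$ to force $\egp(G_i)=0$, and finish with Theorem~\ref{th-richter-euler}, which gives $\eg(H)\le\egp(G_1)+\egp(G_2)=0$, a contradiction. The two arguments rest on the same two pillars (parity of the Euler genus of orientably simple graphs and the fact that each $G_i^+$ is a proper minor of $H$, hence embeddable in $\NN_2$), but yours is more self-contained in the converse, and it has the incidental merit of not needing the paper's unjustified assertion that $\egp(G_1)>0$: the case $\egp(G_i)=0$ is not excluded but absorbed into the final Richter computation.
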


\begin{proof}
  Assume first that $H \in \E_2$.
  By Theorem~\ref{th-e-2}, there is an $xy$-sum $G$ of graphs $G_1 \in \Cc_0(\egp)$ and $G_2 \in \Cc_1(\egp) \cup \S_1$
  such that $H \in \{\hat{G}, \hat{G\+}\}$.
  By Lemma~\ref{lm-non-simple}, $\osp(G_1) = 0$.
  By Lemma~\ref{lm-all-in-klein}, $\osp(G_2) = 0$.
  By Lemma~\ref{lm-osp-0}, $H \in \Forb(\NN_2)$.

  Assume now that $H \in \Forb(\NN_2)$.
  Let $G$ be an $xy$-sum of connected graphs $G_1, G_2 \in \Gcxy$ such that $H \in \{\hat{G}, \hat{G\+}\}$.
  Suppose that $\osp(G_1) = 1$.
  If $\egp(G_1) \ge 2$, then $\ngp(G_1) \ge 3$ and thus $G_1^+$ does not embed into $\NN_2$.
  This yields a contradiction as $H$ has $G_1^+$ as a proper minor.
  Since $\egp(G_1) > 0$, we conclude that $\egp(G_1) = 1$.
  By Lemma~\ref{lm-non-simple}, $\osp(G_1) = 0$, a contradiction.
  Therefore by symmetry, $\osp(G_1) = \osp(G_2) = 0$.
  By Lemma~\ref{lm-osp-0}, $H \in \E_2$.
\end{proof}


\section{Bridges and cycles}
\label{sc-bridges-and-cycles}

In the rest of the paper, we develop framework which we use to determine the class $\S_1$ of cascades of genus 1.

Let $H$ be a subgraph of $G$. An \df{$H$-bridge} $B$ is either an edge in $E(G) \sm E(H)$ with both ends in $H$ or
a connected component $C$ of $G - V(H)$ together with all edges with at least one end in $C$.
In the former case we say that the bridge $B$ is \df{trivial}. The vertices in $V(B) \cap V(H)$ are the \df{attachments} of $B$.
We also say that $B$ \df{attaches} at $v$, for $v \in V(B) \cap V(H)$.
The graph $B^\circ = B - V(H)$ is the \df{interior} of $B$.
We will use the  following lemma (see~\cite[Prop.~6.1.2.]{mohar-book}).

\begin{lemma}
  \label{lm-planar-patch}
  Let $G_1 \in \Gcxy$ be a nontrivial $\{x, y\}$-bridge of a graph $G$.
  If $G_1^+$ is planar, then every embedding of $(G-G_1^\circ)\+$ into a surface can be extended to an embedding of $G$ into the same surface.
\end{lemma}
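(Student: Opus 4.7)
The plan is to apply a standard topological patching argument. Because $G_1\+$ is planar, we may draw it in the sphere so that the edge $xy$ bounds the outer face; deleting $xy$ from this drawing then exhibits $G_1$ as embedded in a closed topological disk $D$ with $x$ and $y$ on $\partial D$, whose local rotations at $x$ and at $y$ agree with the rotations at $x,y$ in the planar embedding of $G_1\+$ after the $xy$-entry is removed from each.

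Given an embedding $\Pi$ of $H=(G-G_1^\circ)\+$ in a surface $\Sigma$, the edge $xy$ is realized as a simple arc $\gamma\subset\Sigma$. I would take a closed tubular neighborhood $N$ of $\gamma$ that meets no other vertex or edge of $H$; then $N$ is a closed disk with $x,y\in\partial N$. Excising the interior of $N$ and gluing $D$ along the boundary, identifying the two copies of $x$ and of $y$, yields a surface homeomorphic to $\Sigma$ (a disk is removed and a disk is pasted in), carrying the graph $(H-xy)\cup G_1$. If $xy\in E(G)$ I restore $xy$ along the seam, precisely where it sat in the planar embedding of $G_1\+$; if $xy\notin E(G)$ I leave it out. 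Either way this is the desired embedding of $G$ in $\Sigma$.

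The subtle point is combinatorial bookkeeping: the new rotations at $x$ and at $y$ must be obtained from $\pi_x$ and $\pi_y$ by splicing in the cyclic lists of $G_1$-edges at $x$ and at $y$ (as read off from the planar embedding of $G_1\+$, with the $xy$-slot deleted) into the slot previously occupied by $xy$. When $\lambda(xy)=-1$ or $\Sigma$ is nonorientable, the orientation of the splice at one of the two endpoints must be reversed relative to the other, and all edges of $G_1$ should be assigned signature $+1$. This signature accounting is the only real obstacle; once it is set up so that the angles around $\partial D$ are traced consistently with the $\Pi$-faces containing $xy$-angles, every such face is replaced by a face following the new boundary walk and no face is created or destroyed outside of $N$, so Euler's formula confirms that the Euler genus is unchanged.
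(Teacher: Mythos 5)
The paper does not prove this lemma at all: it is quoted from Mohar--Thomassen (Prop.~6.1.2), and the proof given there is exactly the disk-patching surgery you describe, so your argument is essentially the standard one and is correct. Two small points you should make explicit to be airtight: your ``tubular neighborhood'' $N$ must be the closed bigon with corners precisely at $x$ and $y$ (its two boundary arcs running inside the face(s) of $\Pi$ incident with the arc $\gamma$), so that $N\cap H=\gamma$ and no other edge of $H$ at $x$ or $y$ enters $N$; and the disk $D$ should be chosen with $G_1\cap\partial D=\{x,y\}$, e.g.\ by taking $\partial D$ to be the old $xy$-arc of the planar drawing of $G_1\+$ together with a parallel arc pushed into the incident face, which also guarantees that the restored edge $xy$ can be drawn along the seam internally disjoint from the rest of the graph. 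Finally, your last paragraph is unnecessary: once the surgery is carried out topologically the ambient surface is unchanged by construction, so no rotation/signature splicing or Euler-formula bookkeeping is needed (and if one insists on the paper's combinatorial formalism, the face count of the spliced embedding gives $F_{\rm new}=F_H+F_{G_1^+}-2$, recovering the same genus).
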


A \df{branch vertex} in $H$ is a vertex of degree different from 2. A \df{branch} in $H$ is a path $P$ connecting two branch vertices $v_1, v_2$ such that
all vertices in $V(P) \sm \{v_1, v_2\}$ have degree 2 in $H$. An \df{open branch} is obtained from a branch by removing its endvertices.

A \df{subdivision} of $G$ is a graph obtained from $G$ by replacing each edge of $G$ by a path of length at least 1.
A graph $H$ is \df{homeomorphic} to $G$, $H\cong G$, if there is a graph $K$ such that both $G$ and $H$ are isomorphic to subdivisions of $K$.
A \df{Kuratowski subgraph} in $G$ is a subgraph of $G$ homeomorphic to a \df{Kuratowski graph}, $K_5$ or $K_{3,3}$.
A \df{K-graph} in $G$ is a subgraph $L$ of $G$ which is homeomorphic to either $K_4$ or $K_{2,3}$
such that there is an $L$-bridge in $G$ that attaches to all four branch vertices of $L$ when $L\cong K_4$
or attaches to all three open branches of $L$ when $L\cong K_{2,3}$. Such an $L$-bridge is a \df{principal} $L$-bridge.

Let $C$ be a cycle in a graph $G$.
Two $C$-bridges $B_1$ and $B_2$ \df{overlap} if at least one of the following conditions hold:
\begin{enumerate}[label=\rm(\roman*)]
\item
  $B_1$ and $B_2$ have three attachments in common;
\item
  $C$ contains distinct vertices $v_1, v_2, v_3,  v_4$ that appear in this order on $C$ such that
  $v_1$ and $v_3$ are attachments of $B_1$ and $v_2$ and $v_4$ are attachments of $B_2$.
\end{enumerate}
In the case (ii), we say that $B_1$ and $B_2$ \df{skew-overlap}.
The \df{overlap graph} $O(G,C)$ of $G$ with respect to $C$ is the graph whose vertex-set consists of the $C$-bridges in $G$,
and two $C$-bridges are adjacent in $O(G,C)$ if they overlap.

Let $C$ be a cycle in a graph $G$.
For a $C$-bridge $B$ in $G$, the \df{$B$-side} of $C$ is the union of all $C$-bridges at even distance from $B$ in the overlap graph $O(G, C)$.
For a vertex $v \in V(G) \sm V(C)$, the \df{$v$-side} of $C$ is the $B$-side of the $C$-bridge $B$ containing $v$.
Two vertices $u,v \in V(G) \sm V(C)$ are \df{separated} by $C$ if the $C$-bridges containing $u$ and $v$ have odd distance in $O(G,C)$.
We also say that $C$ is \df{$(u,v)$-separating}.

Let $G$ be a $\Pi$-embedded graph with the set $F(\Pi)$ of $\Pi$-faces.
The \df{$\Pi$-face-distance} $\dd_\Pi(v_1, v_2)$ of $v_1, v_2 \in V(G)$ is the minimum number $k$ such that there exists a sequence $u_0$, $f_0$, $u_1, \ldots, u_k,f_k$, $u_{k+1}$ such that $u_0 = v_1$, $u_{k+1} = v_2$,
and the face $f_i \in F(\Pi)$ is incident with $u_i$ and $u_{i+1}$, for $i = 0, \ldots, k$.
The \df{face-distance} $\dd_G(v_1, v_2)$ is the minimum $\Pi$-face-distance $\dd_\Pi(v_1, v_2)$ over all planar embeddings $\Pi$ of $G$. Note that the face-distance is 0 if and only if the graph $G+v_1v_2$ is planar.

The following result relating number of separating cycles and the face-distance of two vertices shall be used.

\begin{lemma}[Cabello and Mohar~\cite{cabello-2011}, Lemma~5.3]
\label{lm-separating-cycles}
Let $G$ be a planar graph and $x, y \in V(G)$.
Then the maximum number of disjoint $(x,y)$-separating cycles in $G$ is $\dd_G(x,y)$.
\end{lemma}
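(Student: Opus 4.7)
The plan is to prove two matching inequalities. One direction, that $\dd_G(x,y)$ upper-bounds the maximum disjoint separating family, is a direct crossing argument valid in every planar embedding. The other direction, the existence of $\dd_G(x,y)$ pairwise disjoint separating cycles, is the substantive content, and I would attack it by induction on $d := \dd_G(x,y)$, peeling off one innermost separating cycle at a time.

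For the easy inequality, fix a planar embedding $\Pi$ of $G$ and pairwise vertex-disjoint $(x,y)$-separating cycles $C_1, \ldots, C_m$. Each $C_i$ is a Jordan curve in $\Pi$ with $x$ and $y$ on opposite sides, and pairwise disjointness forces the $C_i$ to be nested; each face of $\Pi$ lies wholly on one side of each $C_i$. In a shortest face-walk $u_0=x, f_0, u_1, \ldots, u_k, f_k, u_{k+1}=y$, the face $f_0$ lies on the $x$-side of every $C_i$ and $f_k$ on the $y$-side, so for each $i$ there is an index $j(i)$ with $f_{j(i)-1}$ on the $x$-side and $f_{j(i)}$ on the $y$-side of $C_i$; the common vertex $u_{j(i)}$ must then lie in $V(C_i)$. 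Disjointness of the $C_i$ forces $j(1), \ldots, j(m)$ to be distinct, so $k \geq m$, and minimizing over embeddings gives $\dd_G(x,y) \geq m$.

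For the hard inequality, induct on $d$. The base $d = 0$ is vacuous. For the step, pick a planar embedding $\Pi$ realizing $\dd_\Pi(x,y) = d$ and an $(x,y)$-separating cycle $C$ of $G$ whose $x$-side, meaning the set of $\Pi$-faces enclosed by $C$ on the $x$-side, is inclusion-minimal among all such cycles. Minimality forces $\dd_\Pi(x, V(C)) = 1$, so some face $f$ of $\Pi$ is incident to both $x$ and a vertex of $V(C)$. Let $G'$ be obtained from $G$ by deleting everything strictly on the $x$-side of $C$ and identifying $V(C)$ into a single new vertex $x'$; then $\Pi$ descends to a planar embedding of $G'$. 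The central claim is $\dd_{G'}(x', y) = d - 1$: the upper bound follows by truncating an optimal $\Pi$-face-walk at its first vertex on $V(C)$ and rerooting at $x'$, while the lower bound holds because an $x'$-$y$ face-walk in $G'$ of length less than $d-1$ could be lifted, via $f$, to an $x$-$y$ face-walk in $G$ of length less than $d$, contradicting the definition of $d$. By induction, $G'$ contains $d-1$ pairwise disjoint $(x', y)$-separating cycles, all lying on the $y$-side of $C$ and hence vertex-disjoint from $C$ itself; appending $C$ yields the desired $d$ disjoint $(x,y)$-separating cycles of $G$.

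The main obstacle is the minimality step. First, one must verify that an $(x,y)$-separating cycle exists whenever $d \geq 1$: non-cofaciality of $x$ and $y$ in every planar embedding produces a combinatorial $x$-$y$ vertex cut in $G$, and a minimal such cut supports a cycle. Second, one must argue that an inclusion-minimal $x$-side does force $\dd_\Pi(x, V(C)) = 1$, i.e., that no face of $\Pi$ can sit strictly inside $C$ avoiding $V(C)$; otherwise one could replace $C$ by a cycle hugging that face and strictly shrink the $x$-side, contradicting minimality. An attractive alternative avoiding the induction is to work in the radial graph $R(G, \Pi)$ (the bipartite vertex-face incidence graph), where $\dd_\Pi(x,y)$ essentially equals the distance in $R$ and separating cycles of $G$ correspond to primal-vertex cuts; Menger's theorem in $R$ gives $d$ disjoint primal-vertex cuts, and the remaining task is the same combinatorial kernel of organizing these cuts into $d$ disjoint cycles of $G$.
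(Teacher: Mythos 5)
First, a framing point: the paper does not prove this lemma at all — it is imported verbatim from Cabello and Mohar — so there is no internal proof to compare against, and your argument has to stand on its own. Your easy direction is essentially fine (it deserves one sentence on why odd distance in the overlap graph $O(G,C_i)$ forces $B_x$ and $B_y$ onto opposite sides of $C_i$ in \emph{every} embedding, which is where bipartiteness of the overlap graph of a planar graph enters). The hard direction, however, has genuine gaps at exactly the two places you flag, and the fixes you sketch do not work as stated. For the existence of a separating cycle when $\dd_G(x,y)\ge 1$, the step ``a minimal $x$--$y$ vertex cut supports a cycle'' is not valid: in a plane graph a minimal vertex cut corresponds to an alternating vertex--face curve (a cycle in the radial graph), and its vertices need not lie on any cycle of $G$, let alone on one avoiding $x,y$ that separates them in the overlap-graph-parity sense this paper uses. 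For the minimality claim, your justification is circular: ``replace $C$ by a cycle hugging that face'' presupposes the existence of a smaller separating cycle, which is the kind of statement under proof; note that the subgraph strictly inside $C$ may contain no separating cycle other than $C$ at all (e.g.\ when $x$ is joined to $C$ only by internally disjoint spokes), so the claim needs a direct argument via the union of faces incident with $x$, not a ``shrink $C$'' argument.

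The most serious gap is the inequality $\dd_{G'}(x',y)\ge d-1$. This is a statement about \emph{all} planar embeddings of $G'$, and $G'$ is a minor of $G$; as the paper itself uses in the proof of Lemma~\ref{lm-disjoint-xy-k-graphs}, minor operations can only decrease face-distance, so monotonicity points the wrong way, and your lifting argument silently assumes that an optimal embedding of $G'$ can be turned back into an embedding of $G$ by uncontracting $x'$ into $C$ and re-inserting the deleted $x$-side. That can fail: the rotation at $x'$ in the new embedding need not be compatible with the cyclic order of the attachment vertices on $C$, and even when pasting is possible, splicing the face-walk at $C$ may cost more than the single face $f$ you budget for. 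A further (fixable, but currently unaddressed) issue is that your construction certifies only topological separation in one chosen embedding, whereas the lemma as stated here requires odd distance in $O(G,C)$; when $B_x$ and $B_y$ lie in different components of the overlap graph these notions differ, so a transfer step is needed. A more robust route for the hard direction is to work with face-distance levels around $x$ (BFS in the radial/vertex--face graph) inside one fixed embedding and extract one separating cycle from the boundary between consecutive levels, rather than contracting $C$ and re-minimizing over embeddings.
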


Let $C$ be a cycle in a $\Pi$-embedded graph $G$ and $\SS$ the surface where $G$ is 2-cell embedded by $\Pi$.
The cycle $C$ is \df{$\Pi$-contractible} if $C$ forms a surface-separating curve on $\SS$ such that one region
of $\SS - C$ is homeomorphic to an open disk.

Let $P_1, P_2, P_3$ be internally disjoint paths connecting vertices $u$ and $v$ in $G$.
If the cycles $P_1 \cup P_2$ and $P_2 \cup P_3$ are $\Pi$-contractible, then the cycle $P_1 \cup P_3$ is also $\Pi$-contractible (see~\cite{mohar-book}, Proposition 4.3.1).
This property is called \df{3-path-condition}.
Let $T$ be a spanning tree of $G$. A \df{fundamental cycle} of $T$ is the unique cycle in $T + e$ for an edge $e \in E(G) \sm E(T)$.

\begin{lemma}
  \label{lm-fund-cycles}
  Let $G$ be a $\Pi$-embedded graph, $L$ a K-graph in $G$, and $T$ a spanning tree of $L$.
  Then one of the fundamental cycles of $T$ in $L$ is $\Pi$-noncontractible.
\end{lemma}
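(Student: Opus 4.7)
The plan is to argue by contradiction. Assume that every fundamental cycle of $T$ in $L$ is $\Pi$-contractible; I shall derive a contradiction from a principal $L$-bridge $B$ of $L$ in $G$. First, I would upgrade the hypothesis to: \emph{every} cycle of $L$ is $\Pi$-contractible. Since $L$ is 2-connected (as a subdivision of $K_4$ or $K_{2,3}$) and the fundamental cycles of $T$ span the cycle space of $L$, any cycle can be expressed via a sequence of operations, each of which replaces two cycles $P_1 \cup P_2$ and $P_2 \cup P_3$ through three internally disjoint paths by the cycle $P_1 \cup P_3$. By the 3-path-condition, each such step preserves $\Pi$-contractibility, giving the desired extension.

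Next, I would locate where the principal bridge sits in the embedding. Since $B^\circ = B - V(L)$ is connected and disjoint from $L$, it lies in a single connected component $F$ of $\SS \sm L$; hence every attachment of $B$ is a vertex of $L$ on the topological boundary $\partial F$. Using that every cycle of $L$ is $\Pi$-contractible, the embedding of $L$ induced by $\Pi$ is equivalent to a planar embedding of $L$, so every topological face of $L$ in $\SS$ (in particular, $F$) is bounded by a single cycle of $L$ (invoking 2-connectedness of $L$).

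It then remains to examine the two possible homeomorphism types of $L$. If $L \cong K_4$, every face of a planar embedding of $L$ is a triangle that passes through exactly 3 of the 4 branch vertices; hence $\partial F$ contains at most 3 branch vertices, contradicting that $B$ attaches to all 4. If $L \cong K_{2,3}$ with the three internally disjoint branches $P_1, P_2, P_3$, every face of a planar embedding is a 4-cycle of the form $P_i \cup P_j$ for some $i \neq j$, so $\partial F$ meets only 2 of the 3 open branches, contradicting that $B$ attaches in all 3. The main obstacle, I expect, is the topological step in the second paragraph: rigorously justifying that all cycles of $L$ being $\Pi$-contractible forces the induced embedding to be equivalent to a planar embedding of $L$ with every face bounded by a single cycle of $L$, so that the attachments of $B$ are confined to lie on one such cycle.
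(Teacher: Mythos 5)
Your proposal is correct and follows essentially the same route as the paper's proof: assume all fundamental cycles are contractible, upgrade to all cycles of $L$ via the 3-path-condition and the fact that fundamental cycles generate the cycle space, and then obtain a contradiction because the principal $L$-bridge must embed in a single region of the surface minus $L$, whose boundary is a planar face cycle of $L$ and hence misses a branch vertex (when $L\cong K_4$) or an open branch (when $L\cong K_{2,3}$). The topological step you flag as the main obstacle is precisely the assertion the paper also makes without further justification, namely that when all cycles of $L$ are $\Pi$-contractible, $L$ separates the surface into four (resp.\ three) regions bounded by its planar face cycles.
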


\begin{proof}
  Suppose that all fundamental cycles of $T$ are $\Pi$-contractible.
  Since fundamental cycles of $T$ generate the cycle space of $L$, the 3-path-condition gives that each cycle of $L$ is $\Pi$-contractible.
  Thus $L$ separates the surface into three regions when $L\cong K_{3,3}$ and into four regions when $L\cong K_4$.
  Since $L$ is a K-graph in $G$, there is a principal $L$-bridge $B$ in $G$.
  But the attachments of $B$ does not lie on a single cycle of $L$ and thus $B$ cannot be embedded into any of the regions --- a contradiction.
\end{proof}

Since all cycles are contractible when genus is zero and any two $\Pi$-noncontractible cycles on the projective plane intersect, we have the following result.

\begin{lemma}
  \label{lm-noncontractible}
  Let $G$ be a $\Pi$-embedded graph. If $G$ contains two disjoint $\Pi$-noncontractible cycles,
  then $\eg(\Pi) \ge 2$.
\end{lemma}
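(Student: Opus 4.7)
The plan is to argue by contradiction and reduce to the two possible surfaces of Euler genus at most one. Suppose $\eg(\Pi) \le 1$, so the surface $\SS$ carrying the embedding is either the sphere $\SS_0$ or the projective plane $\NN_1$.

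First, I would dispose of the sphere case. On $\SS_0$, the Jordan curve theorem implies that every simple closed curve bounds a disk, and in particular every cycle of a $\Pi$-embedded graph on $\SS_0$ is $\Pi$-contractible. This directly contradicts the existence of any $\Pi$-noncontractible cycle, so $\SS \ne \SS_0$.

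Next, I would handle the projective-plane case. On $\NN_1$, the first $\ZZ_2$-homology group is $\ZZ_2$, and a simple closed curve is noncontractible precisely when it represents the nontrivial class. Since the mod-2 intersection form on $H_1(\NN_1;\ZZ_2)$ is nondegenerate (the nontrivial class has self-intersection $1$), any two noncontractible simple closed curves on $\NN_1$ have odd algebraic intersection number and therefore must share a point. Hence two disjoint $\Pi$-noncontractible cycles cannot coexist on $\NN_1$, and $\SS \ne \NN_1$.

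Both cases yield contradictions, forcing $\eg(\Pi)\ge 2$. The only mildly nontrivial step is the intersection argument on $\NN_1$; this is standard topology that the authors have already invoked in the sentence preceding the lemma, so the main task is simply to state the dichotomy and invoke it cleanly. No computation with the combinatorial embedding data $(\pi,\lambda)$ is required beyond the fact that $\Pi$-contractibility is defined in terms of the underlying topological surface $\SS$, which is determined by $\eg(\Pi)$.
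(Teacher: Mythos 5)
Your proof is correct and follows exactly the same route as the paper, which derives the lemma from the two standard facts that all cycles on a genus-zero surface are contractible and that any two noncontractible closed curves on the projective plane must intersect; your homological justification of the latter is just a spelled-out version of what the paper takes as known.
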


The next lemma is a simple corollary of Lemmas~\ref{lm-fund-cycles} and~\ref{lm-noncontractible}.

\begin{lemma}
\label{lm-two-k-graphs}
  If $G$ satisfies one of the following conditions, then $\eg(G) \ge 2$.
  \begin{enumerate}[label=\rm(\roman*)]
  \item
    $G$ contains two disjoint K-graphs.
  \item
    $G$ contains a Kuratowski subgraph $K$ and a K-graph $L$ that intersects $K$ in at most one half-open branch of $K$.
  \item
    $G$ contains a Kuratowski subgraph $K$ and a K-graph $L$ homeomorphic to $K_{2,3}$
    such that $K$ and $L$ intersect in at most one branch $P$ of $K$,
    and the ends of $P$ do not lie on the same branch of $L$.
  \end{enumerate}
\end{lemma}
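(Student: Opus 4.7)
The plan is, for each of the three cases (i)--(iii), to produce two vertex-disjoint $\Pi$-noncontractible cycles in $G$ for every embedding $\Pi$; the bound $\eg(G)\ge 2$ then follows at once from Lemma~\ref{lm-noncontractible}. The noncontractible cycles will be manufactured by Lemma~\ref{lm-fund-cycles}, applied to K-graphs chosen to be (essentially) vertex-disjoint from one another.

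Case (i) is immediate: apply Lemma~\ref{lm-fund-cycles} separately to each of the two given disjoint K-graphs. For case (ii), let $B$ be the branch of $K$ with $L\cap K \subseteq B-v$, and let $u$ be the other endpoint of $B$. The key step is to exhibit a companion K-graph $K'\subseteq K$ whose vertex set meets $V(B)$ only at $v$. I take $K'$ to be the union of the branches of $K$ that avoid $u$: if $K\cong K_5$, this is a subdivision of $K_4$ on the remaining four branch vertices; and if $K\cong K_{3,3}$, a subdivision of $K_{2,3}$ on the remaining five. In either subcase, $u$ together with the interiors of the branches of $K$ incident to $u$ supplies a principal $K'$-bridge in $G$, so $K'$ is indeed a K-graph. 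Lemma~\ref{lm-fund-cycles} applied to $K'$ and to $L$ now produces noncontractible cycles $C_K\subseteq K'$ and $C_L\subseteq L$; they are vertex-disjoint because $V(C_L)\cap V(K)\subseteq V(L\cap K)\subseteq V(B-v)$, while by construction $V(K')\cap V(B-v)=\emptyset$.

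For (iii), note first that if at most one endpoint of $P$ lies in $V(L)$, then $L\cap K$ is already contained in a half-open branch of $K$ and (ii) applies; so assume both ends $a,b$ of $P$ lie in $V(L)$. Since a hub of $L\cong K_{2,3}$ lies on every branch of $L$, the hypothesis forces $a$ and $b$ to be internal vertices of two distinct branches; label them so that $a\in Q_1^\circ$ and $b\in Q_2^\circ$. By the 3-path-condition, at most one of the theta cycles $Q_1\cup Q_2$, $Q_1\cup Q_3$, $Q_2\cup Q_3$ can be $\Pi$-contractible---for if two of them were, all three would be, the surface would decompose into three disk regions along $L$, and the principal $L$-bridge (attaching to all three open branches of $L$) could not be embedded in any single region. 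Hence at least one of $Q_1\cup Q_3$ or $Q_2\cup Q_3$ is noncontractible. If $Q_1\cup Q_3$ is, set $C_L:=Q_1\cup Q_3$ (which avoids $b$) and apply the construction of (ii) with $a$ as the deleted endpoint of $P$, yielding $V(K')\cap V(P)=\{b\}$; the other case is handled symmetrically. In either subcase $V(C_K)\cap V(C_L)\subseteq V(K')\cap V(L)\subseteq V(K')\cap V(P)$ reduces to the single endpoint of $P$ that $C_L$ was engineered to avoid, so $C_K$ and $C_L$ are vertex-disjoint.

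The most delicate point I expect is the disjointness verification in (iii), because $L\cap K$ may consist of considerably more than the two endpoints of $P$ (for example, a hub of $L$ could coincide with an interior vertex of $P$, and entire sub-arcs of branches of $L$ could lie on $P$); the choice of $K'$ circumvents this by ensuring that $V(K')$ meets $V(P)$ only at the ``safe'' endpoint which the selected theta cycle $C_L$ has been chosen to avoid.
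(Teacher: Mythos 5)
Your proof is correct and follows essentially the same route as the paper: in each case you produce two disjoint $\Pi$-noncontractible cycles via Lemma~\ref{lm-fund-cycles} (in (ii) and (iii) by replacing $K$ with the sub-K-graph obtained by deleting one end of the branch together with the open branches at that end, whose principal bridge is the deleted star) and conclude with Lemma~\ref{lm-noncontractible}. The only cosmetic differences are that in (iii) you re-derive the needed noncontractible theta cycle of $L$ from the 3-path condition rather than applying Lemma~\ref{lm-fund-cycles} to a spanning tree of $L$ having the two ends of $P$ as leaves, and that you explicitly dispose of the degenerate case where an end of $P$ does not lie in $V(L)$.
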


\begin{proof}
  If (i) holds, then the result follows by Lemmas~\ref{lm-fund-cycles} and~\ref{lm-noncontractible}.
  Suppose that (ii) holds and that $P$ is the branch of $K$ with ends $u$ and $v$ such that $V(L) \cap V(K) \ss V(P) \sm \{v\}$.
  The K-graph $L'$ in $G$ obtained from $K$ by deleting $u$ is disjoint from $L$.
  The result follows by~(i).

  Assume now that (iii) holds and that $P$ is the branch of $K$ with ends $u$ and $v$.
  Let $T$ be a spanning tree of $L$ such that $u$ and $v$ are its leaves.
  By Lemma~\ref{lm-fund-cycles}, there is a fundamental cycle $C$ of $T$ that is $\Pi$-noncontractible.
  Since $u$ and $v$ do not lie on a single branch of $L$ and they have degree 1 in $T$,
  we may assume that $C$ does not contain $u$. Thus, $K - u$ contains a K-graph disjoint from $C$.
  The result now follows by Lemmas~\ref{lm-fund-cycles} and~\ref{lm-noncontractible}.
\end{proof}

\section{Disjoint K-graphs in cascades}
\label{sc-k-graphs}

In this section, we show that for every cascade $G \in \S_1$, the graph $G^+$ contains two disjoint K-graphs.
We need the following property of separating cycles.

\begin{lemma}
\label{lm-close-cycles}
  Let $G$ be a planar graph, let $x,y \in V(G)$ be vertices separated by a cycle $C$, and let $H$ be the $x$-side of $C$.
  Then there exists an $(x,y)$-separating cycle $C'$ such that $C' \subseteq H \cup C$ and the $C'$-bridges
  containing $x$ and $y$ overlap.
\end{lemma}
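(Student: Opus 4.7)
The plan is to fix a planar embedding of $G$ (so that a cycle combinatorially $(x,y)$-separates two vertices iff it topologically separates them in the embedding) and to select a cycle minimizing a suitable notion of size on the $x$-side. Let $\mathcal{F}$ be the family of all $(x,y)$-separating cycles $C' \subseteq H \cup C$; since $C$ itself lies in this family, $\mathcal{F}$ is nonempty. Pick $C' \in \mathcal{F}$ whose closed topological disk $D(C')$ containing $x$ in the embedding is minimal with respect to inclusion, and then argue by contradiction that the $C'$-bridges $B_x$ and $B_y$ of $G$ containing $x$ and $y$ must overlap.

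Suppose they do not. Because $B_x$ and $B_y$ lie on opposite sides of $C'$, the failure to overlap means their attachments on $C'$ can be cyclically separated: there exist vertices $u, v \in V(C')$ partitioning $C'$ into two arcs $A$ and $A'$ with all attachments of $B_x$ on $A'$ and all attachments of $B_y$ on $A$. Also, $B_x$ has at least two attachments (a single-attachment bridge is an isolated vertex of $O(G, C')$, inconsistent with $C'$ separating $x$ from $y$), so I can pick two attachments $a_1, a_k$ of $B_x$ on $A'$.

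Since the interior $B_x - V(C')$ is connected and lies strictly inside $D(C')$, I can choose a path $P \subseteq B_x$ from $a_1$ to $a_k$ whose internal vertices lie in $B_x - V(C')$. Assuming $x \notin V(P)$, the path $P$ together with the sub-arc of $C'$ from $a_k$ through $v, A, u$ to $a_1$ forms a new cycle $C^* \subseteq H \cup C$. Then $C^*$ still separates $x$ from $y$ in $G$ (with $x$ inside $C^*$ by choosing the correct of the two sub-disks bounded by $P$ and $C'$), and $D(C^*) \subsetneq D(C')$ because the internal vertices of $P$ lie strictly inside $D(C')$ but on the boundary of $D(C^*)$. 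This contradicts the minimality of $D(C')$.

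The main obstacle is guaranteeing $x \notin V(P)$, which can fail when $x$ is a cutvertex of $B_x$ separating its attachments---for example when $B_x - V(C') = \{x\}$. In such cases the plan is either to reroute $P$ through another $x$-side $C'$-bridge contained in $H \cup C$, or to show that the restricted block structure of $B_x$, combined with the cyclic placement of $B_y$'s attachments on $A$, in fact forces a skew-overlap between $B_x$ and $B_y$, directly contradicting the non-overlap assumption. The freedom in choosing the pair $(a_1, a_k)$ and, if needed, relocating $(u, v)$, is used to close these residual cases.
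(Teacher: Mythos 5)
Your overall strategy (take an extremal separating cycle in $H\cup C$ and, if the two relevant bridges fail to overlap, reroute the cycle to contradict extremality) is the same in spirit as the paper's, but the case you explicitly leave open is not a residual corner case — it is the heart of the lemma, and your construction genuinely fails there. If every path in $B_x$ between two of its attachments passes through $x$ (most commonly: all neighbours of $x$ lie on $C'$, so the interior of $B_x$ is just $\{x\}$), no cycle $C^*$ through the interior of $B_x$ avoids $x$, and neither of your fallbacks is substantiated. Nothing forces a skew-overlap in that situation: take $x$ joined to three vertices all lying on the arc $A'$, $B_y$ attached only on $A$, and the odd distance between $B_x$ and $B_y$ in $O(G,C')$ realized through intermediate bridges; then $B_x$ and $B_y$ do not overlap, yet no choice of $a_1,a_k$ or of $(u,v)$ helps. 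Your other fallback, ``reroute through another $x$-side bridge,'' is precisely the missing idea, but you give no rule for which bridge to use, no argument that the rerouted cycle still separates $x$ from $y$, and no measure of progress for that case. The paper avoids routing through $B_x$ altogether: it chooses $C'$ minimizing the distance $d$ between $B_x$ and $B_y$ in the overlap graph $O(G,C')$, and when $d\ge 3$ it replaces the segment of $C'$ carrying the attachments of the bridge $B_2$ at distance $2$ from $B_x$ by a path inside $B_2$, showing the distance drops by at least $2$; since $B_2$ lies on the $x$-side of $C'$, the new cycle stays in $H\cup C$. That is the step your sketch is missing.

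Two further points would need attention even in the case you do handle. First, ``separating'' in this paper is a combinatorial notion (odd distance of the two bridges in the overlap graph); topological separation in one fixed embedding is strictly weaker, since the bridges containing $x$ and $y$ could lie in different components of $O(G,C^*)$. So the ``iff'' in your opening sentence is false as stated, and membership $C^*\in\mathcal{F}$ is not established by ``choosing the correct sub-disk''; you would have to either prove oddness for $C^*$ or set the minimization up over topologically separating cycles and recover combinatorial separation only at the very end from the overlap itself (distance $1$ is odd). Second, the $C'$-bridge $B_x$ need not be a subgraph of $H\cup C$: its interior may absorb vertices of $C\setminus V(C')$ together with $C$-bridges that are not on the $x$-side of $C$, so your path $P$, and hence $C^*$, may leave $H\cup C$ and fall outside the family you are minimizing over. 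Both issues are plausibly repairable, but as written the proof does not close, and the unresolved $x\in V(P)$ case is a genuine gap rather than a technicality.
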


\begin{proof}
  Pick $C'$ to be an $(x, y)$-separating cycle in $G$ such that $C' \subseteq H \cup C$ and that the distance in $O(G, C')$ of the $C'$-bridge $B_x$ containing $x$
  and the $C'$-bridge $B_y$ containing $y$ is minimum.
  Let $H'$ be the $x$-side of $C'$ and note that $H' \subseteq H$.

  Since $C'$ is $(x, y)$-separating, $B_x$ and $B_y$ have odd distance $d$ in $O(G, C')$.
  If $d = 1$, then $B_x$ and $B_y$ overlap. Hence we may assume that $d > 1$.
  Let $B_1, B_2$, and $B_3$ be the $C'$-bridges at distance 1, 2, and 3, respectively, from $B_x$ on a shortest path from $B_x$ to $B_y$ in $O(G, C')$.
  Since $B_2$ and $B_x$ do not overlap, the cycle $C'$ can be decomposed into two segments $Q_1$ and $Q_2$ with ends $v_1$ and $v_2$ such that
  $Q_1$ contains all attachments of $B_x$ and $Q_2$ contains all attachments of $B_2$.
  Furthermore, we can assume that $v_1$ and $v_2$ are attachments of $B_2$.
  Let $P$ be a path in $B_2$ connecting $v_1, v_2$ and let $C''$ be the cycle $Q_1 \cup P$.
  Let $B$ be a $C'$-bridge. If $B$ attaches to the interior of $Q_2$, then $B$ is a subgraph of a single $C''$-bridge $B_0$ containing $Q_2$.
  Note that this is the case for $B_1$ and $B_3$ since they $C'$-overlap with $B_2$.
  If $B$ does not attach to the interior of $Q_2$ it has the same attachments on $C''$ as on $C'$.
  Since $B_1$ only attaches to $Q_1$, we obtain that $B_1$ overlaps with $B_0$.
  It is not hard to see that $B_1$ and the $C''$-bridge containing $y$ have distance at most $d-2$ in $O(G, C'')$.
  Since $C'' \subseteq H' \cup C'$, we conclude that $C'' \subseteq H \cup C$.
  This contradicts the choice of $C'$.
\end{proof}

If $G \in \Gcxy$, then a \df{pre-K-graph} in $G$ is a subgraph of $G$ homeomorphic to either $K_4$ or $K_{2,3}$ that is a K-graph in $G\+$.
Separating cycles allow us to construct pre-K-graphs on each side of the cycle.

\begin{lemma}
\label{lm-x-k-graph}
  Let $C$ be an $(x, y)$-separating cycle in a planar graph $G \in \Gcxy$
  and let $B_x$ and $B_y$ be overlapping $C$-bridges containing $x$ and $y$, respectively.
  Then $G$ contains a pre-K-graph in $C \cup B_x$.
\end{lemma}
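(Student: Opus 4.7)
The plan is to exhibit $L$ as the union of an appropriate subgraph of $B_x$ with a portion of $C$, and then to verify that the $L$-bridge of $G\+$ containing $y$---which consists of $B_y$, the edge $xy$, and (when $x\notin V(L)$) the connected component of $B_x^\circ-V(L)$ containing $x$---is principal. The attachments of this bridge to $V(L)$ come from the attachments of $B_y$ on $C$ together with an ``$x$-side'' attachment supplied by the connectivity of $B_x$. The argument splits according to the two ways $B_x$ and $B_y$ can overlap.

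First I would handle case (i), when $B_x$ and $B_y$ share three common attachments $a_1,a_2,a_3$ on $C$. Since $B_x$ is connected, there is a subdivided $K_{1,3}$ (Steiner tree) $Y\subseteq B_x$ with leaves $a_1,a_2,a_3$ and some branch vertex $u$. Setting $L:=Y\cup C$, each of $u,a_1,a_2,a_3$ has degree three in $L$, so $L$ is a subdivision of $K_4$ with these four branch vertices. In $G\+$, the $L$-bridge containing $y$ attaches at $a_1,a_2,a_3$ through $B_y$, and at $u$ through the path joining $x$ to $u$ in $B_x$, giving a principal $L$-bridge. In the more convenient case where $x$ already lies on some branch of $Y$---say the branch to $a_1$---I would drop the arc $C_{23}$ of $C$ opposite to $a_1$ and take $L':=Y\cup C_{12}\cup C_{13}$; then $L'$ is a subdivision of $K_{2,3}$ whose three open branches contain $x$, $a_2$, and $a_3$ respectively, each supplying the attachment its open branch requires.

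Next I would handle case (ii), when $B_x$ and $B_y$ skew-overlap at $v_1,v_2,v_3,v_4$ in cyclic order on $C$, with $v_1,v_3$ attaching $B_x$ and $v_2,v_4$ attaching $B_y$. Choose a path $P\subseteq B_x$ from $v_1$ to $v_3$ whose interior lies in $B_x^\circ$ and which, whenever possible, passes through $x$. Then $L:=P\cup C$ is a theta graph---a subdivision of $K_{2,3}$ with branch vertices $v_1,v_3$ and three open branches: the arc of $C$ through $v_2$, the arc through $v_4$, and the interior of $P$. Two of these open branches are covered by $v_2$ and $v_4$ via $B_y$; the third receives its attachment either from $x$ directly (when $x\in V(P)$) or from the $x$-side of $B_x-V(P)$, which by connectivity of $B_x$ must reach an interior vertex of $P$ (possibly after rerouting $P$ through the block of $B_x$ containing $x$).

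The main obstacle is the configuration in which a cut vertex of $B_x$ separates $x$ from the shared attachments of $B_x$ and $B_y$ on $C$; then $x$ may fail to lie in $V(L)$, and one must trace the $x$-side of $B_x$ through the cut vertex to certify that the required attachment lands on the intended target---the branch vertex $u$ in case (i), or an interior vertex of the $B_x$-path $P$ in case (ii). Adjusting which arcs of $C$ are included in $L$, and correspondingly switching between $K_4$ and $K_{2,3}$, is the mechanism that handles all such subcases uniformly.
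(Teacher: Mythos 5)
Your overall strategy is the paper's: in the skew-overlap case take a $v_1$--$v_3$ path $P\subseteq B_x$ and use the theta graph $P\cup C$; in the three-common-attachment case take a tripod $Y\subseteq B_x$ with leaves $a_1,a_2,a_3$ and combine it with all or part of $C$, switching between a $K_4$- and a $K_{2,3}$-subdivision. Your case (ii) is correct and is exactly the paper's argument: $B_x^\circ$ is a component of $G-V(C)$, hence connected, and $P^\circ\neq\emptyset$, so the component of $B_x^\circ-V(P)$ containing $x$ attaches to $P^\circ$; together with $v_2$ and $v_4$ this makes the $L$-bridge of $G\+$ containing $y$ principal (no ``rerouting through the block containing $x$'' is needed).

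The gap is in case (i). The assertion that for $L=Y\cup C$ the $L$-bridge containing $y$ ``attaches at $u$ through the path joining $x$ to $u$ in $B_x$'' is false in general: the attachment contributed by the $x$-side is the \emph{first} vertex of $L$ reached by the component of $B_x^\circ-V(Y)$ containing $x$, and this can be an interior vertex $w$ of a leg of $Y$ rather than $u$. It can even happen that every neighbour of $u$ lies in $V(Y)\cup V(C)$ --- for instance when the leg to $a_1$ is $u$--$t$--$a_1$, the legs to $a_2,a_3$ are single edges, and $x$ is adjacent only to $t$ --- and then no $L$-bridge attaches at $u$ at all, so $Y\cup C$ is simply not a pre-K-graph and no choice of path from $x$ can ``certify'' otherwise. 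Your explicit $K_{2,3}$ fallback is invoked only when $x\in V(Y)$, and your closing paragraph misdiagnoses the issue (it is not about cut vertices of $B_x$, and the attachment cannot be forced to land on $u$). The correct dichotomy, which is the one the paper makes, is on where the $x$-side first meets $Y$: if at $u$, keep the $K_4$; if at an interior vertex $w$ of the leg ending at $a_1$ (say), discard the arc of $C$ between $a_2$ and $a_3$ that avoids $a_1$ and use the $K_{2,3}$-subdivision with branch vertices $u$ and $a_1$ --- precisely your $L'$, with $w$ playing the role you assigned to $x$. With that change (plus a word on why a tripod having all of $a_1,a_2,a_3$ as leaves exists, e.g.\ by choosing its centre in $B_x^\circ$, since a minimal connecting tree in $B_x$ could degenerate to a path through one of the $a_i$) the argument closes; as written, the case analysis does not cover these configurations.
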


\begin{proof}
  Assume first that $B_x$ and $B_y$ skew-overlap and let $u_1, v_1$ be attachments of $B_x$
  and $u_2, v_2$ be attachments of $B_y$ such that $u_1, u_2, v_1, v_2$ appear on $C$ in this order.
  Let $P$ be a path connecting $u_1$ and $v_1$ in $B_x$. We see that $P \cup C$ is a pre-K-graph in $G$.

  Assume now that $B_x$ and $B_y$ do not skew-overlap. Hence $B_x$ and $B_y$ have three attachments $u_1, u_2, u_3$ in common.
  Let $P_1, P_2, P_3$ be internally disjoint paths in $B_x$ with one common end $u$ and with the other ends being $u_1, u_2, u_3$, respectively.
  Let $P$ be a (possibly trivial) path connecting $x$ and $P_1 \cup P_2 \cup P_3$ in $B_x - C$ and let $v$ be the other end of $P$.
  If $v = u$, then $C \cup P_1 \cup P_2 \cup P_3$ is a pre-K-graph in $G$.
  If $v \in V(P_1)\setminus \{u\}$, then let $C'$ be the segment of $C$ with ends $u_2$ and $u_3$ that contains $u_1$.
  We have that $C' \cup P_1 \cup P_2 \cup P_3$ is a pre-K-graph in $G$ homeomorphic to $K_{2,3}$ with branch vertices $u$ and $u_1$.
  We construct a pre-K-graph similarly if $v \in (V(P_2) \cup V(P_3))\setminus \{u\}$.
\end{proof}

We have the following corollary.

\begin{corollary}
\label{cr-two-cycles}
  Let $G$ be a planar graph in $\Gcxy$. If $\dd_G(x,y) \ge 2$,
  then $G$ contains two disjoint pre-K-graphs.
\end{corollary}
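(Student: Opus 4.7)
The plan is to apply Lemmas~\ref{lm-separating-cycles}, \ref{lm-close-cycles}, and \ref{lm-x-k-graph} in succession, working on the two ``sides'' of the graph separating $x$ from $y$. First, since $\dd_G(x,y)\ge 2$, Lemma~\ref{lm-separating-cycles} produces two vertex-disjoint $(x,y)$-separating cycles $C_1$ and $C_2$. Fix a planar embedding of $G$, and let $D_i^x, D_i^y$ denote the two open disks bounded by $C_i$ containing $x$ and $y$, respectively. Because $C_1\cap C_2=\emptyset$ and both cycles separate $x$ from $y$, a short case analysis will show that the disks are nested: after relabeling, $\bar{D_2^x}\subset D_1^x$ and $\bar{D_1^y}\subset D_2^y$, so the closed disks $\bar{D_2^x}$ and $\bar{D_1^y}$ are disjoint.

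Next, I would apply Lemma~\ref{lm-close-cycles} to $C_2$ on its $x$-side, producing an $(x,y)$-separating cycle $C_2'\subseteq H_2^x\cup C_2$ such that the $C_2'$-bridges containing $x$ and $y$ overlap. Lemma~\ref{lm-x-k-graph} then yields a pre-K-graph $L_1$ contained in $C_2'$ together with the $C_2'$-bridge $B_x'$ containing $x$. Since $C_2'\subseteq H_2^x\cup C_2$ lies in $\bar{D_2^x}$ and $B_x'$ is drawn in the $x$-containing subregion of $C_2'$, which in turn is contained in $\bar{D_2^x}$, we conclude $L_1\subseteq\bar{D_2^x}$.

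Swapping the roles of $x$ and $y$, the symmetric versions of Lemmas~\ref{lm-close-cycles} and~\ref{lm-x-k-graph} applied to $C_1$ on its $y$-side will produce a second pre-K-graph $L_2\subseteq\bar{D_1^y}$. Since $\bar{D_2^x}\cap\bar{D_1^y}=\emptyset$, the pre-K-graphs $L_1$ and $L_2$ are vertex-disjoint, completing the proof.

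The main technical point is the nesting argument in the first step: while it is intuitively clear that two disjoint $(x,y)$-separating cycles in a planar embedding must be nested around $x$ and $y$, a careful proof requires verifying that any ``parallel'' configuration in the sphere would force $x$ and $y$ into the same region of one of the cycles, contradicting the separating hypothesis. Every remaining step is then a direct invocation of the preceding lemmas, so no further new ideas should be needed.
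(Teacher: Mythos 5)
Your proposal is correct and takes essentially the same route as the paper: Lemma~\ref{lm-separating-cycles} supplies two disjoint $(x,y)$-separating cycles, Lemma~\ref{lm-close-cycles} is applied on the $x$-side of one and the $y$-side of the other, and Lemma~\ref{lm-x-k-graph} produces the two pre-K-graphs. Your explicit nesting of the closed disks $\bar{D_2^x}$ and $\bar{D_1^y}$ in a fixed embedding is just the topological formulation of the paper's combinatorial step of choosing the labels of $C_1,C_2$ so that the $x$-side of $C_1$ and the $y$-side of $C_2$ are disjoint.
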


\begin{proof}
  By Lemma~\ref{lm-separating-cycles}, there are two disjoint $(x,y)$-separating cycles $C_1$ and $C_2$ in $G$.
  Let $C_1$ and $C_2$ be such that the $x$-side of $C_1$ and the $y$-side of $C_2$ are disjoint.
  By Lemma~\ref{lm-close-cycles}, there is an $(x,y)$-separating cycle $C_1'$ such that the $C_1'$-bridges containing $x$ and $y$ overlap.
  Similarly, there is an $(x,y)$-separating cycle $C_2'$ such that the $C_2'$-bridges containing $x$ and $y$ overlap.
  Furthermore, we can pick $C_1'$ and $C_2'$ so that $C_1'$ is contained in the $x$-side of $C_1$ and $C_2'$ in the $y$-side of $C_2$.
  Therefore, $C_1'$ and $C_2'$ are disjoint. Let $B_x$ be the $C_1'$-bridge containing $x$ and let $B_y$ be the $C_2'$-bridge containing $y$.
  By Lemma~\ref{lm-x-k-graph}, the graph $G$ contains a pre-K-graph in $C_1' \cup B_x$ and a pre-K-graph in $C_2' \cup B_y$.
  Thus, $G$ contains two disjoint pre-K-graphs.
\end{proof}

The following lemma relates the face-distance of $x$ and $y$ in a planar graph $G \in \Gcxy$ to the genus of $G\+$.

\begin{lemma}
  \label{lm-dd-xy-2}
  Let $G \in \Gcxy$ be a planar graph and $\dd = \dd_G(x,y)$. If $\dd\le2$, then $\egp(G)=\dd$. If $\dd\ge3$, then $\egp(G)=2$.
\end{lemma}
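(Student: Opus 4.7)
The plan is to verify $\egp(G) = \min\{\dd, 2\}$ by case analysis on $\dd$, using matching upper and lower bounds. The case $\dd = 0$ is immediate since $\dd = 0$ is equivalent to $G + xy$ being planar, i.e., $\egp(G) = 0$. If $\dd \ge 1$, then $G + xy$ is nonplanar and thus $\egp(G) \ge 1$. For the universal upper bound $\egp(G) \le 2$ valid for every planar $G$, I would fix any planar embedding $\Pi$ of $G$ and attach a single handle joining a $\Pi$-face containing $x$ to a $\Pi$-face containing $y$; this yields a toroidal embedding of $G$ with a new face incident with both $x$ and $y$, into which the edge $xy$ can be inserted.

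For the lower bound $\egp(G) \ge 2$ when $\dd \ge 2$, Corollary~\ref{cr-two-cycles} yields two disjoint pre-K-graphs in $G$, which by definition are K-graphs in $G\+$; thus Lemma~\ref{lm-two-k-graphs}(i) gives $\eg(G\+) \ge 2$. Combined with the universal upper bound, this settles the cases $\dd = 2$ and $\dd \ge 3$.

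It remains to establish $\egp(G) \le 1$ when $\dd = 1$. Fix a planar embedding $\Pi$ of $G$ witnessing $\dd_\Pi(x,y) = 1$, so there is a vertex $u$ incident with $\Pi$-faces $f_0 \ni x$ and $f_1 \ni y$. The corners of $f_0$ and $f_1$ at $u$ split the cyclic rotation at $u$ into two arcs. I would apply the standard local operation of reversing one of these arcs and flipping the signatures of the edges in it; this raises the Euler genus by exactly one (making the embedding nonorientable) and fuses the corners of $f_0$ and $f_1$ at $u$ into a single face of the resulting projective-planar embedding of $G$, inside which the edge $xy$ can then be drawn. The main obstacle is this last step, which requires a careful face-tracing argument around $u$ to confirm that the local modification simultaneously raises $\eg$ by exactly one and merges $f_0$ and $f_1$ into a single face; this is a standard technique in topological graph theory, while the other pieces of the argument are routine.
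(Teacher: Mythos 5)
Your proposal is correct and takes essentially the same route as the paper: the case $\dd=0$ from the definition of face-distance, the lower bound for $\dd\ge 2$ via Corollary~\ref{cr-two-cycles} and Lemma~\ref{lm-two-k-graphs}(i), and for $\dd=1$ the same local crosscap construction at the common vertex (reversing one arc of its rotation and flipping the signatures on it to merge the two faces), whose detailed face-tracing verification the paper likewise leaves to the reader. The only cosmetic difference is your handle argument for the bound $\egp(G)\le 2$, where the paper simply observes that adding an edge increases the Euler genus by at most~2.
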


\begin{proof}
  Suppose first that there exists a planar embedding $\Pi$ of $G$ where $\dd_\Pi(x, y) \le 1$.
  If $\dd_\Pi(x, y) = 0$, then $G\+$ is planar and $\egp(G) = 0$.
  Suppose then that $\dd_\Pi(x, y) = 1$. Then there exists a vertex $v \in V(G)$ and two $\Pi$-faces $f_1, f_2$ incident with $v$
  such that $f_1$ is incident with $x$ and $f_2$ is incident with $y$.
  Let $e_1ve_2$ be a $\Pi$-angle of $f_1$ and $e_3ve_4$ a $\Pi$-angle of $f_2$.
  We can write the local rotation around $v$ as $e_2,S_1,e_3,e_4,S_2,e_1$.
  Let us construct the following embedding $\Pi'$ of $G\+$ in the projective plane.
  Let $\Pi'(u) = \Pi(u)$ for each $u \in V(G) \sm \{x, y, v\}$.
  To obtain $\Pi'(x)$, insert the edge $xy$ into the local rotation $\Pi(x)$ of $x$ between the edges $e_1', e_2'$ where $e_1', x, e_2'$ is a $\Pi$-angle of $f_1$.
  The local rotation $\Pi'(y)$ of $y$ is obtained analogously.
  Let $\Pi'(v) = e_2, S_2,e_3,e_1,S_2^{\rm R},e_4$, where $S_2^{\rm R}$ is the reverse of $S_2$.
  Let $\Pi'(e) = -1$, if $e \in \{xy, e_1, e_4\} \cup S_2$, and $\Pi'(e) = 1$ otherwise.
  We leave it to the reader to check that $\Pi'$ is indeed an embedding of $G\+$ into the projective plane.
  Thus $\egp(G) \le 1$ as claimed.

  Assume now that $\dd_G(x,y) \ge 2$.
  By Corollary~\ref{cr-two-cycles}, $G\+$ contains two disjoint K-graphs.
  By Lemma~\ref{lm-two-k-graphs}(i), $\eg(G\+) = \egp(G) \ge 2$.
  However, adding an edge increases Euler genus by at most 2, so $\egp(G)=2$.
\end{proof}

A pre-K-graph $L$ in a planar graph $G \in \Gcxy$ is a \df{$z$-K-graph} for a terminal $z \in \{x, y\}$
if $z \in V(L)$ and, if $L\cong K_4$, then $z$ is a branch vertex of $L$,
and, if $L\cong K_{2,3}$, then $z$ lies on an open branch of $L$.
The \df{boundary} of $L$ is the cycle of $L$ that consists of all branches of $L$ that are not incident with $z$.
All vertices and edges of $L$ that do not lie on the boundary of $L$ are said to be in the \df{interior} of $L$.
A graph $G \in \Gcxy$ contains \df{disjoint $xy$-K-graphs} if it contains
an $x$-K-graph and a $y$-K-graph that are disjoint.
We conclude this section by showing that each graph in $\S_1$ contains disjoint $xy$-K-graphs.

\begin{lemma}
  \label{lm-disjoint-xy-k-graphs}
  Each graph in $\S_1$ contains disjoint $xy$-K-graphs.
\end{lemma}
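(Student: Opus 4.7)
The plan is to exploit the cascade properties of $G$ to reduce to the planar setting, and then to refine Corollary~\ref{cr-two-cycles} so that it yields disjoint $xy$-K-graphs rather than just arbitrary disjoint pre-K-graphs.

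First I would use properties (C1) and (C3). Since $G \in \S_1$, property (C3) supplies a minor-operation $\mu \in \M(G)$ with $\egp(\mu G) = \egp(G) = 2$. Property (C1) then forces $\eg(\mu G) < \eg(G) = 1$, so $G' := \mu G$ is planar with $\egp(G') = 2$. By Lemma~\ref{lm-dd-xy-2}, this gives $\dd_{G'}(x,y) \geq 2$.

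Next I would follow the proof of Corollary~\ref{cr-two-cycles} inside $G'$ to produce disjoint $(x,y)$-separating cycles $C_1'$ and $C_2'$ such that the $C_i'$-bridges $B_x^i, B_y^i$ containing $x$ and $y$ overlap and such that the $x$-side of $C_1'$ is disjoint from $C_2' \cup B_y^2$ (and symmetrically). Lemma~\ref{lm-x-k-graph} then produces pre-K-graphs inside $C_1' \cup B_x^1$ and $C_2' \cup B_y^2$, so they are automatically vertex-disjoint. The crux is to arrange that $x$ and $y$ actually occupy admissible positions in these pre-K-graphs (branch vertex in the $K_4$ case, open branch in the $K_{2,3}$ case). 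In the skew-overlap case of Lemma~\ref{lm-x-k-graph}, where the pre-K-graph has the form $C_1' \cup P$ for some path $P$ in $B_x^1$ joining two attachments, one uses connectivity inside $B_x^1$ to route $P$ through $x$, which places $x$ on an open branch of the resulting $K_{2,3}$-subdivision; if such a routing is not possible directly, a detour through an auxiliary path can upgrade the structure to a $K_4$-subdivision with $x$ as a branch vertex. In the non-skew-overlap case, one chooses the apex $u$ of the three internally disjoint paths to $\{u_1,u_2,u_3\}$ to be $x$ whenever Menger-type connectivity inside $B_x^1$ allows, and otherwise absorbs $x$ into one of the paths via the connecting path $P$ from the proof of Lemma~\ref{lm-x-k-graph}. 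A symmetric argument yields a $y$-K-graph inside $C_2' \cup B_y^2$.

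Finally, the $x$-K-graph and $y$-K-graph obtained in $G' = \mu G$ lift back to $G$: deletions leave subgraphs intact, and for contractions the rules for $\Gcxy$ keep the terminals fixed, while $K_4$- and $K_{2,3}$-subdivisions remain subdivisions of the same graphs (possibly with one extra subdivision vertex on a branch). Hence $x$ and $y$ keep admissible positions and the two lifted K-graphs remain disjoint, proving the lemma. The main obstacle lies in the refinement step: the internal structure of a $C_i'$-bridge can be quite intricate, and a careful case analysis, most likely via Menger's theorem inside the bridge (possibly after iterating Lemma~\ref{lm-close-cycles} to pass to a simpler bridge), is required to guarantee that a path or apex can always be chosen so that the terminal ends up at a valid position of the resulting K-subdivision.
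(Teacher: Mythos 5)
Your opening coincides with the paper's: use (C1) and (C3) to get a minor-operation $\mu$ with $\mu G$ planar and $\egp(\mu G)=2$, apply Lemma~\ref{lm-dd-xy-2} to get face-distance at least $2$, and then run the separating-cycle machinery (Lemmas~\ref{lm-close-cycles}, \ref{lm-x-k-graph}, Corollary~\ref{cr-two-cycles}) to get two disjoint pre-K-graphs $L_x\subseteq C_1'\cup B_x$ and $L_y\subseteq C_2'\cup B_y$. The divergence, and the genuine gap, is at the crux you yourself flag: forcing the terminal onto the pre-K-graph at an admissible position. You propose to do this by ``connectivity inside $B_x$'' and Menger-type rerouting of the paths from Lemma~\ref{lm-x-k-graph}, but no argument is actually given, and none of this kind can be expected to succeed: planarity of $\mu G$ together with $\dd\ge 2$ does not force $C_1'\cup B_x$ to contain any $x$-K-graph at all (the terminal may be attached to its bridge so sparsely, e.g.\ behind a small cut or with low degree inside $C_1'\cup B_x$, that no $K_4$- or $K_{2,3}$-subdivision can position it admissibly there). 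The statement is a property of cascades, not of planar graphs with large face-distance, and the cascade hypotheses must enter precisely here. The paper's proof handles the bad case $x\notin L_x$ by contradiction with (C1): contract an edge $xv$ into the interior of $B_x$; the two pre-K-graphs survive, so $\egp(G/xv)\ge 2$ and (C1) forces $G/xv$ to be planar, which localizes a Kuratowski subgraph $K$ inside $C_1'\cup B_x$; then contracting an edge from $C_1'$ to the outside yields a graph that still contains $K$ (so $\eg\ge 1$) and, by Lemma~\ref{lm-two-k-graphs}(ii), has $\egp\ge 2$, contradicting (C1). Your sketch never invokes (C1)--(C3) at this step, which is the heart of the proof.

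A second, smaller gap is the lifting step. You construct the K-graphs in $G'=\mu G$ and claim they lift to $G$ with the terminals ``keeping admissible positions.'' If $\mu$ is the contraction of an edge $xv$, an $x$-K-graph of $G/xv$ homeomorphic to $K_4$ with $x$ as a branch vertex need not lift admissibly: the three branches incident with the contracted vertex may split between $x$ and $v$ in $G$, and after reinstating the edge $xv$ the branch vertex becomes $v$ while $x$ sits in the interior of a branch, which is not an admissible position for a $K_4$-type K-graph. The paper sidesteps this entirely by using $\mu G$ only to establish the face-distance bound and by constructing the separating cycles and pre-K-graphs in $G$ itself, so nothing needs to be lifted.
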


\begin{proof}
  Let $G \in \S_1$.
  By (C1) and (C3) from the definition of cascades, there is a minor-operation $\mu \in M(G)$ such that $\mu G$ is planar but $\egp(\mu G) = 2$.
  By Lemma~\ref{lm-dd-xy-2}, $\dd_{\mu G}(x,y) \ge 2$.
  Minor operations cannot increase the face-distance. Thus, $\dd_G(x,y)\ge2$.
  By Corollary~\ref{cr-two-cycles} and its proof, $G$ contains disjoint $(x,y)$-separating cycles $C_1', C_2'$ and disjoint pre-K-graphs $L_x\subseteq C_1'\cup B_x$ and $L_y\subseteq C_2'\cup B_y$ (where $B_x$ is the $C_1'$-bridge containing $x$ and $B_y$ is the $C_2'$-bridge containing $y$ such that $B_x \cap B_y = \emptyset$).

  Suppose that $x \not\in L_x$. Then $x$ has a neighbor $v\in V(B_x)\setminus V(C_1')$.
  Consider contracting the edge $xv$.
  Since $L_x$ and $L_y$ are disjoint pre-K-graphs in $G/xv$, we have that $\egp(G/xv) \ge 2$.
  By (C1), $G/xv$ is planar.
  Since $B_x /xv$ has the same attachments on $C_1'$ as $B_x$ and $G$ is nonplanar,
  we conclude that $C_1' \cup B_x$ is nonplanar and thus contains a Kuratowski subgraph $K$.
  Let $e$ be an edge of $G$ joining a vertex on $C_1'$ with a vertex that is not in $C_1'\cup B_x$. Observe that $L_y$ is a pre-K-graph in $G/e$.
  In the graph $G / e$,  $K$ shares at most one vertex with $L_y$.
  By Lemma~\ref{lm-two-k-graphs}(ii), $\egp(G/e) \ge 2$.
  Since $G/e$ contains $K$, $\eg(G/e) \ge 1$, a contradiction with (C1).
  We conclude that $x \in L_x$.
  By symmetry $y \in L_y$.
  Therefore, $G$ contains disjoint $xy$-K-graphs.
\end{proof}


\section{The class $\S_1$}
\label{sc-s1}

Throughout this section we will use the following notation and assumptions.
Let us consider a graph $G \in \S_1$.
By Lemma~\ref{lm-disjoint-xy-k-graphs}, $G$ contains an $x$-K-graph $L_x$ and a $y$-K-graph $L_y$ that are disjoint.
We shall assume that $L_x$ is \df{minimal} in the sense that there is no $x$-K-graph properly contained in $L_x$. Similarly take $L_y$ minimal.
Let $\By$ be the $L_x$-bridge in $G$ that contains $L_y$. Define $\Bx$ similarly.
A \df{base} in $G$ is a subgraph $H$ of $G$ such that $H$ contains $L_x$ and $L_y$ and they are pre-K-graphs in $H$.
In this section, we use the structure obtained in the previous section to construct cascades in $\S_1$
and find their planar bases.

Each graph $G \in \S_1$ has $\egp(G) = 2$, and thus contains a graph $H \in \Cc_1(\egp)$ as a minor. The next lemma shows that $H$ has to be planar.

\begin{lemma}
  \label{lm-c2-egp}
  Let $G \in \Gcxy$ be a cascade in $\S_1$.

  {\rm (a)} If $H\in \Gcxy$ is a proper minor of $G$ and $\egp(H)=2$, then $H$ is a planar graph.

  {\rm (b)} If $H \in \Gcxy$ is a minor of $G$ such that $H \in \Cc_1(\egp)$, then $H$ is planar.
\end{lemma}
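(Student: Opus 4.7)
The plan is to leverage the cascade axioms (C1)--(C3) together with the minor-monotonicity of $\egp$. The essential content of being a cascade, in this setting, is that any single minor-operation that fails to decrease $\egp$ must decrease $\eg$ by at least one; since $\eg(G)=1$, this means that as soon as one takes a single minor-operation step that preserves $\egp=2$, the resulting graph is forced to be planar. Both parts of the lemma then fall out quickly.

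For part (a), I would argue as follows. Given a proper minor $H$ of $G$ with $\egp(H)=2$, definitionally there is some $\mu \in \M(G)$ such that $H$ is a minor of $\mu G$. Monotonicity of $\egp$ sandwiches
$$2 = \egp(H) \le \egp(\mu G) \le \egp(G) = 2,$$
so $\egp(\mu G) = \egp(G)$ and hence $\mu \notin \dc1 \egp$. By the cascade axiom (C1), $\mu \in \dc1 \eg$. Since $G \in \S_1$ gives $\eg(G)=1$ (as observed just after the definition of $\S_k$), this forces $\eg(\mu G)=0$, i.e.\ $\mu G$ is planar. Every minor of a planar graph is planar, so $H$ is planar.

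For part (b), I would reduce immediately to part (a). A graph $H \in \Cc_1(\egp)$ lies in $\Cc(\egp)$ by definition, while the cascade axiom (C3) excludes $G$ from $\Cc(\egp)$. Therefore $H \ne G$, so $H$ is a \emph{proper} minor of $G$ with $\egp(H) = 2$, and part (a) applies to give that $H$ is planar.

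There is no real obstacle here; the lemma is a direct unwinding of the cascade definition, and its role is simply to record this one-step planarity consequence for later use. The only mild point to mind is to justify that a proper minor is reachable from $G$ by \emph{some} $\mu \in \M(G)$ in a single step, which is immediate from the definition of a minor as the outcome of a sequence of minor-operations in $\Gcxy$ (so in particular there is a first such operation, which produces an intermediate graph of which $H$ is still a minor).
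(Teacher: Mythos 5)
Your proposal is correct and follows essentially the same route as the paper: in both, one takes a single minor-operation $\mu$ with $H$ a minor of $\mu G$, uses minor-monotonicity of $\egp$ to see $\mu\notin\dc1\egp$, invokes (C1) to get $\eg(\mu G)=0$, and for (b) uses (C3) (the paper cites (C1) and (C3)) to conclude $H\neq G$ so that (a) applies. Your write-up is, if anything, slightly cleaner than the paper's, whose final line contains a typo where the intended conclusion is $\eg(H)\le\eg(\mu G)<\eg(G)=1$.
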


\begin{proof}
  In case (b), $H$ is a proper minor of $G$ as wellby the properties (C1) and (C3) of cascades. Thus, in both cases, (a) and (b),
  there exists a minor operation $\mu \in \M(G)$ such that $H$ is a minor of $\mu G$.
  Since $\egp(H) = 2$ and $\egp$ is minor-monotone, $\egp(\mu G) \ge \egp(H) = 2$.
  By (C1), $\eg(H) < g(H) = 1$, which means that $H$ is planar.
\end{proof}

\begin{figure}
  \centering
  \includegraphics{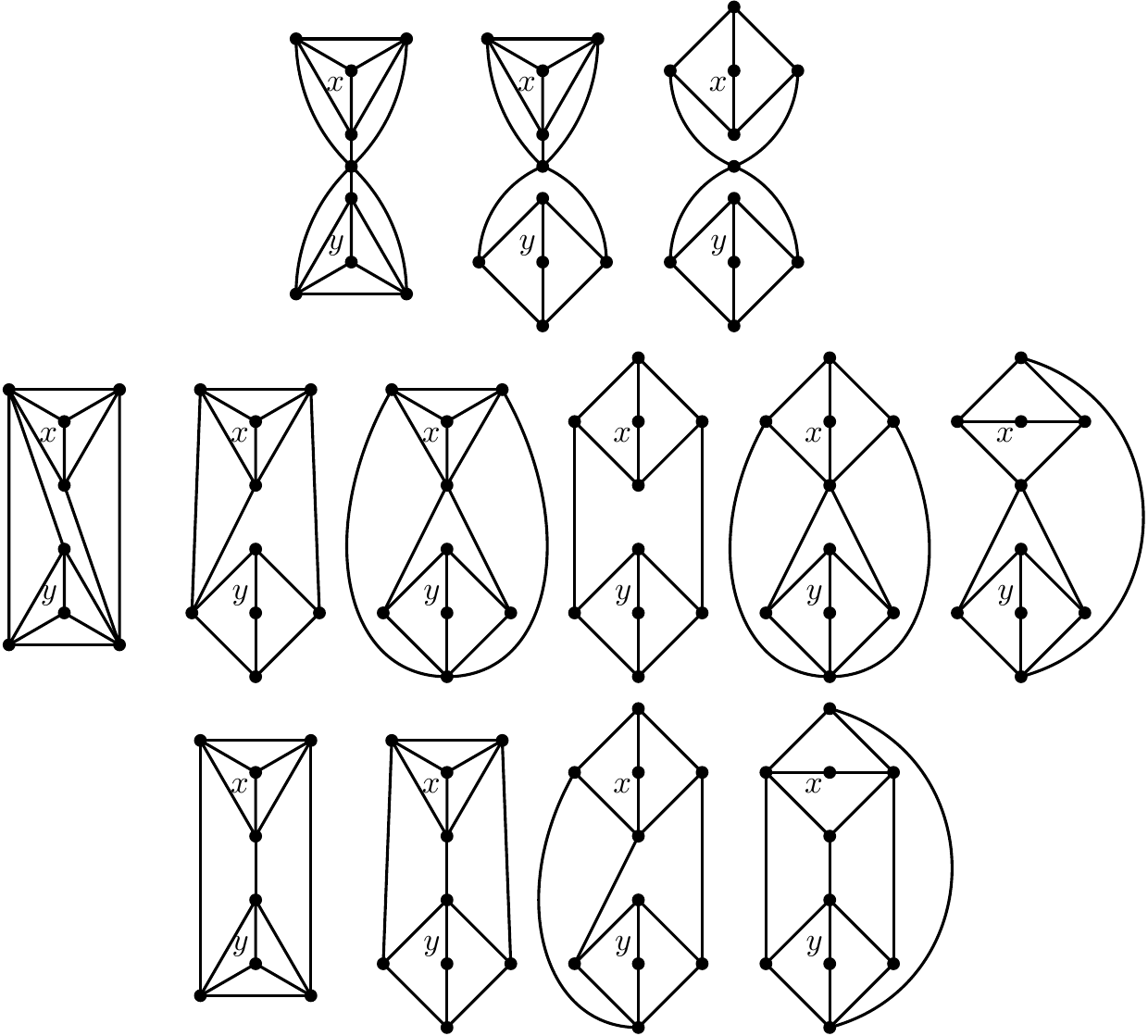}
  \caption{The planar graphs in $\Cc_1(\egp)$.}
  \label{fg-projective}
\end{figure}

Lemma~\ref{lm-c2-egp} combined with (C2) implies that each graph $G \in \S_1$ contains a planar graph $H\in \Cc_1(\egp)$ as a minor. By Lemma \ref{lm-cc-egp-iff}, $\widehat{H}^+$ is either in $\E_1$ (an obstruction for the projective plane) or $\eg(H/xy)=\egp(H)=2$. In the latter case, Lemma \ref{lm-cc-egp} shows that $\widehat{H}^+ \in \E_1^*$ and $\widehat{H/xy} \in \E_1$.
The complete list of planar graphs in $\Cc_1(\egp)$ is depicted in Fig.~\ref{fg-projective}.
The list has been obtained as follows: We start with $\E_1$ which consists of 35 obstructions for the projective plane \cite{archdeacon-1981,glover-1979}. Every planar graph obtained from one of these by removing an edge and using its ends as terminals $x$ and $y$ gives one of the graphs. Next, each of 68 ($=103-35$) graphs $Q\in \E_1^*\setminus \E_1$ (cf. \cite{glover-1979}) is tested to check if the removal of an edge $xy$ yields a planar graph $H\in\Gcxy$ such that $H/xy = Q/xy \in \E_1$.
A simple use of computer then reveals that the resulting planar cases are precisely those depicted in Fig.~\ref{fg-projective}.

\begin{theorem}
\label{thm:C1plus}
The class $\Cc_1(\egp)$ contains precisely 13 planar graphs that are depicted in Fig.~\ref{fg-projective}.
Every cascade in $\S_1$ contains one of these as a minor.
\end{theorem}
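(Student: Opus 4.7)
The plan is to prove the two assertions separately: first I would enumerate the planar members of $\Cc_1(\egp)$ using Lemma~\ref{lm-cc-egp}, and then I would derive the containment statement via Lemma~\ref{lm-c2-egp}(b).

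For the enumeration, I would begin from the characterization given by Lemma~\ref{lm-cc-egp}. A graph $G \in \Cc_1(\egp)$ must fall into one of three cases: either (i) $\et(G) = 0$ and $\hat G \in \E$, or (ii) $\et(G) > 0$ with $\hat{G\+} \in \E$, or (iii) $\et(G) > 0$, $\hat{G\+} \in \E^* \sm \E$, and $\hat{G/xy} \in \E$. Case (i) forces $\eg(\hat G) = \egp(G) = 2$, so $\hat G \in \E_1$, which is nonplanar --- hence this case produces no planar candidate. In case (ii) we obtain $\hat{G\+} \in \E_1$, and in case (iii) we obtain $\hat{G\+} \in \E_1^* \sm \E_1$ together with $\hat{G/xy} \in \E_1$. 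I would then iterate over the $35$ graphs in $\E_1$ and the $68$ graphs in $\E_1^* \sm \E_1$ catalogued in \cite{archdeacon-1981,glover-1979}; for each edge $e$ of each such graph $H$, the candidate $G = H - e$ (with endpoints of $e$ as $x,y$) is retained when $G$ is planar, augmented in case (iii) by the requirement $H/e \in \E_1$. Each surviving candidate would then be checked for $\egp$-criticality --- a routine finite test, since one only has to verify that every single minor operation drops $\egp$ below $2$. A computer-assisted execution of this procedure is expected to yield precisely the $13$ graphs in Fig.~\ref{fg-projective}.

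For the second assertion, let $G \in \S_1$. By (C1) and Lemma~\ref{lm-cascades}, $\eg(G) = 1$ and $\egp(G) = 2$. Since $\egp$ is minor-monotone and the minor order is well-founded, I would choose a minor $H$ of $G$, still lying in $\Gcxy$, that is minimal subject to $\egp(H) = 2$. I would then verify that every $\mu \in \M(H)$ satisfies $\egp(\mu H) < 2$: otherwise either $\mu H \in \Gcxy$ directly violates minimality, or else $xy \in E(\mu H)$, in which case $(\mu H) - xy$ is a strictly smaller minor that lies in $\Gcxy$ and has $\egp((\mu H)-xy) = \eg(\mu H) = \egp(\mu H) = 2$, again violating minimality. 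Hence $H \in \Cc_1(\egp)$. Finally, Lemma~\ref{lm-c2-egp}(b) forces $H$ to be planar, so $H$ is one of the $13$ graphs in Fig.~\ref{fg-projective}.

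The principal obstacle is the first part: the search space is finite and modest, but systematic verification of $\egp$-criticality for every candidate drawn from $\E_1 \cup \E_1^*$ is intricate and essentially requires a computer. The conceptual reduction to such a finite check is entirely furnished by Lemma~\ref{lm-cc-egp}; once the list of $13$ graphs has been obtained, the second assertion follows immediately from Lemma~\ref{lm-c2-egp}(b).
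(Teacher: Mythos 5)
Your proposal is correct and follows essentially the same route as the paper: the enumeration of the planar members of $\Cc_1(\egp)$ is reduced via Lemma~\ref{lm-cc-egp} (the paper also invokes Lemma~\ref{lm-cc-egp-iff}) to a computer-assisted check over the catalogued classes $\E_1$ and $\E_1^*\setminus\E_1$, and the containment statement for cascades is obtained exactly as in the paper by extracting a minor in $\Cc_1(\egp)$ and applying Lemma~\ref{lm-c2-egp}(b). Your explicit minimal-minor argument handling the possible appearance of the edge $xy$ is a slightly more detailed version of the step the paper leaves implicit.
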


\begin{corollary}
\label{cor:C1plusplanar}
Every planar graph in $\Cc_1(\egp)$ contains disjoint $xy$-K-graphs.
\end{corollary}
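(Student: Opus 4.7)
My plan is to adapt the argument of Lemma~\ref{lm-disjoint-xy-k-graphs} (which handled the cascade case) to the $\Cc_1(\egp)$ setting. Let $G \in \Cc_1(\egp)$ be planar, so $\egp(G) = 2$ and $\eg(G) = 0$. Lemma~\ref{lm-dd-xy-2} then gives $\dd_G(x,y) \ge 2$, and following the proof of Corollary~\ref{cr-two-cycles} I obtain disjoint $(x,y)$-separating cycles $C_1'$ and $C_2'$ whose $C_i'$-bridges containing $x$ and $y$ overlap, together with (via Lemma~\ref{lm-x-k-graph}) disjoint pre-K-graphs $L_x \subseteq C_1' \cup B_x$ and $L_y \subseteq C_2' \cup B_y$, where $B_x$ is the $C_1'$-bridge containing $x$ and $B_y$ the $C_2'$-bridge containing $y$.

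My next step is to refine the construction in Lemma~\ref{lm-x-k-graph} so that $L_x$ is actually an $x$-K-graph. In the skew-overlap case the produced pre-K-graph $P \cup C_1'$ is a $K_{2,3}$-subdivision with branch vertices $u_1, v_1 \in V(C_1')$; since $x \in B_x^\circ$ and $B_x$ is connected with attachments $u_1, v_1$, I can always route $P$ through $x$, which places $x$ on an open branch of $L_x$. In the three-common-attachment case $L_x$ is a $K_4$-subdivision with apex $u \in B_x^\circ$, and I would take $u = x$ when possible (or reroute one of the internal paths so that $x$ becomes a branch vertex), again producing an $x$-K-graph. Symmetric reasoning handles $L_y$.

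If a refinement happens to be structurally blocked, I close the argument by contradicting $\Cc_1(\egp)$-criticality. Pick a neighbor $v$ of $x$ with $v \in V(B_x) \setminus V(C_1')$ and contract $xv$; by criticality $\egp(G/xv) \le 1$. On the other hand $L_x$ and $L_y$ persist as disjoint pre-K-graphs in $G/xv$, so $(G/xv)^+$ contains two disjoint K-graphs and Lemma~\ref{lm-two-k-graphs}(i) gives $\egp(G/xv) \ge 2$, a contradiction.

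The main obstacle will be verifying that $L_x$ and $L_y$ truly survive as \emph{pre-K-graphs}---not merely as homeomorphic copies of $K_4$ or $K_{2,3}$---after the contraction, particularly in the delicate subcase $v \in V(L_x)$. Here one must show that the principal $L_x$-bridge in $G^+$ retains its attachments on every branch of $L_x$ in $(G/xv)^+$; this in turn reduces to tracking how the edge $xy$ and the overlap of $B_x$ and $B_y$ across $C_1'$ together supply the required attachments after the contraction.
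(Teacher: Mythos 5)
Your route is not the paper's stated proof: the paper disposes of this corollary by inspecting the thirteen planar graphs of Fig.~\ref{fg-projective} (so it leans on the computer-assisted Theorem~\ref{thm:C1plus}), and only remarks that Lemma~\ref{lm-disjoint-xy-k-graphs} can be adapted to give a direct argument. You are carrying out exactly that adaptation, and its skeleton is sound; in fact the $\Cc_1(\egp)$ case is easier than the cascade case, because $\egp$-criticality means \emph{every} minor-operation drops $\egp$, so exhibiting any two disjoint K-graphs in $(G/xv)^+$ already yields the contradiction, and the whole second half of the cascade proof (the Kuratowski subgraph, the extra edge $e$, Lemma~\ref{lm-two-k-graphs}(ii)) is not needed. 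Two caveats on the first half of your plan: the claim that in the skew-overlap case you can ``always route $P$ through $x$'' is unjustified --- a cutvertex of $B_x$ may separate $x$ from $u_1$ or $v_1$, and nothing guarantees here that $G^+$ is even $2$-connected --- though this is harmless since your fallback covers it; and the existence of the neighbor $v\in V(B_x)\setminus V(C_1')$ deserves the one-line remark that otherwise $B_x$ is a star centred at $x$, in which case the construction of Lemma~\ref{lm-x-k-graph} already places $x$ correctly in $L_x$ and there is nothing to prove.

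The genuine gap is the step you yourself flag: ``$L_x$ and $L_y$ persist as disjoint pre-K-graphs in $G/xv$'' is not true as stated. $L_y$ is fine (the contraction happens away from it and can only merge bridges), but $L_x$ can fail when it is of $K_4$-type: there the principal bridge's attachment at the apex $u\in B_x^\circ$ is supplied by a path from $x$ to $u$, while the attachments $u_1,u_2,u_3$ come from the bridge containing $y$. If the contracted neighbor $v$ is an interior vertex of, say, $P_1$, then in $(G/xv)^+$ the bridge containing $y$ attaches at $u_1,u_2,u_3$ and at the merged vertex (which lies in the interior of a branch), but possibly not at $u$, while the remnant of the $x$--$u$ path may form a different $L_x$-bridge; so no single bridge need attach to all four branch vertices, and $L_x$ need not be a K-graph in $(G/xv)^+$. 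The repair is easy but must be said: either replace $L_x$ by the $K_{2,3}$-type pre-K-graph $C'\cup P_1\cup P_2\cup P_3$ from the second case of Lemma~\ref{lm-x-k-graph}, whose three open branches receive the attachments $u_2$, $u_3$ and the merged vertex, so two disjoint K-graphs survive after all; or bypass K-graph survival altogether: since $x$ and $v$ lie in the interiors of the relevant bridges of both $C_1'$ and $C_2'$, contracting $xv$ changes no attachment sets and hence no overlap graphs, so $C_1'$ and $C_2'$ remain disjoint $(x,y)$-separating cycles of the planar graph $G/xv$; then Lemma~\ref{lm-separating-cycles} gives $\dd_{G/xv}(x,y)\ge 2$ and Lemma~\ref{lm-dd-xy-2} gives $\egp(G/xv)=2$, contradicting criticality. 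With either patch your argument closes, and it matches the direct proof the paper alludes to rather than its inspection-based one.
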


The corollary can be proved by inspection of graphs in Fig.~\ref{fg-projective}. However,
it is not hard to see that Lemma~\ref{lm-disjoint-xy-k-graphs} can be adapted to prove the corollary directly,
without relying on the computer-assisted proof of Theorem \ref{thm:C1plus}.

A selection of nonplanar graphs in $\Cc_1(\egp)$ is depicted in Fig.~\ref{fg-selected}.
The consequence of Lemma~\ref{lm-c2-egp}(b) is that a graph $G \in \S_1$ cannot contain a graph in Fig.~\ref{fg-selected} as a minor. This will be used extensively in the proofs of Lemmas \ref{lm-sep-1} and \ref{lm-2-sep}.

Next we prove, using minimality assumption on $L_x$, that in the case when $L_x\cong K_4$, $\By$ is attached to $L_x$ only at the branch-vertices of $L_x$.

\begin{lemma}
\label{lm-k4}
  If $L_x\cong K_4$, then the attachments of $\By$ in $L_x$ are branch-vertices of $L_x$.
\end{lemma}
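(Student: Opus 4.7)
I would argue by contradiction. Suppose $\By$ has an attachment $v\in V(L_x)$ that is not a branch vertex of $L_x$; then $v$ lies in the interior of some branch $P$ of $L_x$. The goal is to produce a new $x$-K-graph $L_x^{\mathrm{new}}\subsetneq L_x$, contradicting the minimality assumption on $L_x$.

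The first step is to pin down the attachments of $\By$ on $L_x$. Since $L_x$ is a pre-K-graph, it is a K-graph in $G^+$, so the principal $L_x$-bridge in $G^+$ must attach at all four branch vertices $x,u_1,u_2,u_3$. The only edge of $G^+$ not in $G$ is $xy$, and $y\in V(\By)$; hence $\By\cup\{xy\}$ is precisely this principal bridge, which forces $\By$ to attach at each of $u_1,u_2,u_3$ in $G$. Combined with the hypothesized attachment at $v$, the bridge $\By$ has at least four attachments on $L_x$. In particular, since $\By$ is connected, one can extract a path $R\subseteq \By$ from $v$ to some $u_k$, chosen later according to the case at hand.

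I would then distinguish cases according to which branch of $L_x$ contains $v$: either $P$ is an $x$-branch $P_i$ (joining $x$ to $u_i$) or $P$ is a boundary branch $Q_{ij}$ (joining $u_i$ and $u_j$). In each case, the idea is to prune $L_x$ by removing one of the two sub-arcs of $P$ meeting at $v$ together with any now-superfluous branches, and verify that the result, together with the ``promoted'' vertex $v$, is a $K_{2,3}$-subdivision $L_x^{\mathrm{new}}$ in which $x$ plays either the role of a branch vertex or lies on an open branch, as required. The principal bridge of $L_x^{\mathrm{new}}$ in $G^+$ is then supplied by the remainder of $\By\cup\{xy\}$, which still attaches to enough vertices thanks to the surplus attachments at $u_1,u_2,u_3,x$.

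\textbf{Main obstacle.} The delicate point is the requirement $L_x^{\mathrm{new}}\subsetneq L_x$. Any $K_{2,3}$-subdivision in which $v$ (which has degree two inside $L_x$) is promoted to a branch vertex of degree three must make use of at least one attaching edge of $\By$ at $v$, so a direct construction yields a subgraph of $G$ rather than of $L_x$. To make the minimality argument go through, one must do the combinatorial bookkeeping carefully: after deleting a suitable arc of $P$ incident to $v$, and after suppressing the resulting degree-two vertices, the remaining subgraph of $L_x$ must itself be seen to contain a proper $x$-K-subgraph of type $K_{2,3}$ whose principal bridge in $G^+$ is witnessed by $\By\cup\{xy\}$ using the attachment at $v$ only as a certificate, not as an edge of $L_x^{\mathrm{new}}$. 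Executing this case analysis, and in particular verifying in each of the two (up to symmetry) configurations of $P$ that $x$ lands in the required role of an $x$-K-graph, is where the real work lies.
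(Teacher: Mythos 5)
Your overall strategy---contradict the minimality of $L_x$ by exhibiting a smaller $x$-K-graph inside $L_x$---only covers half of the lemma, and the other half cannot be repaired within that strategy. Split according to whether the attachment $v$ lies on a branch of the boundary of $L_x$ (joining two of $u_1,u_2,u_3$) or on a branch incident with $x$. In the boundary case your revised idea from the ``main obstacle'' paragraph (keep $L_x^{\mathrm{new}}\subseteq L_x$ and use $v$ only as an attachment of the principal bridge) is exactly right, and it is what the paper does: one deletes the branch of $L_x$ joining $x$ to the branch vertex not lying on the branch containing $v$, obtaining a theta graph in which $v$, $x$ and that branch vertex lie in the interiors of the three branches, so the bridge $B_y$ together with the edge $xy$ attaches to all three open branches and minimality is contradicted. (Note that your stated recipe---delete a sub-arc of the branch containing $v$---does not produce this: it either leaves a degree-one vertex or discards $v$ altogether, and after cleaning up you get a theta graph with $x$ as a branch vertex, which is not an $x$-K-graph.) In the case where $v$ lies on a branch incident with $x$, however, no proper subgraph of $L_x$ is an $x$-K-graph at all: the only proper subgraphs of a $K_4$-subdivision homeomorphic to $K_4$ or $K_{2,3}$ are the six theta graphs obtained by deleting the interior of one branch, and each of them either has $x$ as a branch vertex (disqualified, since for a $K_{2,3}$-type $x$-K-graph the terminal must lie on an open branch) or has a branch whose interior contains none of the available attachments $x,u_1,u_2,u_3,v$, so no principal bridge is guaranteed. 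The paper therefore abandons minimality in this case and uses the cascade property (C1): it contracts the edge $e$ of that branch incident with $x$ and shows that $G/e$ still contains a K-graph (so $\eg(G/e)\ge 1$) and that $L_x/e$ and $L_y$ are disjoint K-graphs in $(G/e)^+$ (so $\egp(G/e)\ge 2$), contradicting (C1). Your proposal never invokes (C1) or any contraction, so it has no way to close this case; treating the two configurations as parallel ``bookkeeping'' is the genuine gap.

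A smaller point: your deduction that $B_y\cup\{xy\}$ ``is precisely'' the principal $L_x$-bridge does not follow from the stated premise; the definition of a K-graph only guarantees that \emph{some} $L_x$-bridge of $G^+$ attaches to all four branch vertices, and a priori that bridge need not contain $y$. The paper uses the same attachment facts for $B_y$ without further comment, so this is not the decisive defect, but a self-contained write-up would have to justify why the bridge containing $y$ attaches at $u_1,u_2,u_3$ (or restructure the argument so that whichever principal bridge exists can be used).
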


\begin{proof}
  Let $w_0=x, w_1, w_2$, and $w_3$ be the branch-vertices of $L_x$ and let $P_{i,j}$ be the open branch of $L_x$ connecting $w_i$ and $w_j$.
  Assume for a contradiction that there is an attachment $w$ of $\By$ on an open branch of $L_x$.
  Suppose first that $w$ lies on $P_{1,2}$.
  Then there is an $x$-K-graph $L\cong K_{2,3}$ and disjoint from $L_y$: The subgraph $L$ consists of the branch vertices $w_1, w_2$
  and branches $P_{1,3} \cup P_{3,2}$, $P_{1,2}$, and $P_{1,0} \cup P_{0, 2}$. Since $\By$ attaches to vertices $x, w_3$, and $w$, and $L$ is a proper subgraph of $L_x$,
  $L$ is indeed an $x$-K-graph disjoint from $L_y$, a contradiction to the minimality of $L_x$.

  By symmetry, we may assume that $w$ lies on $P_{1,0}$.
  Let $e$ be the edge of $L_x$ incident with $x$ and $P_{1,0}$.
  Consider the graph $G' = G / e$.
  Since $L_x / e$ is an $x$-K-graph of $G$ disjoint from $L_y$, $G'\+$ contains two disjoint K-graphs and thus $\eg(G'\+) = 2$ by Lemma~\ref{lm-two-k-graphs}(i).
  If $e = wx$, then $L_x / e$ is a K-graph in $G'$ and $\eg(G') \ge 1$.
  Otherwise, $L_x/e$ contains a K-graph $L\cong K_{2,3}$ as follows.
  The branch-vertices of $L$ are $w_1$ and $x$. The branches of $L$ are paths $P_{1,2} \cup P_{2,0}$, $P_{1,3} \cup P_{3,0}$, and $P_{1,0}$.
  Since $\By$ attaches on to vertices $w_2, w_3$, and $w$, the subgraph $L$ is a K-graph in $G'$ and $\eg(G') \ge 1$.
  We conclude that $\eg(G') = \eg(G)$ and $\eg(G'\+) = \eg(G\+)$ which violates (C1).
\end{proof}

Since $\eg(G) = 1$, at most one of $L_x$ and $L_y$ can be a K-graph in $G$ (Lemma \ref{lm-noncontractible}).
Let us recall that the interior of $L_x$ consists of $x$ and all open branches of $L_x$ that are incident with~$x$.

\begin{lemma}
\label{lm-interiors}
  If $B_y$ is attached to the interior of $L_x$, then its only attachment in the interior of $L_x$ is the vertex $x$. In such a case, $\Bx$ is not attached to the interior of $L_y$.
\end{lemma}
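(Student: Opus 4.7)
The plan is to handle the two assertions of the lemma separately, with the bulk of the work going into the first. For the first assertion, the case $L_x \cong K_4$ is immediate from Lemma~\ref{lm-k4}: every attachment of $\By$ on $L_x$ is a branch-vertex, and the only branch-vertex lying in the interior of $L_x$ is $x$. All the effort is in the case $L_x \cong K_{2,3}$.

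Let $u_1, u_2$ be the branch-vertices of $L_x$ and let $P_1, P_2, P_3$ be its three open branches, with $x$ in the interior of $P_1$. Since $L_x$ is a pre-K-graph, the bridge $\By$ together with the edge $xy$ must attach on every open branch of $L_x$ in $G\+$; as $xy$ only reaches $P_1$ at the single vertex $x$, the bridge $\By$ itself already has attachments $r_2 \in P_2$ and $r_3 \in P_3$ in $G$. Now suppose toward a contradiction that $\By$ has an additional attachment $w \in V(P_1) \sm \{x, u_1, u_2\}$, and assume, by symmetry, that $w$ lies between $x$ and $u_2$ on $P_1$. The key step is to pick any edge $e$ of the subpath $P_1[x, w]$ and look at $G/e$: the contracted graph $L_x/e$ is still a $K_{2,3}$ with branch-vertices $u_1, u_2$, and the image of $\By$ attaches to $L_x/e$ at the image of $w$ on $P_1/e$ and at $r_2, r_3$ on $P_2, P_3$, so $L_x/e$ is an honest K-graph in $G/e$ (no longer needing the edge $xy$). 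On the other hand, $L_y$ is disjoint from $e$, so it remains a pre-K-graph in $G/e$ disjoint from $L_x/e$. Lemma~\ref{lm-fund-cycles} then forces $\eg(G/e) \ge 1$ (a K-graph cannot live in a planar graph), while Lemma~\ref{lm-two-k-graphs}(i) applied to the two disjoint K-graphs in $(G/e)\+$ forces $\egp(G/e) \ge 2$. Thus $e$ decreases neither $\eg$ nor $\egp$, contradicting property~(C1) of a cascade.

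For the second assertion, suppose $\By$ is attached to the interior of $L_x$. By the first assertion (together with Lemma~\ref{lm-k4} in the $K_4$ case) the attachment is at $x$; combined with the attachments on the remaining branches, or the remaining branch-vertices in the $K_4$ case, this shows that $\By$ attaches to every open branch, respectively every branch-vertex, of $L_x$ already in $G$. Hence $L_x$ is a K-graph in $G$ with principal bridge $\By$. If $\Bx$ were also attached to the interior of $L_y$, then symmetrically $L_y$ would be a K-graph in $G$, and Lemma~\ref{lm-two-k-graphs}(i) would give $\eg(G) \ge 2$, contradicting $\eg(G) = 1$ for $G \in \S_1$.

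The main obstacle is the contradiction step in Part~1. The delicate point is recognizing that the extra attachment $w$ already promotes $L_x$ to a K-graph in $G$ (not merely a pre-K-graph), which is what allows a single contraction of an edge in $P_1[x, w]$ to force $\eg$ to stay at $1$; combined with the disjoint pre-K-graph pair in $(G/e)\+$ that forces $\egp$ to stay at $2$, property~(C1) is violated.
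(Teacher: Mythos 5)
Your proposal is correct and is essentially the paper's own proof: the same contraction of an edge of $L_x$ with both ends in the interior to produce a minor-operation decreasing neither $\eg$ nor $\egp$ (contradicting (C1)) for the first assertion, and the same "two disjoint K-graphs in $G$ force $\eg(G)\ge 2$" contradiction for the second, with Lemma~\ref{lm-k4} disposing of the $K_4$ case. The only slight overclaim is that the principal $L_x$-bridge in $G^+$ must be $B_y$ together with the edge $xy$ (the definition of a pre-K-graph only guarantees \emph{some} principal bridge), but if a different bridge is principal then $L_x$ is already a K-graph in $G$ and remains one after your contraction, so every conclusion you draw still holds; the paper's proof is equally terse on this point.
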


\begin{proof}
  If both $\By$ and $\Bx$ attach to the interior of $L_x$ and $L_y$, respectively, then we obtain (using Lemma~\ref{lm-k4} if $L_x$ or $L_y$ is homeomorphic to $K_4$) that both $L_x$ and $L_y$ are K-graphs in $G$.
  By Lemma~\ref{lm-two-k-graphs}(i), $\eg(G) \ge 2$, a contradiction with $G \in \S_1$.

  Suppose that $\By$ has an attachment in the interior of $L_x$ that is different from $x$.
  Thus there exists an edge $e \in E(L_x)$ with both ends in the interior of $L_x$.
  Consider the graph $G / e$. Since $L_x/e$ is a K-graph in $G / e$, $\eg(G / e) \ge 1$.
  Since $L_x/e$ and $L_y$ are $xy$-K-graphs in $G / e$, $\egp(G / e) \ge 2$.
  This contradicts (C1).
\end{proof}

When dealing with cascades in $\S_1$, we will consider a base $H$ in $G$ containing the $xy$-K-graphs $L_x$ and $L_y$ in $G$ as introduced at the beginning of this section. We will explore how $L_x$ and $L_y$ are linked to each other by paths in $H$. To describe the linkages, we introduce some additional terminology that will be used to capture the situation inside the graph $H$.

Let $H$ be a graph that contains a subgraph $L$, called \df{core}, homeomorphic to $K_4$ or $K_{2,3}$ with distinguished cycle $C$ in $L$ that contains two or three branch vertices of $L$.
When $L=L_x$ or $L=L_y$, then we select $C$ to be the cycle that does not contain the terminal $x$ or $y$, respectively.
We say that $C$ is a \df{boundary cycle} of the core $L$.
The edges and vertices of $L$ that do not lie in $C$ are said to be in the \df{interior} of $L$.
For $U \ss V(H)$, we say that $L$ is \df{$U$-linked} in $H$ if there are $|U|$ disjoint paths in $H$ connecting $C$ and $U$ that are internally disjoint from $L$. We say that $H$ is a \df{$U$-linkage} of $L$ if $L$ is $U$-linked in $H$ and
the following holds. If $L\cong K_{2,3}$, then for every open branch $t$ on the boundary of $L$ there is a path in $H$ from $t$ to $U$ that is internally disjoint from $L$; if $L\cong K_4$, then for every branch vertex $t$ on the boundary of $L$ there is a path in $H$ from $t$ to $U$ that is internally disjoint from $L$. Existence of these paths will enable us to show that $L$ is close to be a K-graph in $H$. Namely, if we add a new vertex adjacent to all vertices in $U$ and to a vertex in the interior of $L$, then $L$ contains a K-graph in the extended graph.
If $u \in U$ has degree at least 2 in a $U$-linkage $H$, then $u$ is called a \df{foot} of $H$.
If $u \in U$ has degree 1, then the \df{foot} of $H$ containing $u$ is the path from $u$ to a first vertex of degree at least 3.
The foot containing $u$ is also called the \df{$u$-foot} of $H$.
A \df{$u$-foot} is \df{removable} if $H$ is a $(U \sm \{u\})$-linkage.
The notion of a linkage will be used to describe a pre-K-graph in $G$ together with essential paths that attach onto it.

A set $U \ss V(G)$ \df{separates} $L_x$ and $L_y$ in $G$ if every $(L_x,L_y)$-path in $G$ contains a vertex in $U$.
We say that $U$ \df{blocks} $L_x$ \df{from} $L_y$ in $G$ if $U \cup \{x\}$ separates $L_x$ and $L_y$ and $L_x$ is $U$-linked in $G$. The introduced terms are illustrated in Fig.~\ref{fg-linkage-example}.

\begin{figure}
  \centering
  \includegraphics{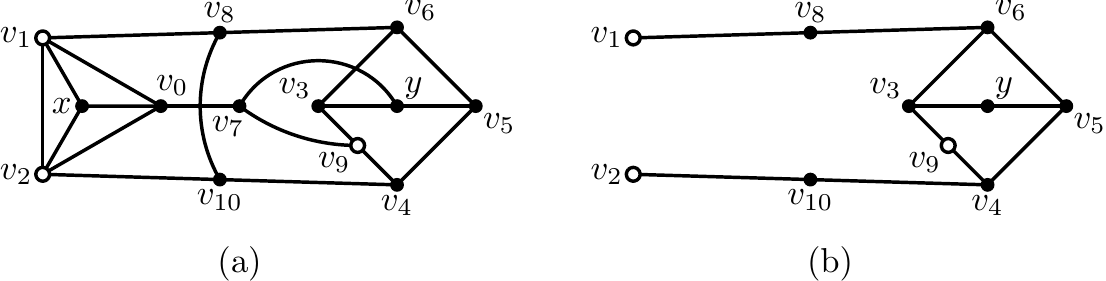}
  \caption{%
    (a) A graph with $L_x$ induced by $x, v_0, v_1, v_2$ and $L_y$ induced by $y, v_3, v_4, v_5, v_6, v_9$.
    The set $U = \{v_1, v_2, v_9\}$ blocks $L_y$ from $L_x$ but not $L_x$ from $L_y$.
    (b) A $U$-linkage with core $L_y$ and feet $v_4v_{10}v_2$, $v_9$, $v_6v_8v_1$.
    Each of the feet $v_4v_{10}v_2$ and $v_9$ is removable but $v_6v_8v_1$ is not.
    This shows that $L_y$ and $U$ admit the linkage \gs{g} (shown in Fig.~\ref{fg-separation}(g)) that is obtained by contracting the edges $v_3v_9, v_1v_8, v_2v_{10}$.}
  \label{fg-linkage-example}
\end{figure}

Let $k$ be the maximum number of pairwise disjoint paths in $G$ connecting
the boundaries of $L_x$ and $L_y$ that are internally disjoint from $L_x$ and $L_y$.
Then we say that the $xy$-K-graphs $L_x$ and $L_y$ are \df{$k$-separated} in $G$.

\begin{lemma}
\label{lm-separating}
  If $L_x$ and $L_y$ are $k$-separated, then there exists a set $U \ss V(G)$ of cardinality $k$ such that
  one of the following cases occurs:
  \begin{enumerate}[label=\rm(\roman*)]
  \item
    $U$ blocks $L_x$ from $L_y$ and $L_y$ from $L_x$.
  \item
    $U$ blocks $L_x$ from $L_y$ and $U \cup \{x\}$ blocks $L_y$ from $L_x$.
  \item
    $U \cup \{y\}$ blocks $L_x$ from $L_y$ and $U$ blocks $L_y$ from $L_x$.
  \end{enumerate}
\end{lemma}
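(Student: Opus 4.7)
\textbf{Proof plan for Lemma~\ref{lm-separating}.}
My plan is to reduce the statement to a Menger-type argument in the auxiliary graph
$G^* := G - (V(L_x^\circ) \setminus \{x\}) - (V(L_y^\circ) \setminus \{y\})$,
where $L^\circ$ denotes the interior of a pre-K-graph $L$. By Lemma~\ref{lm-interiors}, every $(L_x, L_y)$-path in $G$ internally disjoint from $V(L_x) \cup V(L_y)$ restricts to a path in $G^*$ with one endpoint in $A := \{x\} \cup V(C_x)$ and the other in $B := \{y\} \cup V(C_y)$; and those paths that avoid both $x$ and $y$ are exactly the paths in $G' := G^* - \{x, y\} = G - V(L_x^\circ) - V(L_y^\circ)$ connecting $V(C_x)$ to $V(C_y)$. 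By the $k$-separation hypothesis the maximum $(V(C_x), V(C_y))$-flow in $G'$ equals $k$.

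I take a minimum $(A, B)$-vertex-separator $U^*$ in $G^*$ and set $m := |U^*|$. Because $U^* \setminus \{x, y\}$ is still a $(V(C_x), V(C_y))$-separator in $G'$, Menger's theorem forces $|U^* \setminus \{x, y\}| \ge k$, and hence $m \in \{k, k+1, k+2\}$. In every case the desired $U$ will be obtained by deleting $x$ and $y$ from $U^*$ whenever they are present. The max-flow/min-cut duality simultaneously provides $m$ internally disjoint $(A, B)$-paths whose $(C_x, U^*)$-segments and $(U^*, C_y)$-segments supply the $U$-linkages required on each side of the separator.

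If $m = k$, then $U^* \cap \{x, y\} = \emptyset$; taking $U := U^*$, the separation of $A$ from $B$ in $G^*$ translates to $U \cup \{x\}$ and $U \cup \{y\}$ both separating $L_x$ from $L_y$ in $G$, and the $k$ max-flow paths split at their unique $U$-crossings into the linkages of $L_x$ and $L_y$. This is case~(i). If $m = k+1$, then exactly one of $x, y$ lies in $U^*$; say $x \in U^*$. Setting $U := U^* \setminus \{x\}$, I obtain $|U|=k$, and $U \cup \{x\}$ separates $L_x$ from $L_y$ in $G$ while $U \cup \{x, y\}$ separates $L_x$ from $L_y$; the extra $(k+1)$-st max-flow path (which necessarily starts at $x$) yields the additional disjoint path that realises the $(U \cup \{x\})$-linkage of $L_y$, giving case~(ii). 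The subcase $y \in U^*$ is symmetric and yields case~(iii).

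The main obstacle is ruling out the possibility $m = k+2$. In that case $\{x, y\} \subseteq U^*$ and the $k+2$ disjoint $(A, B)$-paths decompose into $k$ disjoint $(V(C_x), V(C_y))$-paths in $G'$ together with two extra paths $P\colon x \to V(C_y)$ and $Q\colon V(C_x) \to y$ in $G^*$. Combining $P$ and $Q$ with the two or three internally disjoint $(x, V(C_x))$-paths available inside $L_x$ and the corresponding $(y, V(C_y))$-paths inside $L_y$ (which exist because $L_x, L_y$ are pre-K-graphs homeomorphic to $K_4$ or $K_{2,3}$), one can assemble either a proper $x$-K-graph of $G$ disjoint from $L_y$, contradicting the minimality of $L_x$, or two vertex-disjoint K-graphs of $G$, contradicting $\eg(G) = 1$ via Lemma~\ref{lm-two-k-graphs}(i). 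Carrying out this construction carefully in each of the four combinations $L_x, L_y \in \{K_4, K_{2,3}\}$, and for every attachment pattern of $P$ and $Q$ to the boundary cycles, is the technical heart of the proof.
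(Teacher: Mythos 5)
Your overall architecture (a Menger-type max-flow/min-cut argument between the two cores, with a case split according to whether the terminals end up in the cut) is the same as the paper's, but the write-up has a genuine gap exactly where you place the ``technical heart''. The case $m=k+2$ is left as an unexecuted sketch, and the sketch redoes work that is already available: if $m=k+2$, Menger gives $k+2$ disjoint $(A,B)$-paths, and after truncating them so that each meets $A$ only in its first vertex and $B$ only in its last, one of them starts at $x$ and a different one ends at $y$. The first path, minus $x$, avoids $L_x$ entirely and reaches $L_y$, so it lies in the $L_x$-bridge $\By$ and shows that $\By$ attaches at $x$, an interior vertex of $L_x$; symmetrically the second shows that $\Bx$ attaches at $y$. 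This contradicts Lemma~\ref{lm-interiors} outright, so no case analysis over the homeomorphism types of $L_x,L_y$ or over ``attachment patterns'' is needed, and the ``minimality of $L_x$'' alternative in your dichotomy plays no role. As submitted, the key case is neither carried out nor reduced to the lemma that settles it in one line.

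Two further steps do not hold as stated. First, ``if $m=k+1$ then exactly one of $x,y$ lies in $U^*$'' is unjustified for an arbitrary minimum $(A,B)$-separator: a minimum separator may cut the extra path leaving $x$ at an internal vertex rather than at $x$ (and there are configurations in which no minimum $(A,B)$-separator contains a terminal at all), in which case $U:=U^*\setminus\{x,y\}$ has size $k+1$, not $k$, and your recipe produces nothing. The paper avoids this by taking a minimum cut $U_1$ of size $k$ for the boundary-to-boundary paths only, and adjoining the terminal afterwards; you would need to construct $U$ that way rather than from an arbitrary minimum $(A,B)$-cut. Second, the linkage claims need more than ``the max-flow segments supply them'': in case $m=k$ a maximum family of $(A,B)$-paths may contain a path starting at $x$ or ending at $y$, whose segment does not start on the boundary cycle, so one should use instead the $k$ disjoint boundary-to-boundary paths guaranteed by $k$-separation (each of which must meet $U^*$); and in your case (ii) you need that no path of the family ends at $y$, which is again exactly the content of Lemma~\ref{lm-interiors}, since a path of the family starting at $x$ and another ending at $y$ would force both forbidden interior attachments simultaneously. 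These defects are repairable, but the repairs are precisely the bridge-attachment bookkeeping that the paper channels through Lemma~\ref{lm-interiors}, so in its current form the proposal does not yet constitute a proof.
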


\begin{proof}
  In the conclusions of the lemma, there is symmetry between $x$ and $y$. Thus, we may assume by Lemma~\ref{lm-interiors} that $\By$ is not attached to the interior of $L_x$.
  Let $P_1, \ldots, P_r$ be pairwise disjoint paths connecting $L_x$ and $L_y$ such that $r$ is maximum
  and let $U_0$ be a minimum vertex-set that meets all paths connecting $L_x$ and $L_y$.
  By Menger's Theorem, we have that $|U_0| = r$.
  Note that $r \ge k$. Assume first that $r = k$.
  In this case $U_0$ separates $L_x$ and $L_y$.
  Since there are $k$ pairwise disjoint paths connecting the boundaries of $L_x$ and $L_y$ and all of them
  meet $U_0$, both $L_x$ and $L_y$ are $U_0$-linked in $G$. We conclude that (i) holds.

  Assume now that $r > k$. By Lemma~\ref{lm-interiors}, $\Bx$ has at most one attachment in the interior of $L_y$.
  Thus there is only one path, say $P_r$, that has an end in the interior of $L_y$.
  As noted at the beginning of the proof, none of the paths is attached to the interior of $L_x$.
  Since there are at most $k$ disjoint paths joining the boundaries of $L_x$ and $L_y$,
  we conclude that $r = k+1$.
  Let $U_1$ be a minimum vertex-cut (of size $k$) that meets all paths connecting the boundaries of $L_x$ and $L_y$.
  Thus $U_1$ meets all the paths $P_1, \ldots, P_k$.
  We see that $U_1 \cup \{y\}$ separates $L_x$ and $L_y$.
  Also, the paths $P_1, \ldots, P_r$ demonstrate that $L_x$ is $(U_1 \cup \{y\})$-linked
  and that $L_y$ is $U_1$-linked.
  We conclude that $U_1 \cup \{y\}$ blocks $L_x$ from $L_y$ and $U_1$ blocks $L_y$ from $L_x$.
  Hence (iii) holds. The case (ii) occurs in the symmetric case when $\By$ attaches to the interior of $L_x$.
\end{proof}

\begin{figure}
  \centering
  \includegraphics{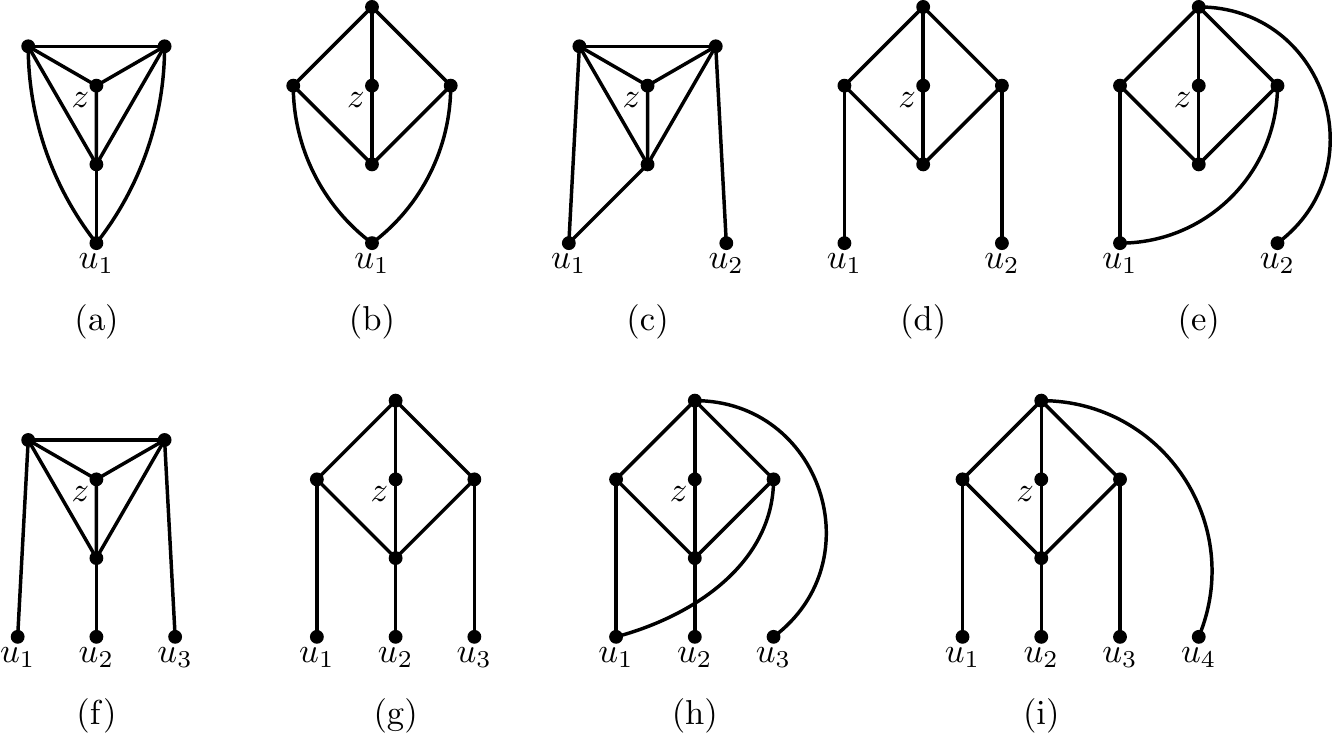}
  \caption{Linkages to small sets. In each linkage, any subset of feet can be contracted.}
  \label{fg-separation}
\end{figure}

In the next lemma we classify all possible types of $U$-linkages of small order. To do this, we need a way to say when an abstract $U'$-linkage $H$ models a $U$-linkage in $G$.
Consider the cascade $G$ and let $z \in \{x,y\}$ and $U \ss V(G)$.
We say that $L_z$ and $U$ \df{admit} a $U'$-linkage $H$ if there exists a set $F \ss E(G)$ such that $L_z /(F\cap E(L_z))$ is a $z$-K-graph in $G/F$ and
$H$ is isomorphic to a subgraph of $G/F$ such that $U'$ is mapped bijectively to $U$ and the core of $H$ is mapped to $L_z/(F\cap E(L_z))$.

\begin{lemma}
  \label{lm-sep-cuts}
  Let $H$ be a base of $G$ and let $U \ss V(H)$.
  If $U$ blocks $L_x$ from $L_y$ in $H$, then $L_x$ and $U$ admit a linkage.
  Furthermore, if $1 \le |U| \le 4$, then $L_x$ and $U$ admit a linkage from Fig.~\ref{fg-separation} (with some of the feet possibly of length zero).
\end{lemma}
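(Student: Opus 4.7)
My plan is to establish the existence of a linkage using the two pieces of data the hypothesis provides --- the $|U|$ disjoint paths from the $U$-linked assumption, and the principal bridge coming from the pre-K-graph property --- and then to handle the enumeration for $1\le|U|\le 4$ by a finite case check against Fig.~\ref{fg-separation}.

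Since $L_x$ is $U$-linked in $H$, I obtain $|U|$ pairwise disjoint paths $P_1,\dots,P_{|U|}$ from the boundary cycle $C$ of $L_x$ to the vertices of $U$, internally disjoint from $L_x$. The remaining task is to produce, for every boundary feature $t$ of $L_x$ (a branch vertex if $L_x\cong K_4$, an open branch if $L_x\cong K_{2,3}$), a path in $G$ (possibly after a contraction) from $t$ to $U$ that is internally disjoint from $L_x$. Since $L_x$ is a pre-K-graph in $H$, there is a principal $L_x$-bridge $B$ in $H^+$ that attaches at every boundary feature. The hypothesis that $U\cup\{x\}$ separates $L_x$ from $L_y$ forces all attachments of $\By$ on $L_x$ to lie in $U\cup\{x\}$, so the same holds for the $H^+$-bridge $\By+xy$.

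I then split into two cases. When $B$ can be chosen to equal $\By+xy$, every boundary feature is an attachment of $B$ lying in $U\cup\{x\}$; since $x$ is in the interior of $L_x$, these attachments all lie in $U$, and the desired paths are short routes through $\By$ from each boundary feature to $U$. When $B$ is an $L_x$-bridge of $H$ distinct from $\By+xy$ (in particular, $B$ does not contain $L_y$), $B$ provides, for each boundary branch vertex $w_i$, a path $Q_i$ in $B$ from $w_i$ to $x$ internally disjoint from $L_x$. Using that $\By$ attaches at $x$ and contracting a single edge incident with $x$ (absorbed into the set $F$ permitted by the definition of ``admits a linkage''), one extends $Q_i$ through $\By^\circ$ to a vertex of $U$ without internally touching $L_x$ after the contraction. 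Taking the union of $L_x$, the paths $P_j$, and the new boundary-feature paths, then contracting the subdivisions of $L_x$ to convert the core into a genuine $K_4$ or $K_{2,3}$, produces a subgraph of $G/F$ that realizes $L_x$ and $U$ as a linkage.

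For the second statement, I enumerate all possibilities when $1\le|U|\le 4$. The core $L\in\{K_4,K_{2,3}\}$ together with at most four feet (each attaching at a branch vertex of $L$ if $L\cong K_4$, or at one of two branch vertices or three open branches if $L\cong K_{2,3}$, with the $x$-branch distinguished in the latter case) leaves only finitely many combinatorial possibilities. After quotienting by the automorphisms of the core that fix $x$ (or the $x$-branch), collapsing length-zero feet, and discarding patterns incompatible with the $U$-linked and boundary-feature reachability conditions, I verify by direct inspection that the resulting configurations coincide with those depicted in Fig.~\ref{fg-separation}. I expect the principal obstacle to be the ``$B\ne\By+xy$'' subcase of the existence argument: one must confirm that contracting exactly one edge incident with $x$ in $B$ suffices to splice the $Q_i$'s into $\By$, turning the concatenated walk into a valid path that is internally disjoint from the (possibly contracted) core.
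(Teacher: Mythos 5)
Your overall plan (the $|U|$ disjoint paths from $U$-linkedness, plus boundary-feature paths extracted from the pre-K-graph structure, then a finite case check for $|U|\le 4$) has the same shape as the paper's proof, but the step that actually produces the boundary-feature paths is wrong as written. The hypothesis that $U\cup\{x\}$ separates $L_x$ from $L_y$ in $H$ does \emph{not} force the attachments of $\By$ on $L_x$ to lie in $U\cup\{x\}$: $U$ is a separator sitting between $L_x$ and $L_y$ and in general contains no vertex of $L_x$ at all (this is visible in Fig.~\ref{fg-linkage-example}, where, with the roles of $x$ and $y$ swapped, the bridge containing the other core attaches to the core at $v_4$ and $v_6$, neither of which is in $U=\{v_1,v_2,v_9\}$; it is also why the lemma explicitly allows feet of positive length). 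What the separation hypothesis actually yields — and what the paper uses, with Lemma~\ref{lm-interiors} supplying the disjointness from the interior of $L_x$ — is that a path inside the bridge from a boundary attachment of $L_x$ towards $L_y$ is an $(L_x,L_y)$-path of $H$ avoiding $x$ internally, hence it must meet $U$, and truncating at the first $U$-vertex gives the required path from the boundary feature to $U$, internally disjoint from $L_x$. This truncation argument never appears in your write-up; both of your cases are built on the false attachment claim. Your Case ``$B\ne\By+xy$'' is broken for a second reason: routing $Q_i$ to $x$ and then into $\By^\circ$ gives no path to $U$, since a path from $x$ to $L_y$ already meets the separator $U\cup\{x\}$ at $x$ and may avoid $U$ entirely; moreover such a route passes through the core vertex $x$, and contracting one edge incident with $x$ cannot repair this, because the contracted vertex is still the vertex $x$ of the (possibly contracted) core. (Note also that $\By$ need not attach at $x$ at all; Lemma~\ref{lm-interiors} only restricts where it may attach.) So exactly the obstacle you flag at the end is fatal to that subcase.

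The ``furthermore'' part is also not established by your enumeration. Listing abstract cores with up to four feet and quotienting by symmetry is not the content of the claim: the paths supplied by the hypotheses need not be pairwise disjoint, and the substance of the paper's argument is controlling how the boundary-feature paths meet the $|U|$ disjoint paths (choosing them disjoint where possible, cutting a path at its first intersection with another, rerouting along it, etc.). That interaction analysis is what produces the linkages (e), (h), (i) of Fig.~\ref{fg-separation}, which encode specific intersection patterns, and, together with Lemma~\ref{lm-k4}, it shows $|U|\le 3$ when $L_x\cong K_4$, which is why no $K_4$-core with four feet appears in the figure — a configuration your abstract enumeration would have to admit. Your sketch neither performs this analysis nor explains why an arbitrary realization in $G$ can be contracted (via a legitimate choice of $F$) onto one of the nine depicted linkages, so both halves of the lemma remain unproved.
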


\begin{proof}
  Since $H$ is a base, it contains $L_x$ and $L_y$, and these are K-graphs in $H^+$.
  Since $U$ blocks $L_x$ from $L_y$ in $H$, there are three paths $P_1, P_2, P_3$ joining the branch vertices on the boundary of $L_x$ with $U$ (when $L_x\cong K_4$) or two paths $P_1, P_2$ from the interiors of both open branches on the boundary of $L_x$ to $U$ (when $L_x\cong K_{2,3}$). These paths are internally disjoint from $L_x$ by Lemma \ref{lm-interiors}. Moreover, $L_x$ is $U$-linked in $H$,
  so there are $|U|$ disjoint paths $Q_1,\dots,Q_{|U|}$ joining the boundary of $L_x$ with $U$.
  By definition, the union $R = L_x \cup_i P_i \cup_i Q_i$ form a $U$-linkage of $L_x$ in $H$.

  Let us now prove that $L_x$ and $U$ admit a linkage from Fig.~\ref{fg-separation} when $|U| \le 4$.
  Assume first that $L_x\cong K_4$.
  By Lemma~\ref{lm-k4}, $|U| \le 3$ and
  there are three paths $P_1, P_2, P_3$ connecting the branch-vertices of $L_x$ different from $x$ to $U$.
  Choose the paths so that each pair is disjoint if possible.
  Assume that $U = \{u_1\}$.
  By contracting the edges of $P_1, P_2$, and $P_3$ that are not incident with $L_x$,
  we obtain that $L_x$ admits the linkage \gs{a}.

  Assume now that $U = \{u_1, u_2\}$ is of size two.
  Since the paths $Q_1,Q_2$ also start at the branch vertices of $L_x$ (by Lemma \ref{lm-k4}), we may assume that $P_1$ and $P_2$ are disjoint and connect $L_x$ to $u_1$ and $u_2$, respectively.
  We may also assume that $P_3$ intersects only one of the other paths, say $P_1$.
  By contracting the edges of $P_1, P_2,P_3$ that are not incident with $L_x$, we obtain that $L_x$ admits the linkage \gs{c}.

  Assume now that $U = \{u_1, u_2, u_3\}$ is of size three.
  Since there are three disjoint paths $Q_1,Q_2,Q_3$ connecting $L_x$ and $U$, we may assume that $P_1, P_2$, and $P_3$ are pairwise disjoint.
  Thus $L_x$ admits the linkage \gs{f}.

  Assume now that $L_x\cong K_{2,3}$.
  There are two paths $P_1, P_2$ connecting the open branches on the boundary of $L_x$ to $U$.
  Choose the paths so that they are disjoint if possible.
  Assume that $U = \{u_1\}$. We see that $L_x$ admits the linkage \gs{b}.
  Assume now that $U = \{u_1, u_2\}$ is of size two.
  After possibly changing some of the paths, we may assume that $Q_1 = P_1$. If $P_2$ is disjoint from $P_1$, then $L_x$ admits the linkage \gs{d}.
  Otherwise we may assume that $Q_2$ is disjoint from $P_2$ and from the open branches of $L_x$.
  Hence $L_x$ admits the linkage \gs{e}.

  Assume now that $U = \{u_1, u_2, u_3\}$ is of size three.
  We may assume that $Q_1 = P_1$. If $P_2$ is disjoint from $P_1$, then $P_2$ can be changed, if necessary, so that it intersects only one of $Q_2,Q_3$. Then it is easy to see that $L_x$ admits the linkage \gs{g}.
  Otherwise, we may assume that $P_2$ intersects $P_1$ and that its segment from $L_x$ to $P_1$ does not intersect $Q_1,Q_2$. Now it is easy to see that $L_x$ admits the linkage \gs{h}.

  Assume now that $U = \{u_1,u_2,u_3,u_4\}$ is of size four.
  We may assume that $Q_1 = P_1$. If $P_2$ first intersects one of $Q_2, Q_3, Q_4$, then $L_x$ admits the linkage \gs{i}.
  If $P_2$ first intersects $P_1$, then one of $Q_2, Q_3, Q_4$ connects to an open branch of $L_x$ which is a contradiction with the choice of $P_1,P_2$, since $Q_2, Q_3$, and $Q_4$ are disjoint from $P_1$.
\end{proof}

The following lemma will be used to reduce the number of cases when $G$ admits linkages for $L_x$ and $L_y$ whose feet meet each other.

\begin{lemma}
\label{lm-removable}
Suppose that $H$ is a base of $G$ such that $L_x$ admits a $U_1$-linkage $H_x$ and $L_y$ admits a $U_2$-linkage $H_y$ in $H$
such that $U_1 \sm U_2 \ss \{y\}$, $U_2 \sm U_1 \ss \{x\}$,
$H_x$ and $H_y$ are edge-disjoint,
and there exists $u \in U_1 \cap U_2$ such that the $u$-feet of $H_x$ and $H_y$ are removable.
Then there is a proper subbase of $H$.
Moreover, neither $L_x$ nor $L_y$ is a K-graph in $G$.
\end{lemma}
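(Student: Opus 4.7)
The plan is to exploit the removable $u$-feet to shorten the two linkages and splice the result with the remainder of $H$ to obtain a smaller base; the moreover assertion will then be established by exhibiting a configuration that forces $\eg(G) \ge 2$. Since the $u$-foot of $H_x$ is removable, $H_x$ is also a $(U_1 \sm \{u\})$-linkage of $L_x$. The original $U_1$-linkage needs at least one edge incident to $u$ to reach $u$ as a target, whereas no $(U_1 \sm \{u\})$-linkage requires that edge, so I can choose an inclusion-minimal $(U_1 \sm \{u\})$-linkage $H_x^- \subsetneq H_x$ of $L_x$. An analogous choice produces $H_y^- \subsetneq H_y$. Since $H_x$ and $H_y$ are edge-disjoint, I will set
\[
  H' = (H - E(H_x) - E(H_y)) \cup H_x^- \cup H_y^-,
\]
which is a proper subgraph of $H$.

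Next, I verify that $L_x$ and $L_y$ remain pre-K-graphs in $H'$. For $L_x$, I construct a principal $L_x$-bridge in $(H')\+$ as follows: the edge $xy$ in $(H')\+$ attaches $L_x$ at $x$ to $y \in V(L_y)$; the subgraph $L_y \cup H_y^-$ is connected and reaches every vertex of $U_2 \sm \{u\}$; and the hypothesis $U_1 \sm U_2 \ss \{y\}$ gives $U_1 \sm \{u\} \ss (U_2 \sm \{u\}) \cup \{y\}$, so every vertex of $U_1 \sm \{u\}$ lies in $L_y \cup H_y^-$ and is reachable from $y$ within $(H')\+ - V(L_x)$. Combining this with the paths of $H_x^-$ from the boundary of $L_x$ to $U_1 \sm \{u\}$ produces a single connected $L_x$-bridge that attaches at $x$ via the edge $xy$ and at every branch vertex (if $L_x\cong K_4$) or on every open branch (if $L_x \cong K_{2,3}$) on the boundary of $L_x$. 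The symmetric argument, using $U_2 \sm U_1 \ss \{x\}$, shows that $L_y$ is a pre-K-graph in $H'$, so $H'$ is a proper subbase of $H$.

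For the moreover assertion, suppose for contradiction that $L_x$ is a K-graph in $G$. Then some $L_x$-bridge $B$ in $G$ attaches to all branch vertices (or all open branches) of $L_x$, including $x$, so a minimal subtree of $B$ linking the attachments together with $L_x$ yields a Kuratowski subgraph $K \ss L_x \cup B$. I will argue that $K$ can be chosen so that either it is disjoint from $L_y$, in which case $K$ together with $L_y$ (or a K-graph derived from $L_y$ using the paths of $H_y$ that bypass $L_x$) realizes a forbidden configuration of Lemma~\ref{lm-two-k-graphs}, or else a path of $B$ between two branch vertices of $L_x$ can be combined with $H_y$ to upgrade $L_y$ itself to a K-graph in $G$, producing two disjoint K-graphs in $G$ by Lemma~\ref{lm-two-k-graphs}(i). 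Either route contradicts $\eg(G) = 1$; the case of $L_y$ is handled symmetrically.

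The main obstacle is the combinatorial bookkeeping in the verification step: I must ensure that after cutting both linkages at $u$, the pieces splice into a \emph{single} connected $L_x$-bridge in $(H')\+$ rather than fragmenting, and that the detour through $L_y \cup H_y^-$ together with the edge $xy$ really supplies the connectivity that the $u$-foot used to provide. The delicate sub-case is when a foot consists solely of the vertex $u$ (of degree at least $2$ in its linkage); here I need to argue that removability still lets me delete at least one edge incident to $u$ from each $H_i$ while preserving the $(U_i \sm \{u\})$-linkage property, since otherwise the putative $H_x^-$ or $H_y^-$ need not be a proper subgraph.
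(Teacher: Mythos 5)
Your construction for the first assertion is workable only modulo the very sub-case you flag, and that sub-case is a genuine hole: when the $u$-foot of $H_x$ is trivial (i.e.\ $u$ has degree at least $2$ in $H_x$), removability only says that $H_x$ \emph{itself} is a $(U_1 \sm \{u\})$-linkage, and the edges at $u$ may be internal to paths of that smaller linkage, so an inclusion-minimal $(U_1\sm\{u\})$-linkage inside $H_x$ need not be a \emph{proper} subgraph; your $H'$ then need not be a proper subbase. The paper sidesteps this entirely and shrinks only one side: since $L_x$ and $L_y$ are disjoint one may assume $u \notin V(L_x)$, so the linkage path $P_u$ of $H_x$ ending at $u$ is nontrivial; one deletes just its edge $v_1v_2$ with $v_1 \in V(L_x)$ and checks that $H - v_1v_2$ is still a base, using removability of the $u$-feet, edge-disjointness ($v_1v_2 \notin E(H_y)$), and $U_1 \sm \{x,u\} \ss U_2$ to reattach the boundary of $L_x$ to $y$ through $H_y$. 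If you keep your two-sided splice, you must actually resolve the trivial-foot case rather than defer it.

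The more serious gap is the ``moreover'' part. Your plan is to contradict $\eg(G)=1$ by producing two disjoint K-type structures in $G$, but $L_y$ is only a K-graph in $G\+$ (a pre-K-graph), not in $G$; a Kuratowski subgraph of $L_x\cup B$ disjoint from $L_y$ therefore only yields $\egp(G)\ge 2$, which is already known and contradicts nothing. The alternative branch, ``upgrading $L_y$ to a K-graph in $G$ by a path of $B$ between two branch vertices of $L_x$,'' does not make sense: making $L_y$ a K-graph in $G$ requires an attachment at the interior of $L_y$, which such a path does not provide, and in any case at most one of $L_x, L_y$ can be a K-graph in $G$ because $\eg(G)=1$, so the two-disjoint-K-graphs route via Lemma~\ref{lm-two-k-graphs}(i) is unavailable. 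The paper's proof of this claim is of a different nature and crucially uses the cascade property (C1), which you never invoke: assuming $L_x$ is a K-graph in $G$, the deletion of the single edge $v_1v_2$ leaves the subbase intact (so $\egp(G-v_1v_2)\ge 2$) and, by removability of the $u$-foot, leaves $L_x$ a K-graph in $G-v_1v_2$ (so $\eg(G-v_1v_2)\ge 1$); hence this minor-operation decreases neither $\eg$ nor $\egp$, contradicting (C1). Without an argument of this kind, the ``moreover'' statement does not follow from your sketch.
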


\begin{proof}
  Since $L_x$ is $U_1$-linked, there are pairwise-disjoint paths $P_v$, $v \in U_1$, connecting $L_x$ and $U_1$.
  Similarly, there are pairwise-disjoint paths $Q_v$, $v \in U_2$ connecting $L_y$ and $U_2$.
  We may assume by symmetry that $u \not\in V(L_x)$. Thus $P_u$ is a non-trivial path (but $Q_u$ may possibly consist of a single vertex, $u$).

  Let $v_1v_2$ be the edge in $P_u$ such that $v_1 \in V(L_x)$ and $H' = H - v_1v_2$. We claim that $H'$ is a base in $G$.
  Since $u$ is a removable foot of $H_x$ and $H_y$ and $v_1v_2 \not\in E(H_y)$, $H_x$ is $(U_1 \sm \{u\})$-linkage of $L_x$ in $H'$ and
  $H_y$ is a $(U_2 \sm \{u\})$-linkage of $L_y$ in $H'$.
  Since, for each $v \in U_1 \sm \{x, u\}$, it holds that $v \in U_2$, there is a path in $H_y$ connecting $v$ and $y$.
  Thus $L_x$ is a pre-K-graph in $H'$. Similarly, $L_y$ is also a pre-K-graph in $H'$.
  We conclude that $H'$ is a base of $G$.
  This proves the first part of the lemma.

  To prove the remaining claim, suppose for a contradiction that $L_x$ is a K-graph in $G$. Let $G' = G - v_1v_2$.
  Since $H'$ is a base of $G'$, we have that $\egp(G') \ge 2$.
  Since $v_1 \in V(L_x)$, the $L_x$-bridge in $G'$ containing $y$ attaches to the same vertices of $L_x$ as the $L_x$-bridge in $G$ containing $y$, except possibly to $v_1$.
  Therefore, $L_x$ is a K-graph in $G'$ as $u$ is a removable foot of $H_x$.
  Thus $\eg(G') \ge 1$ which contradicts (C1).
  The case when $L_y$ is a K-graph in $G$ is done similarly.
\end{proof}

Suppose that $L_x$ and $L_y$ are $k$-separated.
By Lemma~\ref{lm-separating}, there exists a set $U$ of size $k$ such that a statement (i), (ii), or (iii) of that lemma holds.
If (i) holds, then $L_x$ and $L_y$ are blocked from each other by $U$.
Otherwise, we may assume that (ii) holds and $L_x$ is blocked from $L_y$ by $U$ and $L_y$ is blocked from $L_x$ by $U \cup \{x\}$.
By Lemma~\ref{lm-sep-cuts}, $L_x$ admits a $U$-linkage $H_x$ and $L_y$ admits a $U_y$-linkage $H_y$.
Assume that $H_x$ and $H_y$ are minimal (with respect to taking subgraphs).

If $|U| \le 4$, then Lemma~\ref{lm-sep-cuts} asserts that $H_x$ is one of the linkages in Fig.~\ref{fg-separation}.
In that case, let $u_1, \ldots, u_k$ be the vertices of $U$ to which $H_x$ is linked as depicted in Fig.~\ref{fg-separation}.
Similarly, when $|U_y| \le 4$, $H_y$ is one of the linkages in Fig.~\ref{fg-separation}.
Let $u_1', \ldots, u_r'$ be the vertices of $U_y$ in the order in which they are depicted in the picture of $H_y$ in Fig.~\ref{fg-separation}.
In the following series of lemmas we shall describe all cascades that are at most 2-separated.

\begin{figure}
  \centering
  \includegraphics{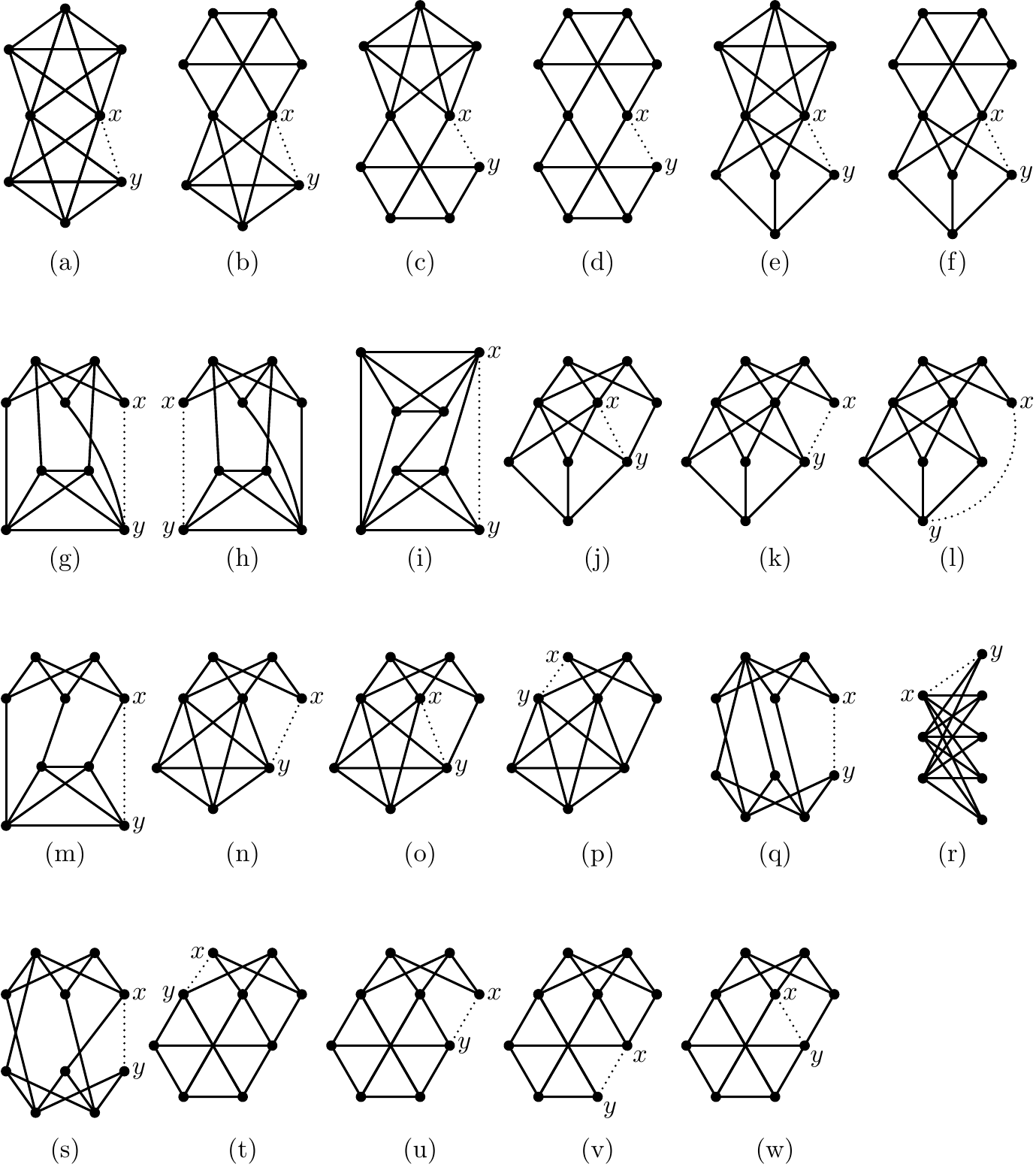}
  \caption{Selected nonplanar graphs in $\Cc_1(\egp)$.}
  \label{fg-selected}
\end{figure}

\begin{lemma}
\label{lm-sep-0}
  $L_x$ and $L_y$ are not\/ $0$-separated.
\end{lemma}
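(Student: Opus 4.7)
Suppose for contradiction that $L_x$ and $L_y$ are $0$-separated. The plan is to apply Lemma~\ref{lm-separating} with $k = 0$ to obtain a cut-vertex structure, and then to show that $G \in \Cc(\egp)$, violating cascade condition (C3). The lemma gives that one of (i)--(iii) holds with $U = \emptyset$, and in each case either $\{x\}$ or $\{y\}$ separates $L_x$ from $L_y$ in $G$. Using the symmetry between the two terminals in the cascade conditions, I may assume that $\{x\}$ is such a separator, so $x$ is a cut vertex and I can decompose $G = G_1 \cup G_2$ with $G_1 \cap G_2 = \{x\}$, $L_x \ss G_1$, and $L_y \ss G_2$. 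Note that $G^+ = G_1 \cup G_2^+$ with the same shared vertex.

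By Theorem~\ref{th-stahl-euler} applied at the cut vertex $x$, $\eg(G) = \eg(G_1) + \eg(G_2) = 1$ and $\egp(G) = \eg(G_1) + \eg(G_2^+) = 2$. To pin down the summands, observe that $L_x \ss G_1$ contains at least one edge, so $\M(G_1) \neq \emptyset$. If $\eg(G_1) = 0$, then by minor-monotonicity every $\mu \in \M(G_1)$ satisfies $\eg(\mu G_1) = \eg(G_1) = 0$, so $\mu$ decreases neither $\eg(G)$ nor $\egp(G)$, violating (C1). Thus $\eg(G_1) \ge 1$, which forces $\eg(G_1) = 1$, $\eg(G_2) = 0$, and $\eg(G_2^+) = 1$.

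To finish, I would check that every minor operation in $\M(G)$ decreases $\egp$. For $\mu \in \M(G_1)$, both $\eg(\mu G) - \eg(G)$ and $\egp(\mu G) - \egp(G)$ equal $\eg(\mu G_1) - \eg(G_1)$, so (C1) forces $\eg(\mu G_1) < 1$ and hence $\mu$ decreases $\egp$. For $\mu \in \M(G_2)$, decreasing $\eg(G)$ would require $\eg(\mu G_2) < 0$, which is impossible, so (C1) forces $\mu$ to decrease $\egp$. Hence every minor operation decreases $\egp$, so $G \in \Cc(\egp)$, contradicting condition (C3) of the cascade definition.

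The main obstacle is extracting the cut-vertex structure from the three cases of Lemma~\ref{lm-separating} and establishing $\eg(G_1) \ge 1$. Once this is in hand, the Stahl--Beineke decomposition together with (C1) and (C3) quickly yields the contradiction, without any need for a finer analysis of the bridges of $L_x$ or $L_y$.
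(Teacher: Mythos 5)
Your proof is correct, but it takes a genuinely different route from the paper's. The paper's own argument is purely structural and very short: since $L_x$ is an $x$-K-graph, there is a path from the boundary of $L_x$ to $L_y$; under $0$-separation it cannot reach the boundary of $L_y$, so $\Bx$ attaches to the interior of $L_y$, and by symmetry $\By$ attaches to the interior of $L_x$, which contradicts Lemma~\ref{lm-interiors} directly --- no genus computation and no appeal to Lemma~\ref{lm-separating}. You instead read off from Lemma~\ref{lm-separating} with $k=0$ that $\{x\}$ or $\{y\}$ separates $L_x$ from $L_y$, pass to the resulting $1$-sum at the terminal, and use Theorem~\ref{th-stahl-euler} together with (C1) to force $\eg(G_1)=1$, $\eg(G_2)=0$, $\egp(G_2)=1$, after which every minor-operation must decrease $\egp$, so $G\in\Cc(\egp)$, contradicting (C3). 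This is legitimate (Lemma~\ref{lm-separating} precedes Lemma~\ref{lm-sep-0}, so there is no circularity even though its proof rests on Lemma~\ref{lm-interiors}) and it proves something slightly stronger and reusable: a cascade in $\S_1$ cannot be a $1$-sum at a terminal separating the two K-graphs, with the contradiction coming from the cascade axioms rather than from bridge attachments. The price is extra machinery where the paper needs three lines, plus two steps you leave implicit and should spell out: the passage from ``$\{x\}$ separates $L_x$ and $L_y$'' to the decomposition $G=G_1\cup G_2$ with $G_1\cap G_2=\{x\}$, $L_x\ss G_1$, $L_y\ss G_2$ (trim any $L_x$--$L_y$ connection avoiding $x$ to an $(L_x,L_y)$-path avoiding $x$), and the fact that the blockwise additivity of $\eg$ and $\egp$ over this decomposition persists for $\mu G$ when $\mu$ contracts an edge incident with the cut vertex, which is what makes the displayed genus differences equal to $\eg(\mu G_i)-\eg(G_i)$.
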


\begin{proof}
  Suppose that $L_x$ and $L_y$ are 0-separated.
  Since $L_x$ is an $x$-K-graph in $G$, there is a path $P$ connecting the boundary of $L_x$ to $L_y$ in $G$.
  Since $P$ does not end on the boundary of $L_y$, $P$ ends at a vertex in the interior of $L_y$.
  Thus $\Bx$ is attached to the interior of $L_y$.
  By symmetry, $\By$ is attached to the interior of $L_x$.
  This contradicts Lemma~\ref{lm-interiors}.
\end{proof}

\begin{figure}
  \centering
  \includegraphics{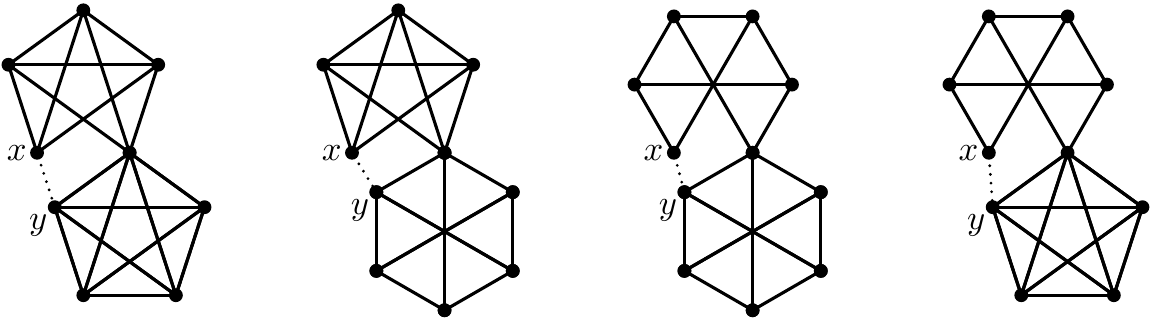}
  \caption{Cascades in $\S_1$ whose $xy$-K-graphs are 1-separated.}
  \label{fg-cascades-1}
\end{figure}

\begin{lemma}
\label{lm-sep-1}
  If $L_x$ and $L_y$ are $1$-separated, then $G$ has one of the graphs in Fig.~\ref{fg-cascades-1} as a minor.
\end{lemma}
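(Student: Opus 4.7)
The plan is to exploit Lemmas~\ref{lm-separating} and~\ref{lm-sep-cuts} to obtain a finite list of ``skeletons'' modeling how $L_x$ and $L_y$ are linked across the single cut vertex, then argue in each skeleton that $G$ must contain one of the graphs in Fig.~\ref{fg-cascades-1} as a minor. Throughout I would use the minimality of $L_x$ and $L_y$ together with the constraint that $G$ cannot contain a nonplanar member of $\Cc_1(\egp)$ as a minor (Lemma~\ref{lm-c2-egp}(b)) to eliminate redundant cases.

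First, applying Lemma~\ref{lm-separating} with $k=1$ yields a vertex $u \in V(G)$ such that, up to exchanging $x$ and $y$, either \textrm{(i)} $\{u\}$ blocks $L_x$ from $L_y$ and $L_y$ from $L_x$, or \textrm{(ii)} $\{u\}$ blocks $L_x$ from $L_y$ while $\{u,x\}$ blocks $L_y$ from $L_x$. By Lemma~\ref{lm-sep-cuts}, $L_x$ and $\{u\}$ admit either the linkage \gs{a} (when $L_x\cong K_4$) or \gs{b} (when $L_x\cong K_{2,3}$); in case~(i), $L_y$ and $\{u\}$ admit a linkage of the same two types, while in case~(ii), $L_y$ and $\{u,x\}$ admit a linkage from \gs{c}, \gs{d}, or \gs{e}. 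Up to the natural $x$--$y$ symmetry this leaves only a small finite number of configurations to analyse.

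Next, in each configuration I would build an explicit minor of $G$ by taking the union of $L_x$ with its linkage feet, of $L_y$ with its linkage feet, and identifying the shared points at $u$ (and at $x$ in case~(ii)). Lemma~\ref{lm-interiors} guarantees that $\By$ attaches into the interior of $L_x$ only possibly at $x$ (and symmetrically for $\Bx$), and Lemma~\ref{lm-k4} forces attachments at branch vertices in the $K_4$ case, so the combined skeleton has precisely the shape dictated by the two chosen linkages. After contracting trivial feet, one obtains a candidate minor $M$ that, up to relabeling of branch vertices and possibly further contractions, is one of the graphs in Fig.~\ref{fg-cascades-1}.

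The main obstacle is the organised case analysis: verifying that in each configuration the produced minor really is one of the listed graphs, and that no listed graph is missed. Lemma~\ref{lm-removable} is the crucial tool: whenever the $L_x$- and $L_y$-linkages share a removable foot at $u$, one obtains a strictly smaller base, contradicting the minimality of $L_x$ or $L_y$ (since Lemma~\ref{lm-removable} also shows that neither $L_x$ nor $L_y$ is a K-graph in $G$, which is consistent with $\eg(G)=1$ and Lemma~\ref{lm-noncontractible}). This rules out many spurious configurations. For each remaining shape one then checks directly that it contains a graph from Fig.~\ref{fg-cascades-1} as a minor, and conversely that introducing any extra edge or attachment would force a nonplanar member of $\Cc_1(\egp)$ (e.g.\ one of the graphs in Fig.~\ref{fg-selected}) as a minor, violating Lemma~\ref{lm-c2-egp}(b); this closes the case analysis.
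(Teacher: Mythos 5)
There is a genuine gap, and it sits exactly where the real content of this lemma lies. In the case where a single vertex $u_1$ blocks $L_x$ from $L_y$ and $L_y$ from $L_x$ (case (i) of Lemma~\ref{lm-separating}), your construction --- take $L_x$ with its linkage feet, $L_y$ with its linkage feet, and glue them at $u_1$ --- produces a \emph{planar} graph: each side is a subdivision of $K_4$ or $K_{2,3}$ with attached paths, and a $1$-sum of planar graphs is planar. The graphs in Fig.~\ref{fg-cascades-1} are cascades in $\S_1$, hence have Euler genus $1$, so your candidate minor $M$ can never be one of them. The missing idea is where the nonplanarity comes from: one must look at the two $\{u_1\}$-bridges $G_x\supseteq L_x$ and $G_y\supseteq L_y$, use additivity of Euler genus over blocks (Theorem~\ref{th-stahl-euler}) to conclude that one of them, say $G_y$, is nonplanar, and then use criticality (C1) together with Lemma~\ref{lm-two-k-graphs}(ii) to show that $G_y$ cannot be made smaller while staying nonplanar --- forcing $G_y$ to be exactly $K_5$ or $K_{3,3}$, with $yu_1\in E(G)$. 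Only after this is pinned down does $G$ contain one of the four graphs of Fig.~\ref{fg-cascades-1} (Kuratowski graph on the $y$-side, linkage \gs{a} or \gs{b} on the $x$-side) as a minor. Nothing in your proposal supplies this step.

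Your treatment of case (ii) ($L_y$ linked to $\{u_1,x\}$, so $H_y$ is one of \gs{c}, \gs{d}, \gs{e}) also misreads what happens there: the correct conclusion is not that these skeletons yield Fig.~\ref{fg-cascades-1} minors, but that they are \emph{impossible} for a cascade, because the union of the two linkages already contains one of the nonplanar graphs \gsel{a}--\gsel{f} of $\Cc_1(\egp)$ as a minor, contradicting Lemma~\ref{lm-c2-egp}(b). You invoke Lemma~\ref{lm-c2-egp}(b) only in the converse role (to exclude extra attachments beyond the skeleton), which is not how the case is closed. Finally, Lemma~\ref{lm-removable} does not help here: with $|U|=1$ the single foot of the linkage \gs{a} or \gs{b} is never removable (a $\emptyset$-linkage cannot satisfy the path conditions), and its conclusion of a proper subbase contradicts minimality of a chosen base, not minimality of $L_x$ or $L_y$; the paper uses it only for the $2$- and higher-separated cases.
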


\begin{proof}
  We adopt the notation and the assumptions made before Lemma \ref{lm-sep-0}.
  Then we have that $H_x$ admits the linkage \gs{a} or \gs{b}.
  Assume first that $U_y = U = \{u_1\}$.
  Then $H_y$ also admits one of \gs{a} or \gs{b}.
  Let $G_z$ be the $U$-bridge in $G$ containing $L_z$, $z \in \{x, y\}$.
  Since $U$ separates $L_x$ and $L_y$ in $G$, the $U$-bridges $G_x$ and $G_y$ are distinct.
  Since $G$ is nonplanar, one of $G_x$ or $G_y$, say $G_y$ by symmetry, is nonplanar by Theorem~\ref{th-stahl-euler}.
  Suppose that $G_y$ is not isomorphic to a Kuratowski graph. Then there exists a minor-operation $\mu \in \M(G_y)$ such that $\mu G_y$ is nonplanar.
  The graph $\mu G\+$ contains a K-graph and a Kuratowski subgraph whose intersection is either empty or equal to $u_1$.
  Thus, $\egp(\mu G) \ge 2$ by Lemma~\ref{lm-two-k-graphs}(ii), a contradiction with (C1).
  Thus $G_y$ is isomorphic to either $K_5$ or $K_{3,3}$.
  It is not hard to see that $yu_1 \in E(G)$ in both cases.
  We conclude that $G$ has one of the graphs in Fig.~\ref{fg-cascades-1} as a minor.

  Assume now that $U_y = U \cup \{x\}$. In this case $H_y$ is one of \gs{c}, \gs{d}, or \gs{e}.
  Since $L_y$ is linked to $\{u_1, x\}$, there are two choices for the vertices $u_1'$ and $u_2'$.
  In each case, we will be able to find a minor in $G$ isomorphic to one of nonplanar graphs in $\Cc_2(\egp)$ depicted in Figure \ref{fg-selected}. As noted earlier, this contradicts Lemma~\ref{lm-c2-egp}.
  We treat different cases and note that the worst case is always when every foot of the corresponding linkage in Figure \ref{fg-separation} is trivial (i.e. a single vertex), except when this is excluded because that would make $L_x$ and $L_y$ intersect.

  \begin{casesblock}
    \case{$H_y$ is \gs{c}}
    If $u_1' = u_1$ and $u_2' = x$, then $H_y$ contains \gs{d} as a sublinkage (with $u_1'$ being a trivial foot), which is treated in Case 2 below.
    Suppose then that $u_1' = x$ and $u_2' = u_1$.
    If $H_x$ is \gs{a}, then $G$ has~\gsel{a} as a minor.
    If $H_x$ is \gs{b}, then $G$ has~\gsel{b} as a minor.

    \case{$H_y$ is \gs{d}}
    Since \gs{d}\ has a symmetry exchanging its feet, we may assume that $u_1'=u_1$ and $u_2'=x$.
    If $H_x$ is \gs{a}, then $G$ has~\gsel{c} as a minor.
    If $H_x$ is \gs{b}, then $G$ has~\gsel{d} as a minor.

    \case{$H_y$ is \gs{e}}
    If $u_1' = u_1$ and $u_2' = x$, then $H_y$ contains \gs{d} as a sublinkage (having $u_1'$ as a trivial foot), where we contract an edge incident with $y$ and remove the other one.
    Suppose thus that $u_1' = x$ and $u_2' = u_1$.
    If $H_x$ is \gs{a}, then $G$ has~\gsel{e} as a minor.
    If $H_x$ is \gs{b}, then $G$ has~\gsel{f} as a minor.
  \end{casesblock}
\end{proof}

We deal with 2-separated K-graphs similarly.

\begin{lemma}
  \label{lm-2-sep}
  If $L_x$ and $L_y$ are 2-separated, then $G$ has one of the graphs in Fig.~\ref{fg-cascades-2} as a minor.
\end{lemma}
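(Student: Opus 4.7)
The plan is to mimic the strategy used in Lemma~\ref{lm-sep-1} but with one extra parameter: now $|U|=2$, so by Lemma~\ref{lm-separating} there is a 2-element vertex set $U$ and three possibilities to consider. Either (i) $U$ blocks both $L_x$ and $L_y$, or (ii) $U$ blocks $L_x$ and $U\cup\{x\}$ blocks $L_y$, or symmetrically (iii) $U\cup\{y\}$ blocks $L_x$ and $U$ blocks $L_y$. Case (iii) is symmetric to (ii) (exchanging the roles of $x$ and $y$), so we need to handle only (i) and (ii). Fix a minimal base $H$ containing $L_x\cup L_y$ and the required linkages, and take the $U$-linkage $H_x$ of $L_x$ and the $U_y$-linkage $H_y$ of $L_y$ from Lemma~\ref{lm-sep-cuts}. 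In case (i), $|U|=|U_y|=2$ so $H_x,H_y\in\{\gs{c},\gs{d},\gs{e}\}$; in case (ii), $|U|=2$ and $|U_y|=3$, so $H_x\in\{\gs{c},\gs{d},\gs{e}\}$ and $H_y\in\{\gs{f},\gs{g},\gs{h}\}$.

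First I would apply Lemma~\ref{lm-removable} to prune cases in which the two linkages share a removable foot: whenever such a foot exists, the base $H$ is not minimal, contradicting our assumption; moreover the lemma also rules out any subcase in which $L_x$ or $L_y$ would become a full K-graph in $G$. This eliminates many configurations and fixes which of $u_1',\dots,u_r'$ can be identified with elements of $U\cup\{x\}$. After these reductions, for every remaining combination of linkage types I would record exactly how the two linkages share vertices on $U$ (and, in case (ii), which $u_i'$ plays the role of $x$). Since both \gs{d} and \gs{e}\ have an exchange symmetry among their feet, the number of essentially different subcases is manageable (a finite, systematic enumeration of pairs of linkages from Fig.~\ref{fg-separation}, together with all legal identifications of the feet).

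In each remaining subcase I would try to exhibit a minor of $G$ isomorphic to one of the graphs in Fig.~\ref{fg-cascades-2}, using the concrete linkage pictures: take $L_x\cup L_y\cup H_x\cup H_y$, contract every foot to a single edge, and if necessary contract connecting paths inside $\Bx\cup\By$. If the desired minor appears, we are done. If not, I would show that the resulting minor contains one of the nonplanar graphs in $\Cc_1(\egp)$ (Fig.~\ref{fg-selected}), which by Lemma~\ref{lm-c2-egp}(b) cannot be a minor of $G\in\S_1$; this contradiction rules out the subcase. The trivial-foot situations are the worst: where every foot of the linkages in Fig.~\ref{fg-separation} shrinks to a single vertex, we can at best recover the ``smallest'' cascades of Fig.~\ref{fg-cascades-2}, and any enlargement corresponds to extending a foot by a path.

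The main technical obstacle is the sheer number of subcases to inspect: three choices for $H_x$, three for $H_y$, and several possibilities for how the labels $u_i,u_i'$ match up (including whether or not $x$ or $y$ coincides with one of the $u_i$ in case (ii)). Lemma~\ref{lm-removable} is decisive in cutting this down, because essentially every subcase in which a single shared $u_i$ is a removable foot in both $H_x$ and $H_y$ is either reducible or forces one of $L_x,L_y$ to be a K-graph in $G$, contradicting $\eg(G)=1$ (Lemma~\ref{lm-noncontractible}). Once the case distinction is pinned down, verifying each remaining configuration is routine: exhibit the explicit minor by contracting feet and, when appropriate, read off a nonplanar member of $\Cc_1(\egp)$ from Fig.~\ref{fg-selected} to obtain the required contradiction. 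At the end, the configurations that survive are exactly those listed in Fig.~\ref{fg-cascades-2}.
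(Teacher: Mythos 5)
Your plan works for the case $U_y = U \cup \{x\}$, and there it is essentially the paper's proof: enumerate $H_x \in \{\gs{c},\gs{d},\gs{e}\}$ against $H_y \in \{\gs{f},\gs{g},\gs{h}\}$ and the possible identifications of the $u_i'$, read off either a graph of Fig.~\ref{fg-cascades-2} or a nonplanar graph of $\Cc_1(\egp)$ from Fig.~\ref{fg-selected} (the latter killed by Lemma~\ref{lm-c2-egp}(b)). The genuine gap is the case $U_y = U$, i.e.\ when the $2$-set $U=\{u_1,u_2\}$ blocks $L_x$ and $L_y$ from each other. There your dichotomy cannot get off the ground: the union $L_x \cup L_y \cup H_x \cup H_y$, with both linkages of type \gs{c}, \gs{d}, or \gs{e} glued along the same two vertices $u_1,u_2$, is a \emph{planar} graph, so contracting feet and connecting paths will never exhibit a graph of Fig.~\ref{fg-cascades-2} (these are nonplanar) nor a nonplanar member of $\Cc_1(\egp)$. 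You can neither confirm nor refute any of these subcases from the linkage pictures alone; the nonplanarity of $G$ has to enter, and it sits in the $U$-bridges, not in the linkages. The paper's argument here is structurally different: since $U$ is a $2$-cut separating $L_x$ from $L_y$, Lemma~\ref{lm-planar-patch} and the nonplanarity of $G$ force one of the two $U$-bridges, say $G_y$, to have $G_y^+$ nonplanar; then criticality (C1), via Lemma~\ref{lm-two-k-graphs}(ii)--(iii), shows that no minor-operation inside $G_y$ may keep $G_y^+$ nonplanar, so $G_y$ is \emph{isomorphic} to one of the three graphs of $\Cc_0(\egp)$ with terminals $u_1,u_2$ (a Kuratowski graph minus the edge $u_1u_2$, or $K_{3,3}$ with nonadjacent terminals). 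Combining this exact Kuratowski structure on the $y$-side with the \gs{d}-sublinkage of $H_x$ on the $x$-side produces one of the forbidden graphs \gsel{n}, \gsel{u}, \gsel{k}, \gsel{l} as a minor, a contradiction with Lemma~\ref{lm-c2-egp}. Hence the case $U_y=U$ contributes no cascades at all; your write-up shows no awareness that this case needs a separate mechanism and quietly yields nothing.

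A secondary, fixable point: you invoke Lemma~\ref{lm-removable} as a minimality-of-base pruning device (``a shared removable foot contradicts minimality of $H$''). The lemma only delivers a \emph{proper subbase}, which is a contradiction only if you have explicitly fixed a minimal base, and its hypotheses also require the two linkages to be edge-disjoint, which you should verify in each configuration. The paper instead uses the second conclusion of that lemma --- that neither $L_x$ nor $L_y$ is a K-graph in $G$ --- to exclude identifications in exactly those subcases where $H_x$ is \gs{e} and $L_x$ \emph{is} a K-graph in $G$ (e.g.\ forcing $u_2 \ne u_2'$). Either route can be made to work for the pruning, but as stated your pruning step is not yet airtight, and in any case it does not repair the missing $U_y=U$ analysis.
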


\begin{proof}
  We have that $H_x$ is one of \gs{c}, \gs{d}, or \gs{e}.
  Assume first that $U_y = U = \{u_1, u_2\}$.
  Let $G_z$ be the $U$-bridge containing $L_z$, $z \in \{x, y\}$.
  Since $U$ separates $L_x$ and $L_y$ in $G$, the $U$-bridges $G_x$ and $G_y$ are distinct.
  We will consider $G_x$ and $G_y$ as graphs in $\G_{u_1u_2}$, with terminals $u_1$ and $u_2$.
  Since $G$ is nonplanar, Lemma~\ref{lm-planar-patch} gives that either $G_x^+$ or $G_y^+$ is nonplanar.
  We may assume by symmetry that $G_y^+$ is nonplanar.
  Thus $G_y$ contains a graph in $\Cc_0(\egp)$ as a minor.
  Suppose that there exists a minor-operation $\mu \in \M(G_y)$ such that $\mu G_y^+$ is nonplanar.
  Then $\mu G\+$ contains a K-graph in $G_x$ and a Kuratowski graph that satisfy the conditions of Lemma~\ref{lm-two-k-graphs}(ii) or (iii). (To see this, note that a Kuratowski graph in $\mu G_y^+$ gives rise to a Kuratowski graph in $G^+$ by replacing the edge $u_1u_2$ with a path in $G_x$. The path can be chosen in such a way that it intersects with $L_y$ in a subpath. If this one would not satisfy (ii), then the linkage in $G_x$ is \gs{d} with both feet trivial, and hence we get that (iii) is satisfied.)
  By Lemma~\ref{lm-two-k-graphs}, $\egp(\mu G) \ge 2$, a contradiction.
  We conclude that $G_y$ is isomorphic to one of the three graphs in $\Cc_0(\egp)$ (with terminals $u_1$ and $u_2$).
  Since $L_y$ is 2-linked to $u_1,u_2$, we have that $y \not\in \{u_1, u_2\}$.
  If $H_x$ is \gs{c} or \gs{e}, then $H_x$ contains \gs{d} as a sublinkage.
  Suppose now that $H_x$ is \gs{d}.
  If $G_y$ is isomorphic to \gkur{a}, then $G$ has~\gsel{n} as a minor.
  If $G_y$ is isomorphic to \gkur{b}, then $G$ has~\gsel{u} as a minor.
  If $G_y$ is isomorphic to \gkur{c}, then $G$ has~\gsel{k} or \gsel{l} as a minor.
  In each case, we obtain a contradiction by Lemma \ref{lm-c2-egp}.

  Assume now that $U_y = U \cup \{x\}$.
  Hence $H_y$ is one of \gs{f}, \gs{g}, or \gs{h}.
  \begin{casesblock}
    \case{$H_y$ is \gs{f}}
    This case is symmetric.
    If $H_x$ is \gs{c}, then $G$ has~\gsel{i} as a minor.
    If $H_x$ is \gs{d}, then $G$ has~\gsel{m} as a minor.
    If $H_x$ is \gs{e}, then $G$ has~\gsel{v} as a minor (hint: delete two edges in $H_y$).

    \case{$H_y$ is \gs{g}}
    Suppose that $H_x$ is \gs{c}.
    If $u_2' = u_1$, then $G$ has~\gsel{p} as a minor (hint: contract an edge joining $H_x$ and $H_y$).
    If $u_2' = u_2$, then $G$ has \gsel{t} as a minor (hint: delete two edges in $H_x$). If $u_2' = x$, then $G$ has~\gcas{a} as a minor.

\begin{figure}
  \centering
  \includegraphics{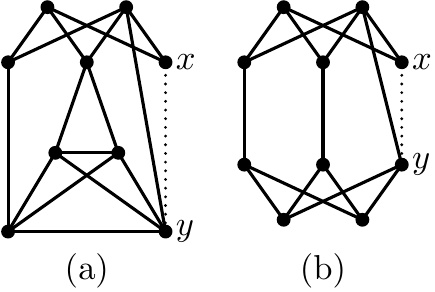}
  \caption{Cascades in $\S_1$ whose $xy$-K-graphs are 2-separated.}
  \label{fg-cascades-2}
\end{figure}

    Suppose now that $H_x$ is \gs{d}.
    If $u_2' = u_1$, then $G$ has~\gsel{t} as a minor.
    If $u_2' = x$, then $G$ has~\gcas{b} as a minor.

    Suppose now that $H_x$ is \gs{e}.
    Since $u_2$-foot is removable in $H_x$ and $u_2'$-foot is removable in $H_y$, Lemma~\ref{lm-removable} asserts that $u_2 \not= u_2'$ (as $L_x$ is a K-graph in $G$).
    If $u_2' = u_1$, then $G$ has~\gsel{v} as a minor (hint: contract one and delete another edge in $H_y$, both incident with the vertex linked to $u_2$).
    If $u_2' = x$, then again, $G$ has \gsel{v} as a minor (hint: delete one and contract the other edge incident with $y$ in $H_y$).

    \case{$H_y$ is \gs{h}}
    Suppose that $H_x$ is \gs{d}.
    If $u_1' = u_1$, then $G$ has~\gsel{w} as a minor (hint: delete one and contract the other edge incident with $y$).
    If $u_1' = x$, then $G$ has~\gsel{r} as a minor.

    Suppose that $H_x$ is \gs{c}.
    If $u_1' = u_1$, then $H_x$ has~\gs{d} as a sublinkage. So this is covered above.
    If $u_1' = u_2$, then $G$ has~\gsel{o} as a minor.
    If $u_1' = x$, then $G$ has~\gsel{r} as a minor.

    Suppose that $H_x$ is \gs{e}.
    Since $u_2$-foot is removable in $H_x$ and $u_2'$-foot and $u_3'$-foot are removable in $H_y$, Lemma~\ref{lm-removable} asserts that $u_1' = u_2$.
    Then $G$ has~\gsel{j} as a minor.
  \end{casesblock}
\end{proof}

For $xy$-K-graphs that are $k$-separated for $k \ge 4$, we shall use the fact that they admit linkages that have many removable feet.

\begin{lemma}
\label{lm-4-linked}
  Suppose that $H$ is a $U$-linkage, where $|U| \ge 4$. Then $H$ has at least $|U|-2$ removable feet.
\end{lemma}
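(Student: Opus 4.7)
The plan is to characterize removability of the $u$-foot via a reachability condition. For each boundary feature $t$ of the core $L$, let $R(t) \ss U$ denote the set of vertices $u \in U$ such that $H$ contains a $(t,u)$-path that is internally disjoint from $L$; the $U$-linkage property guarantees $R(t)\ne\emptyset$. I would first verify the equivalence that the $u$-foot is removable precisely when $R(t)\ne\{u\}$ for every boundary feature $t$. Indeed, the $(U\sm\{u\})$-linked condition is inherited automatically by discarding the one of the $|U|$ disjoint $(C,U)$-paths that terminates at $u$, leaving $|U|-1$ disjoint paths from $C$ to $U\sm\{u\}$; and the remaining requirement for $H$ to be a $(U\sm\{u\})$-linkage is that each boundary feature $t$ admits a path in $H$ to $U\sm\{u\}$ internally disjoint from $L$, which is precisely $R(t)\not\ss\{u\}$.

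It follows that the non-removable feet form a subset of $\{u\in U : R(t)=\{u\}\text{ for some boundary feature }t\}$, whose cardinality is bounded by the number of boundary features $|T|$. For $L\cong K_{2,3}$ the boundary cycle $C$ has $|T|=2$ open branches, and the conclusion of at least $|U|-2$ removable feet follows immediately.

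The main difficulty is the case $L\cong K_4$, where $|T|=3$ and the naive argument yields only $|U|-3$ removable feet. To improve this to $|U|-2$, I would invoke the hypothesis $|U|\ge 4$ together with the fact that the boundary cycle $C$ contains only three branch vertices. Since the $U$-linkage supplies $|U|\ge 4$ pairwise disjoint $(C,U)$-paths starting at $|U|$ distinct vertices of $C$, at least one such path $Q$ must begin at a subdivision vertex $c$ lying on some open branch of $C$ between two branch vertices $t_i$ and $t_j$. The plan is then to use $Q$ together with the witness paths $P_{t_i}, P_{t_j}$ in order to reroute through the exterior $L$-bridge structure of $H$ and produce an alternative $(t_\ell, u')$-path (for some $\ell\in\{i,j\}$ and some $u'\in U$ distinct from the assumed singleton element of $R(t_\ell)$). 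This contradicts the assumption that all three $R(t_1), R(t_2), R(t_3)$ are singletons with pairwise distinct witnesses, and hence at most two of the three branch vertices of $C$ can contribute non-removable feet.

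The main obstacle is executing the rerouting step for $L\cong K_4$ while preserving internal disjointness from $L$. This is where the structure of the exterior $L$-bridges carrying $Q, P_{t_i}, P_{t_j}$ enters, and where a Menger-type argument in the graph obtained from $H$ by contracting the interior of $L$ should supply the augmenting path joining one of the $t_\ell$'s to a new element of $U$ through $c$.
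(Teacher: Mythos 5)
Your reduction of removability to the condition $R(t)\neq\{u\}$ for every boundary feature $t$, and the resulting bound of at most $|T|$ non-removable feet, is sound, and for $L\cong K_{2,3}$ (where $|T|=2$) it is essentially the paper's own argument: the paper takes the two witness paths $Q_1,Q_2$ from the open boundary branches and observes that only the two linkage paths they first meet can carry non-removable feet. So the $K_{2,3}$ half of your plan is fine.

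The gap is the $K_4$ case. The paper does not prove the bound $|U|-2$ there; it eliminates the case entirely: by Lemma~\ref{lm-k4}, in the cascade setting where these linkages arise, a $K_4$-core is attached only at its branch vertices, so it admits at most three disjoint paths from its boundary to $U$, and $|U|\ge 4$ forces $L\cong K_{2,3}$. Your proposed rerouting/Menger step cannot replace this, because the strengthened bound is simply false for abstract $U$-linkages with a $K_4$ core. Take $L$ to be $K_4$ with branch vertices $t_1,t_2,t_3,z$, boundary cycle through $t_1,t_2,t_3$, subdivide the boundary branch $t_1t_2$ by a vertex $c$, and attach four pendant edges $t_1w_1$, $t_2w_2$, $t_3w_3$, $cw_4$, with $U=\{w_1,w_2,w_3,w_4\}$. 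This $H$ is a $U$-linkage with $|U|=4$, yet $R(t_i)=\{w_i\}$ for $i=1,2,3$: any path from $t_i$ to $U$ that is internally disjoint from $L$ must leave $L$ immediately along the pendant edge $t_iw_i$ and ends there. Hence only the $w_4$-foot is removable, i.e.\ there are $|U|-3$ removable feet, not $|U|-2$. Your extra path $Q=cw_4$ is of no help, since any $t_i$--$w_4$ path through $c$ has the internal vertex $c\in V(L)$ and so violates internal disjointness from $L$ --- exactly the obstacle you flagged, and it is not surmountable in general. The missing ingredient is therefore the appeal to Lemma~\ref{lm-k4} (equivalently, to the fact that the linkage lives inside a cascade), which disposes of $K_4$ cores before the counting argument is run.
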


\begin{proof}
  Let $H$ be a $U$-linkage with core $L$, $|U| = k \ge 4$.
  By Lemma~\ref{lm-k4}, $L\cong K_{2,3}$.
  Let $P_1, \ldots, P_k$ be pairwise disjoint paths connecting $L$ and $U = \{u_1, \ldots, u_k\}$
  and suppose that $P_i$ ends at $u_i$, $i=1, \ldots, k$.
  Since $H$ is a $U$-linkage, there are paths $Q_1$ and $Q_2$ connecting the open branches on the boundary
  of $L$ to $U$.
  For $j=1,2$, let $v_j$ be the first vertex on $Q_j$ that belongs to $P_1\cup P_2\cup \cdots\cup P_k$ when traversing $Q_j$ from $L$ towards $U$. Let $i_j$ be the index such that $v_j\in V(P_{i_j})$.
  It is easy to see that, for $i \not= i_1, i_2$, the $u_i$-foot of $H$ is removable.
  Thus $H$ has at least $k-2$ removable feet.
\end{proof}

Let $\B$ be the set of the five $xy$-labeled graphs depicted in Fig.~\ref{fg-bases}.
A graph $H$ is a \df{planar minor} of $G$ if $H$ is a minor of a planar subgraph of $G$.

\begin{figure}
  \centering
  \includegraphics{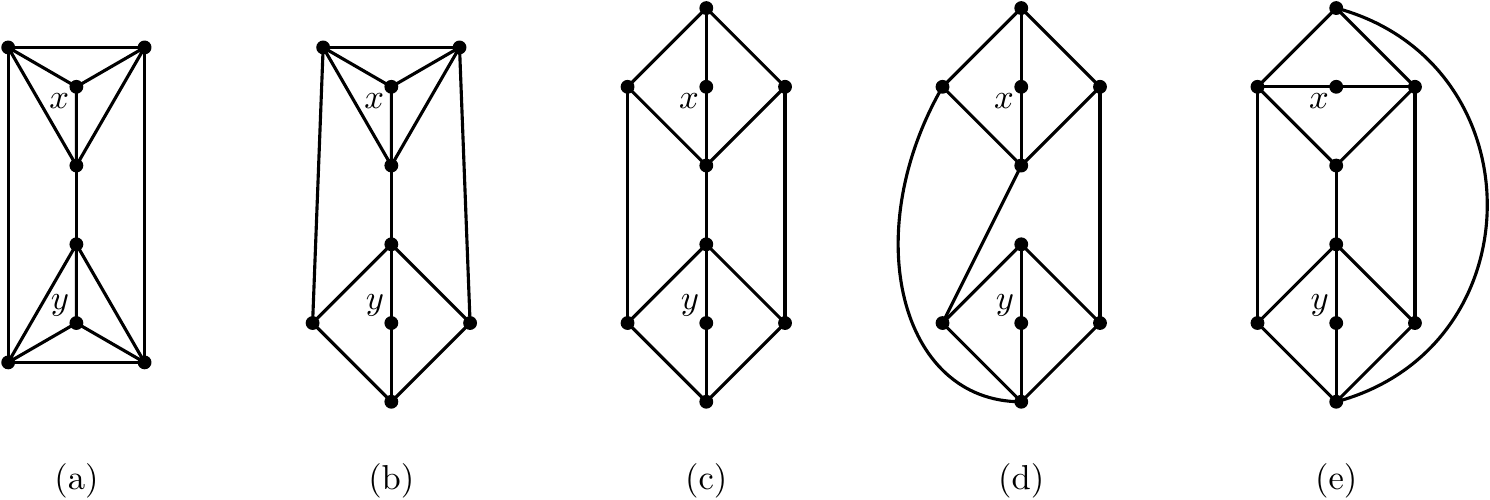}
  \caption{Set $\B$ of bases of cascades in $\S_1$ whose $xy$-K-graphs are $k$-separated for $k \ge 3$.}
  \label{fg-bases}
\end{figure}

\begin{lemma}
\label{lm-3con-bases}
  If $H$ is a base in $G$ such that the $xy$-K-graphs in $H$ are $k$-separated for $k \ge 3$,
  then $G$ contains one of the graphs in $\B$ as a planar minor.
\end{lemma}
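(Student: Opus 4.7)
The plan is to extract from $H$ linkages $H_x$ and $H_y$ of $L_x$ and $L_y$ respectively to a common small separator $U$, and show that their union is a planar subgraph of $G$ containing one of the graphs in $\B$ as a minor.

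First, since $L_x$ and $L_y$ are $k$-separated in $H$ with $k \ge 3$, Lemma~\ref{lm-separating} yields a set $U \subseteq V(H)$ with $|U| = k$ such that one of the cases (i)--(iii) of that lemma holds. Using the symmetry between $x$ and $y$, we may assume that (i) or (ii) holds, so $L_x$ is blocked from $L_y$ by $U$, and $L_y$ is blocked from $L_x$ by $U_y \in \{U, U \cup \{x\}\}$. Lemma~\ref{lm-sep-cuts} then produces a $U$-linkage $H_x$ of $L_x$ and a $U_y$-linkage $H_y$ of $L_y$. We take $H_x$ and $H_y$ to be edge-minimal, so in particular they are edge-disjoint. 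The subgraph $H_x \cup H_y$ is contained in $H \subseteq G$, and a direct embedding argument using the specific types enumerated in Fig.~\ref{fg-separation} shows that $H_x \cup H_y$ is planar: each linkage admits an embedding in a disk with the separator vertices on the boundary circle in a prescribed cyclic order, and these embeddings can be glued along the shared vertices of $U$ (and, when applicable, $x$).

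The heart of the proof is a finite case analysis on the possible types of $H_x$ and $H_y$. When $k = 3$, Lemma~\ref{lm-sep-cuts} restricts $H_x$ to be one of \gs{f}, \gs{g}, \gs{h}, and $H_y$ to be either one of \gs{f}, \gs{g}, \gs{h} (in case (i)) or \gs{i} (in case (ii), where $|U_y|=4$). For each admissible pair, one performs a short sequence of contractions on the feet of $H_x \cup H_y$ to normalize the cores, and verifies that the resulting graph is isomorphic to one of the five graphs in $\B$. When $k \ge 4$, Lemma~\ref{lm-4-linked} guarantees at least $k-2 \ge 2$ removable feet in each of $H_x$ and $H_y$. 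By Lemma~\ref{lm-removable}, the minimality of the base $H$ forbids that a single vertex of $U$ is a removable foot in both linkages; this constrains the distribution of removable feet severely, and permits us to delete superfluous feet in $H_x \cup H_y$ to extract a sub-linkage pair of total separator size $3$, which is already covered by the $k=3$ analysis.

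The main obstacle will be the combinatorial case enumeration in the third step. One must verify that, after the contractions, the resulting graph really coincides with one of the five graphs in $\B$ and not with another planar graph containing disjoint $xy$-K-graphs; the asymmetric case $|U_y|=4$ requires particular care, since $x$ plays a role on one side but not the other. Throughout, Lemma~\ref{lm-removable} is essential for eliminating redundant configurations by exploiting that neither $L_x$ nor $L_y$ can become a K-graph in the final graph $G$, since $\eg(G)=1$ forbids two disjoint noncontractible cycles by Lemmas~\ref{lm-fund-cycles} and~\ref{lm-noncontractible}.
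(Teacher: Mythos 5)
There are two genuine gaps. First, your central structural claim --- that $H_x \cup H_y$ is planar because the two linkages can be embedded in disks with $U$ on the boundary and then glued --- is false, and with it the claim that every admissible pair of linkage types normalizes to a graph in $\B$. Gluing two disk embeddings along $U$ requires the two boundary orders to be compatible, and for many pairings they are not: for example, with $k=3$ and $U_y=U$, pairing \gs{h} with \gs{f}, \gs{g}, or \gs{h} does not give a planar union at all. The paper's proof does not rule these configurations out by planarity; it rules them out by showing that they would force $G$ to contain one of the \emph{nonplanar} graphs in $\Cc_1(\egp)$ of Fig.~\ref{fg-selected} (e.g.\ \gsel{h}, \gsel{q}, \gsel{w}, \gsel{r}) as a minor, contradicting Lemma~\ref{lm-c2-egp}, i.e.\ property (C1) of cascades. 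Your argument never invokes this exclusion mechanism, so the $k=3$ case analysis as you describe it simply fails: some admissible pairs do not yield a member of $\B$, and without the cascade hypothesis there is nothing to eliminate them. The same problem occurs in the asymmetric subcase $U_y = U\cup\{x\}$, where almost all subcases end in a forbidden nonplanar minor (\gsel{g}, \gsel{s}) rather than in a graph of $\B$.

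Second, the reduction for $k \ge 4$ to ``a sub-linkage pair of total separator size $3$'' is wrong in exactly the case that matters. Lemma~\ref{lm-removable} needs a \emph{common} vertex $u \in U$ whose foot is removable in both $H_x$ and $H_y$, and it produces a proper subbase of $H$ --- so one must also set up a minimality assumption on the base (the paper assumes $H$ has no proper subbase that is $l$-separated with $l\ge 3$), which your proof never does. More importantly, when $k=4$ and $U_y=U$, both linkages are of type \gs{i} with only two removable feet each, and Lemma~\ref{lm-removable} forces the removable feet of $H_x$ and $H_y$ to attach at disjoint pairs of vertices of $U$; hence there is no common removable foot and no reduction to $k=3$ is possible. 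This is precisely the configuration that produces the fifth base \gbase{e} of $\B$, which your argument would never generate --- indeed, if your reduction worked, $\B$ would not need \gbase{e} at all. So the approach needs to be repaired by (a) adding the cascade-based exclusion of nonplanar pairings via Lemma~\ref{lm-c2-egp}, and (b) treating $k=4$, $U_y=U$ as a genuine terminal case rather than reducing it.
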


\begin{proof}
  We may assume that $H$ does not contain a proper subbase that is $l$-separated for some $l \ge 3$.

  Suppose first that $k=3$.
  We have that $H_x$ is one of \gs{f}, \gs{g}, or \gs{h}.
  Assume first that $U_y = U$.
  In this case, $H_y$ is also one of \gs{f}, \gs{g}, or \gs{h}.
  \begin{casesblock}
    \case{$H_y$ is \gs{h}}
    If $H_x$ is \gs{f}, then $G$ has~\gsel{h} as a minor.
    Suppose that $H_x$ is \gs{g}.
    There are two cases by symmetry:
    If $u_1' = u_1$, then $G$ has~\gsel{t} as a minor.
    (Hint: contract one and delete the other edge incident with $y$ in $H_y$.)
    If $u_1' = u_2$, then $G$ has~\gsel{q} as a minor.

    Suppose now that $H_x$ is \gs{h}.
    There are two cases by symmetry:
    If $u_1' = u_1$, then $G$ has~\gsel{w} as a minor.
    (Hint: Let the two neighbors of $x$ and $y$ be $a,b$ and $c,d$, respectively, where $ac$ and $bd$ is part of the linkage. Then we contract the edges $xb$ and $yc$ and delete the edges $xa$ and $yd$. The vertex $u_1'=u_1$ corresponds to the vertex of degree 4 in \gsel{w}.)
    If $u_1' = u_2$, then $G$ has~\gsel{r} as a minor.
    By symmetry, we may assume now that neither $H_x$ nor $H_y$ is \gs{h}.

    \case{$H_y$ is \gs{f}}
    If $H_x$ is \gs{f}, then $G$ has~\gbase{a} as a planar minor.
    If $H_x$ is \gs{g}, then $G$ has~\gbase{b} as a planar minor.
    By symmetry, we may assume now that neither $H_x$ nor $H_y$ is \gs{f}.

    \case{$H_y$ is \gs{g}}
    The only remaining case is when $H_x$ is \gs{g}.
    If $u_2' = u_2$, then $G$ has~\gbase{c} as a planar minor.
    If $u_2' = u_1$, then $G$ has~\gbase{d} as a planar minor.
  \end{casesblock}

  Assume now that $U_y = U \cup \{x\}$.
  Hence $H_y$ is \gs{i}.
  If $u_2' = x$ or $u_4' = x$, then $H_y$ contains linkage \gs{g} and this case was dealt with above.
  We may thus assume that $u_1' = x$.
  If $H_x$ is \gs{f}, then $G$ has~\gsel{g} as a minor.

  Suppose now that $H_x$ is \gs{g}.
  By Lemma~\ref{lm-removable}, $u_2' \not= u_2$ and $u_4' \not= u_2$.
  Thus $u_3' = u_2$ and $G$ has~\gsel{s} as a minor.
  On the other hand, if $H_x$ is \gs{h}, then Lemma~\ref{lm-removable} gives that $u_2', u_4' \not\in \{u_2, u_3, x\}$ which is impossible.

  Suppose now that $k=4$.
  Assume first that $L_x$ and $L_y$ are 4-separated and suppose that $U_y = U$.
  Thus both $H_x$ and $H_y$ are \gs{i}.
  By Lemma~\ref{lm-removable}, $\{u_2', u_4'\} \cap \{u_2, u_4\} = \emptyset$.
  Thus we may assume by symmetry that $u_1' = u_2$, $u_2' = u_3$, $u_3' = u_4$, and $u_4' = u_1$.
  We conclude that $G$ has~\gbase{e} as a planar minor.

  We may assume now that $U_y = U \cup \{x\}$.
  By Lemma~\ref{lm-4-linked}, $H_y$ has three removable feet.
  Since $H_x$ has two removable feet, there exists $u \in U$ such that the $u$-feet of $H_x$ and $H_y$ are removable.
  By Lemma~\ref{lm-removable}, this contradicts our initial assumption that $H$ does not contain a proper subbase that is $3$-separated.

  Assume now that $k > 4$.
  By Lemma~\ref{lm-4-linked}, there are at most two elements $u$ in $U$ such that either the $u$-foot of $H_x$ or the $u$-foot of $H_y$ is not removable.
  Since $|U| > 4$, there exists $u' \in U$ such that the $u'$-feet of $H_x$ and $H_y$ are removable.
  By Lemma~\ref{lm-removable}, there is a proper subbase of $H$ that is $(k-1)$-separated, a contradiction with our initial assumption about $H$.
\end{proof}

\section{Nonplanar extensions of planar bases}
\label{sc-ext}

Let $\B^*$ be the class of planar graphs that contain a graph in $\B$ as a minor and that are deletion-minimal. These graphs are obtained from $\B$ by splitting vertices of degree 4 in all possible ways such that planarity and minimality are preserved.
It is not hard to check that $\B^*$ contains only five graphs that are not contained in $\B$ (see Fig.~\ref{fg-b-star}).
In this section, we describe the minimal nonplanar graphs that contain a subgraph homeomorphic to a graph in $\B^*$.
Having this description, we use computer to determine the class $\S_1$. The graphs in $\S_1$ that have
a subgraph homeomorphic to a graph in $\B^*$ are depicted in Fig.~\ref{fg-cascades-3}.

\begin{figure}
  \centering
  \includegraphics{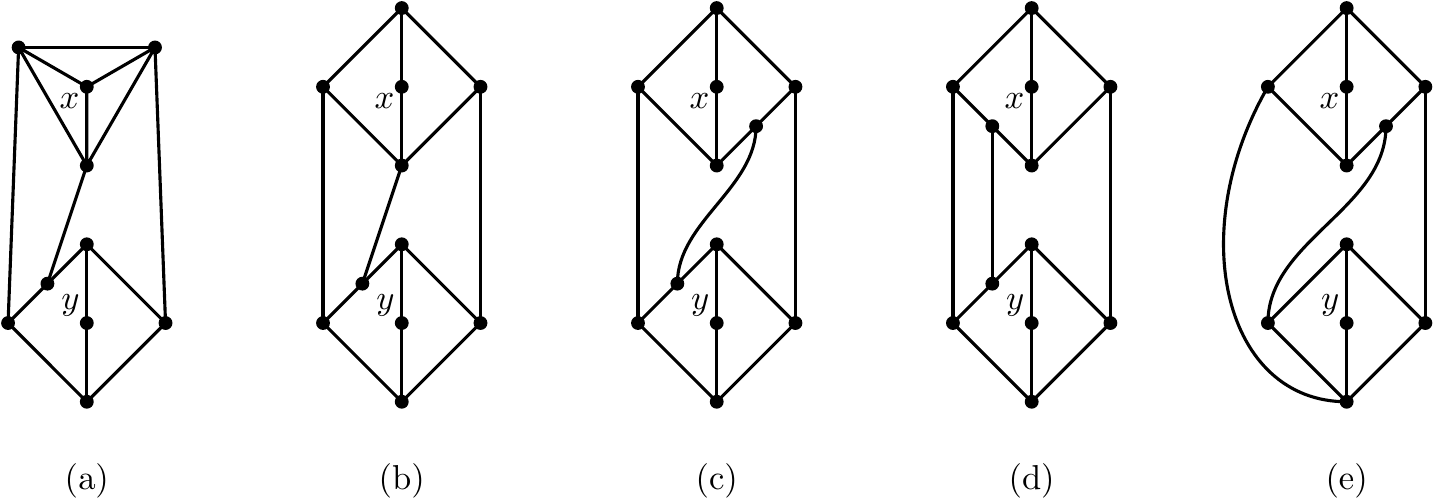}
  \caption{The class $\B^* \sm \B$.}
  \label{fg-b-star}
\end{figure}

Let $H_0$ be a subdivision of $K_{3,3}$, let $v$ be a branch vertex of $H_0$, and let $u_1, u_2, u_3$ be the neighbors of $v$.
The graph $H = H_0 - v$ is called a \df{tripod}. The three (possibly trivial) paths in $H$ with ends $u_1, u_2, u_3$, respectively, are the \df{feet} of $H$.
We say that $H$ is \df{attached} to a subgraph $K$ of $G$ if $H$ is contained in a $K$-bridge $B$, $u_1, u_2, u_3$ are attachments of $B$, and $B$ has no other attachments.
We use the following classical theorem (see~\cite[Theorem~6.3.1]{mohar-book}).

\begin{theorem}
  \label{th-disk-ext}
  Let $G$ be a connected graph and $C$ a cycle in $G$.
  Let $G'$ be a graph obtained from $G$ by adding a new vertex joined to all vertices of $C$.
  Then $G$ can be embedded in the plane with $C$ as an outer cycle unless $G$ contains
  an obstruction of the following type:
  \begin{enumerate}[label=\rm(\alph*)]
  \item
    disjoint paths whose ends are on $C$ and their order on $C$ is interlaced (disjoint crossing paths),
  \item
    a tripod attached to $C$, or
  \item
    a Kuratowski subgraph contained in a 3-connected block of $G'$ distinct from
    the 3-connected block of $G'$ containing $C$.
  \end{enumerate}
\end{theorem}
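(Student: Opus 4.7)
The plan rests on the classical reduction that $G$ admits an embedding in the plane with $C$ as outer cycle if and only if the auxiliary graph $G'$ is planar. One direction is obvious: given such an embedding of $G$, place $v^*$ in the outer face and draw edges to $V(C)$. For the converse, in any planar embedding of $G'$ the neighbors of $v^*$ must appear in the rotation of $v^*$ in the same cyclic order as along $C$ (otherwise $G'$ would contain a $K_5$- or $K_{3,3}$-subdivision on $v^*$ together with four points of $C$); hence $C$ bounds a face of $G' - v^*$, which can be chosen as the outer face. So it suffices to show that $G'$ is nonplanar if and only if $G$ contains one of the obstructions (a), (b), (c).

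For the ``if'' direction I would verify each obstruction exhibits a Kuratowski subgraph of $G'$. In case (a), two disjoint paths $P_1, P_2$ with ends $a_1, b_1$ and $a_2, b_2$ interleaved on $C$ combine with the four arcs of $C$ determined by these four vertices and with the four edges of $G'$ from $v^*$ to $\{a_1, a_2, b_1, b_2\}$ to give ten paths pairwise internally disjoint and joining $v^*, a_1, a_2, b_1, b_2$, i.e.\ a subdivision of $K_5$. In case (b), the tripod $H$ is by definition $H_0 - v$ for some $K_{3,3}$-subdivision $H_0$; reattaching $v^*$ via the edges $v^* u_1, v^* u_2, v^* u_3$ recovers an isomorphic copy of $H_0$ inside $G'$. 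In case (c), the Kuratowski subgraph already lies in $G' - v^* = G \subseteq G'$.

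For the ``only if'' direction I assume $G'$ is nonplanar and pick a Kuratowski subgraph $K \subseteq G'$. If $v^* \notin V(K)$ then $K \subseteq G$; passing to the 3-block (SPQR) decomposition of $G'$, a standard argument relocates $K$ into a single 3-connected component, and if this component contains $C$ then the 3-connectedness together with the presence of $C$ forces a Kuratowski subgraph using $v^*$ (handled below), so otherwise we are in case (c). If $v^* \in V(K)$ and $v^*$ is an internal subdivision vertex of $K$ (degree $2$), then the two edges of $K$ at $v^*$ go to vertices $u, u' \in V(C)$, and replacing the length-$2$ path through $v^*$ by either arc of $C$ between $u$ and $u'$ produces a Kuratowski subgraph of $G$, reducing to the previous case. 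Otherwise $v^*$ is a branch vertex: if $K \cong K_{3,3}$, then $K - v^*$ is a tripod whose feet end at the neighbors $u_1, u_2, u_3 \in V(C)$ of $v^*$ in $K$, yielding (b) after absorbing any extra attachments of its $C$-bridge; if $K \cong K_5$, then the four paths of $K - v^*$ joining the neighbors $u_1, u_2, u_3, u_4$ of $v^*$ on $C$ contain two disjoint paths with interleaved endpoints on $C$, giving (a).

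The main obstacle, and the reason this proof occupies a section of a book rather than a paragraph, is the bookkeeping needed to arrive at the clean formulations of (b) and (c): one must be able to assume that the $C$-bridge containing the tripod in case (b) has exactly the three attachments $u_1, u_2, u_3$, and that the Kuratowski subgraph in case (c) lives in a 3-connected block distinct from that of $C$. Both claims are handled by repeatedly ``absorbing'' extraneous attachments and moving $K$ along the 3-block tree of $G'$ until it lands in a single 3-connected piece; this is the content of the proof of \cite[Theorem~6.3.1]{mohar-book}, which I would invoke rather than reproduce.
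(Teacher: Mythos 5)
The paper never proves Theorem~\ref{th-disk-ext}: it is quoted as a classical result with a pointer to \cite[Theorem~6.3.1]{mohar-book}, so there is no in-paper argument to compare against. Your plan---reduce to planarity of the apex graph $G'$, extract a Kuratowski subgraph, and case on the role of $v^*$---is the standard route behind that citation, and your closing decision to ``invoke rather than reproduce'' the book's proof puts you in the same position as the paper, which simply treats the statement as known. As a matter of consistency with the paper this is fine; as a self-contained proof it is not, because the steps you delegate (cleaning the obstruction into the exact forms (a)--(c)) are the substance of the theorem, so the argument is circular if it is meant to stand on its own.

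In the parts you do argue there are concrete gaps. First, the converse of your opening equivalence does not follow from the rotation at $v^*$: it is false that $C$ must bound a face of $G'-v^*$ in an arbitrary planar embedding of $G'$. A $C$-bridge of $G$ can be drawn inside one of the triangular faces $v^*uu'$ (take $C=abcd$, one bridge attached to $a,b$ drawn inside, another attached to $c,d$ drawn outside); then no face of $G'-v^*$ is bounded by $C$. The repair is standard but needed: such a bridge attaches only to the two consecutive vertices $u,u'$ of $C$ and can be re-embedded across the edge $uu'$. Second, when $v^*$ is a degree-2 vertex of $K$, ``replace the path through $v^*$ by either arc of $C$'' can fail, since both arcs may meet $K$ elsewhere and destroy the homeomorphism type; this case needs the truncation/rerouting analysis you skip. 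Third, ``absorbing any extra attachments'' in the $K_{3,3}$-branch-vertex case is exactly what is required to meet the paper's definition of an attached tripod (the $C$-bridge must have \emph{no} attachments besides $u_1,u_2,u_3$), and the subcase $v^*\notin V(K)$ with $K$ in the same 3-connected block as $C$ is only waved at (``forces a Kuratowski subgraph using $v^*$''); these are precisely the points where the proof separates crossing paths, tripods, and case (c), and they are not routine.
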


We formalize homeomorphisms of graphs as follows.
Let $G, H$ be graphs. A mapping $\hem$ with domain $V(H) \cup E(H)$ is called a \df{homeomorphic embedding} of $H$ into $G$
if for every two vertices $v, v'$ and every two edges $e, e'$ of $H$
\begin{enumerate}[label=\rm(\roman*)]
\item
  $\hem(v)$ is a vertex of $G$, and if $v,v'$ are distinct then $\hem(v),\hem(v')$ are distinct,
\item
  if $e$ has ends $v, v'$, then $\hem(e)$ is a path in $G$ with ends $\hem(v), \hem(v')$,
  and otherwise disjoint from $\hem(V(H))$, and
\item
  if $e,e'$ are distinct, then $\hem(e)$ and $\hem(e')$ are edge-disjoint,
  and if they have a vertex in common, then this vertex is an end of both.
\end{enumerate}
We shall denote the fact that $\hem$ is a homeomorphic embedding of $H$ into $G$ by writing $\hem: H \homeo G$.
If $K$ is a subgraph of $H$, then we denote by $\hem(K)$ the subgraph of $G$ consisting of all vertices $\hem(v)$,
where $v \in V(H)$, and all vertices and edges that belong to $\hem(e)$ for some $e \in E(K)$.
Note that $\hem(V(K))\subseteq V(\hem(K))$ mean different sets. It is easy to see that $G$
has a subgraph homeomorphic to $H$ if and only if there is a homeomorphic embedding $H \homeo G$.
An \df{$\hem$-bridge} is an $\hem(H)$-bridge in $G$; an \df{$\hem$-branch} is an image of an edge of $H$.
A bridge is \df{local} if all its vertices of attachment are on a single branch $\hem(e)$, $e\in E(H)$.

The following result is well-known (see~\cite{mohar-book}, Lemma 6.2.1).

\begin{lemma}
\label{lm-no-local-bridges-3con}
Let $H$ be a graph with at least three vertices and $\hem$ a homeomorphic embedding of $H$ into a 3-connected graph $G$.
Then there exists a homeomorphic embedding $\hem'$ such that:
\begin{enumerate}[label=\rm(\roman*)]
\item
  $\hem(v) = \hem'(v)$ for each $v \in V(H)$.
\item
  $\hem'(e)$ is a path that is contained in the union of $\hem(e)$ and all local $\hem(e)$-bridges.
\item
  There are no local $\hem'$-bridges.
\end{enumerate}
\end{lemma}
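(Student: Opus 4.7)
The plan is to construct $\hem'$ by optimal choice within the allowed region. For each edge $e = uv \in E(H)$, let $G_e = \hem(e) \cup \bigcup\{B : B \text{ is a local } \hem(e)\text{-bridge}\}$. The family of candidate embeddings consists of all $\hem'$ with $\hem'(w) = \hem(w)$ for $w \in V(H)$ (giving~(i)) and $\hem'(e) \subseteq G_e$ for every $e \in E(H)$ (giving~(ii)); this family is nonempty since $\hem$ itself belongs to it. Among these, I would choose $\hem'$ minimizing $|E(\hem'(H))|$, resolving ties by a secondary criterion such as minimizing the total size of interiors of local $\hem'$-bridges.

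Before arguing~(iii) I would verify that any such $\hem'$ is a valid homeomorphic embedding. The interior of a local $\hem(e_i)$-bridge is a connected component of $G - V(\hem(H))$, so $V(G_{e_1}) \cap V(G_{e_2}) \subseteq V(\hem(e_1)) \cap V(\hem(e_2))$, which equals the set of branch vertices common to $\hem(e_1)$ and $\hem(e_2)$. Also, no branch vertex $\hem(w)$ with $w \notin \{u,v\}$ lies in $V(G_e)$, since branch vertices sit on branches other than $\hem(e)$ and cannot be interior to local $\hem(e)$-bridges. Hence the branches $\hem'(e)$ have pairwise disjoint interiors and meet only at images of vertices of $H$.

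For~(iii), suppose $B$ is a local $\hem'(e)$-bridge with attachments $a_1, \ldots, a_k$ along $\hem'(e)$. In the trivial case $B = \{ab\}$, I would first show $ab \in E(G_e)$: both endpoints lie in $V(G_e)$, and since distinct components of $G - V(\hem(H))$ share no edges, $ab$ belongs either to $E(\hem(e))$ or to the edge set of a single local $\hem(e)$-bridge, hence to $E(G_e)$. Simplicity of $G$ then forces $a$ and $b$ to be non-consecutive on $\hem'(e)$, so using $ab$ as a chord gives a strictly shorter $\hem(u)$--$\hem(v)$ path in $G_e$, contradicting minimality. In the non-trivial case, pick $w \in V(B^\circ)$; 3-connectivity of $G$ together with $|V(H)| \ge 3$ yields three internally disjoint paths from $w$ to $V(\hem'(H))$, whose first $V(\hem'(H))$-vertices must be distinct attachments of $B$ on $\hem'(e)$, so $k \ge 3$. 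I would then contract the set formed by $V(B^\circ)$ together with the vertices of the $a_1$-$a_k$ open segment of $\hem'(e)$ (a nonempty set containing $a_2$) and argue that, after absorbing any non-local $\hem'$-bridges meeting this segment into the contracted blob by rerouting or by appealing to the secondary criterion, one obtains a 2-cut $\{a_1,a_k\}$ separating this blob from the remainder of $G$ (nonempty since $|V(H)|\ge 3$ produces a branch vertex off $\hem'(e)$), contradicting 3-connectivity.

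The main obstacle is the non-trivial case: a naive 2-cut argument can fail if non-local $\hem'$-bridges attach inside the open $a_1$-$a_k$ segment and link it to other branches. The secondary criterion on local-bridge interior size is precisely what is needed to rule this out: any such non-local bridge would allow a rerouting of $\hem'(e)$ (within $G_e$, exploiting a path through $B^\circ$ of length at most that of the segment $[a_1,a_k]$) that either reduces $|E(\hem'(H))|$ or preserves it while absorbing $V(B^\circ)$ into $V(\hem'(H))$, contradicting the chosen minimality. Once this technical step is secured, the two cases of the potential local bridge close off by standard 3-connectivity arguments.
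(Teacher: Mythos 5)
The paper itself does not prove this lemma (it is quoted from Mohar--Thomassen, Lemma~6.2.1 of \cite{mohar-book}), so your argument has to stand alone. Your setup is sound: the family of embeddings satisfying (i) and (ii) is well defined because the graphs $G_e$ meet only in shared branch vertices, condition (ii) is preserved under your rerouting, and the chord case (trivial local bridges) is handled correctly, as is the observation that a nontrivial bridge has at least three attachments. The fatal problem is the nontrivial case, and it is not a fixable detail: the extremal choice itself -- minimize $|E(\hem'(H))|$, with any tie-breaker -- is the wrong one, so no contradiction can follow from ``the optimum has a nontrivial local bridge.'' Concretely, take $H=K_4$ on $u,v,w,t$, embed every branch except $uv$ as a single edge, and let $\hem(uv)$ be the path $\hem(u)x_1x_2\hem(v)$. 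Add a path $p_1p_2\cdots p_{10}$ with each $p_i$ adjacent to $x_1$, with $p_1$ adjacent to $\hem(u)$ and $p_{10}$ adjacent to $x_2$, and add the edge $x_1\hem(w)$. The resulting graph is $3$-connected; the fan $p_1,\dots,p_{10}$ is a nontrivial local bridge of $\hem(uv)$ with attachments $\hem(u),x_1,x_2$; and $\hem(uv)$ is the \emph{unique} shortest $\hem(u)$--$\hem(v)$ path in $G_{uv}$, so $\hem$ itself is your lexicographic optimum (the tie-breaker never enters) and yet a local bridge persists. The lemma does hold for this instance, but only via the much longer branch $\hem(u)p_1\cdots p_{10}x_2\hem(v)$, after which the freed vertex $x_1$ lies in a non-local bridge thanks to the edge $x_1\hem(w)$. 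So any correct proof must allow branches to become longer, which your primary criterion forbids.

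The same example exposes the two unjustified sub-claims in your sketch: a path through $B^\circ$ between the extreme attachments $a_1,a_k$ need not have length at most that of the segment it replaces (here it is much longer), so the proposed rerouting can strictly worsen the primary quantity and yields no contradiction; and even if no non-local bridge attaches inside the open segment $(a_1,a_k)$, the pair $\{a_1,a_k\}$ need not be a cut, because another local bridge of the same branch may attach both inside and outside that interval. (You also use, without proof, that a local $\hem'(e)$-bridge lies inside $G_e$; this is true but needs the argument that its interior cannot absorb pieces of other branches or of their local bridges.) A workable route is to treat the branches one at a time -- rerouting $\hem(e)$ inside $G_e$ does not disturb bridges local to other branches -- and to choose $\hem'(e)$ as an $\hem(u)$--$\hem(v)$ path $P$ in $G_e$ that is chordless in $G_e$ and such that every component of $G_e-V(P)$ contains an interior vertex of $\hem(e)$ that is attached to a non-local bridge or to another branch; such a path can be obtained from Tutte's induced non-separating cycle theorem applied to $G_e$ augmented by the edge $\hem(u)\hem(v)$ and a vertex recording those external attachments, with $3$-connectivity of $G$ and the existence of a third branch vertex outside $G_e$ doing the work your $2$-cut argument was aiming for. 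Alternatively, simply follow the proof of Lemma~6.2.1 in the cited book; but an extremal argument whose first objective is to shorten the branches cannot be completed.
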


In order to apply Lemma~\ref{lm-no-local-bridges-3con} to a base in $\B^*$, we need to assure that
new homeomorphic embedding still maps terminals to terminals. We will need the following lemmas.

\begin{lemma}
  \label{lm-cascade-kur}
  Suppose that $G \in \S_1$ has a base homeomorphic to a graph in $\B^*$ and that $K$ is
  a Kuratowski subgraph of $G$. If none of the branch vertices of $K$ lie in $L_x$, then two
  of its open branches intersect $L_x$. The same holds for the intersection of $K$ with $L_y$.
\end{lemma}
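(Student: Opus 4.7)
The plan is to argue by contradiction. Assume that no branch vertex of $K$ lies in $L_x$ and that at most one open branch of $K$ intersects $L_x$. Then $K\cap L_x$ is contained in at most one open branch of $K$, hence in a single half-open branch of $K$. I will show that $L_x$ is in fact an honest K-graph in $G$ (not merely a pre-K-graph). Applied to the Kuratowski subgraph $K$ and the K-graph $L=L_x$, Lemma~\ref{lm-two-k-graphs}(ii) then yields $\eg(G)\ge 2$, contradicting $G\in\S_1$.

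To promote $L_x$ from a pre-K-graph to a K-graph in $G$, note that since $L_x$ is a pre-K-graph, the principal $L_x$-bridge of $G^+$ attaches to every branch vertex of $L_x$ (when $L_x\cong K_4$) or to every open branch of $L_x$ (when $L_x\cong K_{2,3}$). In $G$, the $L_x$-bridge $B_y$ containing $L_y$ therefore attaches to every such required element except possibly the one involving the terminal $x$. Here I use the hypothesis that $G$ has a base homeomorphic to some graph in $\B^*$: by inspecting Fig.~\ref{fg-bases} and Fig.~\ref{fg-b-star}, one verifies that in every such base there is either (a) an edge or path in $G$ from $x$ into $B_y$ that does not use the edge $xy$, or (b) a second $L_x$-bridge incident with $x$ that merges into $B_y$ through vertices outside $L_x$. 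Either way, the $L_x$-bridge in $G$ containing $L_y$ attaches at all required places of $L_x$, and thus $L_x$ is a K-graph in $G$. In the borderline case where neither (a) nor (b) holds, I would instead construct a K-graph $L'$ by rerouting a short segment of $L_x$ through $B_y$; the rerouting paths can be chosen inside $B_y$, and since $B_y$ can only meet $K$ inside the same open branch of $K$ already containing $K\cap L_x$ (otherwise a second open branch of $K$ would meet $L_x\cup B_y$, contradicting our assumption on the base), one still has $L'\cap K\subseteq L_x\cap K$.

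Applying Lemma~\ref{lm-two-k-graphs}(ii) to $K$ and the K-graph $L=L_x$ (or $L=L'$ in the borderline case), whose intersection lies in a single half-open branch of $K$, yields $\eg(G)\ge 2$, the required contradiction. By symmetry, the same conclusion holds for $L_y$ in place of $L_x$. The main obstacle is the case analysis in the second paragraph: one must verify, base by base through $\B$ and $\B^*\setminus\B$, that the $L_x$-bridge structure in $G$ always provides the missing attachment, so that $L_x$ actually qualifies as a K-graph (or, in degenerate cases, that the small rerouting inside $B_y$ does not introduce new intersections with $K$).
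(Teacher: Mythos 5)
Your central step -- promoting $L_x$ from a pre-K-graph to ``an honest K-graph in $G$'' -- is the gap, and it cannot be repaired in the form you propose. In $G$ itself the edge $xy$ is absent, and the only thing the definition of a base (or of an $x$-K-graph) guarantees is that $L_x$ is a K-graph in $G\+$; whether $L_x$ is a K-graph in $G$ depends on whether the bridge $\By$ attaches at $x$ (respectively at the open branch through $x$), and nothing about the bases in $\B^*$ forces this: a base only certifies pre-K-graph structure, i.e.\ attachment via the added edge $xy$. Worse, the claim cannot hold for both terminals: since $\eg(G)=1$ for $G\in\S_1$, Lemmas~\ref{lm-fund-cycles} and~\ref{lm-noncontractible} show that at most one of $L_x,L_y$ can be a K-graph in $G$ (this is stated explicitly in the paper just before Lemma~\ref{lm-interiors}), so your ``by symmetry'' conclusion for $L_y$ is unavailable. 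In fact, under the very hypothesis you are trying to refute ($K$ meeting $L_x$ in at most one half-open branch), $L_x$ being a K-graph in $G$ would force $\eg(G)\ge 2$ by Lemma~\ref{lm-two-k-graphs}(ii), which is impossible for a cascade; so in every configuration you need to exclude, your promotion step is guaranteed to fail. The fallback ``rerouting through $\By$'' does not help: $K$ may lie largely inside $\By$ (this is exactly what happens in the known cascades), and your parenthetical justification conflates the hypothesis about $K\cap L_x$ with a claim about $K\cap(L_x\cup\By)$, which is not assumed.

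The paper's proof takes a different and essential route that your argument never touches: it uses the cascade condition (C1) rather than contradicting $\eg(G)=1$ directly. By inspection of the graphs in $\B^*$ one finds an edge $e$ incident with $L_x$ such that in $G/e$ the image of $L_x$ is still an $x$-K-graph and $K$ yields a Kuratowski subgraph $K'$ meeting it in at most one half-open branch; applying Lemma~\ref{lm-two-k-graphs}(ii) in $(G/e)\+$ (where the edge $xy$ does make $L_x$ a genuine K-graph) gives $\egp(G/e)\ge 2$, while $K'$ makes $G/e$ nonplanar, so the contraction decreases neither $\eg$ nor $\egp$, contradicting (C1). Passing to a contracted minor and to the graph with $xy$ added is precisely what circumvents the obstacle that $L_x$ need not be a K-graph in $G$; since your proposal uses neither the minor-minimality of cascades nor $G\+$, it misses the key idea.
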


\begin{proof}
  Assume for a contradiction that $K$ is disjoint from $L_x$
  except possibly for an open branch $P$ of $K$.
  By inspection of graphs in $\B^*$, we see that there is an edge $e$ incident with $L_x$ such that
  $L_x$ is an $x$-K-graph in $G/e$ and there is a Kuratowski subgraph $K'$ in $G /e$
  that shares at most one half-open branch with $L_x$.
  By Lemma~\ref{lm-two-k-graphs}, $\egp(G/e) \ge 2$.
  Since $G/e$ is nonplanar, this contradicts the condition (C1) from the definition of cascades.
\end{proof}

\begin{lemma}
  \label{lm-3-connected}
  Let $U$ be a vertex-cut in $G \in \S_1$. If\/ $|U| \le 2$, then each nontrivial $U$-bridge in $G$ contains either $x$ or $y$.
\end{lemma}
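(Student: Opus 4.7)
The plan is by contradiction. Suppose $G\in\S_1$ admits a vertex-cut $U$ with $|U|\le 2$ and a nontrivial $U$-bridge $B$ such that $\{x,y\}\cap V(B)=\emptyset$. Write $G_2=B$ and $G_1=G-B^\circ$, so that $x,y\in V(G_1)$ and $V(G_1)\cap V(G_2)=U$. The idea is to apply Stahl--Beineke (Theorem~\ref{th-stahl-euler}) when $|U|=1$ and Richter's 2-sum formula (Theorem~\ref{th-richter-euler}) when $|U|=2$ to track the effect of minor operations in $G_1$ and $G_2$ on $\eg(G)$ and $\egp(G)$. In each configuration, the aim is either to exhibit a single minor operation that preserves both $\eg(G)$ and $\egp(G)$, contradicting~(C1), or to show that every minor operation of $G$ decreases $\egp(G)$, forcing $G\in\Cc(\egp)$ and contradicting~(C3).

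For $|U|=1$ with $U=\{u\}$, the 1-sum formulas give $\eg(G_1)+\eg(G_2)=1$ and $\egp(G_1)+\eg(G_2)=2$. If $\eg(G_2)=0$, contracting any edge of $G_2$ incident to $u$ (which exists since $B^\circ\neq\emptyset$) leaves $\mu G_2$ planar and connected, so preserves $\eg(G)$ and $\egp(G)$ and contradicts~(C1). Otherwise $\eg(G_2)=1$, so $G_2$ is $\eg$-critical, $G_2\in\E_0=\{K_5,K_{3,3}\}$, and $\eg(G_1)=0$, $\egp(G_1)=1$. Every operation in $G_2$ then drops $\eg(G_2)$ to $0$ and so decreases $\egp(G)$, while $\eg(\mu G)\ge 1$ for every $G_1$-operation forces~(C1) to be satisfied via a decrease in $\egp$; hence every operation of $G$ decreases $\egp(G)$, giving $G\in\Cc(\egp)$, a contradiction with~(C3).

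For $|U|=2$ with $U=\{u_1,u_2\}$, view $G$ as the $u_1u_2$-sum and set $\alpha_i=\eg(G_i)$, $\beta_i=\eg(G_i+u_1u_2)$. Richter's formula forces $\alpha_1=\alpha_2=0$ and $\beta_1+\beta_2=1$. If $\beta_2=0$, contracting an edge of $G_2$ incident to an interior vertex of $B$ leaves $\mu G_2$ connected with $\alpha=\beta=0$, preserving $\eg(G)$ and $\egp(G)$, contradicting~(C1). If $\beta_2=1$, then $G_2+u_1u_2$ is nonplanar with $\eg=1$; unless $G_2+u_1u_2\in\{K_5,K_{3,3}\}$ exactly, some deletion or contraction in $G_2$ preserves a Kuratowski subgraph and keeps $(\alpha_2,\beta_2)=(0,1)$, violating~(C1). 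In the remaining case $G_2\cong K_5-u_1u_2$ or $K_{3,3}-u_1u_2$, every operation in $G_2$ drops $\beta_2$ to $0$. A case analysis on $(\alpha(G_1+xy),\beta(G_1+xy))\in\{(0,1),(1,1),(0,2)\}$ (the pairs consistent with $\egp(G)=2$) shows that in $(0,1)$ and $(1,1)$ every operation in $G_2$ decreases $\egp(G)$, and since $\eg(\mu G)\ge\eg(G)$ for every $G_1$-operation, (C1) forces every operation of $G$ to decrease $\egp(G)$, yielding $G\in\Cc(\egp)$ in contradiction with~(C3).

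The main obstacle is the subcase $(0,2)$, where $G_1+xy$ is planar but $\eg(G_1+xy+u_1u_2)=2$. Here Lemma~\ref{lm-dd-xy-2} gives $\dd_{G_1+xy}(u_1,u_2)\ge 2$, and Lemma~\ref{lm-separating-cycles} produces two disjoint $(u_1,u_2)$-separating cycles $C_1,C_2$ in $G_1+xy$. Since $G_1$ is connected yet cannot consist solely of two disjoint cycles, $G_1$ must contain an edge $e$ outside $C_1\cup C_2$. Then $C_1,C_2$ remain disjoint $(u_1,u_2)$-separating cycles in $(G_1-e)+xy$, so Lemma~\ref{lm-dd-xy-2} yields $\eg((G_1-e)+xy+u_1u_2)=2$, and a direct calculation using Richter's formula shows that $(e,-)$ neither decreases $\eg(G)$ nor $\egp(G)$, contradicting~(C1).
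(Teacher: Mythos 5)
Your route is genuinely different from the paper's: the paper uses the disjoint $xy$-K-graphs (Lemma~\ref{lm-disjoint-xy-k-graphs}), passes to $G-B^\circ$ (plus the edge $u_1u_2$ when $|U|=2$) to conclude via Lemma~\ref{lm-c2-egp} that this part is planar and hence $B$ (or $B+u_1u_2$) is nonplanar, and then contracts a single edge of $L_x$, using Lemma~\ref{lm-two-k-graphs}(ii) to contradict (C1); you instead do genus bookkeeping with Theorems~\ref{th-stahl-euler} and~\ref{th-richter-euler}. Your $|U|=1$ case is essentially sound, but the $|U|=2$ case has real gaps. First, Richter's formula does \emph{not} force $\eg(G_2)=0$: from $\eg(G)=1$ and $\egp(G)=2$ you only get $\eg(G_1+u_1u_2)+\eg(G_2+u_1u_2)=1$, which leaves open $\eg(G_2)=\eg(G_2+u_1u_2)=1$ (for instance $G_2\cong K_{3,3}$ attached at two nonadjacent vertices, the third member of $\Cc_0(\egp)$ in Lemma~\ref{lm-cc-0}); this subcase never appears in your case split, and your later ``unless $G_2+u_1u_2\in\{K_5,K_{3,3}\}$'' step silently presupposes $\eg(G_2)=0$. (It can in fact be closed by the same ``every operation decreases $\egp$'' argument, but that has to be said.) Second, Theorem~\ref{th-richter-euler} is stated for connected parts, and you never check that $G_1$, let alone $\mu G_1$, is connected; if $\mu$ deletes a cut edge of $G_1$ separating $u_1$ from $u_2$, the inequality $\eg(\mu G)\ge\eg(G)$ that drives your ``hence $G\in\Cc(\egp)$'' conclusion can fail. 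The paper handles the analogous issue explicitly (Lemma~\ref{lm-no-cutedges} and the connectivity hypotheses of Lemmas~\ref{lm-part-eg} and~\ref{lm-part-egp}).

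The most serious gap is in your subcase $(0,2)$. You assert that for an arbitrary edge $e$ of $G_1$ outside $C_1\cup C_2$ the two cycles ``remain disjoint $(u_1,u_2)$-separating cycles in $(G_1-e)+xy$'', so that $(G_1-e)+xy+u_1u_2$ is still nonplanar. Separation by a cycle is a property of the bridge/overlap structure of the whole graph, not of the cycle alone, and it is not preserved by deleting edges off the cycle. Concretely, take two disjoint $4$-cycles $a_ib_ic_id_i$ ($i=1,2$), join $u_i$ to $a_i$ and $c_i$, and add the edges $b_1b_2$ and $d_1d_2$: both cycles are $(u_1,u_2)$-separating, yet deleting the single edge $b_1b_2$, which lies on neither cycle, destroys both separations and the resulting graph plus $u_1u_2$ is planar, i.e.\ the face-distance drops from $2$ to $0$. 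In your setting such a choice of $e$ makes $\eg((G_1-e)+xy+u_1u_2)=0$, so $\egp(G-e)=1<2$ and the deletion does decrease $\egp(G)$; no contradiction with (C1) results. To rescue the argument you would have to choose $e$ (or the minor-operation) with care and prove that a good choice exists, and that is exactly the work your write-up omits --- and exactly the difficulty the paper's proof avoids by contracting an edge of $L_x$ and playing the surviving Kuratowski subgraph of $B$ against the K-graph $L_y$ via Lemma~\ref{lm-two-k-graphs}(ii).
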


\begin{proof}
  Let $B$ be a nontrivial $U$-bridge that contains neither $x$ nor $y$.
  If $|U| = 1$, let $G_1 = G - B^\circ$.
  If $U = \{u, v\}$ has size 2, let $G_1 = G - B^\circ + uv$.
  Since Kuratowski graphs are 3-connected, $G_1$ contains the same disjoint $xy$-K-graphs as $G$.
  Thus $\egp(G_1) = 2$ by Lemma~\ref{lm-two-k-graphs}(i). Since $G_1$ is a proper minor of $G$, $\eg(G_1) = 0$ by (C1).
  If $|U| = 1$, Theorem~\ref{th-stahl-euler} implies that $B$ is nonplanar since $G$ is nonplanar.
  If $|U| = 2$, then Lemma~\ref{lm-planar-patch} implies that $B +uv$ is nonplanar since $G$ is nonplanar.
  We may assume by symmetry that $|V(L_y) \cap U| \le 1$.
  Let us now consider an edge $e \in E(L_x)$ (with $e\ne uv$ if $|U| = 2$) and the graph $G_0 = G / e$.
  The graph $G_0$ is nonplanar since it contains $B$ or $B+uv$ as a minor.
  Also $G_0^+$ contains a Kuratowski subgraph in $B$ and a K-graph $L_y$ that intersect in at most one vertex
  or in at most one half-open branch.
  Lemma~\ref{lm-two-k-graphs}(ii) gives that $\egp(G_0) = 2$. This is a contradiction with (C1).
\end{proof}

\begin{lemma}
  \label{lm-k23-local}
  Let $H$ be a base of a graph\/ $G \in \S_1$, and $\hem: H \homeo G$ a homeomorphic embedding of $H$ in $G$.
  If $L_x\cong K_{2,3}$ and $P$ is the branch of $L_x$
  that contains the interior of $L_x$, then
  there are no local $\hem$-bridges with attachments only on $P$.
\end{lemma}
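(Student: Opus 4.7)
The plan is to argue by contradiction. Suppose $B$ is a local $\hem$-bridge whose attachments $U = V(B)\cap V(\hem(P))$ all lie on a single branch $\hem(e)$ for some edge $e\in E(P)$; write $U=\{v_1,\ldots,v_k\}$ in order along $\hem(e)$. I will use that the embedding $\hem$ preserves terminals (as is standard in this section), so $x=\hem(x)$ and $y=\hem(y)$ lie in $V(\hem(H))$.

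First I would dispose of $|U|=1$. In that case $\{v_1\}$ is a $1$-vertex-cut of $G$ with $B$ a nontrivial $\{v_1\}$-bridge; since both $x$ and $y$ are in $V(\hem(H))$, neither lies in $B^\circ$, and this contradicts Lemma~\ref{lm-3-connected}.

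For $|U|\ge 2$ the plan is to invoke cascade property (C1). For any $e'\in E(B)$ one has $\hem(H)\subseteq G-e'$, and since $\hem(H)$ contains the disjoint pre-K-graphs $L_x,L_y$, Lemma~\ref{lm-two-k-graphs}(i) applied to $\hem(H)^+$ gives $\egp(\hem(H))\ge 2$, whence $\egp(G-e')=\egp(G)=2$. Thus the minor-operation $(e',-)$ does not decrease $\egp$, so (C1) forces $G-e'$ to be planar. As this must hold for every $e'\in E(B)$, every Kuratowski subgraph of $G$ contains every edge of $B$.

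I would then split into two subcases to derive a contradiction. If $B$ is non-planar, $B$ contains a Kuratowski subgraph $K'$; since $B$ meets $\hem(H)$ only in $\hem(P)\subseteq \hem(L_x)$, which is disjoint from $\hem(L_y)$, the subgraph $K'$ is disjoint from the $y$-K-graph $\hem(L_y)$, and Lemma~\ref{lm-two-k-graphs}(i) yields $\eg(G)\ge 2$, contradicting $\eg(G)=1$. So $B$ is planar. Symmetrically, $G-B$ must be planar: a Kuratowski subgraph in $G-B$ would avoid $E(B)$ and contradict the previous paragraph. Finally, I would combine a suitable planar embedding of $G-B$ with a planar embedding of $B$ inserted into a face adjacent to $\hem(e)$ whose boundary traverses $v_1,\ldots,v_k$ in their linear order, producing a planar embedding of $G$ and contradicting $\eg(G)=1$.

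The hard part will be this final insertion step---showing that some planar embedding of $G-B$ has a face adjacent to $\hem(e)$ whose boundary contains all of $v_1,\ldots,v_k$ in the correct linear order. To handle this I would use the planarity of $\hem(H)$ (since $H\in\B^*$) together with Theorem~\ref{th-disk-ext} applied in a neighborhood of $\hem(e)$, and exploit the freedom of flipping $3$-connected components of $G-B$ (and rerouting other local bridges on $\hem(e)$) to make the $v_i$'s co-facial, invoking Lemma~\ref{lm-interiors} to keep control over how attachments near $\hem(x)$ behave.
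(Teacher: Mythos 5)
Your reduction to ``$B$ is planar and $G-B$ is planar'' is fine as far as it goes (deleting any edge of $B$ keeps $\hem(H)$, hence keeps $\egp\ge 2$, so (C1) forces planarity), but the proof stops exactly where the lemma's real content begins. The final insertion step is not only unproven, it does not follow from what you have established: planarity of $G-B$ does not give you an embedding with a face whose boundary contains $v_1,\dots,v_k$ consecutively and in order (other, non-local bridges attached to $\hem(e)$ and elsewhere can force them apart in every embedding), and planarity of $B$ does not give you that $B$ together with the attachment segment embeds in a disk with that segment on the boundary --- the obstructions are precisely the crossing paths, tripods and extra blocks of Theorem~\ref{th-disk-ext}, and they can live inside the single bridge $B$ itself. ``Flipping $3$-connected components'' cannot remove such obstructions; since $G$ really is nonplanar, something must block the insertion, and you give no argument that the blocker cannot exist. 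The paper never tries to re-embed $G$: it deletes one edge of the offending bridge, uses (C1) to get planarity of the result, applies Theorem~\ref{th-disk-ext} to the boundary cycle $C$ of $L_x$, and shows that each resulting obstruction yields a Kuratowski subgraph of $G$ whose intersection with $L_y$ (or $L_x$) is too small, contradicting Lemma~\ref{lm-cascade-kur}. That criticality-plus-Kuratowski-intersection mechanism is the missing idea in your plan; without it the cofaciality and disk-embeddability claims you need are simply unavailable.

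There are also two smaller defects. In the ``$B$ nonplanar'' subcase you apply Lemma~\ref{lm-two-k-graphs}(i) to $K'$ and $\hem(L_y)$; but $L_y$ is only a pre-K-graph, i.e.\ a K-graph in $G^+$ (its principal bridge may need the edge $xy$), so at best you conclude $\egp(G)\ge 2$, which is no contradiction. Repairing this needs the contraction trick packaged in Lemma~\ref{lm-cascade-kur}, which you never invoke. Moreover, your setup only treats bridges attached to a single $\hem$-branch inside $P$, whereas the lemma as the paper uses it also covers bridges whose attachments lie on $P$ but straddle $x$, in particular a bridge attached only at the two ends $w_1,w_2$ of $P$; the paper handles this as a separate first case, and your argument never sees it. (Your $|U|=1$ step also tacitly assumes $v_1\neq x$ when citing Lemma~\ref{lm-3-connected}, since that lemma says nothing about a bridge one of whose attachments is a terminal.)
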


\begin{proof}
  Let $C$ be the boundary of $L_x$ which consists of the $\hem$-branches $P_1, P_2$.
  Assume first that there is an $\hem$-bridge $B_0$ that attaches only at the ends $w_1, w_2$ of $P$.
  By Lemma~\ref{lm-3-connected}, $B_0$ is trivial and consists of the edge $w_1w_2$.
  Let $G' = G - w_1w_2$. Since $\egp(G') \ge 2$, we have that $G'$ is planar by (C1).
  Since $G$ is nonplanar, there are paths $P_3$ and $P_4$ connecting $P - w_1 - w_2$
  to $P_1 - w_1 - w_2$ and $P_2 - w_1 - w_2$, respectively.
  Let $P_5$ be a path in $\By$ connecting $P_1 - w_1 - w_2$ and $P_2 - w_1 - w_2$.
  The planarity of $G'$ implies that $P_3$ and $P_4$ are internally disjoint from $L_y$, and therefore
  $L_x \cup P_3 \cup P_4 \cup P_5 \cup w_1w_2$ contains a Kuratowski subgraph $K$.
  The intersection of $K$ with $L_y$ is contained in $P_5$.
  This contradicts Lemma~\ref{lm-cascade-kur}.

  We may assume now that all $\hem$-bridges that are attach to $P$, have a vertex of attachment in the interior of $P$.
  Let $B'$ be a local $\hem$-bridge with an attachment $t\in V(P)\setminus \{w_1,w_2\}$.
  Let $G'$ be the graph obtained from $G$ by deleting an edge $e$ of $B'$ incident with $t$.
  Since $\egp(G') \ge 2$, we have that $G'$ is planar by (C1). Let $B$ be the $C$-bridge containing $\cup B'$.
  Since $G$ is nonplanar, $B$ cannot be drawn inside
  a disk with $C$ on the boundary. By Theorem~\ref{th-disk-ext}, there are three possibilities.
  The option (iii) contradicts Lemma~\ref{lm-cascade-kur}.
  Suppose that (i) holds and let $P_3, P_4$ be a pair of crossing paths.
  Since $B$ is connected, there is a path $P_5$ connecting interiors of $P_3$ and $P_4$.
  Thus $C \cup P_3 \cup P_4 \cup P_5$ is a $K_{3,3}$-minor which contradicts Lemma~\ref{lm-cascade-kur}.
  Suppose now that (ii) holds and there is a tripod $T$ in $C\cup B$.
  If $T$ has a foot of nonzero length, then $C \cup T$ contains a $K_{3,3}$-minor.
  Otherwise, there is a path $P_5$ connecting the two triads that $T$ consists of.
  Hence $C \cup T \cup P_5$ contains a $K_5$-minor.
  In both cases, Lemma~\ref{lm-cascade-kur} yields a contradiction.
\end{proof}

Let $H$ be a planar 3-connected graph and $\hem$ a homeomorphic embedding of $H$ into $G$.
A well-known result of Tutte~\cite{tutte-1963} says
that $\hem(H)$ has a unique embedding in the plane where each face is a cycle.
Let us call each such a cycle an \df{$\hem$-face}.
An \df{$\hem$-path} is a path in $G$ with ends in $\hem(H)$ but otherwise disjoint from $\hem(H)$.
An \df{$\hem$-jump} is an $\hem$-path such that no $\hem$-face includes both of its ends.

An \df{$\hem$-cross} consists of two disjoint $\hem$-paths $P_1, P_2$ with ends $u_1, v_1$ and $u_2, v_2$ (respectively) on a common $\hem$-face such that
the ends appear in the interlaced order $u_1, u_2, v_1, v_2$ on the boundary of the face.
An $\hem$-cross $P_1, P_2$ is \df{free} if neither $P_1$ nor $P_2$ has its ends on $\hem(e)$ for a single $e \in E(H)$
and, whenever the ends of $P_1$ and $P_2$ are in $V(\hem(e_1)) \cup V(\hem(e_2))$ for $e_1, e_2 \in E(H)$, then
$e_1$ and $e_2$ have no end in common.

An \df{$\hem$-triad} is an $\hem(H)$-bridge $B$ with three attachments that consists of three internally disjoint paths $P_1, P_2, P_3$
connecting the attachments to a vertex $v \in V(G) \sm V(\hem(H))$.
Furthermore, every pair of attachments of $B$ lie on a common $\hem$-face but no $\hem$-face contains all the attachments.

An \df{$\hem$-tripod} in $G$ is a tripod whose feet are in $\hem(H)$, but none of its other vertices or edges is in $\hem(H)$.
Let $C$ be an $\hem$-face and $v_1, v_2, v_3 \in V(C)$ branch-vertices of $\hem(H)$.
Let $Q$ be the union of one or two $\hem$-branches, each with both ends in $\{v_1, v_2, v_3\}$.
A \df{weak $\hem$-tripod} is a tripod $B$ in $G$ with attachments $v_1,v_2,v_3$ such that $B\cap \hem(H) = Q\cup \{v_1,v_2,v_3\}$ (see Figure \ref{fg-weak-tripod}, where $v_1,v_2,v_3$ correspond to the square vertices).

We will use the following well-known result.

\begin{lemma}
  \label{lm-faces}
  Let $G$ be a subdivision of a 3-connected plane graph.
  Then each pair of intersecting faces of $G$ share either a single branch-vertex or a single branch.
\end{lemma}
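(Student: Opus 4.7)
The plan is to reduce the statement to the corresponding well-known fact about 3-connected plane graphs (without subdivision) and then transfer the conclusion through the homeomorphism.

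Let $H$ be the 3-connected plane graph such that $G$ is isomorphic to a subdivision of $H$. Under the subdivision map, vertices of $H$ correspond to branch-vertices of $G$, edges of $H$ correspond to branches of $G$, and the plane embedding of $H$ induces a plane embedding of $G$ whose faces are obtained from faces of $H$ by replacing each edge on the boundary by the corresponding branch. Consequently, for any two faces $f_1, f_2$ of $G$, their intersection is precisely the subgraph obtained by subdividing the intersection of the corresponding faces of $H$. Thus it suffices to prove the statement for $H$ itself: any two intersecting faces of $H$ share either a single vertex or a single edge.

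First I would establish this claim for $H$. Suppose $f_1$ and $f_2$ are distinct faces whose boundary cycles $C_1$ and $C_2$ share at least two vertices. If $u, v \in V(C_1) \cap V(C_2)$ are distinct, then $\{u,v\}$ separates $H$: each of $C_1$ and $C_2$ is split by $\{u,v\}$ into two arcs, and the remainder of $H$ attached to $C_i$ (on the side opposite $f_i$) can only reach the two arcs of $C_{3-i}$ through $u$ or $v$, because the faces $f_1$ and $f_2$ are internally disjoint from the rest of the embedding. This contradicts 3-connectivity of $H$ unless $C_1 \cap C_2$ consists of exactly two vertices joined by an edge lying on both cycles. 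A similar argument (or simply the same one applied to the two endpoints of a second shared edge) shows that $C_1$ and $C_2$ cannot share two distinct edges. Hence $C_1 \cap C_2$ is a single vertex or a single edge.

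Transferring back to $G$ via the subdivision: if $f_1$ and $f_2$ of $H$ share a single vertex $v$, then the corresponding faces of $G$ share only the branch-vertex $v$; if they share a single edge $e$, then the corresponding faces of $G$ share only the branch obtained by subdividing $e$. This is exactly the statement of the lemma. The only point requiring care is the separation argument in the 3-connected claim, where one must justify that the ``outside'' of $C_i$ (the side opposite $f_i$) is nonempty or trivial and that any path between the two arcs of $C_{3-i}$ avoiding $\{u,v\}$ would have to cross into $f_1$ or $f_2$; this is standard from the Jordan curve theorem applied in the 2-cell embedding, and I expect this to be the only mildly delicate step.
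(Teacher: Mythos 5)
The paper does not prove Lemma~\ref{lm-faces} at all: it is invoked as a well-known fact (it is the classical statement that in a 3-connected plane graph two distinct face boundaries meet in at most one vertex or at most one edge, transferred through the subdivision). Your proposal is exactly the standard proof of that underlying fact, and your reduction step is sound: subdividing edges leaves the facial structure untouched except that boundary edges become branches, so the intersection of two faces of $G$ is the subdivided intersection of the corresponding faces of $H$, and a shared vertex (resp.\ edge) of $H$ becomes a shared branch-vertex (resp.\ branch) of $G$. So there is no divergence from the paper to speak of; your route is the expected one.

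The only place your write-up is loose is the separation step, and as literally phrased it claims too much: from two shared vertices $u,v$ of the boundary cycles $C_1,C_2$ you assert that $\{u,v\}$ separates $H$ ``unless'' $uv$ is a common boundary edge, but the sentence you give does not actually produce the separation or isolate the exception. The airtight version is the Jordan-curve argument you allude to at the end: join $u$ to $v$ by an arc through the interior of $f_1$ and by an arc through the interior of $f_2$; their union is a simple closed curve $J$ meeting $H$ only in $\{u,v\}$, and by the local rotation at $u$ and $v$ each side of $J$ contains the interior of one $u$--$v$ arc of $C_1$ and of one $u$--$v$ arc of $C_2$. If both sides contain vertices of $H-\{u,v\}$, then $\{u,v\}$ separates $H$, contradicting 3-connectivity; if a side contains no such vertex, the two arcs on that side are single edges joining $u$ and $v$, hence (by simplicity of a 3-connected graph) the same edge, lying on both $C_1$ and $C_2$. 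Finally, a third shared vertex would, by the same argument applied pairwise, force a shared triangle inside two cycles, giving $C_1=C_2$ and hence $H$ equal to that cycle, again contradicting 3-connectivity; this also rules out two shared edges. These are routine additions to what you wrote, and with them your proof is complete and matches the standard argument the paper implicitly relies on.
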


We say that a graph $G \in \Gcxy$ is \df{essentially 3-connected} if $G\+$ is 3-connected.
The following lemma and its proof are adapted from~\cite{robertson-2001}.

\begin{lemma}
  \label{lm-ext}
  Suppose that\/ $G \in \S_1$ has a base homeomorphic to a graph $H \in \B^*$.
  Then there exists a homeomorphic embedding $\hem: H \homeo G$, mapping the terminals of $H$ to the terminals of $G$, such that one of the following holds:
  \begin{enumerate}[label=\rm(W\arabic*)]
  \setlength{\itemindent}{4mm}
  \item
    There exists an $\hem$-jump.
  \item
    There exists a free $\hem$-cross.
  \item
    There exists an $\hem$-tripod or a weak $\hem$-tripod.
  \item
    There exists an $\hem$-triad.
  \end{enumerate}
\end{lemma}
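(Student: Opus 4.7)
The plan is to proceed in three stages. In the first stage, from the given base in $G$ (homeomorphic to $H \in \B^*$) we obtain a homeomorphic embedding $\hem_0 : H \homeo G$ which maps the terminals of $H$ to $x$ and $y$: this is immediate because a base, by definition, contains $L_x$ and $L_y$ with $x \in V(L_x)$ and $y \in V(L_y)$, and these correspond to the distinguished terminals of the base graph in $\B^*$.

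In the second stage, I would normalize $\hem_0$ to eliminate local $\hem$-bridges. A direct inspection of Figs.~\ref{fg-bases} and~\ref{fg-b-star} confirms that every graph in $\B^*$ is essentially 3-connected, so Lemma~\ref{lm-no-local-bridges-3con} can be applied to successively reroute each branch $\hem_0(e)$ through its local bridges (using Lemma~\ref{lm-3-connected} to guarantee 3-connectedness of the relevant local patch even though $G$ itself is only essentially 2-connected). Since this rerouting never alters the images $\hem_0(v)$ for $v \in V(H)$, the resulting embedding $\hem : H \homeo G$ still sends terminals to terminals. Because $\hem(H)$ is then a subdivision of a 3-connected plane graph with no local bridges, it admits a unique planar embedding by Whitney's theorem; we call its facial cycles the $\hem$-faces.

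In the third stage, I would exploit the nonplanarity of $G$ to exhibit one of (W1)--(W4). If some $\hem$-bridge $B$ contains two attachments not simultaneously incident with any single $\hem$-face, then a path in $B$ joining them is an $\hem$-jump, so (W1) holds. Otherwise every $\hem$-bridge may be assigned to some $\hem$-face; for each face $F$ let $G_F$ be the union of the facial cycle of $F$ with the bridges assigned to $F$. Since $G$ is nonplanar, some $G_F$ is not drawable inside the closed disc bounded by $F$, and Theorem~\ref{th-disk-ext} then yields one of (i) a pair of crossing $\hem$-paths inside $F$, (ii) a tripod attached to $F$, or (iii) a Kuratowski subgraph lying in a 3-connected block of $G_F\+$ separate from the block containing $F$. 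Case~(iii) is excluded by Lemma~\ref{lm-cascade-kur}: such a Kuratowski subgraph would intersect $L_x$ (or $L_y$) in at most a single branch, contradicting the conclusion of that lemma. Case~(ii) yields an $\hem$-tripod, a weak $\hem$-tripod, or an $\hem$-triad, depending on whether pairs of the tripod's attachments are joined by an $\hem$-branch on the boundary of $F$, producing (W3) or (W4). Case~(i) yields an $\hem$-cross; if it is free we obtain (W2), and otherwise the ends of the two paths share at most two $\hem$-branches meeting at a common branch vertex, allowing a local rerouting that transforms the non-free cross into either a free cross, a weak tripod, or an $\hem$-triad.

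The principal obstacle will be the detailed case analysis in the third stage, particularly converting non-free crosses and tripods attached at specific pairs of branches into exactly one of the four permissible witnesses while preserving the terminal mapping. A secondary subtlety is verifying that Lemma~\ref{lm-no-local-bridges-3con}, whose hypothesis requires a 3-connected host graph, may still be applied despite $G$ being only essentially 2-connected; this is handled by restricting each application to a 3-connected local patch obtained via Lemma~\ref{lm-3-connected}.
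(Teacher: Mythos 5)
Your overall strategy (fix a terminal-preserving embedding, remove local bridges, then either find a jump or confine bridges to faces and apply Theorem~\ref{th-disk-ext} face by face) is the same as the paper's, but there is a genuine gap at the pivotal step. You assert that, once (W1) fails, ``every $\hem$-bridge may be assigned to some $\hem$-face.'' Excluding jumps only guarantees that every \emph{pair} of attachments of a bridge lies on a common face; it does not follow that \emph{all} attachments of a bridge do. This is exactly where the paper does real work: excluding (W4) is used first (a nontrivial bridge with three pairwise co-facial attachments not lying on one face contains an $\hem$-triad), and then an induction on the size of attachment sets, using Lemma~\ref{lm-faces}, shows that a bad $(k+1)$-set would force a subdivided $K_{k+1}$ among branch vertices, hence $k=3$; this is finally killed by the specific property of the graphs in $\B^*$ that no $K_4$-subgraph has each triple of its branch vertices on a common face. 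Your proposal never invokes (W4) at this stage (you only produce triads later, as alleged outputs of Theorem~\ref{th-disk-ext}, which in fact yields tripods, not triads), and it makes no use of the structure of $\B^*$, so the face-by-face application of the disc-extension theorem is unjustified as written.

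Two secondary points. First, Lemma~\ref{lm-no-local-bridges-3con} requires the host graph to be 3-connected, while $G$ is only essentially 3-connected; your appeal to ``3-connectedness of the relevant local patch'' via Lemma~\ref{lm-3-connected} is not an argument. The paper handles this by first deriving essential 3-connectivity from Lemma~\ref{lm-3-connected} and then using Lemma~\ref{lm-k23-local} to show that the only potentially problematic branch (the one through a terminal lying in the interior of $L_x$ or $L_y$) carries no local bridges at all, so the rerouting both works and keeps terminals on the image. Second, your treatment of non-free crosses (``a local rerouting transforms the non-free cross into a free cross, a weak tripod, or a triad'') omits the actual mechanism: the paper chooses the cross minimizing the number of end-pairs lying on a single branch and uses non-locality of the bridge to produce an auxiliary path $P_3$, from which a weak $\hem$-tripod is extracted in the residual cases; you flag this as the main obstacle, but as it stands it is a sketch, not a proof.
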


\begin{proof}
  Since $H$ has three internally disjoint paths joining the two terminals and $\hem$ maps the terminals of $H$ to $x$ and $y$,
  Lemma~\ref{lm-3-connected} gives that $G$ is essentially 3-connected.
  By Lemmas~\ref{lm-no-local-bridges-3con} and~\ref{lm-k23-local}, there exists a homeomorphic embedding $\hem$ from $H$ into $G$ such that there are no local $\hem$-bridges
  and terminals are mapped onto terminals by $\hem$.
  Suppose that none of (W1)--(W4) holds for $\hem$.
  Let $B$ be an $\hem$-bridge and $S$ the set of attachments of $B$.
  By excluding (W1), any two elements of $S$ lie on the same $\hem$-face.
  Not having (W4), each triple in $S$ must lie on the same $\hem$-face.
  We claim that all vertices in $S$ are contained in one of the faces.
  To see this, we will use induction.
  Let $k \ge 3$ and let us assume that for each subset $S'$ of $S$ of size $k$, there
  exists an $\hem$-face $F$ such that $S'$ lie on $F$.
  We shall prove that the same holds for each subset of $S$ of size $k+1$.
  Suppose for a contradiction that $S_0 = \{v_1, \ldots, v_{k+1}\}$ is a subset of $S$ of size $k+1$ such that there is no $\hem$-face
  that contains $S_0$. For $i = 1, \ldots, k+1$, let $F_i$ be the $\hem$-face that contains $S_0 \sm \{v_i\}$.
  Thus $F_i$ are pairwise distinct. In particular, each vertex $v_i$ belongs to $k\ge3$ distinct faces in $\{F_1,\dots,F_{k+1}\} \setminus \{F_i\}$ and thus $v_i$ is a branch vertex of $\hem(H)$.
  Since $v_1$ and $v_2$ belong to both $F_3$ and $F_4$, Lemma~\ref{lm-faces} gives that there is an $\hem$-branch $P_{12}$ that contains $v_1$ and $v_2$.
  Similarly, there is an $\hem$-branch $P_{ij}$ for each pair $i,j = 1,\ldots, k+1$.
  The branch vertices $v_i$ and the paths $P_{ij}$ form a subdivision of $K_{k+1}$.
  This implies that $k = 3$. However, for graphs in $\B^*$, no subgraph isomorphic to $K_4$ has each triple of its vertices on the same face. With this contradiction we conclude that there exists an $\hem$-face that contains $S$.

\begin{figure}
  \centering
  \includegraphics{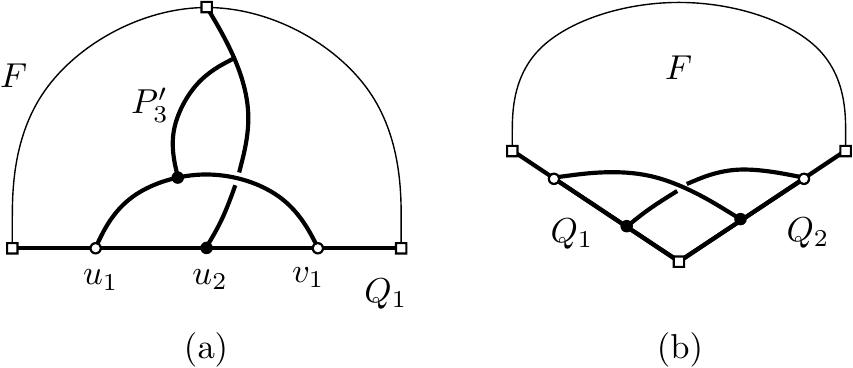}
  \caption{Weak tripods from the proof of Lemma \ref{lm-ext}.}
  \label{fg-weak-tripod}
\end{figure}

  Since there are no local $\hem$-bridges, for each $\hem$-bridge $B$, there exists a unique $\hem$-face $F_B$ such that $F_B$ contains all attachments of $B$.
  For an $\hem$-face $F$, let $G_F$ be the union of all $\hem$-bridges whose attachments are contained in $F$.
  Since $G$ is nonplanar, there exists an $\hem$-face $F$ such that $G_F$ does not embed inside $F$.
  By excluding (W3) and by Theorem~\ref{th-disk-ext}, there is an $\hem$-cross $P_1, P_2$ in $F$.
  Let $u_i, v_i$ be the ends of $P_i$, $i = 1,2$.
  Pick $P_1$ and $P_2$ so that number of pairs in $\{u_1, v_1, u_2, v_2\}$ that lie on a single $\hem$-branch is minimized.
  Assume first that $u_1$ and $v_1$ lie on a single $\hem$-branch $Q_1$.
  Since the bridge containing $P_1$ is not local, there is a path $P_3$ connecting $P_1$ and
  an $\hem$-branch $Q_2$ distinct from $Q_1$.
  If also $u_2$ and $v_2$ lie on $Q_1$, then this yields a contradiction as $P_1 \cup P_2 \cup P_3$
  contains an $\hem$-cross where the ends do not lie on a single $\hem$-branch.
  Thus we may assume that the pair $u_2, v_2$ does not share a common $\hem$-branch.
  If $P_3$ is disjoint from $P_2$, then $P_1 \cup P_2 \cup P_3$ contains an $\hem$-cross where the pairs $u_1, v_1$ and $u_2, v_2$
  do not share a common $\hem$-branch. This again contradicts the choice of $P_1$ and $P_2$.
  If $P_3$ intersects $P_2$ (even if only at its endpoint), let $P_3'$ be the subpath of $P_3$ from $P_1$ to the first vertex on $P_2$, and let $P$ be the path in $Q_1$ from $u_1$ to $v_1$.
  Then $P \cup P_1 \cup P_2 \cup P_3$ forms a weak $\hem$-tripod (see Figure \ref{fg-weak-tripod}(a)).
  This gives (W3).
  Finally, we may assume by symmetry that none of the pairs $u_1, v_1$ and $u_2, v_2$ share a common $\hem$-branch.
  Then we have (W2), unless there are two $\hem$-branches $Q_1, Q_2$ that share a branch vertex and
  so that $u_1, u_2$ lie on $Q_1$ and $v_1, v_2$ lie on $Q_2$.
  This gives (W3) as $P_1 \cup P_2 \cup Q_1 \cup Q_2$ contains a weak $\hem$-tripod (see Figure \ref{fg-weak-tripod}(b)).
  We conclude that $\hem$ satisfies one of (W1)--(W4).
\end{proof}

Even a stronger version of Lemma~\ref{lm-ext} can be proved.

\begin{lemma}
\label{lm-ext-strong}
  Let $G \in \S_1$ that has a base homeomorphic to a graph $H \in \B^*$.
  Then there exists a homeomorphic embedding $\hem: H \homeo G$ such that one of the following holds:
  \begin{enumerate}[label=\rm(T\arabic*)]
  \setlength{\itemindent}{4mm}
  \item
    There exists an $\hem$-jump.
  \item
    There exists an $\hem$-cross that attaches onto branch-vertices of $\hem(H)$.
  \item
    There exists a (weak) $\hem$-tripod with trivial feet that attaches onto branch-vertices of $\hem(H)$.
  \item
    There exist branch-vertices $u_1, u_2, u_3$ of $\hem(H)$ such that no two of them lie on a common $\hem$-branch and there exists an $\hem$-triad that attaches onto $u_1$, $u_2$, and $u_3$.
  \end{enumerate}
  Moreover, $G$ is the union of $\hem(H)$ and the corresponding obstruction in {\rm (T1)--(T4)}.
\end{lemma}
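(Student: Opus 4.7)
I would begin by applying Lemma~\ref{lm-ext} to obtain a homeomorphic embedding $\hem: H \homeo G$ mapping terminals to terminals, with no local $\hem$-bridges, and satisfying one of (W1)--(W4). The goal is then twofold: strengthen each of (W2)--(W4) to the corresponding (T)-type condition, and use the criticality of $G$ to establish the ``moreover'' statement that $G = \hem(H) \cup O$, where $O$ is the obstruction certifying the (T)-condition.

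Case (W1) immediately gives (T1). For (W2)--(W4), if any obstruction path attaches to $\hem(H)$ at a point $v$ lying in the interior of some $\hem$-branch $\hem(e)$ rather than at a branch vertex, I would reroute $\hem$ by swapping a segment of $\hem(e)$ for a portion of the obstruction, yielding a new homeomorphic embedding $\hem'$ with strictly simpler interaction between the obstruction and $\hem'(H)$. After each rerouting I would re-clear local bridges using Lemmas~\ref{lm-no-local-bridges-3con} and~\ref{lm-k23-local}, ensuring that $\hem'$ continues to map terminals to terminals and still satisfies the hypotheses of Lemma~\ref{lm-ext}. Iterating this process finitely many times places all attachment points at branch vertices. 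For (W3), an additional step contracts any non-trivial tripod foot into a branch so that the feet become trivial attachments, giving (T3). For (W4), if after rerouting two attachments of the triad still share a common $\hem$-branch, I would extract from the triad and the surrounding branches a simpler obstruction realizing (T1), (T2), or (T3) instead.

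For the ``moreover'' claim, fix the (T)-configuration with obstruction $O$ and set $G' = \hem(H) \cup O$. Since $\hem(H)$ contains disjoint K-graphs (the images of $L_x$ and $L_y$), Lemma~\ref{lm-two-k-graphs}(i) yields $\egp(G') \ge 2$. Moreover, $\hem(H)$ is a subdivision of a planar 3-connected graph $H \in \B^*$, hence has a unique planar embedding; the defining property of each obstruction type (jump between non-cofacial vertices, cross in a face, tripod attached to a face, triad with non-cofacial attachments) ensures, via Theorem~\ref{th-disk-ext} and the accompanying planarity arguments, that this embedding cannot be extended to include $O$. Thus $G'$ is nonplanar, so $\eg(G') \ge 1$. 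If some $e \in E(G) \setminus E(G')$ existed, then $G - e \supseteq G'$ would give $\eg(G - e) \ge 1 = \eg(G)$ and $\egp(G - e) \ge 2 = \egp(G)$, contradicting the criticality condition (C1). Hence $E(G) = E(G')$, and since $G$ has no isolated vertices, $G = G'$.

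The principal obstacle will be carrying out the rerouting step: proving that each rerouting produces a valid homeomorphic embedding of $H$, preserves the placement of terminals, can be iterated until every attachment sits at a branch vertex with the additional restrictions of (T3) and (T4), and never leaves the class of obstructions guaranteed by Lemma~\ref{lm-ext}. The detailed case analysis is specific to the five graphs in $\B^*$ and the pattern of their faces, branches, and branch vertices, and will rely on the structural constraints that $G$ is essentially 3-connected (Lemma~\ref{lm-3-connected}) and that $\hem(H)$ has no local bridges on the boundary of $L_x$ or $L_y$ (Lemma~\ref{lm-k23-local}).
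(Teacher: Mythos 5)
Your overall frame is right at the two ends: starting from Lemma~\ref{lm-ext} and proving the ``moreover'' part by observing that any edge outside $\hem(H)$ and the obstruction could be deleted while keeping a configuration that forces $\eg\ge 1$ and $\egp\ge 2$, contradicting (C1), is exactly the paper's argument. The gap is in the middle, which is the actual content of the lemma: the upgrade from (W1)--(W4) to (T1)--(T4). Your plan is to ``reroute'' $\hem$ whenever an attachment lies in the interior of a branch, iterating until all attachments sit on branch vertices and all tripod feet are trivial. This is not how the paper proceeds, and as stated it does not work. Rerouting a branch through a piece of the obstruction absorbs that piece into $\hem'(H)$ and destroys the very cross/tripod/triad you were trying to normalize; there is no measure given under which the ``interaction becomes strictly simpler,'' no argument that the process terminates in a (T)-configuration, and no reason a (T)-configuration must exist at all for an arbitrary graph containing a base and a (W)-obstruction. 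The (T)-conditions are extremely rigid (cross and tripod attachments exactly at branch vertices, trivial feet, triad attachments pairwise on distinct branches), and their validity for $G$ is not a re-embedding fact: it is forced by the criticality of the cascade. The paper's mechanism is a contradiction argument with (C1): if, say, a free cross has an end $u_1$ in the interior of a branch $Q$, one contracts the edges of $Q$ at $u_1$ and shows that in each resulting proper minor either a (W)-configuration survives (so the minor is nonplanar with $\egp\ge 2$, contradicting (C1)) or the surviving structure pins down a very special local picture (a subdivided triangle), which is then killed by one more contraction; similarly, a nontrivial tripod foot or an attachment on an open branch is removed by a contraction that preserves (W3), again contradicting (C1), and the triad case needs Observation~\ref{obs-triangle}, Lemma~\ref{lm-cascade-kur}, and a final argument that two of the attachments would form a 2-cut separating $L_x$ from $L_y$. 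Your proposal never brings (C1) to bear on this part, and your phrase ``contracts any non-trivial tripod foot'' conflates taking a minor of $G$ (which changes the graph and proves nothing about $G$ directly, unless used to contradict criticality) with modifying the embedding.

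Two further ingredients of the paper's proof are absent and would be needed in any completion: first, before contracting edges near the obstruction one must know the relevant $\hem$-face contains no terminal (the paper proves this via inspection of $\B^*$ and Lemma~\ref{lm-two-k-graphs}(i), again against (C1)), otherwise the contractions may not preserve the terminal structure you rely on; second, in the triad case the conclusion is not that one can ``extract a simpler obstruction'' but that the offending configuration cannot occur in a cascade at all. So the proposal, as written, defers precisely the step that constitutes the proof, and the route it sketches for that step (embedding surgery without criticality) would not succeed.
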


\begin{proof}
  Lemma~\ref{lm-ext} yields a homeomorphic embedding $\hem: H \homeo G$ such that
  one of (W1)--(W4) holds.
  Let $\mu \in \M(G)$. If $\mu G$ admits a homeomorphic embedding $\hem' : H \homeo \mu G$
  that satisfies one of (W1)--(W4), then $\egp(\mu G) \ge \egp(H) = 2$ and $\mu G$ is nonplanar.
  This contradicts the property (C1) of cascades.
  Let us describe sufficient conditions that yield this contradiction.
  Clearly, if $\mu$ is deletion of an edge $e \not\in \hem(H)$ that also does not appear in the obstruction given by (W1)--(W4),
  then $\hem, G - e$ contradicts (C1). This yields the last statement in the lemma.
  If $\mu$ is a contraction of an edge $e \in \hem(H)$ and one of its ends is not a terminal or a branch-vertex of $\hem(H)$
  and, furthermore, the ends of $e$ are not attachments of the obstruction given by (W1)--(W4), then there is a homeomorphic
  embedding $\hem': H \homeo \mu G$ that satisfies one of (W1)--(W4), a contradiction.

  Suppose that none of (T1)--(T4) holds. Thus one of (W2)--(W4) holds.
  Assume first that (W2) or (W3) holds and let $B$ be the union of $\hem$-bridges as given by (W2) or (W3).
  Let $C$ be an $\hem$-face of $\hem(H)$ that contains all attachments of $B$.
  Let us prove that $C$ contains no terminals.
  Suppose to the contrary that $C$ contains $x$.
  Thus $C \cup B$ contains a K-graph of $G$.
  Let $e \in E(\hem(H)) \sm E(L_x)$ be an edge that is incident with $L_x$ and not incident with $C$.
  By inspection of $\B^*$, the graph $G/e$ contains two disjoint K-graphs and $C \cup B$ contains
  a K-graph of $G/e$. This contradicts (C1) by Lemma~\ref{lm-two-k-graphs}(i).
  Hence we may assume that $C$ contains no terminals.

  Assume that (W2) holds.
  Since (T2) does not hold, there is a free $\hem$-cross $P_1, P_2$
  such that $P_1$ has attachment $u_1$ on an open branch $Q$ of $\hem(H)$.
  Let $u_1, v_1$ and $u_2, v_2$ be the attachments of $P_1$ and $P_2$, respectively.
  Let $e_1$ and $e_2$ be the edges of $Q$ incident with $u_1$.
  Consider the graphs $G_1 = G /e_1$ and $G_2 = G/e_2$.
  Since $Q$ is an $\hem$-branch of length at least 2, $\hem$ induces homeomorphic embeddings $\hem_1: H \homeo G_1$ and
  $\hem_2: H \homeo G_2$.
  Suppose $P_1, P_2$ is a free $\hem_1$-cross. Then $G_1$ is nonplanar and, since $G_1$ has a base $\hem_1(H)$, we have that $\egp(G_1) \ge 2$.
  This contradicts (C1). Thus $P_1, P_2$ is not a free $\hem_1$-cross.
  Similarly $P_1, P_2$ is not a free $\hem_2$-cross.
  Let $e_1 = u_1w_1$ and $e_2 = u_1w_2$.
  Since $P_1, P_2$ is a free $\hem$-cross, we may assume that $w_1 \not\in \{u_2, v_2\}$.
  Since $P_1, P_2$ is not a free $\hem_1$-cross, $P_1$ has both ends on a single $\hem_1$-branch $Q_1$.
  If $w_2 \in \{u_2, v_2\}$, then the ends of $P_1, P_2$ lie on $Q \cup Q_1$ in $G$, a contradiction.
  Thus we may assume that $w_2 \not\in \{u_2, v_2\}$ and we obtain by symmetry that $P_1$ has both ends on a single $\hem_2$-branch $Q_2$.
  We conclude that $Q, Q_1, Q_2$ is a subdivision of a triangle, $v_1$ is the common vertex of $Q_1$ and $Q_2$, $w_1$ is the common vertex of $Q$ and $Q_1$,
  $w_2$ is the common vertex of $Q$ and $Q_2$, and $u_2, v_2$ lie on $Q_1, Q_2$, respectively.
  Let $e_3$ be the edge of $Q_1$ that is incident with $u_2$ and lies between $w_1$ and $u_2$.
  Consider the graph $G_3 = G/e_3$. Clearly, $G_3$ satisfies either (W2) or (W3) and thus contradicts (C1).

  Assume that (W3) holds.
  Since (T3) does not hold, either $B$ has a nontrivial foot or $B$
  has an attachment on an open $\hem$-branch.
  Suppose first that $B$ has a nontrivial foot $P$.
  Since $P$ contains no terminals, contracting $P$ preserves (W3).
  This is a contradiction with the observation made above.
  Suppose now that $u$ is an attachment of $B$ on an open $\hem$-branch $Q_1$.
  Let $uv \in E(Q_1)$ be an edge incident with $u$.
  Since $u$ is not a terminal and $v$ is not an attachment of $B$ by Observation \ref{obs-triangle}, $G /uv$ contradicts (C1).

  Assume that (W4) holds.
  By Observation \ref{obs-triangle}, the attachments $u_1, u_2, u_3$ of $B$ are independent.
  Since (T4) does not hold, we may assume that $u_1$ lies on an open $\hem$-branch $Q$.
  If $u_1 = x$ and $L$ is the $x$-K-graph in $\hem(H)$, then $L \cup B$ contains a Kuratowski subgraph
  that is disjoint from the $y$-K-graph, a contradiction with Lemma~\ref{lm-cascade-kur}.
  Thus we may assume that $u_1$ is not a terminal.
  Let $u_1w_1, u_1w_2$ be the edges of $Q$ incident with $u_1$ and
  let $G_1 = G/u_1w_1$ and $G_2/u_1w_2$.
  Since both $G_1$ and $G_2$ admit a homeomorphic embedding of $H$, they are both planar.
  Thus there is an $\hem$-face that contains the vertices $w_1, u_2, u_3$ and an $\hem$-face
  that contains the vertices $w_2, u_2, u_3$. It is not hard to see that $u_2, u_3$ is a 2-vertex-cut in $G$ that blocks $L_x$ and $L_y$,
  a contradiction.
\end{proof}

The list of minimal graphs satisfying the conditions of Lemma~\ref{lm-ext-strong} was generated by computer\footnote{The programs used and the graphs generated are archived at \url{arXiv.org} along with the original manuscript of this paper.} and checked for which of them are in $\S_1$.
The outcome of this computation is the following theorem.
A proof by hand would be possible but would involve detailed case analysis that can be as error-prone
as a computer program.

\begin{figure}
  \centering
  \includegraphics{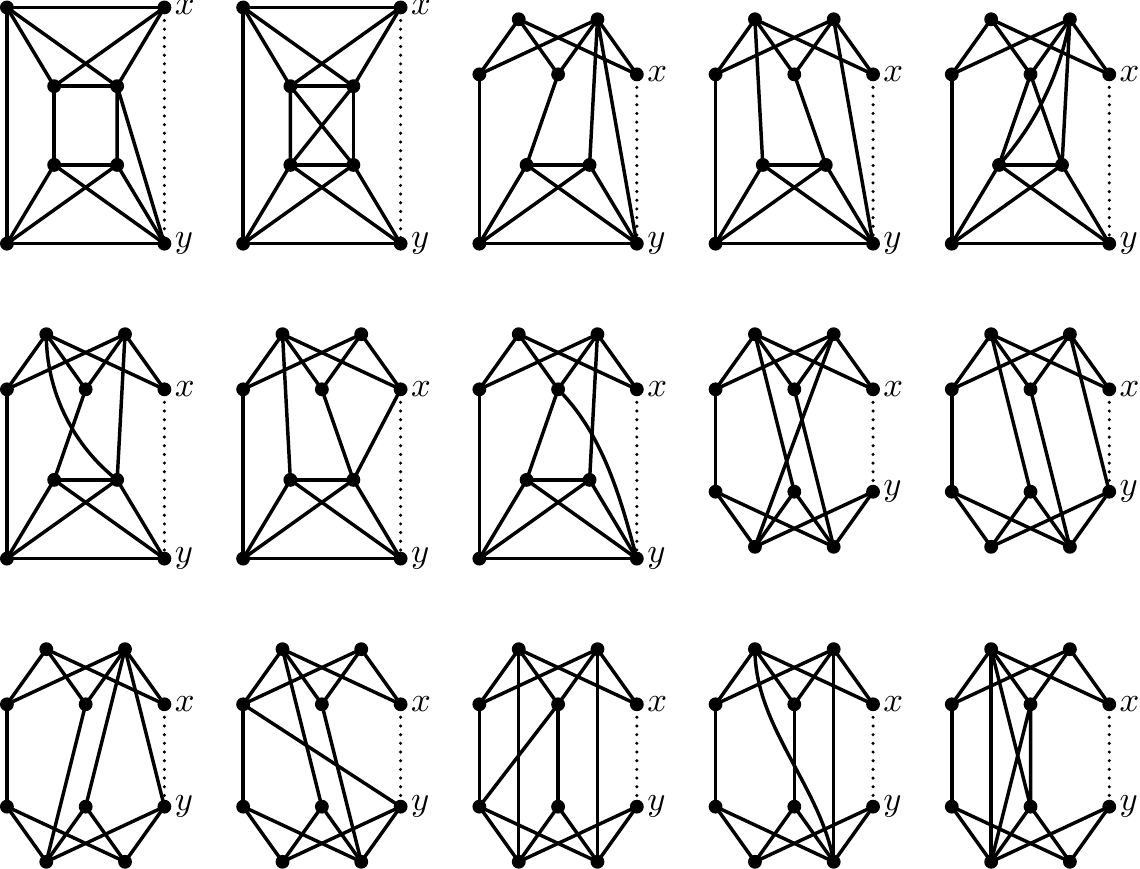}
  \caption{Cascades in $\S_1$ whose $xy$-K-graphs are $k$-separated for $k \ge 3$.}
  \label{fg-cascades-3}
\end{figure}

\begin{theorem}
  The class $\S_1$ consists of $21$ graphs which are depicted in Figs.~\ref{fg-cascades-1}, \ref{fg-cascades-2}, and~\ref{fg-cascades-3}.
\end{theorem}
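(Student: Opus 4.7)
The plan is to stratify the class $\S_1$ by the separation number $k$ introduced before Lemma~\ref{lm-separating}, where $L_x$ and $L_y$ are the disjoint minimal $xy$-K-graphs guaranteed by Lemma~\ref{lm-disjoint-xy-k-graphs}. Since $k\ge1$ by Lemma~\ref{lm-sep-0}, we have three regimes: $k=1$, $k=2$, and $k\ge3$. For each regime we will show that every $G \in \S_1$ in that regime equals (not just contains as a minor) one of the graphs in the relevant figure, and conversely that every graph displayed in Figs.~\ref{fg-cascades-1}--\ref{fg-cascades-3} genuinely lies in $\S_1$.

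For $k=1$, Lemma~\ref{lm-sep-1} supplies a short explicit list of minors $M$ (the graphs of Fig.~\ref{fg-cascades-1}) such that every $G\in\S_1$ with $k=1$ contains some $M$ as a minor. To upgrade ``minor of'' to ``isomorphic to'', I will invoke the cascade axioms: each candidate $M$ itself satisfies (C1)--(C3) by direct checking (this is the verification that the graphs in Fig.~\ref{fg-cascades-1} actually lie in $\S_1$), so $\egp(M)=2$ and $\eg(M)=1$. If $G$ strictly contains $M$, then by minor-monotonicity $\egp(G)=2$ as well, and the extra minor-operation from $G$ to $M$ violates criticality since it does not decrease either parameter. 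Hence $G\cong M$. The same argument, applied via Lemma~\ref{lm-2-sep}, disposes of the case $k=2$ using Fig.~\ref{fg-cascades-2}.

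For $k\ge3$, the key input is Lemma~\ref{lm-3con-bases}, which says $G$ contains a planar minor $H_0\in\B$; accordingly, $G$ contains a subdivision of some $H\in\B^*$ as a planar base (by the definition of $\B^*$). Apply Lemma~\ref{lm-ext-strong} to produce a homeomorphic embedding $\hem:H\homeo G$ such that $G=\hem(H)\cup X$, where $X$ is a single obstruction of type (T1), (T2), (T3) or (T4). Now the problem becomes finite: for each $H\in\B^*$ and each type of obstruction, one enumerates the choices of attachment locations (on branch vertices, modulo symmetries of $H$) and tests whether the resulting graph is a cascade. This enumeration is what the computer executes, and the outcome is the list in Fig.~\ref{fg-cascades-3}. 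Combined with the $k\le2$ cases, this gives exactly $21$ graphs.

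The main obstacle is the final bookkeeping step of the $k\ge3$ case: although Lemma~\ref{lm-ext-strong} cuts the space of extensions down dramatically (essentially forcing the non-base part of $G$ to be one jump, one cross on branch vertices, one tripod with trivial feet, or one triad), one still has to verify, for each surviving candidate $G$, both that $\eg(G)=1$ and $\egp(G)=2$ (so (C2)--(C3) hold), and that every minor-operation on $G$ reduces $\eg$ or $\egp$ (so (C1) holds). The number of candidates to be examined, together with the needed embedding certificates and minor-operation checks, is the reason we delegate this final enumeration to a computer rather than attempting a hand proof; a human proof is feasible in principle but would involve a large and error-prone case analysis essentially isomorphic to the computer search.
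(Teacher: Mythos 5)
Your proposal is correct and follows essentially the same route as the paper: stratify by the separation number $k$, use Lemmas~\ref{lm-sep-0}, \ref{lm-sep-1} and~\ref{lm-2-sep} for $k\le 2$, use Lemma~\ref{lm-3con-bases} together with Lemma~\ref{lm-ext-strong} and a computer enumeration for $k\ge 3$, and verify (C1)--(C3) for the listed graphs by computer. Your explicit upgrade from ``contains a cascade of Fig.~\ref{fg-cascades-1} or~\ref{fg-cascades-2} as a minor'' to ``is isomorphic to it'' (via (C1) and minor-monotonicity of $\eg$ and $\egp$) is exactly the step the paper leaves implicit, and it is argued correctly.
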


\begin{proof}
  Let us give detailed overview of the proof and indicate which parts of the proof rely on computer verification.
  Let $\C$ be the set of $21$ graphs depicted in Figures \ref{fg-cascades-1}, \ref{fg-cascades-2}, and~\ref{fg-cascades-3}.
  To show that $\C \ss \S_1$, we have to prove that each graph $G \in \C$ satisfies (C1)--(C3) and $\egp(G) = 2$.
  We are not aware of a faster method than computing $\eg(\mu G)$ and $\egp(\mu G)$ for all minor-operations $\mu \in M(G)$
  and then checking that (C1)--(C3) were satisfied. This was verified by computer for every graph in $\C$.

  In order to show that $\S_1 \ss \C$, let us consider a graph $G \in \S_1$.
  By Lemma~\ref{lm-disjoint-xy-k-graphs}, $G$ contains disjoint $xy$-K-graphs that are $k$-separated for some $k \ge 0$.
  If $k \le 2$, then Lemmas~\ref{lm-sep-0},~\ref{lm-sep-1}, and~\ref{lm-2-sep} give that $G \in \C$.
  If $k \ge 3$, then Lemma~\ref{lm-3con-bases} asserts that $G$ has a base that is homeomorphic to a graph $H \in \B^*$.
  By Lemma~\ref{lm-ext-strong}, there is a homeomorphic embedding of $H$ into $G$ such that one of (T1)--(T4) holds.
  By computer, we have constructed all those graphs (which yields several hundred) and
  verified that all of these graphs that satisfy (C1)--(C3) belong to~$\C$.
\end{proof}

\bibliographystyle{abbrv}
\bibliography{bibliography}

\end{document}